\newtheorem{theorem}{Theorem}[section]
\newtheorem*{theorem*}{Theorem}
\newtheorem{corollary}[theorem]{Corollary}
\newtheorem{lemma}[theorem]{Lemma}
\newtheorem{rem}[theorem]{Remark}
\newtheorem{proposition}[theorem]{Proposition}
\theoremstyle{definition}
\newenvironment{remark}[1][Remark]{\begin{trivlist}
\item[\hskip \labelsep {\bfseries #1}]}{\end{trivlist}}
\newcommand{\rr}{\mathbb{R}}
\newcommand{\nn}{\mathbb{N}}
\newcommand{\ee}{\varepsilon}
\newcommand{\hhh}{\mathcal{H}}
\newcommand{\sep}{\text{sep}}
\newcommand{\cat}{^\smallfrown}
\newcommand{\ord}{\textbf{Ord}}
\begin{document}

\title[$\xi$-AUS, $\xi$-AUC, and $(\beta)$ operators]{$\xi$-asymptotically uniformly smooth, $\xi$-asymptotically uniformly convex, and $(\beta)$ operators}
\author{Ryan M. Causey}

\address{Department of Mathematics\\ University of South Carolina\\
Columbia, SC 29208\\ U.S.A.} \email{causey@math.sc.edu}

\author{Stephen J. Dilworth}

\address{Department of Mathematics\\ University of South Carolina\\
Columbia, SC 29208\\ U.S.A.} \email{dilworth@math.sc.edu}

\thanks{The second author was supported by the National Science Foundation under Grant Number DMS--1361461.}

\begin{abstract} For each ordinal $\xi$, we define the notion of $\xi$-asymptotically uniformly smooth and $w^*$-$\xi$-asymptotically uniformly convex operators.  When $\xi=0$, these extend the notions of asymptotically uniformly smooth and $w^*$-asymptotically uniformly convex Banach spaces.  We give a complete description of renorming results for these properties in terms of the Szlenk index of the operator, as well as a complete description of the duality between these two properties.  We also define the notion of an operator with property $(\beta)$ of Rolewicz which extends the notion of property $(\beta)$ for a Banach space.  We characterize those operators the domain and range of which can be renormed so that the operator has property $(\beta)$ in terms of the Szlenk index of the operator and its adjoint.

\end{abstract}

\maketitle

\section{Introduction} The notions of asymptotic uniform convexity and asymptotic uniform smoothness, first introduced by different names in \cite{Milman}, have since become important concepts in the geometry of Banach spaces, particularly in renorming theory.  Knaust, Odell, and Schlumprecht \cite{KOS} proved that a separable Banach space admits an equivalent asymptotically uniformly smooth norm if and only if its Szlenk index does not exceed $\omega$, the first infinite ordinal.  Raja \cite{Raja} extended this result to non-separable Banach spaces, and Godefroy, Kalton, and Lancien \cite{GKL} gave the optimal relationship between the $\ee$-Szlenk indices of a separable Banach space with Szlenk index $\omega$ and the power type moduli which can be obtain under some equivalent asymptotically smooth norm on that space.  The results in this direction as well as the duality between the modulus of asymptotic uniform smoothness of $X$ and the $w^*$-asymptotic uniform convexity of $A^*$, and in particular the characterization of those Banach spaces admitting an equivalent norm with property $(\beta)$ of Rolewicz given in \cite{DKLR},  parallel the well-known corresponding renorming results concerning superreflexive Banach spaces, uniform smoothness, and uniform convexity. It was shown in \cite{DKLR} that a Banach space admits an equivalent norm with property $(\beta)$ of Rolewicz if and only if it is reflexive and $X$ and $X^*$ both have Szlenk index not exceeding $\omega$.  Moreover, in \cite{LPR}, for every countable ordinal $\alpha$, the notion of $\omega^\alpha$-UKK$^*$ was defined, which generalizes the notion of $w^*$-asymptotic uniform convexity of $X^*$.   These authors also proved the analogous renorming theorems to that of Knaust, Odell, and Schlumprecht:  For each countable ordinal $\alpha$, a  separable Banach space admits an equivalent norm whose dual norm is $\omega^\alpha$-UKK$^*$ if and only if the Szlenk index of the Banach space does not exceed $\omega^{\alpha+1}$.   

The purpose of this paper is to introduce for every ordinal $\xi$ the notions of $\xi$-asymptotic uniform smoothness, $\xi$-asymptotic uniform convexity, and $w^*$-$\xi$-asymptotic uniform convexity of an operator.  In the case that $\xi=0$ and the operator in question is an identity, these recover the usual notions of asymptotic uniform smoothness, asymptotic uniform convexity, and $w^*$-$\xi$-asymptotic uniform smoothness.  The notion of $w^*$-$\xi$-asymptotic uniform convexity we define extends to operators the notion of $\omega^\xi$-UKK$^*$ norms defined in \cite{LPR}.  To our knowledge, the notion of $\xi$-asymptotic uniform smoothness when $\xi>0$ has not been considered previously in the literature, even for Banach spaces, and $0$-asymptotic uniform smoothness has not been considered for operators.  We also define what it means for an operator to have property $(\beta)$ of Rolewicz.  In what follows, $Sz(A)$ denotes the Szlenk index of $A$.  Any unknown terminology will be defined in the second section. 

Our first result is a complete description of the duality between $\xi$-asymptotic uniform smoothness and $w^*$-$\xi$-asymptotic uniform convexity.  

\begin{theorem} For any operator $A:X\to Y$ and any ordinal $\xi$, $A$ is $\xi$-asymptotically uniformly smooth if and only if $A^*$ is $w^*$-$\xi$-asymptotically uniformly convex.   Furthermore, for $1<p\leqslant \infty$ and $1/p+1/q=1$,  the modulus of $\xi$-asymptotic uniform smoothness of $A$ is of power type $p$ if and only if the modulus of $w^*$-$\xi$-asymptotic uniform convexity of $A^*$ is of power type $q$.

\end{theorem}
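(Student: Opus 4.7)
The plan is to generalize the classical $\xi=0$, identity-operator Hahn--Banach duality between asymptotic uniform smoothness and $w^*$-asymptotic uniform convexity, lifting it to operators and indexing by the Schreier families $\mathcal{S}_\xi$. Two ingredients should carry the argument: a single-step $w^*$-separation relating a smoothness witness on a finite-codimensional subspace of $X$ to a convexity witness on a $w^*$-slice of $B_{X^*}$, and a transfinite bookkeeping device that propagates this correspondence through the $\mathcal{S}_\xi$-admissible selections used in the definitions of both moduli.

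First I would reformulate both moduli in parallel ``iterated inf-sup'' form. For $\xi$-asymptotic uniform smoothness of $A\colon X\to Y$, the modulus should unfold as a supremum over $x\in B_X$ of an infimum over finite-codimensional subspaces of $X$ chosen along an $\mathcal{S}_\xi$-admissible path, of the gain $\|A(x+y)\|-1$ as $y$ ranges over admissible perturbations of norm $\leqslant t$. For $w^*$-$\xi$-asymptotic uniform convexity of $A^*$, the modulus should unfold as the analogous quantity in which $w^*$-null nets in $B_{X^*}$ (controlled by $\mathcal{S}_\xi$-admissible selections) replace finite-codimensional subspaces. The combinatorial skeleton on both sides is then driven by the same Schreier family, so only the pointwise $X$-versus-$X^*$ duality at each node varies with $\xi$.

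The main step is the ``level-by-level'' duality. Fix $t>0$ and $\varepsilon>0$. By $w^*$-compactness of $B_{X^*}$ together with Hahn--Banach separation, any witness for the smoothness inequality on a finite-codimensional subspace of $X$ produces a $w^*$-slice of $B_{X^*}$ of controlled $w^*$-diameter, and conversely; this is the classical $\xi=0$ argument, now carried out one node at a time for the operator $A$ rather than for the identity. The inductive step then applies this correspondence simultaneously at every node of an $\mathcal{S}_\xi$-admissible selection, using for successors the Schreier concatenation rule and for limits the standard definition $\mathcal{S}_\xi=\bigcup_n\{E:E\in\mathcal{S}_{\xi_n},\ \min E\geqslant n\}$ with $\xi_n\nearrow\xi$. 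The main obstacle I anticipate is precisely this coherence: one must choose finite-codimensional subspaces and matching $w^*$-slices \emph{simultaneously} along an entire tree while preserving $\mathcal{S}_\xi$-admissibility, and a naive diagonalization can destroy admissibility. This will likely require a stabilization lemma on Szlenk-style derivations, or restriction to a cofinal subfamily, as in the stabilization tricks already used for $\omega^\alpha$-UKK$^*$ in \cite{LPR}.

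The power-type equivalence then follows formally. Given the pointwise duality between the two $\xi$-moduli, the equivalence of ``power type $p$'' for the smoothness modulus and ``power type $q$'' for the convexity modulus (with $1/p+1/q=1$) follows from the classical Young conjugate calculation, since the Fenchel conjugate of $Ct^p$ is a constant multiple of $s^q$. The only new task is to verify that the dual relation between the two $\xi$-moduli has the expected Fenchel form, which should be automatic once the definitional duality established in the previous step is in hand.
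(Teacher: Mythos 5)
There is a genuine gap, and it occurs at two levels. First, your reformulation of both moduli through the Schreier families $\mathcal{S}_\xi$ does not match the objects the theorem is about: the statement is for \emph{every} ordinal $\xi$ and \emph{every} operator, whereas $\mathcal{S}_\xi$ is only defined for countable $\xi$ (the limit-step recipe $\mathcal{S}_\xi=\bigcup_n\{E\in\mathcal{S}_{\xi_n}:\min E\geqslant n\}$ requires a sequence $\xi_n\nearrow\xi$), and admissible finite subsets of $\nn$ cannot encode weak or $w^*$-nullity in the non-separable setting, where null collections must be indexed by nets over directed sets of neighborhoods. This is exactly why the paper defines $\rho^\tau_\xi$ and $\delta^\tau_\xi$ via $\tau$-null collections on $B$-trees of order $\omega^\xi$ over the directed set of weak (resp. $w^*$) neighborhoods of $0$, rather than via Schreier admissibility. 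Moreover, your proposed ``iterated inf-sup'' unfolding with finite-codimensional subspaces at each node loses the feature that the modulus is computed over \emph{convex combinations along entire branches}; a node-wise Hahn--Banach separation does not produce the single functional that must norm $y+\sigma Ax_s$ for all $s$ along a branch simultaneously, which is the starting point of the paper's argument.

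Second, the coherence problem you flag as a ``likely'' obstacle is in fact the entire mathematical content of the proof, and gesturing at ``a stabilization lemma on Szlenk-style derivations'' does not supply it. What is actually needed (and what the paper builds) is: a pruning procedure turning an arbitrary branch-norming family of functionals into a normally $w^*$-closed one (Proposition \ref{tedious}, Corollary \ref{thiscorollary}); biorthogonality refinements pairing the vector tree with the functional tree (Lemma \ref{lemma2}); a stabilization of the diagonal values $\text{Re}\,y^*_t(Ax_t)$ to an almost constant $c$, which rests on a Hessenberg-sum dichotomy (Lemma \ref{stabilization}) and uses crucially that $\omega^\xi$ is additively indecomposable, so the only splittings are $(0,\omega^\xi)$ and $(\omega^\xi,0)$ (Corollary \ref{big corollary}); and finally Proposition \ref{finish prop} together with a two-case comparison of $c$ with $\tau$ to close the quantitative implication $\delta^{w^*}_\xi(\tau;A^*)\geqslant\sigma\tau\Rightarrow\rho^w_\xi(\sigma;A)\leqslant\sigma\tau$. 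Your Young-conjugate reduction of the power-type statement is the right idea and coincides with the paper's, but it presupposes precisely these quantitative inequalities (Proposition \ref{dualprop}), which your outline does not establish; ``naive diagonalization destroys admissibility'' is an accurate diagnosis, but the cure is the tree-pruning and indecomposability machinery above, not a restriction to a cofinal subfamily.
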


We obtain a number of renorming results, typified by the following. When we say an operator admits an equivalent norm with a certain property, we mean there exist equivalent norms on the domain and range yielding the stated property.

\begin{theorem} \begin{enumerate}[(i)]\item For any ordinal $\xi$, an operator $A:X\to Y$ admits an equivalent $\xi$-asymptotically uniformly smooth norm if and only if $Sz(A)\leqslant \omega^{\xi+1}$.  \item For any ordinals $\xi, \zeta$ and any weakly compact operator $A:X\to X$ such that $Sz(A)\leqslant \omega^{\xi+1}$ and $Sz(A^*)\leqslant \omega^{\zeta+1}$, there exists a single, equivalent norm $|\cdot|$ on $X$ such that $A:(X, |\cdot|)\to (X, |\cdot|)$ is $\xi$-asymptotically uniformly smooth and $\zeta$-asymptotically uniformly convex.  \end{enumerate} \label{main2}
\end{theorem}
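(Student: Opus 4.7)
My plan is to establish (i) by combining Theorem 1 with a Szlenk-based renorming construction, and then to deduce (ii) by applying (i) twice and merging the resulting norms on $X$. For the necessity direction in (i), if $A$ is $\xi$-AUS under some equivalent norm, then by Theorem 1 the adjoint $A^*$ is $w^*$-$\xi$-AUC. The quantitative $w^*$-$\xi$-AUC modulus translates into a uniform control on the $w^*$-slicing derivations of order $\omega^\xi$ of $B_{Y^*}$ measured through $A^*$: each such derivation reduces a certain ``diameter through $A^*$'' of the relevant subset by a fixed amount depending only on $\ee$. Iterating $\omega$ many such derivations exhausts $B_{Y^*}$ and yields $Sz(A)\leqslant \omega^{\xi+1}$.

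For the sufficiency direction in (i), given $Sz(A)\leqslant \omega^{\xi+1}$, I would implement the operator analogue of the Lancien--Prochazka--Raja construction. Specifically, for each $n\in\nn$ I would use the Szlenk derivations of order $\omega^\xi$ at level $2^{-n}$ to extract a family of auxiliary $w^*$-compact convex subsets of $Y^*$, and then form a weighted $\ell_2$-sum defining an equivalent dual norm on $Y^*$. The hypothesis $Sz(A)\leqslant \omega^{\xi+1}$ ensures the derivation process terminates after $\omega$ iterations at each level, producing a quantitative $w^*$-$\xi$-AUC modulus for $A^*$ in this new norm. Theorem 1 then converts this back to a $\xi$-AUS norm on $Y$; the norm on $X$ can be kept (or adjusted) since the modulus is measured on $A$-images.

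For (ii), I would apply (i) to $A$ to get a norm $|\cdot|_s$ on $X$ under which $A$ is $\xi$-AUS, and apply (i) to the operator $A^*$ to get a norm on $X^*$ under which $A^*$ is $\zeta$-AUS. By Theorem 1, $A^{**}$ is then $w^*$-$\zeta$-AUC; the weak compactness hypothesis, via Gantmacher's theorem, gives $A^{**}(X^{**})\subseteq X$, so this property descends to a norm $|\cdot|_c$ on $X$ under which $A$ itself is genuinely $\zeta$-AUC. Finally, I would combine these via $|x|^2 = |x|_s^2 + |x|_c^2$ and verify that both asymptotic moduli survive, with the same power type up to constants.

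The main obstacle is the last verification. In the classical uniform (non-asymptotic) setting, an $\ell_2$-sum of norms automatically preserves both smoothness and convexity moduli, but the asymptotic versions are controlled by behavior along weakly null nets passing through $A$, and the interaction between two different norms on the same such net is more delicate. A secondary difficulty is the descent step via Gantmacher: without weak compactness, one would obtain only the dual asymptotic property on $X^{**}$, which does not in general descend to $X$, so weak compactness is essential precisely at this transition and must be used explicitly rather than implicitly.
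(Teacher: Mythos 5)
Your outline for part (i) follows the paper's route (duality plus an LPR-type dual renorming of $Y$), but it asserts as automatic the one step that is actually the main new difficulty: you claim that $Sz(A)\leqslant \omega^{\xi+1}$ ``ensures the derivation process terminates after $\omega$ iterations at each level.'' The sets you must iterate are of the form $K_{n,m+1}=\overline{\text{co}}^{w^*}\bigl(s^{\omega^\xi}_{\ee_n}(K_{n,m})\bigr)$, and the Szlenk-type derivation of order $\omega^\xi$ is \emph{not} in general stable under passing to $w^*$-closed convex hulls of $w^*$-compact sets; without such stability the hypothesis on $Sz(A)$ gives no bound on how many convexified rounds are needed, and indeed none of this is a formal consequence of $Sz(A)\leqslant\omega^{\xi+1}$. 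In the paper this is exactly Corollary \ref{main corollary}, which rests on the convexifiability of the measure of non-compactness $\alpha^{\omega^\xi}$ (Proposition \ref{convexifiable}$(iii)$), proved via the convex-hull theorem of \cite{Causey2}; \cite{LPR} had this only for finite exponents. So for a general ordinal $\xi$ (and for operators) the termination claim needs its own proof, and supplying it is the substance of Section 4.

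For part (ii) the gap is the merging step you yourself flag and then leave open. Taking $|x|^2=|x|_s^2+|x|_c^2$ does not work in general: $\xi$-AUS is a property of the \emph{range} norm, and adding (even in the $\ell_2$ sense) an arbitrary equivalent norm to an AUS-good norm can destroy it -- this is already the reason Asplund averaging is a theorem and not a triviality in the classical case, and asymptotically it is worse because one must find, along a single branch of a weakly null tree, one convex combination that is good for several norms simultaneously. The paper's proof replaces your $\ell_2$-sum by the family $r_n^*=p^*+\tfrac1n q^*$, so that $\text{eq}(p,r_n)\to 1$ preserves the $\zeta$-AUC side (Proposition \ref{Asplund}$(iii)$) while the sum of dual norms preserves $w^*$-$\xi$-AUC and hence each $r_n$ keeps the $\xi$-AUS side; it then sets $r=\sum\ee_n r_n$ and proves that $\xi$-AUS survives this summation, which is the nontrivial Lemma \ref{crazy lemma} (using the special convex combinations $\mathbb{P}_\xi$ and extended prunings from \cite{Causey2}, \cite{Causey1}). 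Without an argument of this kind your final verification cannot be completed. A further, smaller point: weak compactness is not needed where you place it -- $w^*$-$\zeta$-AUC of $A^{**}$ restricts to $\zeta$-AUC of $A$ simply because weakly null trees in $X$ are $w^*$-null in $X^{**}$ -- but it is needed elsewhere, namely to identify $\rho^w_\xi(\cdot;A)$ with $\rho^{w^*}_\xi(\cdot;A^{**})$ (Corollary \ref{obs corollary}) and to ensure the constructed norms on $X^{**}$ are biduals of norms on $X$ (Corollary \ref{drew corollarymore} via Proposition \ref{easy obs}), which is where the averaging is actually carried out.
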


We also say $A:X\to Y$ has \emph{property} $(\beta)$ if for any $\ee>0$, there exists $\beta=\beta(\ee)>0$ such that for any $x\in B_X$ and any sequence $(x_n)\subset B_X$ such that $\inf_{m\neq n}\|Ax_m-Ax_n\|\geqslant \ee$, there exists $n\in \nn$ such that $\|x+x_n\|\leqslant 2(1-\beta)$.   This is an operator version of property $(\beta)$ of Rolewicz, in that a Banach space has property $(\beta)$ of Rolewicz if and only if $I_X$ has property $(\beta)$ by our definition.  As mentioned above, in \cite{DKLR} it was shown that $X$ can be renormed to have property $(\beta)$ if and only if $X$ is reflexive and $Sz(X), Sz(X^*)\leqslant \omega$.  In complete analogy, we prove the following.  

\begin{theorem} An operator $A:X\to Y$ admits an equivalent norm with property $(\beta)$ if and only if $A$ is weakly compact and $Sz(A), Sz(A^*)\leqslant \omega$.

\end{theorem}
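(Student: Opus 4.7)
The plan is to prove the two implications separately. For necessity, suppose $A:X\to Y$ satisfies property $(\beta)$ in the given norms. Specializing the definition to $x \in S_X$ and weakly null sequences $(x_n) \subset B_X$ with $(Ax_n)$ $\e$-separated immediately yields a bound on the modulus of $0$-asymptotic uniform smoothness of $A$, so Theorem 1.2(i) with $\xi=0$ gives $Sz(A) \leq \omega$. Moreover, for each $\e > 0$ and each $(x_n) \subset B_X$ with $(Ax_n)$ $\e$-separated, taking $x=x_1$ in the $(\beta)$ inequality produces $n$ with $\|(x_1+x_n)/2\|\leq 1-\beta(\e)$, giving a near-uniform-convexity statement for $A$. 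I would use this to show $A$ is weakly compact by a Rosenthal-style argument: otherwise, a subsequence of $(Ax_n)$ can be arranged to be $\e$-separated while every convex combination has norm bounded below by a fixed constant (either because the image sequence is $\ell_1$-equivalent, or through differences of a non-trivially weakly Cauchy sequence), contradicting the inequality. The same near convexity also yields $0$-asymptotic uniform convexity of $A$, which together with weak compactness (so $A^{**}$ identifies with $A$) and Theorem 1.1 applied to $A^*$ gives $Sz(A^*) \leq \omega$.

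For sufficiency, assume $A$ is weakly compact with $Sz(A), Sz(A^*)\leq\omega$. By Theorem 1.2(i) there exist equivalent norms on $X,Y$ making $A$ $0$-asymptotically uniformly smooth, and an operator analogue of the Godefroy--Kalton--Lancien refinement should upgrade the modulus to power type $p$ for some $1<p\leq\infty$. Applying the same theorem to $A^*$ gives equivalent dual norms making $A^*$ asymptotically uniformly smooth of power type $p'$; by Theorem 1.1 and the identification $A^{**}=A$ afforded by weak compactness, these predualize to norms on $X,Y$ making $A$ asymptotically uniformly convex of power type $q$, where $1/p'+1/q=1$. I would then combine the two pairs of norms via an $\ell_r$-average $|\cdot|=(\|\cdot\|_{\mathrm{AUS}}^r+\|\cdot\|_{\mathrm{AUC}}^r)^{1/r}$, verifying that in the combined norms $A$ is simultaneously AUS of type $p$ and AUC of type $q$ up to multiplicative constants. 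Finally, given $\e>0$, $x\in B_X$, and $(x_n)\subset B_X$ with $\inf_{m\neq n}\|Ax_m-Ax_n\|\geq\e$, weak compactness lets me extract $x_n \to z$ weakly in $X$; writing $x_n = z+u_n$ with $u_n$ weakly null and $(Au_n)$ still $\e/2$-separated, the AUS estimate at $x+z$ and the AUC estimate on $u_n$ combine to give $\|x+x_n\|\leq 2(1-\beta(\e))$ uniformly in $n$, establishing property $(\beta)$.

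The main obstacle is the simultaneous renorming together with the subsequent $(\beta)$ verification, forming the operator analogue of the key renorming step in \cite{DKLR}. Tracking moduli through the $\ell_r$-averaging requires care, and the fact that the construction acts on two distinct spaces $X, Y$ linked by $A$ (rather than on a single space, as in Theorem 1.2(ii)) means the norms on $X$ and $Y$ must be chosen compatibly so that $A$ remains bounded and the operator-level moduli are preserved. Extracting $(\beta)$ from the combined power-type estimates also requires a careful weak-limit and separation argument so that the AUC decrement applies to the weakly null tails while the AUS estimate applies to the fixed part, with both quantitative bounds summing to yield the required decrease.
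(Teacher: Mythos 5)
Your necessity argument breaks at the first step. Property $(\beta)$ of $A$ cannot ``immediately'' bound the modulus $\rho^w_0(\cdot;A)$, because the two notions depend on different norms: as noted in the paper, $(\beta)$ of $A$ is invariant under renorming $Y$, whereas $\xi$-AUS of $A$ is invariant under renorming $X$ and genuinely depends on the norm of $Y$. Concretely, the identity map from $\ell_2$ onto $\ell_2$ equipped with an equivalent non-AUS norm still has property $(\beta)$ (separation of images gives separation in the domain, and the Euclidean norm has $(\beta)$), but it is not AUS; so no fixed-norm implication ``$(\beta)\Rightarrow$ AUS'' exists, and what you actually need, $Sz(A)\leqslant\omega$, requires a mechanism converting the range-side Szlenk information into domain-norm statements that $(\beta)$ can see. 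The paper supplies this mechanism with the metric tree argument: $(\beta)$-ability forbids factoring of the countably branching trees through $A$ (the Kloeckner/Baudier--Zheng self-improvement), while $Sz(A)>\omega$ or $Sz(A^*)>\omega$ forces $T$ to factor through $A$ (Lemma \ref{hardlemma}, Proposition \ref{prop1}, Corollary \ref{maincorollary}); nothing in your outline plays this role. Your weak compactness step has the same domain/range mismatch: a Rosenthal-style extraction lower-bounds norms of convex combinations of the $Ax_n$ in $Y$, but to contradict $(\beta)$ you must force $\|x+x_n\|_X>2(1-\beta)$, i.e.\ midpoints nearly on the unit sphere of $X$; dividing by $\|A\|$ loses exactly what is needed. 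The paper gets these estimates from James's construction, producing $(x_n)\subset B_X$ and $(x^*_m)\subset B_{X^*}$ with $\mathrm{Re}\,x^*_m(x_n)\geqslant 1-1/m$ for $n\geqslant m$ together with $(Ax_n)$ seminormalized basic. Finally, your route to $Sz(A^*)\leqslant\omega$ via ``$0$-AUC of $A$'' plus Theorem 1.1 applied to $A^*$ is not a correct application of the duality theorem: that theorem concerns $w^*$-AUC of $A^{**}$ on $w^*$-null collections in $X^{**}$, and passing from sequential information about $B_X$ to that setting is not addressed.

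The sufficiency direction as you propose it also cannot be carried out. First, the power-type upgrade you invoke is false for operators: the paper's last section exhibits operators with Szlenk index $\omega$ which admit no renorming with power-type AUS modulus, so the Godefroy--Kalton--Lancien refinement has no operator analogue and the DKLR power-type computation cannot be imitated; the paper's verification of $(\beta)$ is purely qualitative. Second, weak compactness of $A$ does not let you extract a weakly convergent subsequence of $(x_n)$ in $X$ (only of $(Ax_n)$ in $Y$), so the decomposition $x_n=z+u_n$ with $u_n$ weakly null in $X$ is unavailable; the paper instead passes to a $w^*$-convergent subnet in $B_{X^{**}}$, which is precisely why the renorming must be arranged at the bidual level: one needs a single norm $r$ on $Y$ making $A^{**}:X^{**}\to (Y,r)$ simultaneously $w^*$-AUS and co-$w^*$-AUC (Theorem \ref{coAUC} and Corollary \ref{super troopers}, obtained through the Asplund-averaging machinery of Lemma \ref{crazy lemma} and Proposition \ref{Asplund}; a naive $\ell_r$-sum of an AUS norm with an unrelated AUC norm need not remain AUS), and then one sets $p(x)^2=\|x\|^2+r(Ax)^2$ on $X$ and runs the $\ell_2^2$ convexity argument. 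So both halves of your plan need to be replaced by different arguments, not merely filled in.
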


In \cite{BKL}, a metric characterization was given of those reflexive Banach spaces which can be renormed to have property $(\beta)$ as those reflexive Banach spaces into which one cannot embed the countably infinitely branching tree $T=\cup_{n=0}^\infty \nn^n$ with its graph distance, where each sequence is adjacent to its extensions by one term.  In complete analogy, we prove the following operator analogue concerning the preservation of the metric space through $T$ an operator.  Let us say that $T$ \emph{factors through} $A:X\to Y$ provided there exist a function $\varphi:T\to X$  and $D>0$ such that for every $s,t\in T$, $$d(s,t)/D\leqslant \|A\varphi(s)-A\varphi(t)\|, \|\varphi(s)-\varphi(t)\| \leqslant d(s,t).$$  

\begin{theorem} Let $A:X\to Y$ be a weakly compact operator. Then exactly one of the following alternatives holds: 

\begin{enumerate}[(i)]\item $T$ factors through $A$. \item $A$ admits an equivalent norm with property $(\beta)$. \end{enumerate}

\end{theorem}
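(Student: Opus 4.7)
The plan is to use the preceding theorem as a bridge between property $(\beta)$ and Szlenk indices. Since $A$ is weakly compact, alternative (ii) is by that theorem equivalent to the conjunction $Sz(A)\leqslant \omega$ and $Sz(A^*)\leqslant \omega$. Hence it suffices to show (a) the two alternatives cannot simultaneously hold, and (b) whenever $\max\{Sz(A), Sz(A^*)\}>\omega$, the tree $T$ factors through $A$.

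For (a), suppose for contradiction that after renorming $A$ has property $(\beta)$ with modulus $\beta$, and that $\varphi:T\to X$ is a factoring with distortion $D$. For each $s\in T$, the children produce vectors $z_k := \varphi(s\cat k) - \varphi(s) \in B_X$ with $\|Az_k - Az_j\| \geqslant 2/D$ for $k\neq j$. Property $(\beta)$ applied with $\ee = 2/D$ yields a recursive compression along an inductively chosen branch in the spirit of the Baudier--Kalton--Lancien obstruction for bi-Lipschitz embeddings of the countably infinitely branching tree, which after sufficiently many levels contradicts the lower factoring inequality $\|A\varphi(s)-A\varphi(\emptyset)\|\geqslant |s|/D$. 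The finite-depth version of this obstruction suffices, since $T$ contains arbitrarily deep finite subtrees of the required form.

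For (b), assume $A$ admits no equivalent norm with property $(\beta)$, so by the preceding theorem either $Sz(A)>\omega$ or $Sz(A^*)>\omega$. In the first case, a standard Szlenk derivation argument produces $\ee>0$ and a tree of vectors $y_s \in B_X$ indexed by $s\in T$ such that for each $s$ the sequence $(y_{s\cat k})_k$ is weakly null in $X$ while $(Ay_{s\cat k})_k$ is $\ee$-separated in $Y$. Setting $\varphi(s)=\sum_{k=1}^{|s|} y_{s|_k}$ gives the upper bound $\|\varphi(s)-\varphi(t)\|\leqslant d(s,t)$ directly by the triangle inequality, and the lower bound $\|A\varphi(s)-A\varphi(t)\|\geqslant d(s,t)/D$ follows, after passing to a branch subtree by a Ramsey-type extraction, from the usual lower estimate on sums of weakly null, $\ee$-separated sequences in $Y$. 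The case $Sz(A^*)>\omega$ is treated in parallel: the weak$^*$-Szlenk derivation on $B_{Y^*}$ produces a dual tree, which is then realized as a tree in $X$ via Hahn--Banach approximation, with the weak compactness of $A$ ensuring that the pull-back vectors remain bounded in $X$ with $A$-images retaining the prescribed separation.

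The main obstacle I expect is the case $Sz(A^*)>\omega$, where the Szlenk data naturally lives in $Y^*$ rather than in $X$, while the factoring requires vectors in $X$ on both sides. The technical heart is a simultaneous extraction in which, at each level of the tree, vectors in $X$ are selected whose $A$-images realize the dual separation while remaining norm-bounded in $X$. Weak compactness of $A$ plays a crucial role: it guarantees that the pull-back vectors admit weakly convergent subsequences whose $A$-images converge weakly in $Y$, which is exactly what lets a perturbation argument produce a genuine tree structure in $X$ satisfying both the upper and lower factoring inequalities.
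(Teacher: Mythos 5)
Your global architecture --- reducing alternative (ii) to the conditions $Sz(A)\leqslant\omega$ and $Sz(A^*)\leqslant\omega$ via the renorming theorem, ruling out coexistence by a Kloeckner-type recursive compression on the finite trees $T_{2^n}$, and deducing (i) from the failure of the Szlenk conditions --- is exactly the paper's, and your part (a) is in substance the paper's Baudier--Zheng argument. The genuine gap is in part (b), in both Szlenk cases, at the lower estimate. There is no ``usual lower estimate on sums of weakly null, $\ee$-separated sequences'': the unit vector basis of $c_0$ is weakly null and $1$-separated while all of its partial sums have norm $1$, so weak nullity of the siblings together with $\ee$-separation of their $A$-images gives no lower bound at all on $\|A\varphi(s)-A\varphi(t)\|$, and neither a Ramsey-type extraction nor passing to a branch subtree can create one (smallness of sums is not a coloring obstruction that extraction removes, and in any case the factoring must preserve the whole branching structure of $T$, not a single branch). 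This lower bound is precisely where the hypothesis $Sz(A^*)>\omega$ (resp. $Sz(A)>\omega$) is used quantitatively: from $0\in s^n_{4\ee}(A^{**}B_{X^{**}})$ for every $n$ (resp. the same for $A^*B_{Y^*}$) the paper builds the tree in dyadic bands $r_n\leqslant |t|<r_{n+1}$, $r_n=2^n-1$, choosing the node opening a band inside a derived set of depth $2^n$ and, crucially, constructing at each node a functional $y^*_t\in B_{Y^*}$ which almost norms every subsequent increment within that band, with almost-biorthogonality errors of size $\delta/2^{3L(t)+2}$ against all previously chosen data (Lemma \ref{hardlemma}); the uniform lower bound for an arbitrary pair $s,t$ then follows from decomposing $(s\wedge t, s]$ into three segments, one of which lies in a single band, has length at least $d(s,t)/8$, and is normed by that band's functional while all remaining contributions are summably small (Proposition \ref{prop1} and Corollary \ref{maincorollary}). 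Nothing in your sketch supplies this mechanism, and without it the map $\varphi(s)=\sum_{k\leqslant |s|}y_{s|_k}$ need not satisfy any lower factoring inequality.

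Two secondary corrections. For $Sz(A^*)>\omega$ the relevant derivation is of $A^{**}B_{X^{**}}\subset Y^{**}$, not a derivation ``on $B_{Y^*}$''; the witnessing tree lives in $B_{X^{**}}$ and is brought down to $B_X$ by Goldstine's theorem, approximating only the finitely many functionals attached to earlier nodes. In particular weak compactness of $A$ plays no role in the factoring direction at all --- it enters the dichotomy only through the renorming direction (producing the norm with property $(\beta)$ when $Sz(A), Sz(A^*)\leqslant\omega$) and through James's theorem showing that property $(\beta)$ forces weak compactness --- so the ``technical heart'' you locate in the pull-back step is misplaced.
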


We also include a discussion of the different power types possible.  It is known \cite{KOS}, \cite{Raja} that if a Banach space admits an asymptotically uniformly smooth norm, it admits an asymptotically uniformly smooth norm with power type modulus. This is in contrast to both the operator case and to the higher ordinal case.

\begin{theorem} Fix an ordinal $\xi$. \begin{enumerate}[(i)]\item For any $1<p\leqslant \infty$, there exists a Banach space which is $\xi$-asymptotically uniformly smooth with modulus of power type $p$ and which cannot be renormed to be $\xi$-asymptotically uniformly smooth with modulus of power type better than $p$. \item There exists a Banach space with Szlenk index $\omega^{\xi+1}$ which cannot be renormed to be $\xi$-asymptotically uniformly smooth with any power type modulus if and only if $\xi>0$. \item There exists an operator with Szlenk index $\omega^{\xi+1}$ which cannot be renormed to be $\xi$-asymptotically uniformly smooth with any power type modulus. \item All of the appropriate dual statements for $w^*$-$\xi$-asymptotically uniformly convex Banach spaces and operators also hold. \end{enumerate}

\end{theorem}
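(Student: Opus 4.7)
The plan is to prove the four items by exhibiting optimally sharp Schreier--Tsirelson examples, reducing (ii)--(iv) to (i) plus soft arguments (sums and duality).

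For (i), for $\xi=0$ the canonical example is $\ell_p$ (and $c_0$ for $p=\infty$): the AUS modulus is explicitly of power type $p$, and any equivalent renorming preserves, on some normalized block sequence of the unit vector basis, an $\ell_p$-lower estimate that prevents a better power type. For general $\xi$, I would take a Tsirelson-type space $T_\xi^p := T(\mathcal{S}_\xi, 1/p)$ built from the generalized Schreier family $\mathcal{S}_\xi$ with admissibility constant $1/p$. A direct computation from the implicit norm equation shows $T_\xi^p$ is $\xi$-AUS with modulus of power type $p$. Sharpness under renorming comes from the fact that $\mathcal{S}_\xi$-admissible block sequences of the basis are equivalent up to constants to the $\ell_p$-basis, and this structure survives equivalent norms.

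For (ii), the forward direction (no counterexample when $\xi=0$) is the Godefroy--Kalton--Lancien theorem: every separable Banach space with $Sz(X)\leqslant\omega$ admits a power-type AUS renorming. For $\xi>0$, I would set
\[
X_\xi \;=\; \Bigl(\bigoplus_{n=2}^\infty T_\xi^{p_n}\Bigr)_{c_0}, \qquad p_n\nearrow\infty.
\]
A standard $c_0$-sum Szlenk computation gives $Sz(X_\xi)=\omega^{\xi+1}$. Under any equivalent norm on $X_\xi$, restriction to the $n$-th summand is uniformly equivalent to a renorming of $T_\xi^{p_n}$; by part (i), the $\xi$-AUS modulus on that summand is at least of power type $p_n$ up to constants. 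Since $p_n\to\infty$, no single finite power type can dominate $\bar\rho^\xi_{X_\xi}$.

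For (iii), the point is that an operator carries more flexibility than an identity: even when $\xi=0$, one constructs $A:Z\to Y$ with $Sz(A)=\omega$ and no power-type renorming. I would take $Y=(\bigoplus_n Y_n)_{c_0}$ with $Y_n = T_\xi^{p_n}$ (for $\xi=0$, $Y_n=\ell_{p_n}$), $Z$ an appropriate $\ell_2$- or $c_0$-sum on which $A$ acts as a block-diagonal map whose image in each $Y_n$ is uniformly isomorphic to a normalized basic sequence. The range structure alone forces $\bar\rho^\xi_A(t)\gtrsim t^{p_n}$ for every $n$, precluding a uniform power type, while $Sz(A)=\omega^{\xi+1}$ follows from the $c_0$-sum structure on the range.

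Part (iv) is obtained by applying Theorem 1.1 (and its quantitative refinement on power types): the examples above dualize — $(T_\xi^p)^*$, duals of the $c_0$-sums (which become $\ell_1$-sums), and adjoints of the operators in (iii) — and give optimal $w^*$-$\xi$-AUC examples with the reciprocal power types. The main obstacle is the sharpness claim in (i) for $\xi>0$: one must show that any equivalent norm on $T_\xi^p$ still contains $\mathcal{S}_\xi$-admissible block sequences quantitatively equivalent to the $\ell_p$-basis. This is carried out via a $w^*$-derivation argument on the dual, using that the $\varepsilon$-Szlenk indices of any renorming of $T_\xi^p$ remain controlled below $\omega^{\xi+1}$ and hence the $\mathcal{S}_\xi$-block structure persists; once (i) is established in this quantitatively stable form, parts (ii)--(iv) follow by the soft reductions outlined above.
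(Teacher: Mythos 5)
Your central example for part (i) with $\xi>0$ does not work. In the Tsirelson-type space $T^p_\xi=T(\mathcal{S}_\xi,1/p)$ the admissibility iterates through the implicit norm equation: blocks of the basis indexed by the $k$-fold iterate of $\mathcal{S}_\xi$ satisfy lower $\ell_1$-estimates with constant $(1/p)^k$, so weakly null trees of order roughly $\omega^{\xi\cdot k}$ witness $o(T^p_\xi,p^{-k})\geqslant \omega^{\xi k}$, and hence $Sz(T^p_\xi)\geqslant \omega^{\xi\cdot\omega}>\omega^{\xi+1}$ for every $\xi\geqslant 1$. But any equivalent $\xi$-AUS norm (in particular one of power type $p$) forces $Sz\leqslant\omega^{\xi+1}$ (Theorem \ref{renorming1}; equivalently, property $(\xi,p,C)$ implies $Sz\leqslant\omega^{\xi+1}$). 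So $T(\mathcal{S}_\xi,1/p)$ is not $\xi$-AUS under any renorming and the "direct computation from the implicit norm equation" cannot succeed; the iteration is precisely what must be avoided. The paper's spaces $S_{\xi,p}$ instead alternate finite $\ell_1^n$-sums with outer $\ell_p$-sums transfinitely, which keeps $Sz(S_{\xi,p})=\omega^{\xi+1}$ and yields property $(\xi,p,1)$, and sharpness under renorming is not obtained from persistence of a block structure but from two-sided estimates on the index $o(\cdot,\ee)$: $o(S_{\xi,p},1/n^{1/q})\geqslant\omega^\xi n$, while property $(\xi,r)$ forces $o(X,D/n^{1/s})\leqslant\omega^\xi n$ with $1/r+1/s=1$, and these are incompatible for $r>p$.

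The reductions for (ii) and (iii) also run in the wrong direction and miss the actual difficulty. A power type improves as $p$ increases, so with $p_n\nearrow\infty$ the claimed obstructions $\bar\rho(t)\gtrsim t^{p_n}$ are vacuous (for $t<1$ they tend to $0$ and are compatible with any fixed power type); moreover each summand with power type $p_n\geqslant p_2$ and uniform constant is already of power type $p_2$, so the sum obstructs nothing. To kill all power types one needs $p_n\downarrow 1$, and then the crux is to keep the Szlenk index equal to $\omega^{\xi+1}$: for $\xi=0$ this is provably impossible (this is exactly the "only if" in (ii), via \cite{KOS}, \cite{Raja}, \cite{GKL}), and it is why the paper's operator in (iii) damps the blocks, $A|_{\ell_{p_n}}=\theta_n I_{\ell_{p_n}}$ with $\theta_n\to 0$ slowly, so that $Sz(A)=\omega$ by \cite{BrookerAsplund}; your undamped block-diagonal map, being a uniform isomorphism onto its blocks, would have Szlenk index exceeding the required bound. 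For (ii) with $\xi>0$ the paper does not use a direct sum at all, but the space $\mathfrak{G}$ of \cite{Causey2} with prescribed slowly decaying $\ee$-Szlenk indices ($Sz(B_{\mathfrak{G}^*},\ee)>\omega^\xi n$ for $\ee<1/\log_2(n+1)$), and derives the contradiction with any power type from the same $o(\cdot,\ee)$ estimates; your sketch offers no argument that the Szlenk index of your sum is exactly $\omega^{\xi+1}$, which is the point that genuinely requires work.
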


\section{Preliminaries}

Given a set $\Lambda$, we let $\Lambda^{<\nn}$ denote the finite sequences in $\Lambda$, including the empty sequence $\varnothing$.  We order $\Lambda^{<\nn}$ by initial segments, denoted $\prec$.  That is, $s\prec t$ if $s$ is a proper initial segment of $t$.  We let $|s|$ denote the length of $s$, $s|_i$ the initial segment of $s$ having length $i$ (provided $0\leqslant i\leqslant |s|$), and $s^-$ the maximal, proper initial segment of $s$ (provided $s\neq \varnothing$).  We let $s\cat t$ denote the concatenation of $s$ with $t$.   Given a subset $T$ of $\Lambda^{<\nn}$, we let $MAX(T)$ denote the maximal members of $T$ with respect to the order $\prec$, and $T'=T\setminus MAX(T)$.    We define the transfinite derived sets $T^0=0$, $T^{\xi+1}=(T^\xi)'$, and if $\xi$ is a limit ordinal, $T^\xi=\cap_{\zeta<\xi} T^\zeta$.    We let $o(T)=\min\{\xi: T^\xi=\varnothing\}$ if this class is non-empty, and we write $o(T)=\infty$ otherwise.   We refer to $o(T)$ as the \emph{order} of $T$.    We say $o(T)$ is \emph{ill-founded} if $o(T)=\infty$ and $T$ is \emph{well-founded} otherwise.  

We say $T\subset \Lambda^{<\nn}$ is a \emph{tree} provided that for any $s\prec t\in T$, $s\in T$.    In this case, $T^\xi$ is also a tree for every ordinal $\xi$.  We say $T\subset \Lambda^{<\nn}\setminus\{\varnothing\}$ is a $B$-\emph{tree} provided that for every $\varnothing \neq s\prec t\in T$, $s\in T$.  Then $T$ is a $B$-tree if and only if $T\cup\{\varnothing\}$ is a tree.   In this case, $T^\xi$ is also a $B$-tree for every $\xi$.   We note that if $T$ is a $B$-tree on the set $\Lambda$, if $t\in \Lambda^{<\nn}$, and if $T_t$ is the set of all non-empty sequences $u\in \Lambda^{<\nn}$ such that $t\cat u\in T$, $T_t$ is a $B$-tree and $o(T_t)=\xi$ if and only if $t$ is maximal in $T^\xi$ \cite{Causey}.

If $U\subset \Lambda_1^{<\nn}$ and $V\subset \Lambda_2^{<\nn}$, we say a function $\theta:U\to V$ is \emph{monotone} provided that for every $s,t\in U$ with $s\prec t$, $\theta(s)\prec \theta(t)$.    If $S,T$ are trees (resp. $B$-trees), then there exists a monotone map $\theta:S\to T$ if and only if $o(S)\leqslant o(T)$ (where we agree that $\xi<\infty$ for any ordinal $\xi$).    Moreover, this map may be taken to preserve lengths  \cite{Causey2}.

Given a set $T\subset \Lambda^{<\nn}$ and a directed set $D$, we let $T D=\{(t, s): t\in T, s\in D^{<\nn}, |t|=|s|\}$.  Here, we identify pairs of sequences $(t,s)$ such that $|t|=|s|$  with sequences of pairs by identifying $(\varnothing, \varnothing)$ with $\varnothing$ and $((a_i)_{i=1}^n, (b_i)_{i=1}^n)$ with $((a_i, b_i))_{i=1}^n$.  With this identifcation, we will think of $T D$ as sequences of pairs, not pairs of sequences. In this case, the length of the sequence $(t,s)$ is the common length of $s$ and $t$. In the case that $D$ also consists of pairs, we will think of $T D$ as sequences of triples by identifying $((a_i)_{i=1}^n, ((b_i, c_i))_{i=1}^n)$ with $((a_i, b_i, c_i))_{i=1}^n$ and $(\varnothing, \varnothing)$ with $\varnothing$.   Note that if $T$ is a tree (resp. $B$-tree), so is $T D$.  Moreover, for any ordinal $\xi$, $(T D)^\xi=T^\xi D$, so $o(T D)=o(T)$.   If $U\subset SD$ and $V\subset TD$, we say $\theta:U\to V$ is a \emph{pruning} provided that $\theta$ is monotone, and for each $s=s_1\cat (a, d)\in SD$, if $\theta(s)=t_1\cat (b, d')$, then $d\leqslant_D d'$, where $\leqslant_D$ denotes the order on $D$. An \emph{extended pruning} will be a pair $(\theta, e)$ of maps such that $\theta:U\to V$ is a pruning and $e:MAX(U)\to MAX(V)$ is a function such that for every $t\in MAX(U)$, $\theta(t)\preceq e(t)$.   We note that if $S,T$ are trees (resp. $B$-trees) with $o(S)\leqslant o(T)$, then by the previous paragraph there exists a length-preserving, monotone map $\theta:S\to T$.  We may then define $\phi:S D\to T D$ by letting $\phi(\varnothing)=\varnothing$ (which we omit in the case of $B$-trees) and by letting $\phi(((\lambda_i, d_i))_{i=1}^n) = ((\mu_i, d_i))_{i=1}^n$, where $(\mu_i)_{i=1}^n= \theta((\lambda_i)_{i=1}^n)$.  Thus we see that if $o(S)\leqslant o(T)$, then for any directed set $D$, there exists a length-preserving pruning $\phi:SD\to TD$.   

We define some particularly useful trees for later use.  We let $T_0=\{\varnothing\}$.  If $T_\xi$ has been defined, we let $$T_{\xi+1}=\{\varnothing\}\cup \{(\xi+1)\cat t: t\in T_\xi\}.$$  If $\xi$ is a limit ordinal and $T_\zeta$ has been defined for every $\zeta<\xi$, we let $T_\xi=\cup_{\zeta<\xi}T_{\zeta+1}$.    For each ordinal $\xi$, we let $B_\xi=T_\xi\setminus\{\varnothing\}$.  It is easy to see that $T_\xi$ is a tree with $o(T_\xi)=\xi+1$, and $B_\xi$ is a $B$-tree with $o(B_\xi)=\xi$.   

Suppose $T$ is a tree, $X$ is a Banach space, and $\tau$ is some topology on $X$.  We say a collection $(x_t)_{t\in T}$ is $\tau$-\emph{closed} provided that for every ordinal $\xi$ and every $t\in T^{\xi+1}$, $$x_t\in \overline{\{x_s: s\in T^\xi, s^-=t\}}^\tau.$$   If $A:X\to Y$ is an operator and $\ee>0$, we say the collection $(x_t)_{t\in T}$ is $(A, \ee)$-\emph{separated} provided that for every $\varnothing\neq t\in T$, $\|Ax_t-Ax_{t^-}\|\geqslant \ee$.   

If $B$ is a $B$-tree, $X$ is a Banach space, and $\tau$ is some topology on $X$, we say a collection $(x_t)_{t\in B}$ is $\tau$-\emph{null} provided that for every ordinal $\xi$ and every $t\in (B\cup \{\varnothing\})^{\xi+1}$, $$0\in \overline{\{x_s: s\in B^\xi, s^-=t\}}^\tau.$$   If $A:X\to Y$ is an operator and $\ee>0$, we say the collection $(x_t)_{t\in B}$ is $(A, \ee)$-\emph{large} provided that for every $t\in B$, $\|Ax_t\|\geqslant \ee$.

In the remainder of this section and in the next section, $A:X\to Y$ is a fixed operator, $M$ is the set of all weakly open sets in $X$ which contain $0$, and $N$ is the set of all $w^*$-open sets in $Y^*$ which contain $0$.  Direct $D=M\times N$ by $(U,V)\leqslant_D (U', V')$ if $U'\subset U$ and $V'\subset V$.    Let $S_\xi=T_\xi D$ and $R_\xi=B_\xi D$.  Note that $R_\xi$ is the collection of all finite sequences of triples $s=((\zeta_i, U_i, V_i))_{i=1}^n$ such that $(\zeta_i)_{i=1}^n\in B_\xi$, $U_i\in M$, and $V_i\in N$.  The tree $S_\xi$ is $R_\xi$ together with the empty sequence.      We say a collection $(x_t)_{t\in R_\xi}\subset X$ is \emph{normally weakly null} provided that for any $s=s_1\cat (\zeta, U,V)\in R_\xi$, $x_s\in U$.  We say a collection $(y^*_t)_{t\in R_\xi}\subset Y^*$ is \emph{normally} $w^*$-\emph{null} if for every $s=s_1\cat (\zeta, U,V)\in R_\xi$, $y^*_s\in V$.  We say a collection $(y^*_t)_{t\in S_\xi}\subset Y^*$ is \emph{normally} $w^*$-\emph{closed} if for every $s=s_1\cat (\zeta, U,V)\in R_\xi$, $y^*_s-y^*_{s_1}\in V$.   It is easy to see that normally weakly null collections are weakly null, normally $w^*$-null collections are $w^*$-null, and normally $w^*$-closed collections are $w^*$-closed.  Moreover, if $\theta:R_\xi\to R_\zeta$ is a pruning for some ordinals $\xi, \zeta$, and if $(x_t)_{t\in R_\zeta}$ is normally weakly null, then $(x_{\theta(t)})_{t\in R_\xi}$ is also normally weakly null, and a similar statement holds for normally $w^*$-null collections $(y^*_t)_{t\in R_\zeta}$. When we say that $(x_{\theta(t)})_{t\in R_\xi}$ is normally weakly null, we mean that the collection $(u_t)_{t\in R_\xi}:=(x_{\theta(t)})_{t\in R_\xi}$ is normally weakly null.  More precisely, if $t=t_1\cat (\eta, U,V)$, $x_{\theta(t)}\in U$.   Moreover, if $\theta:S_\xi\to S_\zeta$ is a pruning which is length-preserving, then $(y^*_{\theta(t)})_{t\in T_\xi}$ is normally $w^*$-closed if $(y^*_t)_{t\in T_\xi}$ is normally $w^*$-closed.  This is because in this case, since $\theta$ preserves lengths, $\theta(t^-)=\theta(t)^-$ for any $\varnothing\neq t\in T_\xi$.

Given an ordinal $\xi$, $\sigma>0$, and a topology $\tau$ (either the weak or $w^*$ topology) on $X$, we let $$\rho_\xi^\tau(\sigma;A)= \sup\Bigl\{\inf\{ \|y+\sigma Ax\|-1: t\in B, x\in \text{co}(x_s: \varnothing\prec s\preceq t)\}\Bigr\},$$ where the supremum is taken over all $y\in B_Y$, all $B$-trees $B$ with $o(B)=\omega^\xi$, and all $\tau$-null collections $(x_t)_{t\in B}\subset B_X$. If $\xi=0$, since $B$-trees of order $1$ are totally incomparable unions of length $1$ sequences, $\rho^w_0(\sigma;A)$ can be defined by nets by $$\sup\Bigl\{\underset{\lambda}{\lim\sup}\|y+\sigma Ax_\lambda\|-1: y\in B_Y, (x_\lambda)\subset B_X, x_\lambda\underset{\tau}{\to}0\Bigr\}.$$   We let $$\delta_\xi^\tau(\sigma;A)=\inf\Bigl\{ \sup\{\|x+\sigma\sum_{\varnothing\prec s\preceq t} x_s\|-1: t\in B\}\Bigr\},$$ where the infimum is taken over all $x\in X$ with $\|x\|\geqslant 1$, all $B$-trees $B$ with $o(B)=\omega^\xi$, and all $\tau$-null, $(A,1)$-large collections $(x_t)_{t\in B}$.  Again, if $\xi=0$, this modulus can be computed by nets as $$\delta^{w^*}_0(\sigma;A^*)=\inf\{\underset{\lambda}{\lim\inf} \|x+\sigma x_\lambda\|-1: \|x\|\geqslant 1, \|Ax\|\geqslant 1, x_\lambda\underset{w^*}{\to}0\}.$$   We note that if $Y=\{0\}$, then $\rho^\tau_\xi(\sigma;A)=-1$, and otherwise $\rho^\tau_\xi(\sigma;A)\geqslant 0$ for all $\sigma$. Indeed, in this case we may fix $y\in S_Y$ and let $x_t=0$ for some $B$-tree $B$ with $o(B)=\omega^\xi$ and all $t\in B$.  We note also that $\delta^\tau_\xi(\sigma;A)\geqslant 0$.  Indeed, if $A$ is such that there does not exist any $B$-tree $B$ with $o(B)=\omega^\xi$ and a $\tau$-null, $(A,1)$-large collection $(x_t)_{t\in B}$, then we are taking the infimum of the empty set.  Otherwise for any $B$-tree $B$ with $o(B)=\omega^\xi$,  any $\tau$-null, $(A,1)$-large collection $(x_t)_{t\in B}\subset X$, and any $x\in X$ with $\|x\|\geqslant 1$, the collection $\{x_t: t\in B, |t|=1\}$ contains a $\tau$-null net, say $(z_\lambda)$, whence $$\sup_{t\in B} \|x+\sigma\sum_{s\preceq t}x_s\| \geqslant \underset{\lambda}{\lim\inf} \|x+\sigma z_\lambda\| \geqslant \|x\|\geqslant 1.$$

We say $A:X\to Y$ is $\xi$-\emph{asymptotically uniformly smooth} if $\rho^w_\xi(\sigma;A)/\sigma\to 0$ as $\sigma\to 0$, where $w$ denotes the weak topology on $X$. We abbreviate this as $\xi$-AUS. We say $A:X\to Y$ is $\xi$-\emph{asymptotically uniformly convex} if $\delta^w_\xi(\sigma;A)>0$ for every $\sigma>0$.   If $A:X\to Y$ is an adjoint, we say $A$ is $w^*$-$\xi$-AUS (resp. $w^*$-$\xi$-AUC) if $\rho^{w^*}_\xi(\sigma;A)/\sigma\to 0$ as $\sigma\to 0$ (resp. $\delta^{w^*}_\xi(\sigma;A)>0$ for every $\sigma>0$).    In the case that $\xi=0$, we simply write AUS, AUC, etc. in place of $0$-AUS, $0$-AUC, etc.   One easily checks that if $A$ is the identity operator on $X$, then the definition of AUS or AUC coincides with the usual definition of what it means for a Banach space to be AUS or AUC, and analogous statements hold for $w^*$-AUS and $w^*$-AUC if $X$ is a dual space.   

It is easy to see that while renorming $Y$ may change the moduli $\delta^w_\xi(\sigma;A)$, $\delta^{w^*}_\xi(\sigma;A)$, whether or not $A$ is $\xi$-AUC or $w^*$-$\xi$-AUC is invariant under renorming $Y$.  Similarly, whether $A$ is $\xi$-AUS or $w^*$-$\xi$-AUS is invariant under renorming $X$.

The purpose of normal weak or $w^*$-nullity is that it will be particularly convenient to pass to subtrees having prescribed biorthogonal behavior during dualization.  The content of our first proposition is that in order to compute the moduli above, it is sufficient to consider these normally weakly or $w^*$ null trees.

\begin{proposition} \begin{enumerate}[(i)]\item For any well-founded $B$-tree $B$ and any ordinal $\xi$ such that $0<\xi\leqslant o(B)$, if $(x_t)_{t\in B}$ is weakly null, there exists a length-presrving, monotone $\theta:R_\xi\to B$ such that $(x_{\theta(t)})_{t\in R_\xi}$ is normally weakly null. \item For any well-founded $B$-tree $B$ and any ordinal $\xi$ such that $0<\xi\leqslant o(B)$, if $(z^*_t)_{t\in B}$ is $w^*$-null, there exists a length-preserving, monotone $\theta:R_\xi\to B$ such that $(y^*_t)_{t\in R_\xi}$ is normally $w^*$-null.   \end{enumerate}

\label{normal prop}
\end{proposition}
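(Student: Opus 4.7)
I plan to prove (i) by transfinite induction on $\xi$; part (ii) is obtained by repeating the argument with $Y^*$, the $w^*$-topology, and the $V$-component of each triple $(\zeta, U, V) \in D$ in place of $X$, the weak topology, and the $U$-component. The engine of the induction is the observation, a consequence of the fact from \cite{Causey} quoted in the preliminaries, that for a well-founded $B$-tree $B$ and $s \in B$,
\[
s \in B^\eta \iff o(B_s) \geq \eta,
\]
where $B_s$ denotes the subtree of $B$ rooted at $s$. Combined with the weak-nullness hypothesis applied at $t = \varnothing$, this yields a \emph{selection fact}: whenever $o(B) \geq \eta + 1$ and $U \in M$, there exists $s \in B$ with $|s| = 1$, $x_s \in U$, and $o(B_s) \geq \eta$, because the weak closure of $\{x_s : s \in B,\ |s| = 1,\ o(B_s) \geq \eta\}$ contains $0$.

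The base case $\xi = 1$ is immediate: for each triple $((1, U, V)) \in R_1$ I apply the selection fact with $\eta = 0$ and let $\theta((1,U,V))$ be the chosen element. For the successor step $\xi \mapsto \xi + 1$, every element of $R_{\xi+1}$ begins with some triple $(\xi + 1, U_1, V_1)$. For each such triple I use the selection fact with $\eta = \xi$ to pick $s_0 = \theta((\xi + 1, U_1, V_1)) \in B$ of length one with $x_{s_0} \in U_1$ and $o(B_{s_0}) \geq \xi$. The translated collection $(x_{s_0 \cat u})_{u \in B_{s_0}}$ is weakly null in $B_{s_0}$ (this uses the identity $(B_{s_0})^\eta = (B^\eta)_{s_0}$), so the inductive hypothesis supplies a length-preserving monotone $\theta_{s_0} : R_\xi \to B_{s_0}$ whose composite with $(x_t)$ is normally weakly null. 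Setting $\theta\bigl(((\xi + 1, U_1, V_1)) \cat r\bigr) := s_0 \cat \theta_{s_0}(r)$ for each $r \in R_\xi$ extends $\theta$ to the full branch, and doing this for every $(U_1, V_1) \in D$ defines $\theta$ on all of $R_{\xi+1}$. The limit step is analogous: for $\xi$ a limit, $B_\xi = \cup_{\zeta < \xi} B_{\zeta + 1}$, so every element of $R_\xi$ begins with a triple $(\zeta + 1, U_1, V_1)$ for some $\zeta < \xi$; since $\zeta + 1 \leq \xi \leq o(B)$, the selection fact applies with $\eta = \zeta$ and the inductive hypothesis applies at the ordinal $\zeta$.

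The only real obstacle is bookkeeping. One must verify that the restriction of a weakly null collection to any subtree $B_{s_0}$ is again weakly null, which is immediate from $(B_{s_0})^\eta = (B^\eta)_{s_0}$ and the definition of weak nullity; and one must observe that the normally weakly null condition on $(x_{\theta(r)})_{r \in R_\xi}$ constrains only the $U$-part of the \emph{last} triple of each $r$, so its verification splits cleanly between the length-one prefix (handled by the selection fact) and the tail (handled inductively). Length-preservation and monotonicity of the glued $\theta$ are inherited from those of $\theta_{s_0}$ together with the common length-one prefix $s_0$.
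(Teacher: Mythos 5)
Your proof is correct and takes essentially the same route as the paper's: your ``selection fact'' is precisely the paper's use of weak nullness at $\varnothing\in (B\cup\{\varnothing\})^{\gamma+1}$ to choose a length-one node $t\in B^{\gamma}$ with $x_t\in U$ and $o(B_t)\geqslant \gamma$, followed by recursion into the subtree $B_t$. The only cosmetic differences are that you run a direct transfinite induction on $\xi$ quantified over all trees (rather than the paper's minimal lexicographic pair $(\zeta,\xi)$ formulation) and that at limit $\xi$ you repeat the selection for each leading triple $(\zeta+1,U,V)$ instead of gluing the maps $R_{\zeta+1}\to B$ supplied by the inductive hypothesis; both variants are sound.
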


\begin{proof} We only prove $(i)$, with the proof of $(ii)$ being similar.   We will prove by induction on $\Omega=\{(\zeta, \xi)\in \ord^2: 0<\xi\leqslant \zeta\}$ ordered lexicographically that for any pair $(\zeta, \xi)\in \Omega$, any $B$-tree with $o(B)=\zeta$, and any weakly null collection $(x_t)_{t\in B}$, there exists a length-preserving, monotone $\theta:R_\xi\to B$ such that $(x_{\theta(t)})_{t\in R_\xi}$ is normally weakly null.  To obtain a contradiction, suppose the claim fails and let $(\zeta, \xi)$ be a minimal pair for which the claim fails.  Let $B$ be a $B$-tree with $o(B)=\zeta$ and $(x_t)_{t\in B}$ a weakly null collection witnessing the failure of the claim.   First suppose that $\xi=\gamma+1$.    Since $o(B\cup\{\varnothing\})=o(B)+1=\zeta+1$, $\varnothing\in (B\cup \{\varnothing\})^{\gamma+1}$.  Thus if $R$ is the collection of sequences in $B^\gamma$ having length $1$, $0\in \overline{\{x_t: t\in R\}}^w$.  Fix $(U,V)\in D$.   Let $\theta((\xi, U,V))=t$ for some $t\in R$ such that $x_t\in U$.    With this $U,V$ and $t$ still fixed, since $t\in B^\gamma$, if $B_t$ denotes those non-empty sequences such that $t\cat u\in B$, $o(B_t)\geqslant \gamma$.    Moreover, $(x_{t\cat u})_{u\in B_t}$ is weakly null.  We may then fix $\theta_{U,V}:R_\gamma\to B_t$ which is length-preserving and monotone and such that $(x_{t\cat \theta_{U,V}(u)})_{u\in B_t}$ is normally weakly null.  We then define $$\theta((\xi, U,V)\cat u)=t\cat \theta_{U,V}(u)$$ for each $u\in R_\gamma$.  This defines $\theta$ on all of $R_\xi$.   Moreover, it is easy to see from the construction that $\theta$ satisfies all of the requirements, yielding a contradiction.    This shows that $\xi$ cannot be a successor ordinal, and since $\xi\neq 0$, it follows that $\xi$ must be a limit ordinal.    Then $R_\xi=\cup_{\gamma<\xi}R_{\gamma+1}$.   By minimality of $\xi$, for each $\gamma<\xi$, there exists a length-preserving, monotone $\theta_\gamma:R_{\gamma+1}\to B$ such that $(x_{\theta(t)})_{t\in R_{\gamma+1}}$ is normally weakly null.  Then let $\theta:R_\xi\to B$ be given by $\theta|_{R_{\gamma+1}}=\theta_\gamma$.  Again, this $\theta$ satisfies the conclusions, and we reach a contradiction.

\end{proof}

\begin{remark} Note that if $\theta:R_\xi\to B$ is length-preserving and monotone, then for any $(z^*_t)_{t\in B}\subset Y^*$ and $t\in R_\xi$, $\sum_{\varnothing\prec s\preceq t} z^*_{\theta(s)}= \sum_{\varnothing\prec s\preceq \theta(t)} z^*_s$.  With this fact, it is easy to see that if an operator $A^*:Y^*\to X^*$ is $w^*$-$\xi$-AUC, it is $w^*$-$\zeta$-AUC for any $\zeta>\xi$, and a similar statement holds for $\xi$-AUC.  Moreover, it is even more readily apparent that any operator which is  $\xi$-AUS (resp. $w^*$-$\xi$-AUS) is $\zeta$-AUS (resp. $w^*$-$\zeta$-AUS) for any $\zeta>\xi$.   

It is also easy to see that any compact operator is AUS and AUC, and therefore $\xi$-AUS and $\xi$-AUC for every $\xi$.  Moreover, any compact adjoint is $w^*$-AUS and $w^*$-AUC.    

\end{remark}

It is not clear that $\rho^\tau_\xi(\cdot;A)$ is convex when $\xi>0$. However, convexity of $\rho^\tau_\xi(\cdot;A)$ is a consequence of the fact that in order to check the infimum over all convex combinations of the branches of a $\tau$-null tree, it is sufficient to check over ``special convex combinations.'' This phenomenon will be used again in the sequel.

\begin{proposition}
For every $\xi$, every operator $A:X\to Y$, and topology $\tau$ either a weak or $w^*$ topology on $X$, $\rho_\xi^\tau(\cdot;A)$ is a convex function.  
\end{proposition}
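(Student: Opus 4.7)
The plan is, for $\lambda\in[0,1]$ and $\sigma_0,\sigma_1\ge 0$ with $\sigma := \lambda\sigma_0+(1-\lambda)\sigma_1$, to establish
\[
\rho_\xi^\tau(\sigma;A) \;\le\; \lambda\rho_\xi^\tau(\sigma_0;A) + (1-\lambda)\rho_\xi^\tau(\sigma_1;A).
\]
The starting point is the pointwise bound obtained by writing $y+\sigma Ax = \lambda(y+\sigma_0 Ax)+(1-\lambda)(y+\sigma_1 Ax)$ and invoking the triangle inequality: for every $y\in B_Y$ and $x\in X$,
\[
\|y+\sigma Ax\|-1 \;\le\; \lambda(\|y+\sigma_0 Ax\|-1) + (1-\lambda)(\|y+\sigma_1 Ax\|-1). \qquad (\star)
\]
Given any admissible triple $(y,B,(x_t)_{t\in B})$ in the definition of $\rho_\xi^\tau(\sigma;A)$, my objective is to produce a branch $t^*\in B$ and two elements $x_0,x_1\in \mathrm{co}(x_s:\varnothing\prec s\preceq t^*)$ with $\|y+\sigma_i Ax_i\|-1\le \rho_\xi^\tau(\sigma_i;A)+\varepsilon$ for both $i=0,1$; once this is secured, the convex combination
\[
x^* \;:=\; \frac{\lambda\sigma_0 x_0 + (1-\lambda)\sigma_1 x_1}{\sigma}
\]
lies in the same $\mathrm{co}(x_s:\varnothing\prec s\preceq t^*)$ (its coefficients $\lambda\sigma_0/\sigma,(1-\lambda)\sigma_1/\sigma$ sum to $1$) and satisfies $y+\sigma Ax^* = \lambda(y+\sigma_0 Ax_0)+(1-\lambda)(y+\sigma_1 Ax_1)$, so the triangle inequality forces $\|y+\sigma Ax^*\|-1 \le \lambda\rho_\xi^\tau(\sigma_0;A)+(1-\lambda)\rho_\xi^\tau(\sigma_1;A)+\varepsilon$. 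Taking the infimum over $(t,x)$, then the supremum over admissible triples, and finally letting $\varepsilon\to 0$ yields the conclusion.

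The technical heart is locating $t^*$ and the simultaneous near-minimizers $x_0,x_1$; this is where the ``special convex combinations'' observation flagged in the remark preceding the proposition comes in. Following the pattern of Proposition~\ref{normal prop}, the plan is to argue by transfinite induction on $\xi$: one produces a length-preserving monotone map $\theta\colon R_\xi\to B$ along which, at each node $t\in R_\xi$, a distinguished family of convex combinations of $(x_{\theta(s)}:\varnothing\prec s\preceq t)$ approximately realizes the inner infimum $\inf_{x\in\mathrm{co}}(\|y+\sigma_i Ax\|-1)$ for \emph{both} $i=0$ and $i=1$. At the successor step, the $\tau$-nullity of $(x_t)$ is used to choose children of each already-selected node whose inner infima are small for both $\sigma_0$ and $\sigma_1$ at once, and at the limit step the prunings produced for each $\zeta<\xi$ are concatenated, exactly as in Proposition~\ref{normal prop}. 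Because the pullback $(y,R_\xi,(x_{\theta(t)}))$ is itself an admissible triple for each $\sigma_i$, applying the defining supremum of $\rho_\xi^\tau(\sigma_i;A)$ to this pullback yields some $t_0\in R_\xi$ at which the simultaneous near-minimization is witnessed; taking $t^*:=\theta(t_0)$ delivers the branch required above.

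The main obstacle is the inductive construction of $\theta$ and the distinguished combinations, that is, arranging the pruning so it serves both moduli simultaneously. Once this ``special convex combinations'' reduction is in place, the remainder of the argument---the identity $y+\sigma Ax^* = \lambda(y+\sigma_0 Ax_0)+(1-\lambda)(y+\sigma_1 Ax_1)$ and the triangle inequality via $(\star)$---is essentially formal.
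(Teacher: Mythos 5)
Your formal reduction is fine as far as it goes: if for every admissible triple $(y,B,(x_t)_{t\in B})$ and every $\ee>0$ you could produce a \emph{single} $t^*\in B$ and $x_0,x_1\in \text{co}(x_s:\varnothing\prec s\preceq t^*)$ with $\|y+\sigma_iAx_i\|-1\leqslant \rho^\tau_\xi(\sigma_i;A)+\ee$ for \emph{both} $i$, then the identity $y+\sigma Ax^*=\lambda(y+\sigma_0Ax_0)+(1-\lambda)(y+\sigma_1Ax_1)$ would finish the proof. But that simultaneous statement is the entire content of the proposition, and your argument for it has a genuine gap. Applying the definition of $\rho^\tau_\xi(\sigma_i;A)$ to the pulled-back triple $(y,R_\xi,(x_{\theta(t)}))$ yields, for each $i$ \emph{separately}, some node and some convex combination along the branch below it; the two nodes may be incomparable and the two combinations incompatible, and nothing in the definition lets you take them at a common $t_0$. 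Likewise the inductive construction you sketch ``following Proposition \ref{normal prop}'' is not available: $\tau$-nullity only lets you choose children inside prescribed $\tau$-neighborhoods of $0$, and gives no control on the branch infima $\inf_x\|y+\sigma_iAx\|$; a child or a convex combination that is nearly optimal for $\sigma_0$ need not be close to optimal for $\sigma_1$, and your successor step (``children whose inner infima are small for both $\sigma_0$ and $\sigma_1$ at once'') simply restates the unproved simultaneous claim at each node. This is exactly the obstruction that makes convexity of $\rho^\tau_\xi$ nontrivial when $\xi>0$.

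For comparison, the paper does not construct simultaneous minimizers directly; it argues by contradiction with a quantitative stabilization. Assuming the convexity inequality fails by $5\ee$, it forms the special convex combinations $z_t=\sum_{\varnothing\prec s\preceq t}\mathbb{P}_\xi(s)x_s$ along maximal branches of $\Gamma_\xi D$, chooses norming functionals $y^*_t$ for $y+(\lambda\sigma_0+(1-\lambda)\sigma_1)Az_t$, applies the extended-pruning theorem for large functions (Theorem $1.5$ of \cite{Causey2}) and then the finite-partition stabilization of \cite[Proposition $4.6(iii)$]{Causey1} to make the pair $\bigl(\text{Re\ }y^*_t(y),\text{Re\ }y^*_t(Ax_s)\bigr)$ essentially constant, equal to some $(\eta,\mu)$, on a pruned copy of the tree. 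Only after this stabilization does one obtain the lower bound $\|y+\sigma_iAu\|\geqslant \eta+\sigma_i\mu-(1+\sigma_i)\delta$ valid \emph{simultaneously} for every branch convex combination $u$ and both values of $i$, and the dichotomy ($\ee+\lambda(1+\rho^\tau_\xi(\sigma_0;A))<\lambda(\eta+\sigma_0\mu)$ or the analogous inequality for $\sigma_1$) then contradicts the definition of $\rho^\tau_\xi(\sigma_i;A)$ for one of the two parameters. If you wish to keep your direct formulation, you still need an argument of this norming-functional stabilization type to manufacture the common branch; as written, your proposal asserts the key step rather than proving it.
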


\begin{proof}  Suppose $A:X\to Y$ is an operator, $\xi$ is an ordinal, $\tau$ is either a weak or $w^*$-topology on $X$, $\sigma_1, \sigma_2, \ee>0$, and $\lambda\in (0,1)$ are such that $$\rho^\tau_\xi(\lambda \sigma_1+(1-\lambda)\sigma_2;A)=5\ee+\lambda \rho^\tau_\xi(\sigma_1;A)+(1-\lambda)\rho^\tau_\xi(\sigma_2;A).$$ Let $a=\rho^\tau_\xi(\sigma_1;A)$ and $b=\rho^\tau_\xi(\sigma_2;A)$.  Fix $y\in B_Y$, a $B$-tree $B$ with $o(B)=\omega^\xi$, and a weakly null collection $(x_t)_{t\in B}\subset B_X$ such that for every $t\in B$ and every $x\in \text{co}(x_s:\varnothing\prec s\preceq t)$, $\|y+(\lambda \sigma_1 +(1-\lambda)\sigma_2)Ax\|>1+4\ee+\lambda a+(1-\lambda)b= 4\ee+\lambda (1+a)+(1-\lambda)(1+ b)$.  

Let $\Gamma_\xi$ be the $B$ tree of order $\omega^\xi$ defined in \cite{Causey2}.  Let $D$ is a $\tau$-neighborhood basis at $0$ in $X$.  We may first fix a monotone $\theta:\Omega_\xi:=\Gamma_\xi D\to B$ such that $(x_{\theta(t)})_{t\in \Omega_\xi}$ is normally weakly null.  By relabeling, we may assume that $B=\Omega_\xi$.  Let $\mathbb{P}_\xi:\Omega_\xi\to [0,1]$ be the function defined in \cite{Causey2}. We recall that $\mathbb{P}_\xi$ has the property that for every $t\in MAX(\Omega_\xi)$, $\sum_{s\preceq t}\mathbb{P}_\xi(s)=1$.   Let $\Pi\Omega_\xi=\{(s,t)\in \Omega_\xi\times MAX(\Omega_\xi): s\preceq t\}$.

For each $t\in MAX(\Omega_\xi)$, let $z_t=\sum_{\varnothing\prec s\preceq t} \mathbb{P}_\xi(s)x_s\in \text{co}(x_s: \varnothing\prec s \preceq t)$ and fix $y^*_t\in B_{Y^*}$ such that \begin{align*} \|y+(\lambda \sigma_1 +(1-\lambda)\sigma_2)Az_t\| & =\text{Re\ }y^*_t(y+(\lambda \sigma_1+(1-\lambda)\sigma_2)Az_t) \\ & > 4\ee+\lambda (1+a)+(1-\lambda)(1+b).\end{align*} Define $f,g,h:\Pi\Omega_\xi\to \rr$ by $$f(s,t)=\text{Re\ }z^*_t(\lambda y+\lambda \sigma_1 Ax_s),$$ $$g(s,t)=\text{Re\ }z^*_t((1-\lambda) y+(1-\lambda) \sigma_2 Ax_s),$$ $h(s,t)=f(s,t)+ g(s,t)$.  Note that for any $t\in MAX(\Omega_\xi)$, $$\sum_{\varnothing\prec s\preceq t}\mathbb{P}_\xi(s)h(s,t)= \text{Re\ }y^*_t(y+(\lambda \sigma_1+(1-\lambda)\sigma_2)Az_t)>4\ee+\lambda (1+a)+(1-\lambda)(1+b).$$ This means the function $h$ is $4\ee+\lambda (1+a)+(1-\lambda)(1+b)$-large, as defined in \cite{Causey2}.  By Theorem $1.5$ of \cite{Causey2}, there exists an extended pruning $(\theta,e):\Omega_\xi\to \Omega_\xi$ such that for every $(s,t)\in \Pi\Omega_\xi$, $h(\theta(s), e(t))>3\ee+\lambda (1+a)+ (1-\lambda)(1+b)$.

Next, fix $R>\max\{\|A\|, 1\}$ and fix a finite partition $\mathcal{P}$ of $[-R, R]^2$ into subsets of diameter (with respect to $\ell_\infty^2$ metric) less than $\delta$, where \begin{enumerate}[(i)]\item $(1+\sigma_1)\delta <\frac{\ee}{2\lambda}$, \item $(1+\sigma_2)\delta< \frac{\ee}{2(1-\lambda)}$, \item $(1+\lambda \sigma_1+(1-\lambda)\sigma_2)\delta<\ee$. \end{enumerate} Define $\chi:\Pi\Omega_\xi\to \mathcal{P}$ by letting $\chi(s,t)$ denote the member $S$ of $\mathcal{P}$ such that $$(\text{Re\ }y^*_{e(t)}(y), \text{Re\ }y^*_{e(t)}(Ax_{\theta(s)}))\in S.$$   By \cite[Proposition $4.6(iii)$]{Causey1}, there exists an extended pruning $(\theta', e'):\Omega_\xi\to \Omega_\xi$ and $S\in \mathcal{P}$ such that for every $(s,t)\in \Pi\Omega_\xi$, $\chi(\theta'(s), e'(t))=S$.   Fix any $(\eta, \mu)\in S$.  For each $s\in \Omega_\xi$, let $u_s=x_{\theta\circ \theta'(s)}$ and for each $t\in MAX(\Omega_\xi)$, let $u^*_t=y^*_{e\circ e'(t)}$.   Note that \begin{enumerate}[(i)]\item for every $(s,t)\in \Pi\Omega_\xi$, $$3\ee+\lambda (1+a)+(1-\lambda)(1+b)< h(\theta\circ \theta'(s), e\circ e'(t))= \text{Re\ }u^*_t(y+(\lambda \sigma_1+(1-\lambda)\sigma_2)Au_s),$$ \item for any $t\in MAX(\Omega_\xi)$, $|\eta-\text{Re\ }u^*_t(y)|<\delta$,  \item for any $(s,t)\in \Pi\Omega_\xi$, $|\mu-\text{Re\ }u^*_t(Au_s)|<\delta$.   \end{enumerate}

Combining these yields that \begin{align*}3\ee+\lambda (1+a)+(1-\lambda)(1+b) & \leqslant  \text{Re\ }u^*_t(y+(\lambda \sigma_1+(1-\lambda)\sigma_2)Au_s) \\ & < (1+\lambda \sigma_1+(1-\lambda)\sigma_2)\delta + \eta+(\lambda \sigma_1+(1-\lambda)\sigma_2)\mu \\ & < \ee+ \eta+(\lambda\sigma_1 +(1-\lambda)\sigma_2 )\mu. \end{align*} 

From this it follows that $$2\ee+ \lambda (1+a)+(1-\lambda)(1+b) < \lambda(\eta+\sigma_1 \mu)+(1-\lambda)(\eta+\sigma_2 \mu).$$  Either $\ee+\lambda (1+a)<\lambda (\eta+\sigma_1 \mu)$ or $\ee+(1-\lambda)(1+b) < (1-\lambda)(\eta+\sigma_2\mu)$.   First assume that $\ee+\lambda (1+a)<\lambda(\eta+\sigma_1\mu)$.  Fix a maximal member $t$ of $\Omega_\xi$ and note that for any $u\in \text{co}(u_s:\varnothing\prec s\preceq t)$, \begin{align*} \|y+\sigma_1 Au\| & \geqslant \text{Re\ }u^*_t(y+\sigma_1 Au) \geqslant \eta+\sigma_1 \mu - (1+\sigma_1)\delta \\ & > (1+a+ \frac{\ee}{\lambda})-(1+\sigma_1) \delta> 1+a + \frac{\ee}{2\lambda}= 1+\rho^w_\xi(\sigma_1;A)+ \frac{\ee}{2\lambda}.\end{align*}   But this is a contradiction of the definition of $\rho^w_\xi(\sigma_1;A)$, since $(u_s)_{s\in \Omega_\xi}$ is $\tau$-null.   The assumption that $\ee+(1-\lambda)(1+b)<(1-\lambda)(\eta+\sigma_2 \mu)$ yields a similar contradiction, and these contradictions finish the proof.

\end{proof}

\section{Duality}

In this section, as in the previous section, $A:X\to Y$ is a fixed operator.  The main result of this section is the following.  

\begin{theorem} For any ordinal $\xi$, $A$ is $\xi$-AUS if and only if $A^*$ is $w^*$-$\xi$-AUC. 

\label{duality}
\end{theorem}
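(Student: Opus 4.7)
The plan is to adapt the classical Hahn--Banach duality argument (``$X$ AUS iff $X^*$ $w^*$-AUC'') to the transfinite operator setting, leveraging the tree machinery of Section~2. Both directions rely on Proposition~\ref{normal prop} to reduce to normally weakly null (respectively, normally $w^*$-null) collections on the standard trees $R_\xi = B_\xi D$, and on the length-preserving prunings guaranteed by the identity $(SD)^\xi = S^\xi D$. The directed set $D=M\times N$ is engineered so that the $U$-coordinate at each node prescribes a weak-open neighborhood of $0$ in $X$ and the $V$-coordinate a $w^*$-open neighborhood of $0$ in $Y^*$; this design is what permits the simultaneous pruning used in both arguments.

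For the implication ``$A^*$ is $w^*$-$\xi$-AUC $\Rightarrow$ $A$ is $\xi$-AUS'', I would begin with a hypothetical witness $(y,(x_t)_{t\in B})$ for an excess $\eta>0$ in $\rho^w_\xi(\sigma;A)$, reduced via Proposition~\ref{normal prop}(i) to $B=R_\xi$ with $(x_t)$ normally weakly null. For each maximal $t$ I would form the $\xi$-special convex combination $x_t^\ast=\sum_{\varnothing\prec s\preceq t}\mathbb{P}_\xi(s)x_s$ using the averaging function $\mathbb{P}_\xi$ from \cite{Causey2}, and pick a Hahn--Banach norming functional $y^*_t\in B_{Y^*}$ for $y+\sigma Ax_t^\ast$. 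A transfinite extraction exploiting $w^*$-compactness of $B_{Y^*}$ at each node --- stabilizing the successive $y^*_t$ and forcing each increment $z^*_t=y^*_t-y^*_{t^-}$ into the $V$-coordinate of its node, then pruning length-preservingly --- yields a base functional $y^*_\varnothing$ and a normally $w^*$-null collection $(z^*_t)$ whose telescoped sum recovers $y^*_t$. The near-norming identity $y^*_t(y+\sigma Ax_t^\ast)>1+\eta$ then forces $(z^*_t)$ to be $(A^*,c)$-large for some $c$ bounded below in terms of $\eta/\sigma$; applying $w^*$-$\xi$-AUC of $A^*$ at the corresponding scale contradicts $\|y^*_\varnothing+\tau\sum z^*_s\|=\|y^*_t\|\leqslant 1$.

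The converse ``$A$ is $\xi$-AUS $\Rightarrow$ $A^*$ is $w^*$-$\xi$-AUC'' is handled symmetrically. Start with a witness $(y^*_\varnothing,(z^*_t))$ for small $\delta^{w^*}_\xi(\tau;A^*)$, placed in normally $w^*$-null form by Proposition~\ref{normal prop}(ii) and $(A^*,1)$-large by assumption. Construct vectors $x_t\in B_X$ with $z^*_t(Ax_t)>1-\ee$, pass (via Goldstine and the bidual $X^{**}$) to a $w^*$-convergent subtree, and subtract the $w^*$-limit to obtain a weakly null refinement; a further pruning exploiting the still-free $U$-coordinates of $D$ promotes this to normal weak nullity while preserving the essential pairings. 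The telescoped sum $y^*_\varnothing+\tau\sum_{s\preceq t}z^*_s$, paired against $y+\sigma Ax_t^\ast$ for a $y\in B_Y$ near-norming $y^*_\varnothing$, then yields a weakly null witness violating $\xi$-AUS at an appropriately chosen scale.

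The main obstacle in both directions is this interleaved pruning: at each node of $R_\xi$ one must alternate between committing to a vector or functional on one side of the duality and shrinking the complementary coordinate of $D$ on the other side, so as to enforce a normal null structure that is not yet in hand. The product directed set $D=M\times N$ is purpose-built to supply this dual flexibility, and the equality $(SD)^\xi=S^\xi D$ combined with the length-preserving pruning statement from Section~2 guarantees that the transfinite induction on $\xi$ proceeds without any loss of tree order. Proposition~\ref{normal prop} handles the reduction to normally null form, but the quantitative preservation of near-norming and $(A^*,c)$-largeness through the auxiliary prunings is what actually drives the argument.
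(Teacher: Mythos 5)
Your outline of the direction ``$A$ is $\xi$-AUS $\Rightarrow$ $A^*$ is $w^*$-$\xi$-AUC'' is essentially the paper's argument (Lemmas \ref{lemma1} and \ref{lemma2} followed by pairing the telescoped functionals against convex combinations), with only bookkeeping issues: after achieving weak nullity you cannot literally ``subtract the $w^*$-limit'' (that lands in $X^{**}$); one takes halved differences of two net elements, which is why the pairing drops to $1/2-\ee$ and a constant such as the $6$ in Proposition \ref{dualprop}$(i)$ appears. These are repairable.

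The other direction, however, has a genuine gap at the endgame. After extracting Hahn--Banach functionals $y^*_t$, refining them to a normally $w^*$-closed family, and arranging $|A^*y^*_{t^-}(x_t)|$ small so that the increments $z^*_t=y^*_t-y^*_{t^-}$ are $(A^*,c)$-large, you claim that applying $w^*$-$\xi$-AUC ``contradicts $\|y^*_\varnothing+\tau\sum z^*_s\|=\|y^*_t\|\leqslant 1$.'' There is no such direct contradiction: the AUC inequality bounds the telescoped sums from below by $(1+\sigma c)\|y^*_\varnothing\|$, and since the root functional $y^*_\varnothing$ (a $w^*$-limit of norming functionals) need not have norm close to $1$, this is perfectly compatible with $\|y^*_t\|\leqslant 1$. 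What the AUC hypothesis actually yields is the quantitative conclusion $\|y^*_\varnothing\|\leqslant 1-\sigma c$ (this is Proposition \ref{finish prop}), and the contradiction must then come from re-inserting this norm loss into the inequality $\text{Re\ }y^*_t(y+\sigma Ax_t)\geqslant \mu$ and comparing it with the diagonal pairing $\text{Re\ }y^*_t(Ax_t)$: the loss $\sigma c$ at the root must defeat the gain of at most $\sigma\cdot\sup_t\text{Re\ }y^*_t(Ax_t)$. For these two quantities to be comparable one needs the diagonal pairings to be (approximately) a single constant $c$ across the entire tree of order $\omega^\xi$; the paper achieves this with the Ramsey-type stabilization of Lemma \ref{stabilization} and Proposition \ref{proximity} (via Hessenberg sums, using that $\omega^\xi$ is additively indecomposable so one color class retains full order), packaged in Corollary \ref{big corollary}, and then closes with the dichotomy $c-\ee\geqslant \tau$ versus $c-\ee<\tau$. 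Your proposal contains neither this stabilization nor the case analysis nor any substitute for them, so the step from ``increments are large'' to a contradiction with $\xi$-AUS failing is not justified.
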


Note that in order to check this fact, we only need to consider the behavior of $\rho_\xi^w(\sigma;A)$ and $\delta_\xi^{w^*}(\sigma;A^*)$ for $\sigma$ sufficiently close to $0$. As in \cite{DKLR}, we will do this by showing that $\delta_\xi^{w^*}(\cdot;A^*)$ is equivalent to the Young dual of $\rho_\xi^w(\cdot;A)$.    

\begin{proposition} Fix $0<\sigma, \tau\leqslant 1$ and any ordinal $\xi$.  \begin{enumerate}[(i)]\item If $\rho^w_\xi(\sigma;A)\leqslant \sigma\tau$, then $\delta^{w^*}_\xi(6\tau;A^*)\geqslant \sigma\tau$.  \item If $\delta^{w^*}_\xi(\tau;A^*)\geqslant \sigma\tau$, then $\rho^w_\xi(\sigma;A)\leqslant \sigma\tau$.   \end{enumerate}

\label{dualprop}
\end{proposition}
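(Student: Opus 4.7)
My plan is to prove both implications by contradiction, viewing the proposition as a Young-type duality between $\rho^w_\xi(\cdot;A)$ and $\delta^{w^*}_\xi(\cdot;A^*)$ and using Hahn-Banach to shuttle between norms on one side and norming functionals on the other. Proposition~\ref{normal prop} reduces the problem to normally weakly null or normally $w^*$-null trees indexed by $R_\xi$, and the special convex combinations technology from Theorem~1.5 of \cite{Causey2} (already deployed in the preceding convexity proof), together with the Ramsey-style stabilization of \cite[Proposition~4.6(iii)]{Causey1}, is what lets the argument work at every ordinal $\xi$.

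For~(ii), assume $\delta^{w^*}_\xi(\tau;A^*)\geqslant \sigma\tau$ but $\rho^w_\xi(\sigma;A)>\sigma\tau$, and extract $y\in B_Y$, a $B$-tree $B$ with $o(B)=\omega^\xi$, and a weakly null $(x_t)_{t\in B}\subset B_X$ realizing the lower bound. Proceeding as in the convexity proof above, reduce to $\Omega_\xi=\Gamma_\xi D$, form $z_t=\sum_{\varnothing\prec s\preceq t}\mathbb{P}_\xi(s)x_s$, and pick Hahn-Banach functionals $y^*_t\in S_{Y^*}$ with $\mathrm{Re}\,y^*_t(y+\sigma Az_t)>1+\sigma\tau$. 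A Ramsey-type extended pruning stabilizes the scalar pairings $\mathrm{Re}\,y^*_t(y)$ and $\mathrm{Re}\,y^*_t(Ax_s)$ (for $s\preceq t$) up to arbitrarily small $\delta$, and a further pass to a normally $w^*$-closed subtree renders the incremental differences $y^*_t-y^*_{t^-}$ $w^*$-null. A suitable scaling of these differences, chosen to ensure $(A^*,1)$-largeness, then produces a $w^*$-null tree; pairing with the stabilized root and exploiting the telescoping identity $y^*_\varnothing+\sum_{s\preceq t}(y^*_s-y^*_{s^-})=y^*_t\in S_{Y^*}$ yields a branch-sum estimate that, together with the stabilized constants, violates $\delta^{w^*}_\xi(\tau;A^*)\geqslant \sigma\tau$.

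For~(i), assume $\rho^w_\xi(\sigma;A)\leqslant \sigma\tau$ but $\delta^{w^*}_\xi(6\tau;A^*)<\sigma\tau$. Extract $y^*\in Y^*$ with $\|y^*\|\geqslant 1$ and a $w^*$-null, $(A^*,1)$-large collection $(y^*_t)_{t\in B}$, $o(B)=\omega^\xi$, satisfying $\|y^*+6\tau\sum_{\varnothing\prec s\preceq t}y^*_s\|<1+\sigma\tau$ for every $t$. Reduce to a normally $w^*$-null tree on $R_\xi=B_\xi D$ via Proposition~\ref{normal prop}(ii), refining the $w^*$-neighborhoods $V$ so that the pairings $|\langle y^*_s,y\rangle|$ and $|\langle y^*_s,Ax_r\rangle|$ (for already-placed $r\prec s$) decay summably in $|s|$. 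Pick $y\in B_Y$ with $\mathrm{Re}\,y^*(y)>1-\ee$, and inductively choose $x_t\in U\cap B_X$ (for the prescribed $U\in M$ at $t$) with $\mathrm{Re}\,\langle y^*_t,Ax_t\rangle>1-\ee$. Setting $z_t=\sum\mathbb{P}_\xi(s)x_s$, the pairing
\[
\mathrm{Re}\,\bigl\langle y^*+6\tau\textstyle\sum y^*_s,\; y+\sigma Az_t\bigr\rangle
\]
splits into $\mathrm{Re}\,y^*(y)\geqslant 1-\ee$, two cross-terms $\sigma\mathrm{Re}\,y^*(Az_t)$ and $6\tau\mathrm{Re}\sum y^*_s(y)$ which are $o(1)$ by the decay choices, and a dominant $6\sigma\tau\mathrm{Re}\sum y^*_s(Az_t)\approx 6\sigma\tau(1-\ee)$ arising from the diagonal $\mathbb{P}_\xi(s)\mathrm{Re}\,y^*_s(Ax_s)$ contributions, which sum via $\sum\mathbb{P}_\xi(s)=1$ along the branch. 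Dividing by $\|y^*+6\tau\sum y^*_s\|<1+\sigma\tau$ and using the elementary inequality $1+6\sigma\tau>(1+\sigma\tau)^2$ (equivalent to $\sigma\tau<4$, automatic under $\sigma,\tau\leqslant 1$) yields $\|y+\sigma Az_t\|>1+\sigma\tau$, contradicting $\rho^w_\xi(\sigma;A)\leqslant \sigma\tau$.

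The main obstacle is the inductive construction of $(x_t)$ in~(i): for a given $y^*_t\in V$ with $\|A^*y^*_t\|\geqslant 1$, realizing both $x_t\in U$ and $\mathrm{Re}\,\langle y^*_t,Ax_t\rangle>1-\ee$ requires that $A^*y^*_t$ not be trapped in the finite-dimensional span of the functionals defining the weak neighborhood $U$. This is precisely what the $D=M\times N$ indexing of $R_\xi$ is designed for---by refining $V$ simultaneously with $U$ and passing to a subtree, we can arrange that every retained $y^*_t$ yields a sufficiently generic $A^*y^*_t$. The constant $6$ in $\delta^{w^*}_\xi(6\tau;A^*)$ absorbs the $\ee$-slacks from approximate norming together with the $o(1)$ cross-term errors.
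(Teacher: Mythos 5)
The decisive step of your part (i) is not achievable, and your proposed workaround for it does not close the gap. You want to choose, at each node, a vector $x_t\in U\cap B_X$ (with $U$ the prescribed weak neighborhood of $0$) satisfying $\mathrm{Re}\,y^*_t(Ax_t)>1-\ee$, and you suggest this can be arranged by refining the $w^*$-neighborhoods $V$ so that the retained $A^*y^*_t$ are ``sufficiently generic'' relative to the functionals defining $U$. But $w^*$-smallness conditions on $y^*_t$ in $Y^*$ give no control whatsoever on how $A^*y^*_t$ is normed relative to a fixed finite set of functionals in $X^*$. Concretely, take $X=Y=\ell_1$, $A=I$, and $z^*_n=\sum_{k>n}e_k^*\in\ell_\infty$: this sequence is $w^*$-null with $\|z^*_n\|=1$, yet if $U=\{x:|\mathbf{1}(x)|<\epsilon\}$ where $\mathbf{1}$ is the summing functional, then any $x\in B_{\ell_1}$ with $\mathrm{Re}\,z^*_n(x)>1-\ee$ has $\sum_{k\leqslant n}|x_k|<\ee$ and hence $|\mathbf{1}(x)|>1-2\ee$; so for small $\epsilon$ no admissible $x$ exists for \emph{any} $n$, and no amount of pruning or refining $V$ helps. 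The best one can do in general is the difference trick of Lemma \ref{lemma1}: take two far-out witnesses whose difference lies in $U$ and halve, which yields only $\mathrm{Re}\,z^*_{\theta(t)}(Ax_t)>1/2-\ee$ (this is the content of Lemma \ref{lemma2}). That factor $1/2$ is exactly why the constant $6$ appears: the branch pairing then gives a numerator $1+3\sigma\tau-O(\ee)$, not $1+6\sigma\tau$, and one still beats $(1+\mu)(1+\sigma\tau)$ with $\mu<\sigma\tau\leqslant 1$ because $\mu+\sigma\tau+\mu\sigma\tau<3\sigma\tau$. Your bookkeeping, which relies on pairings close to $1$ and on the inequality $1+6\sigma\tau>(1+\sigma\tau)^2$, therefore rests on a step that fails already for the identity of $\ell_1$.

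Your part (ii) is in the right general spirit (Hahn--Banach functionals along branches, stabilization, then scaled increments contradicting the convexity hypothesis), but as written it glosses over two points the paper has to work for. First, to make the increments $(y^*_t-y^*_{t^-})$ into an $(A^*,1)$-large collection at the correct scale you need a lower bound $\|A^*(y^*_t-y^*_{t^-})\|\gtrsim c$, where $c$ is the stabilized diagonal value of $\mathrm{Re}\,y^*_t(Ax_t)$; when $c<\tau$ no such rescaling interacts correctly with the hypothesis at parameter $\tau$, and this forces a separate (easy, but necessary) case, while when $c\geqslant\tau$ the contradiction comes from the root-norm estimate of Proposition \ref{finish prop} ($\|y^*_\varnothing\|\leqslant 1-\sigma c$ for a $w^*$-closed, $(A^*,c)$-separated tree), not merely from a branch-sum estimate. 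Second, stabilizing the diagonal map $t\mapsto\mathrm{Re}\,y^*_t(Ax_t)$ over all nodes is not a direct application of the $\Pi$-tree Ramsey result you cite; it requires the Hessenberg-sum stabilization of Lemma \ref{stabilization} together with the additive indecomposability of $\omega^\xi$, which is where the restriction to trees of order $\omega^\xi$ (rather than arbitrary $\xi$) genuinely enters.
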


Let us first show how this yields the result. If $A=0$, $A$ is $\xi$-AUS and $A^*$ is $w^*$-$\xi$-AUC, so assume $A\neq 0$.   First suppose that $\lim_{\sigma\to 0^+} \rho^w_\xi(\sigma;A)/\sigma=0$.   Fix $\tau\in (0,1)$ and choose $\sigma_0\in (0,1)$ such that for every $0<\sigma\leqslant \sigma_0$, $\rho^w_\xi(\sigma;A)\leqslant \sigma\tau/6$.   Then by $(i)$ of Proposition \ref{dualprop}, $\delta^{w^*}_\xi(\tau;A^*)\geqslant \sigma_0\tau>0$.   Next, assume $\delta^{w^*}_\xi(\tau;A^*)>0$ for all $\tau>0$.  Fix $\tau\in (0,1)$ and let $\sigma\in (0,1)$ be a number not exceeding $\delta^{w^*}_\xi(\tau;A^*)/\tau$.   Then $\delta^{w^*}_\xi(\tau;A^*)\geqslant \sigma \tau$, whence $\rho^w_\xi(\sigma;A)\leqslant \sigma\tau$.  Therefore it follows that $\lim\sup_{\sigma\to 0^+} \rho^w_\xi(\sigma;A)/\sigma\leqslant \tau$.  Since $\tau>0$ was arbitrary, $\lim_{\sigma\to 0^+}\inf \rho^w_\xi(\sigma;A)/\sigma=0$.

The remainder of this section is devoted to the proof of this proposition, for which we require some preparation.  

\begin{lemma} Fix  $\ee>0$ and a $w^*$-null net $(z^*_\lambda)_{\lambda\in Q}\subset Y^*$ such that $\|A^*z^*_\lambda\|\geqslant \ee$ for all $\lambda\in Q$.  Then for any $\eta>0$, any $U\in M$, any $V\in N$, and any $\lambda_0\in Q$, there exist $x\in B_X\cap U$ and $\lambda\in Q$ such that $z^*_\lambda\in V$, $\text{\emph{Re}\ }z^*_\lambda(Ax)>\ee/2-\eta$, and $\lambda_0\leqslant \lambda$. 

\label{lemma1}
\end{lemma}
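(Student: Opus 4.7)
The plan is to produce $x$ as a rescaled difference $x=(x_\lambda-g)/2$, where $x_\lambda\in B_X$ nearly norms $A^*z^*_\lambda$ and $g\in B_X$ is a Goldstine approximant to a common $w^*$-cluster point of $(x_\lambda)$ in $B_{X^{**}}$; the division by $2$ is what creates the factor $\ee/2$ appearing in the conclusion. Concretely, using $\|A^*z^*_\lambda\|\geqslant \ee$, for each $\lambda\in Q$ with $\lambda\geqslant \lambda_0$ select $x_\lambda\in B_X$ with $\text{Re}\,z^*_\lambda(Ax_\lambda)>\ee-\eta/4$. By $w^*$-compactness of $B_{X^{**}}$, after replacing $(x_\lambda)$ by a subnet we may assume $x_\lambda\to x^{**}$ in the $w^*$-topology of $X^{**}$ for some $x^{**}\in B_{X^{**}}$.

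Write $U\supseteq \{x\in X:|f_i(x)|<\delta,\ 1\leqslant i\leqslant n\}$ for some $f_1,\dots,f_n\in X^*$ and $\delta>0$. By Goldstine's theorem $x^{**}$ lies in the $w^*$-closure of $B_X$ inside $X^{**}$, so we may choose $g\in B_X$ with $|f_i(g)-x^{**}(f_i)|<\delta/4$ for each $i$. Along the subnet each of the following conditions is eventually valid: (a) $|f_i(x_\lambda)-x^{**}(f_i)|<\delta/4$ for every $i$ (by $w^*$-convergence of $x_\lambda$ to $x^{**}$ in $X^{**}$); (b) $z^*_\lambda\in V$ (by $w^*$-nullness of $(z^*_\lambda)$ in $Y^*$); (c) $|z^*_\lambda(Ag)|<\eta/4$ (since $Ag\in Y$ is fixed and $z^*_\lambda(Ag)\to 0$); and (d) $\lambda\geqslant \lambda_0$. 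Fix a single $\lambda$ verifying all of (a)--(d).

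Setting $x:=(x_\lambda-g)/2\in B_X$, the triangle inequality yields, for each $i$,
\[|f_i(x)|\leqslant \tfrac{1}{2}\bigl(|f_i(x_\lambda)-x^{**}(f_i)|+|x^{**}(f_i)-f_i(g)|\bigr)<\delta/4<\delta,\]
so $x\in U$; while
\[\text{Re}\,z^*_\lambda(Ax)=\tfrac{1}{2}\bigl(\text{Re}\,z^*_\lambda(Ax_\lambda)-\text{Re}\,z^*_\lambda(Ag)\bigr)>\tfrac{1}{2}\bigl((\ee-\eta/4)-\eta/4\bigr)=\ee/2-\eta/4>\ee/2-\eta.\]
The main technical subtlety is that $(x_\lambda)$ need not cluster weakly inside $X$ itself; the combination of $w^*$-compactness of $B_{X^{**}}$ and Goldstine's theorem furnishes the honest element $g\in B_X$ that matches $x^{**}$ on the finitely many $X^*$-directions defining $U$, which is precisely what permits us to keep $(x_\lambda-g)/2$ inside $U$ while preserving enough of the action of $z^*_\lambda\circ A$ to retain the lower bound $\ee/2$.
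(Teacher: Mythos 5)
Your proof is correct and follows essentially the same route as the paper: pass to a subnet of $(x_\lambda)$ that is $w^*$-convergent in $X^{**}$, take a halved difference to stay in $U\cap B_X$ (this is where the factor $\ee/2$ comes from), and use $w^*$-nullness of $(z^*_\lambda)$ to kill the cross term against a fixed vector of $Y$. The only cosmetic difference is that you subtract a Goldstine approximant $g\in B_X$ of the $w^*$-limit, whereas the paper simply subtracts another element $x_{\lambda_1}$ of the subnet, which makes the Goldstine step unnecessary.
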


\begin{proof}  We may assume $U$ is convex and symmetric. For every $\lambda\in Q$, fix $x_\lambda\in B_X$ such that $A^*z^*_\lambda(x_\lambda)>\ee-\eta$.   By passing to a further subnet which converges $w^*$ in $X^{**}$, we may assume that for every $\lambda_1, \lambda_2\in Q$, $x_{\lambda_2}-x_{\lambda_1}\in U$.    Fix $\lambda_1\in Q$ and then fix $\lambda_2\geqslant \lambda_0$ such that $|z^*_{\lambda_2}(Ax_{\lambda_1})|<\eta$ and $z^*_{\lambda_2}\in V$.  Then $$ \text{Re\ }z^*_{\lambda_2}\bigl(\frac{Ax_{\lambda_2}-Ax_{\lambda_1}}{2}\bigr) > (\ee-\eta)/2 - |z^*_{\lambda_2}(Ax_{\lambda_1})|/2>\ee/2-\eta.$$    Thus $(x_{\lambda_2}-x_{\lambda_1})/2$ is the desired $x$ and $\lambda_2$ is the desired $\lambda$.

\end{proof}

\begin{lemma} Fix an ordinal $\xi>0$.  Suppose $(z^*_t)_{t\in R_\xi}\subset Y^*$ is a collection which is $(A^*, 1)$-large and normally $w^*$-null.  Then for any $\ee>0$, there exist a normally weakly null collection $(x_t)_{t\in R_\xi}\subset B_X$ and a length-preserving pruning $\theta:R_\xi\to R_\xi$ such that for every $t\in R_\xi$, $\text{\emph{Re\ }}z^*_{\theta(t)}(Ax_t)>1/2-\ee$.  Moreover, for any sequence $(\ee_n)$ of positive numbers, any $W\in M$, and any sequence $(W_n)\subset N$, these choices may be made such that \begin{enumerate}[(i)]\item $(x_t)_{t\in R_\xi}\subset W$, \item for each $t\in R_\xi$, $z^*_{\theta(t)}\in W_{|t|}$, \item for any $\varnothing\prec s\prec t\in R_\xi$, $|z^*_{\theta(t)}(Ax_s)|, |z^*_{\theta(s)}(Ax_t)|<\ee_{|t|}$. \end{enumerate}

\label{lemma2} 

\end{lemma}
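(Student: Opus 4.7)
The plan is to build $\theta$ and $(x_t)_{t\in R_\xi}$ simultaneously by recursion along the tree $R_\xi$, processing nodes in order of increasing length and using Lemma \ref{lemma1} at each node to perform an atomic subnet refinement indexed by the directed set $D$. Because the atomic step at a node $t$ depends only on the already-defined values at proper initial segments of $t$, the order of processing within a single length level is irrelevant, and no transfinite induction on $\xi$ is needed.

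The atomic step runs as follows. Fix $t = t_1 \cat (\zeta, U, V) \in R_\xi$ with $|t| = n$, and assume $\theta$ and $x_s$ have been defined for every $s \preceq t_1$. I construct $\theta$ to keep the ordinal coordinates unchanged and only refine the $D$-coordinates, so the eligible candidates for $\theta(t)$ are the nodes of the form $\theta(t_1) \cat (\zeta, U', V')$ with $(U', V') \geqslant_D (U, V)$. As $(U', V')$ ranges over this up-set in $D$, the vectors $z^*_{\theta(t_1) \cat (\zeta, U', V')}$ lie in $V'$ by normal $w^*$-nullity, so they form a $w^*$-null net, and every member satisfies $\|A^* z^*\| \geqslant 1$ by hypothesis. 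Applying Lemma \ref{lemma1} to this net with parameter $\eta = \ee$ and target sets $U, V$ produces $(U', V') \geqslant_D (U, V)$ and $x \in B_X \cap U$ with $\text{Re\ }z^*_{\theta(t_1) \cat (\zeta, U', V')}(Ax) > 1/2 - \ee$. Setting $\theta(t) = \theta(t_1) \cat (\zeta, U', V')$ and $x_t = x$ completes the step; length-preservation, monotonicity, the pruning condition, and normal weak-nullity of $(x_t)_{t \in R_\xi}$ (since $x_t \in U$) are all immediate.

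The moreover clauses (i)--(iii) are incorporated by shrinking the target sets passed to Lemma \ref{lemma1} before each atomic step. When choosing $\theta(t)$ at level $n$, replace $V$ by
\[V \cap W_n \cap \{y^* \in Y^* : |y^*(Ax_s)| < \ee_n \text{ for all } \varnothing \prec s \prec t\},\]
which is still in $N$ since $\{Ax_s : s \prec t\}$ is finite; this enforces (ii) together with the bound $|z^*_{\theta(t)}(Ax_s)| < \ee_n$ of (iii). When choosing $x_t$, replace $U$ by
\[U \cap W \cap \{x \in X : |z^*_{\theta(s)}(Ax)| < \ee_n \text{ for all } \varnothing \prec s \prec t\},\]
which is in $M$ for the same reason; this enforces (i) together with the remaining bound $|z^*_{\theta(s)}(Ax_t)| < \ee_n$ of (iii). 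The main technical point to verify is that these refined target sets are still legitimate inputs for Lemma \ref{lemma1}, that is, remain weakly open and $w^*$-open neighborhoods of $0$; this follows from the finiteness of $\{s : s \prec t\}$ and the standard fact that polar sets of finite collections are neighborhoods of $0$ in the corresponding topology. With this observation in hand the construction goes through.
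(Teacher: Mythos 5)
Your proof is correct, but it is organized differently from the paper's. The paper proves the lemma by transfinite induction on $\xi$ (base/successor/limit cases), applying Lemma \ref{lemma1} only at the root level of each successor step and invoking the inductive hypothesis on the subtrees, and then it obtains the ``moreover'' clauses in a second pass by composing with an auxiliary pruning $\phi$ defined by induction on $|t|$, using normal weak and $w^*$-nullity to transfer the shrunken neighborhoods into the required smallness estimates. You instead run a single recursion on the length of the nodes, applying Lemma \ref{lemma1} once at every node to the tail net $\bigl(z^*_{\theta(t_1)\cat(\zeta,U',V')}\bigr)_{(U,V)\leqslant_D(U',V')}$ -- which is indeed $w^*$-null by normal $w^*$-nullity and $(A^*,1)$-large by hypothesis -- and you fold conditions (i)--(iii) directly into the target neighborhoods passed to Lemma \ref{lemma1}, which stay in $M$ and $N$ because only finitely many predecessors are involved. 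This works because every element of $R_\xi$ is a finite sequence and the choice at a node depends only on data along its branch, so the induction on $\xi$ is indeed dispensable; keeping the ordinal coordinates fixed guarantees the image stays in $R_\xi$ and that $\theta$ is a length-preserving pruning, and the pruning property automatically keeps $(z^*_{\theta(t)})_{t\in R_\xi}$ normally $w^*$-null, which is how the lemma is used later. What the paper's two-stage formulation buys is a closer parallel with the recursive structure of the trees $B_\xi$ used elsewhere (e.g.\ in Proposition \ref{tedious} and Lemma \ref{stabilization}); what your version buys is a shorter, single-pass argument with no case analysis at limit ordinals.
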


\begin{proof} We work by induction on $\xi$. 

The base and successor cases are similar, so we perform them simultaneously.  Suppose that $\xi=\zeta+1$, and if $\zeta>0$, assume the result holds for $\zeta$.  Note that $(z^*_{(\xi, U,V)})_{(U,V)\in D}$ is $w^*$-null and $\|A^*z^*_{(\xi, U,V)}\|\geqslant 1$.  By Lemma \ref{lemma1}, for any $(U,V)\in D$, there exist $(U', V')\in D$ and $u_{(\xi, U', V')}\in U\cap W\cap B_X$ such that $U'\subset U$, $V'\subset V$, $\text{Re\ }z^*_{(U', V')}(u_{(\xi, U', V')})>1/2-\ee$, and $z^*_{(\xi, U', V')}\in V\cap W_1$.   Let $\theta_0((\xi, U,V))=(\xi, U', V')$.    If $\zeta=0$, we let $\theta=\theta_0$ and $x_t=u_t$, which finishes this case.  Otherwise, fix $(U,V)\in D$ and apply the inductive hypothesis to $(z^*_{\theta_0(\xi, U,V)\cat t})_{t\in R_\zeta}$ to find a length-preserving pruning $\theta_{U,V}:R_\zeta\to R_\zeta$, a normally weakly null collection $(u^{U,V}_t)_{t\in R_\zeta}$ such that for every $t\in R_\zeta$, $$\text{Re\  }z^*_{\theta_0(\xi, U,V)\cat \theta^{U,V}(t)}(u^{U,V}_t)>1/2-\ee,$$    and $(z^*_{\theta_0(\xi, U,V)\cat \theta_{U,V}(t)})_{t\in R_\zeta}$ is normally $w^*$-null.    For each $t\in R_\zeta$, let $u_{(\xi, U,V)\cat t}=u^{U,V}_t$ and $\theta_0((\xi, U, V)\cat t)=\theta_0((\xi, U,V))\cat \theta_{U,V}(t)$.  This defines a normally weakly null collection $(u_t)_{t\in R_\xi}$ and $\theta_0:R_\xi\to R_\xi$ such that for every $t\in R_\xi$, $\text{Re\ }z^*_{\theta(t)}(u_t)>1/2-\ee$ and such that $(z^*_{\theta_0(t)})_{t\in R_\xi}$ is normally $w^*$-null.  We next define a pruning $\phi:R_\xi\to R_\xi$ for $t\in R_\xi$ by induction on $|t|$ such that for each $t=((\zeta_i, U_i, V_i))_{i=1}^n\in R_\xi$, $\phi(t)=((\zeta_i, U_i', V_i'))_{i=1}^n$ for some $U_i'\subset U_i$ and $V_i'\subset V_i$.    Given $t\in R_\xi$ with length $1$, let $\phi(t)=t$.    Next, suppose $t=((\zeta_i, U_i, V_i))_{i=1}^n\in R_\xi$ and $\phi(t^-)=((\zeta_i, U_i', V_i'))_{i=1}^{n-1}$ has been defined.    Let $$U_n'=U_n\cap W\cap \{x\in X: (\forall \varnothing\prec s\prec t)(|z^*_{ \theta_0\circ\phi(s)}(x)|< \ee_n)\}$$ and $$V_n'=V_n\cap W_n\cap \{y^*\in Y^*: (\forall \varnothing\prec s\prec t)(|y^*(u_{\phi(s)})|<\ee_n)\}.$$  Let $x_t=u_{\phi(t)}$ and let $\theta= \theta_0\circ\phi$.   This completes the successor case.

Last, assume $\xi$ is a limit ordinal.  Then for every $\zeta<\xi$, we apply the result to $(z^*_t)_{t\in R_{\zeta+1}}$ to obtain $(x_t)_{t\in R_{\zeta+1}}$ and $\theta_\zeta:R_{\zeta+1}\to R_{\zeta+1}\subset R_\xi$.   Then let $\theta|_{R_{\zeta+1}}=\theta_\zeta$.

\end{proof}

\begin{proposition}   Suppose that for some $\xi$, $(y^*_t)_{t\in MAX(S_\xi)}$ is given, and contained in a $w^*$-compact subset $K$ of $Y^*$. Then there exists a collection $(y^*_t)_{t\in S_\xi'}\subset K$ and a length-preserving pruning $\theta:S_\xi\to S_\xi$ such that $(y^*_{\theta(t)})_{t\in S_\xi}$ is normally $w^*$-closed.   

\label{tedious}
\end{proposition}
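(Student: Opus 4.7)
The plan is to proceed by transfinite induction on $\xi$. The base case $\xi=0$ is immediate: $S_0=\{\varnothing\}=MAX(S_0)$, so $S_0'=\varnothing$ and the identity pruning vacuously satisfies the (empty) normally $w^*$-closed condition.

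For the successor step $\xi=\zeta+1$, the level-$1$ nodes of $S_\xi$ are precisely the triples $(\xi,U,V)$ for $(U,V)\in D$, and below each such node the subtree is length-preservingly isomorphic to $S_\zeta$. For each $(U,V)\in D$, I would apply the inductive hypothesis to this subtree with the inherited MAX values, obtaining an inner pruning $\theta_{U,V}$ and interior values in $K$, in particular a root value $y^*_{(\xi,U,V)}\in K$. Since $K$ is $w^*$-compact, the net $(y^*_{(\xi,U,V)})_{(U,V)\in D}$ has a $w^*$-cluster point $y^*_\varnothing\in K$. The cluster-point property then provides, for each $(U,V)\in D$, a refinement $(U',V')$ with $U'\subset U$, $V'\subset V$ and $y^*_{(\xi,U',V')}-y^*_\varnothing\in V$; setting $\theta((\xi,U,V)):=(\xi,U',V')$ and gluing with $\theta_{U',V'}$ on the subtree below gives the length-preserving pruning, whose normally $w^*$-closed condition holds at level $1$ by the choice of $(U',V')$ and at greater depths by the inductive hypothesis.

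The limit step is where the real work lies. Here $T_\xi=\bigcup_{\zeta<\xi}T_{\zeta+1}$, so $S_\xi\setminus\{\varnothing\}$ decomposes as a disjoint union of subtrees indexed by the level-$1$ labels $\eta=\zeta+1<\xi$, and running the successor procedure within each $\eta$-layer yields, for each such $\eta$, a net $(y^*_{(\eta,U,V)})_{(U,V)\in D}$ of root values in $K$. I now need a single $y^*_\varnothing\in K$ that is simultaneously a $w^*$-cluster point of every one of these nets, so that the level-$1$ pruning can be executed uniformly in $\eta$. This is the main obstacle, because cluster points of distinct nets need not coincide. My plan is to strengthen the inductive hypothesis so as to carry, at every stage, a canonically-defined $w^*$-closed nonempty ``achievable'' subset $C_\xi\subseteq K$ (determined by the given MAX values) from which the root value $y^*_\varnothing$ may be chosen; one then shows that at the limit step the family of achievable sets arising from the various $\eta$-subtrees has the finite-intersection property inside the $w^*$-compact $K$, and a Tychonoff-style compactness argument extracts a common $y^*_\varnothing$, beneath which the successor-case pruning is performed $\eta$-by-$\eta$. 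The recursive bookkeeping of these achievable sets across the transfinite induction is exactly the source of the ``tedium''.
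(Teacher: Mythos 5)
Your base case and successor step are exactly the paper's argument and are fine. The limit step, however, contains a genuine gap, and the repair you sketch cannot work. Your plan keeps each level-one layer of $S_\xi$ inside itself (you run the successor procedure ``within each $\eta$-layer''), and you then need a single $y^*_\varnothing\in K$ which is simultaneously a $w^*$-cluster point of every layer's net of root values, to be produced by a finite-intersection-property argument for ``achievable'' sets. This is impossible already for $\xi=\omega$: assign to every maximal node of $S_\omega$ whose first label is $n+1$ one fixed value $k^*_n\in K$, with the $k^*_n$ pairwise distinct. Every branch of $T_\omega$ passing through the label $n+1$ has length exactly $n+1$, so any length-preserving pruning that maps the $(n+1)$-labelled layer into itself sends maximal nodes to maximal nodes. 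Since for each parent $t_1$ in the domain and each $V\in N$ there is a child $t=t_1\cat(\eta,U,V)$, normal $w^*$-closedness of the relabeled collection forces $y^*_{\theta(t_1)}\in\bigcap_{V\in N}\bigl(\{y^*_{\theta(t)}:t^-=t_1\}+V\bigr)$; starting from the maximal level and descending, every value in that layer is forced to equal $k^*_n$, and at the root one gets $k^*_n-y^*_\varnothing\in V$ for all $V\in N$, i.e.\ $y^*_\varnothing=k^*_n$ for every $n$, a contradiction. So the ``achievable'' sets can be pairwise disjoint singletons, and no compactness or Tychonoff-style argument can supply the common cluster point your scheme requires.

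What your plan overlooks is that a pruning $\theta:S_\xi\to S_\xi$ is not required to respect the layer decomposition: it must be monotone, length-preserving, and increasing in the $D$-coordinates, but it may change the ordinal labels. The paper's limit step uses exactly this freedom. Apply the inductive hypothesis to each $S_{\zeta+1}$, obtaining collections with roots $y^{*,\zeta}_\varnothing$, and let $y^*_\varnothing$ be the limit of a $w^*$-convergent subnet of the single net $(y^{*,\zeta}_\varnothing)_{\zeta<\xi}$. Given $\zeta<\xi$, $U\in M$, $V\in N$, choose a convex symmetric $W\in N$ with $W+W\subset V$ and some $\eta_\zeta\in(\zeta,\xi)$ with $y^{*,\eta_\zeta}_\varnothing-y^*_\varnothing\in W$, and re-route the node $(\zeta+1,U,V)$ together with everything below it into the layer with first label $\eta_\zeta+1$, composing $\theta_{\eta_\zeta}$ with a length-preserving pruning $\phi_\zeta:R_\zeta\to R_{\eta_\zeta}$ (which exists since $\zeta\leqslant\eta_\zeta$) and recording $W$ in place of $V$ at the first level (legitimate, as $W\subset V$). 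The root condition then follows from $W+W\subset V$, since the value at the image of $(\zeta+1,U,V)$ differs from $y^{*,\eta_\zeta}_\varnothing$ by an element of $W$ and $y^{*,\eta_\zeta}_\varnothing-y^*_\varnothing\in W$, while the deeper conditions come from the inductive hypothesis in layer $\eta_\zeta$. With this rerouting no common cluster point and no strengthened induction are needed.
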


\begin{proof} We work by induction.   The $\xi=0$ case is trivial, since we may take $\theta:S_0\to S_0$ to be the identity.  Assume the result holds for $\zeta$ and suppose $\xi=\zeta+1$.  Fix $(y^*_t)_{t\in MAX(S_\xi)}\subset K$ as in the statement.  For each $U\in M$ and $V\in N$, we may apply the inductive hypothesis to $(x_{(\xi,U,V)\cat t})_{t\in R_\zeta}$ and $(y^*_{(\xi, U,V)\cat t})_{t\in MAX(S_\zeta)}$ to obtain a pruning $\theta_{U,V}:S_\zeta\to S_\zeta$ and a collection $(y^*_{(\xi, U,V)\cat t})_{t\in S_\zeta}$ such that $(y^*_{(\xi, U,V)\cat \theta_{U,V}(t)})_{t\in S_\zeta}$ is normally $w^*$-null.   Let $y^*_\varnothing$ be any $w^*$-limit of a $w^*$-convergent subnet of $(y^*_{(\xi, U,V)})_{(U,V)\in M\times N}$.   This means that for any $(U,V)\in M\times N$, there exists $(U', V')\in M\times N$ such that $U'\subset U$, $V'\subset V$, and $y^*_{(\xi, U',V')}-y^*_\varnothing\in V$.  For any $t\in S_\zeta$, let $\theta(\varnothing)=\varnothing$ and $$\theta((\xi, U,V)\cat t)=(\xi, U', V')\cat \theta_{U',V'}(t).$$

Assume that $\xi$ is a limit ordinal and the result holds for every $\zeta<\xi$.  Then applying the inductive hypothesis to $(x_t)_{t\in R_{\zeta+1}}$ and $(y^*_t)_{t\in MAX(S_{\zeta+1})}$, we obtain $(y^{*, \zeta}_t)_{t\in S_{\zeta+1}}$ and a pruning $\theta_\zeta:S_{\zeta+1}\to S_{\zeta+1}$ satisfying the conclusions.  Let $y^*_\varnothing$ be any $w^*$-limit of a $w^*$-convergent subnet of $(y^{*, \zeta}_\varnothing)_{\zeta<\xi}$.    Let $\theta(\varnothing)=\varnothing$.  Fix $U\in M$, $V\in N$, and $\zeta<\xi$.   Let $W\in N$ be convex, symmetric with $2W\subset V$. Note that there exists $\eta_\zeta<\xi$ such that $\zeta<\eta_\zeta$ and $y^{*, \eta_\zeta}_\varnothing- y^*_\varnothing\in W$.  Fix a length-preserving pruning $\phi_\zeta:R_\zeta\to R_{\eta_\zeta}$. Let $\theta((\zeta+1, U, V))=(\eta_\zeta+1, U, W)$ and for $t\in R_\zeta$, let $$\theta((\zeta+1, U, V)\cat t)=\theta_{\eta_\zeta}((\eta_\zeta+1, U, W)\cat \phi_\zeta(t)).$$   It is straightforward to check that the conclusions are satisfied by this construction.

\end{proof}

\begin{corollary} Suppose $\sigma, \ee, \mu>0$ and $(\ee_n)$ is a sequence of positive numbers.   Suppose $0<\xi$, $y\in Y$, and $(u_t)_{t\in R_\xi}\subset B_X$ is a normally weakly null tree such that for every $t\in R_\xi$ and every convex combination $u$ of $(u_s: \varnothing\prec s\preceq t)$, $\|y+\sigma A u\|\geqslant \mu$.  Then there exist collections $(x_t)_{t\in R_\xi}\subset B_X$, $(y^*_t)_{t\in S_\xi}\subset B_{Y^*}$ such that \begin{enumerate}[(i)]\item $(x_t)_{t\in R_\xi}$ is normally weakly null, \item $(y^*_t)_{t\in S_\xi}$ is normally $w^*$-closed,  \item for every $s,t\in R_\xi$ with $s\preceq t$, $\text{\emph{Re}\ }y^*_t(y+\sigma A x_s)\geqslant \mu$.  \end{enumerate}

Moreover, we may make these choices such that for every $s\prec t$, $s,t\in S_\xi$, $|A^*y^*_s(x_t)|<\ee_{|t|}$ and for every $t\in R_\xi$, $|(y^*_t- y^*_{t^-})(y)|<\ee_{|t|}$.   
\label{thiscorollary}
\end{corollary}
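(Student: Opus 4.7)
Plan: We follow the template of Lemma \ref{lemma2}, but replace its one-vector Hahn--Banach step with a Hahn--Banach separation applied to the convex hull of an entire branch. The proof proceeds in three phases: construct witness functionals at the maximal nodes of $S_\xi$; extend them to all of $S_\xi$ while enforcing normal $w^*$-closedness via Proposition \ref{tedious}; and finally prune once more to obtain the biorthogonality conditions of the moreover clause.

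\emph{Phase 1: maximal-node witnesses.} Fix $t\in MAX(R_\xi)$ and let $C_t=\mathrm{co}(u_s:\varnothing\prec s\preceq t)\subset B_X$. The hypothesis says exactly that the convex set $y+\sigma A(C_t)\subset Y$ is disjoint from the open ball $\{z\in Y:\|z\|<\mu\}$. Hahn--Banach separation, followed by normalization, produces a unit functional $z^*_t\in S_{Y^*}$ and a constant $c_t\geqslant \mu$ with $\mathrm{Re}\,z^*_t(y+\sigma A u)\geqslant c_t\geqslant \mu$ for every $u\in C_t$; in particular $\mathrm{Re}\,z^*_t(y+\sigma A u_s)\geqslant \mu$ for every $s\preceq t$. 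Thus a single functional at each maximal $t$ handles all of its ancestors, and no average-extraction of the type used in the proof of Proposition 2.3 is required here.

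\emph{Phase 2: extension to all of $S_\xi$.} Apply Proposition \ref{tedious} with $K=B_{Y^*}$ to the collection $(z^*_t)_{t\in MAX(S_\xi)}$. This supplies a length-preserving pruning $\theta:S_\xi\to S_\xi$ together with values $y^*_t\in B_{Y^*}$ at the non-maximal nodes of $S_\xi$ such that $(y^*_{\theta(t)})_{t\in S_\xi}$ is normally $w^*$-closed. After relabelling, define $x_t:=u_{\theta(t)}$ for $t\in R_\xi$ and keep the new $y^*_t$'s as provided. Then (i) is inherited from the normal weak nullity of $(u_t)$, (ii) is exactly the conclusion of Proposition \ref{tedious}, and (iii) holds at maximal $t$ by Phase 1, while at non-maximal $t$ the proof of Proposition \ref{tedious} defines $y^*_t$ (recursively) as a $w^*$-limit of $y^*_{t'}$ for certain maximal $t'\succeq t$; since for each fixed $s$ the map $y^*\mapsto\mathrm{Re}\,y^*(y+\sigma A x_s)$ is $w^*$-continuous on $Y^*$, the closed half-space $\{y^*:\mathrm{Re}\,y^*(y+\sigma A x_s)\geqslant \mu\}$ is $w^*$-closed and passes the inequality to the limit.

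\emph{Phase 3: biorthogonality.} The moreover clause is obtained by a final inductive pruning in the style of the closing paragraph of the proof of Lemma \ref{lemma2}. Proceed by induction on $n=|t|$: once $(x_s,y^*_s)$ have been fixed for all $s$ of length less than $n$, shrink the weak neighborhood $U$ appearing in the last coordinate of $t$ so that membership in $U$ forces $|A^*y^*_s(x)|<\varepsilon_n$ for every $\varnothing\prec s\prec t$, and shrink the $w^*$-neighborhood $V$ in the last coordinate of $t$ so that the normal $w^*$-closedness relation $y^*_t-y^*_{t^-}\in V$ forces $|(y^*_t-y^*_{t^-})(y)|<\varepsilon_n$; this last step is possible because $y\in Y$ is a single fixed vector. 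The main obstacle is that Proposition \ref{tedious} constructs the non-maximal $y^*_t$ only through a nested $w^*$-limit procedure, so it is not a priori clear that the Hahn--Banach inequality survives. This is resolved by the $w^*$-continuity of evaluation at each fixed vector $y+\sigma A x_s$, which is precisely the feature of Proposition \ref{tedious} that was engineered to dovetail with largeness conditions of this form.
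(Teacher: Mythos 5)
Your proposal is correct and follows essentially the same route as the paper: Hahn--Banach separation of the branch-hull from the open $\mu$-ball at each maximal node, Proposition \ref{tedious} to fill in the non-maximal nodes and obtain normal $w^*$-closedness (with the inequality propagating downward because half-spaces are $w^*$-closed — the paper packages this as a well-founded induction rather than an appeal to the internal limit construction), and a final Lemma \ref{lemma2}-style pruning of the neighborhoods to secure the biorthogonality estimates in the moreover clause.
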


\begin{proof}  By the Hahn-Banach theorem, for every $t\in MAX(S_\xi)$, we may find $z^*_t\in B_{Y^*}$ such that for every $\varnothing\prec s\preceq t$, $\text{Re\ }z^*_t(y+\sigma Au_s)\geqslant \mu$.   By Proposition \ref{tedious}, there exists a length-preserving pruning $\phi:S_\xi\to S_\xi$ and a collection $(z^*_t)_{t\in S_\xi}$ such that $(z^*_{\phi(t)})_{t\in S_\xi}$ is normally $w^*$-closed.  Note that $(u_{\phi(t)})_{t\in R_\xi}$ is still normally weakly null, since $\phi$ is a pruning.    Note that with this choice, $\text{Re\ }z^*_t(y+\sigma A u_s)\geqslant \mu$ for every $\varnothing\prec s\preceq t$ from the $w^*$-closedness. Indeed, to obtain a contradiction, suppose that $\varnothing\prec s\preceq t$ are such that $\text{Re\ }z^*_t(y+\sigma Au_s)<\mu$. By well-foundedness, by replacing $t$ with a proper extension if necessary, we may assume that there is no proper extension $v$ of $t$ such that $\text{Re\ }z^*_v(y+\sigma Au_s)<\mu$.  If $t$ is maximal in $R_\xi$, then $z^*_t(y+\sigma Au_s)\geqslant \mu$ by our choice of $z^*_t$.  Thus $t$ is not maximal, whence $z^*_t\in \overline{\{z^*_v: t\prec v\}}^{w^*}$.  But since every proper extension $v$ of $t$ satisfies $\text{Re\ }z^*_v(y+\sigma Au_s)\geqslant \mu$, and since $z^*_t$ lies in the $w^*$-closure of these functionals, $\text{Re\ }z^*_t(y+\sigma Au_s)\geqslant \mu$.    

We next define a length-preserving $\phi_0:S_\xi\to S_\xi$ as in the proof of Lemma \ref{lemma2} and let $\theta= \phi\circ \phi_0$, $x_t=u_{\theta(t)}$, $y^*_t=z^*_{\theta(t)}$ to obtain the properties in the last line of the corollary.

\end{proof}

Before stating the next results, we require some facts concerning ordinals which can be found in \cite{Monk}.  If $\xi$ is any ordinal, then there exist $k\in \nn$, $\gamma_1>\ldots >\gamma_k$, and non-negative integers $n_1, \ldots, n_k$ such that $\xi=\omega^{\gamma_1}n_1+\ldots + \omega^{\gamma_k}n_k$.  Moreover, if $\xi$ is non-zero, and if we omit any summands such that $n_i=0$, this is the unique \emph{Cantor normal form} of $\xi$.    By including zero terms in the expressions above, if $\xi, \zeta$ are any two ordinals, there exist $k\in \nn$, $\gamma_1>\ldots >\gamma_k$, and non-negative natural numbers $m_1, n_1, \ldots, m_k, n_k$ such that $\xi=\omega^{\gamma_1}m_1+\ldots +\omega^{\gamma_k}m_k$ and $\zeta=\omega^{\gamma_1}n_1+\ldots +\omega^{\gamma_k}n_k$.  Then the \emph{Hessenberg sum} of $\xi$ and $\zeta$, denoted $\xi\oplus \zeta$, is given by $$\xi\oplus \zeta=\omega^{\gamma_1}(m_1+n_1)+\ldots +\omega^{\gamma_k}(m_k+n_k).$$   This is independent of the particular representations of $\xi$ and $\zeta$, since such representations differ only in zero summands.     It is clear that for any $\xi, \zeta$, $(\xi+1)\oplus \zeta= (\xi\oplus \zeta)+1$.    Note also that for any ordinal $\zeta$, the set of all pairs $(\xi_0, \xi_1)$ such that $\xi_0\oplus \xi_1=\zeta$ is finite.  

We require the following result from \cite{Causey}.    

\begin{proposition} Let $\xi$ be a limit ordinal.  Suppose that for every $\zeta<\xi$, there exist ordinals $\xi_{0, \zeta}, \xi_{1, \zeta}$ such that $\xi_{0, \zeta}\oplus \xi_{1, \zeta}=\zeta+1$.   Then there exist a subset $S$ of $[0, \xi)$, ordinals $\xi_0, \xi_1$ such that $\xi_0\oplus \xi_1=\xi$, and $\ee\in \{0,1\}$ such that $\xi_\ee$ is a limit ordinal, $$\sup_{\zeta\in S} \xi_{\ee, \zeta}\geqslant \xi_\ee, \text{\ and\ }\min_{\zeta\in S}\xi_{1-\ee, \zeta} \geqslant \xi_{1-\ee}.$$  

\label{proximity}
\end{proposition}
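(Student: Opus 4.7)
The plan is to exploit the Cantor normal form of $\xi$ together with a pigeonhole argument that extracts a subset $S$ on which the splits $(\xi_{0,\zeta}, \xi_{1,\zeta})$ have a uniform ``shape.''  Write $\xi$ in Cantor normal form as $\xi = \omega^{\gamma_1}n_1 + \cdots + \omega^{\gamma_k}n_k$ with $\gamma_1 > \cdots > \gamma_k$; since $\xi$ is a limit, $\gamma_k \geqslant 1$.  For each $\zeta < \xi$, the definition of $\oplus$ means that the equation $\xi_{0,\zeta}\oplus \xi_{1,\zeta} = \zeta+1$ is equivalent to a coefficient-wise splitting of the Cantor normal form of $\zeta+1$ between the two summands.

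Set $M = \omega^{\gamma_1}n_1 + \cdots + \omega^{\gamma_{k-1}}n_{k-1}$, interpreted as $0$ if $k=1$.  For $\zeta$ in the cofinal tail $\{\zeta < \xi : \zeta+1 > M + \omega^{\gamma_k}(n_k-1)\}$, the Cantor form of $\zeta+1$ is exactly $M + \omega^{\gamma_k}(n_k - 1) + \tau_\zeta$ with $\tau_\zeta \in (0, \omega^{\gamma_k})$, and $\tau_\zeta \to \omega^{\gamma_k}$ as $\zeta \to \xi$.  I would apply pigeonhole to the finitely many possible splittings of the coefficients $n_1, \ldots, n_{k-1}, n_k-1$ between $\xi_{0,\zeta}$ and $\xi_{1,\zeta}$, passing to a cofinal subset $S_0$ on which the splittings are simultaneously constant, say $n_i = p_i + q_i$ for $i < k$ and $n_k - 1 = p_k + q_k$, with $(p_i)$ assigned to $\xi_{0,\zeta}$ and $(q_i)$ to $\xi_{1,\zeta}$.

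The key step is the analysis of the tail.  On $S_0$, write $\tau_\zeta = \tau^0_\zeta \oplus \tau^1_\zeta$ with both $\tau^i_\zeta < \omega^{\gamma_k}$.  I would use that $\omega^{\gamma_k}$ is additively indecomposable under $\oplus$: any two ordinals less than $\omega^{\gamma_k}$ have all Cantor exponents less than $\gamma_k$, hence so does their Hessenberg sum.  Since $\sup_{\zeta \in S_0} \tau_\zeta = \omega^{\gamma_k}$, this forces $\sup_{\zeta \in S_0} \tau^\ee_\zeta = \omega^{\gamma_k}$ for at least one $\ee \in \{0,1\}$.  Pass to a subset $S \subseteq S_0$ along which $\tau^\ee_\zeta$ is cofinal in $\omega^{\gamma_k}$.

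Finally, declare $\xi_\ee = \omega^{\gamma_1}p_1 + \cdots + \omega^{\gamma_{k-1}}p_{k-1} + \omega^{\gamma_k}(p_k + 1)$ and $\xi_{1-\ee} = \omega^{\gamma_1}q_1 + \cdots + \omega^{\gamma_{k-1}}q_{k-1} + \omega^{\gamma_k}q_k$, using the convention that $(p_i)$ denotes the coefficients on the $\ee$-side.  A direct check of coefficients shows $\xi_0 \oplus \xi_1 = \xi$, and $\xi_\ee$ is a limit ordinal because its lowest Cantor term has exponent $\gamma_k \geqslant 1$ and nonzero coefficient $p_k + 1$.  The inequality $\sup_{\zeta \in S} \xi_{\ee,\zeta} \geqslant \xi_\ee$ then follows from the decomposition $\xi_{\ee,\zeta} = \omega^{\gamma_1}p_1 + \cdots + \omega^{\gamma_k}p_k + \tau^\ee_\zeta$ combined with $\tau^\ee_\zeta \to \omega^{\gamma_k}$, while $\min_{\zeta \in S} \xi_{1-\ee,\zeta} \geqslant \xi_{1-\ee}$ because the fixed top portion of $\xi_{1-\ee,\zeta}$ already equals $\xi_{1-\ee}$, with a nonnegative tail $\tau^{1-\ee}_\zeta$ added.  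The main technical obstacle is the additive-indecomposability step for $\omega^{\gamma_k}$ under $\oplus$; beyond that, the rest is bookkeeping, with a little extra attention for the degenerate cases $k = 1$ or $n_k = 1$ in which some of the fixed coefficients vanish.
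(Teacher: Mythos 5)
Your argument is correct, and in fact it is essentially self-contained: the paper does not prove Proposition \ref{proximity} at all, but simply quotes it from the reference [Causey] (\emph{Proximity to $\ell_p$ and $c_0$ in Banach spaces}), so there is no in-paper proof to compare against. Your route — write $\xi=\omega^{\gamma_1}n_1+\cdots+\omega^{\gamma_k}n_k$, restrict to the cofinal tail where $\zeta+1=M+\omega^{\gamma_k}(n_k-1)+\tau_\zeta$ with $0<\tau_\zeta<\omega^{\gamma_k}$, use that a Hessenberg decomposition of $\zeta+1$ is exactly a coefficient-wise splitting of its Cantor normal form, pigeonhole the finitely many splittings of the coefficients $n_1,\dots,n_{k-1},n_k-1$ onto a cofinal $S_0$, and then use that $[0,\omega^{\gamma_k})$ is closed under $\oplus$ to force $\sup_{\zeta\in S_0}\tau^{\ee}_\zeta=\omega^{\gamma_k}$ for some $\ee$ — checks out, and the final bookkeeping ($\xi_\ee\oplus\xi_{1-\ee}=\xi$, $\xi_\ee$ a limit since $\gamma_k\geqslant 1$, $\sup_{\zeta\in S}\xi_{\ee,\zeta}=P+\omega^{\gamma_k}=\xi_\ee$ by continuity of $\beta\mapsto P+\beta$, and $\xi_{1-\ee,\zeta}\geqslant \xi_{1-\ee}$ termwise) is right, including the degenerate cases $k=1$ or $n_k=1$ where $\xi_{1-\ee}$ may be $0$. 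Two small points you should make explicit in a final write-up: the step ``both sups $<\omega^{\gamma_k}$ would bound every $\tau_\zeta$ by $\alpha_0\oplus\alpha_1<\omega^{\gamma_k}$'' uses monotonicity of the natural sum in each argument (standard, but it is the one fact beyond the closure property you invoke); and the labels $(p_i),(q_i)$ should only be attached to the $\ee$-side versus the $(1-\ee)$-side after $\ee$ has been determined, since your opening sentence pins $(p_i)$ to the $0$-side while the conclusion uses the other convention.
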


\begin{lemma} Suppose $(y^*_t)_{t\in S_\xi}$ is a normally $w^*$-closed collection, and that $f:R_\xi\to 2=\{0,1\}$ is a function.  Then there exist ordinals $\xi_0, \xi_1$ with $\xi_0\oplus \xi_1=\xi$, and for each $\ee\in 2$, there exists a pruning $\theta_\ee:S_{\xi_\ee}\to S_\xi$ such that $f\circ \theta_\ee|_{R_{\xi_\ee}}\equiv \ee$, $(y^*_{\theta_\ee(t)})_{t\in R_{\xi_\ee}}$ is normally $w^*$-closed,  and $\varnothing=\theta_\ee(\varnothing)$.

\label{stabilization}
\end{lemma}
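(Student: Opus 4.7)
My plan is to proceed by transfinite induction on $\xi$. The base case $\xi=0$ is immediate since $R_0=\varnothing$: take $\xi_0=\xi_1=0$ and both $\theta_\ee$ the identity on $S_0=\{\varnothing\}$.

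For the successor step $\xi=\zeta+1$, the first-level nodes of $S_\xi$ are $s_{U,V}:=(\zeta+1,U,V)$, indexed by $(U,V)\in D$. Applying the inductive hypothesis to each sub-collection $(y^*_{s_{U,V}\cat t})_{t\in S_\zeta}$ with the shifted coloring $f_{U,V}(t):=f(s_{U,V}\cat t)$ yields a decomposition $\zeta_{0,U,V}\oplus\zeta_{1,U,V}=\zeta$ and length-preserving prunings $\theta_{\ee,U,V}:S_{\zeta_{\ee,U,V}}\to S_\zeta$ with the four required properties. Since the set of pairs $(\alpha,\beta)$ with $\alpha\oplus\beta=\zeta$ is finite and $D$ is directed, a pigeonhole argument gives a cofinal fiber $\mathcal{C}=\{(U,V):f(s_{U,V})=\delta,\,\zeta_{0,U,V}=\zeta_0^*,\,\zeta_{1,U,V}=\zeta_1^*\}\subseteq D$. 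Setting $\xi_\delta=\zeta_\delta^*+1$ and $\xi_{1-\delta}=\zeta_{1-\delta}^*$ yields $\xi_0\oplus\xi_1=(\zeta_0^*\oplus\zeta_1^*)+1=\xi$. I build $\theta_\delta:S_{\xi_\delta}\to S_\xi$ length-preservingly by sending each length-one node $(\xi_\delta,U,V)$ to some $s_{U',V'}$ with $(U',V')\in\mathcal{C}$ and $(U',V')\geq(U,V)$, then extending below via $\theta_{\delta,U',V'}$; the four properties are inherited directly from the hypothesis together with $V'\subseteq V$.

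The $(1-\delta)$-side cannot in general be length-preserving (since $f^{-1}(1-\delta)$ may fail to appear cofinally at the first level), which is the principal obstacle of the proof. One chooses, for each length-one node $(\xi_{1-\delta},U,V)\in S_{\xi_{1-\delta}}$, a representative $(U_\star,V_\star)\in\mathcal{C}$ refined so that $V_\star\subseteq V_{\mathrm{half}}$, where $V_{\mathrm{half}}\subseteq V$ is a convex symmetric $w^*$-open half-neighborhood with $V_{\mathrm{half}}+V_{\mathrm{half}}\subseteq V$; the sub-pruning $\theta_{1-\delta,U_\star,V_\star}$ is further precomposed with a pruning that refines its target $N$-coordinates into $V_{\mathrm{half}}$. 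One then defines $\theta_{1-\delta}((\xi_{1-\delta},U,V)\cat t')=s_{U_\star,V_\star}\cat\theta_{1-\delta,U_\star,V_\star}(t')$ for $t'\in S_{\zeta_{1-\delta}^*}$. The critical normally $w^*$-closed check at the length-one node $t=(\xi_{1-\delta},U,V)$ is
\[
y^*_{\theta_{1-\delta}(t)}-y^*_\varnothing=\bigl(y^*_{s_{U_\star,V_\star}\cat u}-y^*_{s_{U_\star,V_\star}}\bigr)+\bigl(y^*_{s_{U_\star,V_\star}}-y^*_\varnothing\bigr)\in V_{\mathrm{half}}+V_{\mathrm{half}}\subseteq V,
\]
with $u:=\theta_{1-\delta,U_\star,V_\star}(t)$, using the refined inclusions from the big-collection normality together with the refined sub-pruning. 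At longer nodes of $R_{\xi_{1-\delta}}$ the length-preservation of $\theta_{1-\delta,U_\star,V_\star}$ makes the verification routine, since then the induced increment collapses to a single big-collection increment of the correct $N$-coordinate.

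For the limit step, apply the inductive hypothesis to each $(y^*_t)_{t\in S_{\zeta+1}}$ with $f|_{R_{\zeta+1}}$ for $\zeta<\xi$ to obtain $\xi_{0,\zeta}\oplus\xi_{1,\zeta}=\zeta+1$. Proposition \ref{proximity} then produces $S\subseteq[0,\xi)$, ordinals $\xi_0,\xi_1$ with $\xi_0\oplus\xi_1=\xi$, and $\ee\in\{0,1\}$ such that $\xi_\ee$ is a limit ordinal, $\sup_{\zeta\in S}\xi_{\ee,\zeta}\geq\xi_\ee$, and $\min_{\zeta\in S}\xi_{1-\ee,\zeta}\geq\xi_{1-\ee}$. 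The $(1-\ee)$-pruning is built by fixing any $\zeta\in S$ and precomposing $\theta_{1-\ee,\zeta}$ with a length-preserving pruning $S_{\xi_{1-\ee}}\to S_{\xi_{1-\ee,\zeta}}$, which exists since $\xi_{1-\ee}\leq\xi_{1-\ee,\zeta}$. The $\ee$-pruning is assembled on the decomposition $S_{\xi_\ee}=\bigcup_{\gamma<\xi_\ee}S_{\gamma+1}$: for each $\gamma<\xi_\ee$, choose $\zeta\in S$ with $\xi_{\ee,\zeta}\geq\gamma+1$ and use $\theta_{\ee,\zeta}$ precomposed with a length-preserving pruning $S_{\gamma+1}\to S_{\xi_{\ee,\zeta}}$, assembling compatibly so that the resulting map is a well-defined pruning with all required properties.
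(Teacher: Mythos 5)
Your proposal follows essentially the same route as the paper's proof: induction on $\xi$, with the successor step handled by pigeonholing the finite set of pairs (root color, Hessenberg decomposition of $\zeta$) over the directed set $D$, attaching the $+1$ to the root-color side, and using the $V'+V'\subset V$ half-neighborhood refinement to verify normal $w^*$-closedness at the length-one level of the non-length-preserving side, while the limit step uses Proposition \ref{proximity} and length-preserving prunings exactly as in the paper. The only blemishes are cosmetic: the inductive hypothesis does not supply length-preserving sub-prunings (nor are they needed, since the deeper-level normality checks follow directly from the normal $w^*$-closedness of the pruned sub-collections, not from length preservation), and in your formula for $\theta_{1-\delta}$ the sub-pruning should be applied to the whole node with its first $N$-coordinate refined (as the paper does via the auxiliary map $\varphi$), with the refinement also taken above $(U,V)$ in $D$, rather than to the tail $t'$ alone.
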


\begin{proof} By induction on $\xi$.  The $\xi=0$ case is trivial.  Assume $\xi=\zeta+1$ and the conclusion holds for $\zeta$. Let $H$ denote the set of all triples $(\ee, \xi_0, \xi_1)\in 2\times \ord^2$ such that $\xi_0\oplus \xi_1=\zeta$.  Note that $H$ is finite.     For every pair $(U,V)\in M\times N$, let $f_{U,V}:R_\zeta\to 2 $ be given by $f_{U,V}(t)=f((\xi, U, V)\cat t)$.  Applying the inductive hypothesis to $f_{U,V}$ and $(y^*_{(\xi, U, V)\cat t})_{t\in R_\zeta}$, for every $(U,V)\in M\times N$, there exists a triple $\alpha_{U,V}=(\ee(U,V), \xi_0(U,V), \xi_1(U,V))$ and for each $\ee\in 2$ there exists a pruning $\theta^{U,V}_\ee:S_\zeta\to S_\zeta$ such that $f_{U,V}\circ \theta^{U,V}_\ee\equiv \ee$, $\theta^{U,V}_\ee(\varnothing)=\varnothing$, and $(y^*_{(\xi, U, V)\cat \theta^{U,V}_\ee(t)})_{t\in R_\zeta}$ is normally $w^*$-closed.  For each $\alpha\in H$, let $G_\alpha=\{(U,V)\in M\times N: \alpha_{U,V}=\alpha\}$.    Since $H$ is finite, there exists $\alpha\in H$ such that $G_\alpha$ is cofinal in $M\times N$.  Without loss of generality we assume $\alpha=(0, \xi_0, \xi_1)$.   We will define $\theta_0:S_{\xi_0+1}\to S_\xi$ and $\theta_1:S_{\xi_1}\to S_\xi$ to satisfy the conclusions.

Let $\theta_0(\varnothing)=\varnothing$.  For $(U,V)\in M\times N$, fix $(U', V')\in G_\alpha$ such that $U'\subset U$ and $V'\subset V$.   Let $$\theta_0((\xi_0+1, U, V)\cat t)= (\xi, U', V')\cat \theta^{U',V'}_0(t)$$ for $t\in R_{\xi_0+1}$.    It is easily verified that the requirements on $\theta_0$ are satisfied.   

Next, for each $(U,V)\in M\times N$, fix some $(U', V')\in G_\alpha$ such that $U'\subset U$ and $ V'+V'\subset V$.  Define $\varphi:S_{\xi_1}\to S_{\xi_1}$ by letting $\varphi(\varnothing)=\varnothing$ and $\varphi((\eta, U, V)\cat t)=(\eta, U', V')\cat t$ for each $(\eta, U,V)\cat t\in R_{\xi_1}$.  Let $\theta_1(\varnothing)=\varnothing$ and for $t=(\eta, U, V)\cat t_1\in R_{\xi_1}$, let $$\theta_1(t)= (\xi, U', V')\cat \theta_1^{U',V'}(\varphi(t)).$$   The only requirement to check which is not straightforward is that for any $t=(\eta, U,V)\in S_{\xi_1}$, $y^*_{\theta_1(t)}-y^*_\varnothing\in V$.    To see this, note that since $\varphi(t)=(\eta, U', V')=:t_0$, $(y^*_{(\xi, U', V')\cat \theta^{U',V'}_1(s)})_{s\in S_\zeta}$ is normally $w^*$-closed, and $\theta_1^{U', V'}(\varnothing)=\varnothing$, $$y^*_{\theta_1(t)}- y^*_{(\xi, U', V')}=y^*_{(\xi, U', V')\cat \theta^{U', V'}_1(t_0)}-y^*_{(\xi, U', V')\cat \theta^{U', V'}_1(t_0^-)}\in  V'.$$  Since $(y^*_s)_{s\in S_\xi}$ is normally $w^*$-closed, $y^*_{(\xi, U', V')}-y^*_\varnothing\in V'$.  Therefore $$y^*_{\theta_1(t)}-y^*_\varnothing = y^*_{\theta_1(t)} - y^*_{(\xi, U', V')}+y^*_{(\xi, U', V')}-y^*_\varnothing\in V'+V'\subset V.$$

Last, suppose the result holds for every $\zeta<\xi$, $\xi$ a limit ordinal. Fix $f:R_\xi\to 2$ and $(y^*_t)_{t\in S_\xi}$ normally $w^*$-closed.    Apply the result to the function $f|_{R_{\zeta+1}}:R_{\zeta+1}\to 2$ and $(y^*_t)_{t\in S_{\zeta+1}}$.   By the inductive hypothesis, there exist ordinals $\xi_{0, \zeta}$ and $\xi_{1, \zeta}$ such that $\xi_{0, \zeta}\oplus \xi_{1, \zeta}=\zeta+1$ and for each $\ee\in 2$, a pruning $\theta_{\ee, \zeta}:S_{\xi_{\ee, \zeta}}\to S_{\zeta+1}$ such that $f\circ \theta_{\ee, \zeta}|_{R_{\xi_{\ee, \zeta}}}\equiv \ee$ and $\varnothing=\theta_{\ee, \zeta}(\varnothing)$.   By Proposition \ref{proximity}, there exists a subset $A\subset[0, \xi)$ and ordinals $\xi_0, \xi_1$ with $\xi_0\oplus\xi_1=\xi$ and such that, without loss of generality, $\xi_0$ is a limit ordinal, $\sup_{\zeta\in A}\xi_{0, \zeta}\geqslant \xi_0$, and $\min_{\zeta\in A}\xi_{1, \zeta}\geqslant \xi_1$.    Fix any $\zeta\in A$ and any length-preserving pruning $\phi:S_{\xi_1}\to S_{\xi_{1, \zeta}}$, which we may, since $\xi_1\leqslant \xi_{1, \zeta}$.  Define $\theta_1:S_{\xi_1}\to S_\xi$ by letting $\theta= \theta_{1, \zeta}\circ\phi$. Next, for any $\zeta<\xi_0$, fix $\eta_\zeta\in A$ such that $\zeta+1<\eta_\zeta$ and a length-preserving pruning $\phi_\zeta:S_{\zeta+1}\to S_{\eta_\zeta}$.  Define $\theta_0$ on $S_{\zeta+1}$ by letting $\theta_0|_{S_{\zeta+1}}= \theta_{0, \eta_\zeta}\circ\phi_\zeta$.  This defines $\theta_0$ on all of $S_\xi$.  It is straightforward to check that the conclusions are satisfied in this case.

\end{proof}

\begin{corollary} Fix $\sigma, \mu, \ee>0$, $y\in Y$, and $\xi\in \emph{\textbf{Ord}}$.  Suppose that $(u_t)_{t\in R_{\omega^\xi}}\subset B_X$ is a normally weakly null collection such that for every $t\in R_{\omega^\xi}$ and every convex combination $u$ of $(x_s: \varnothing \prec s\preceq t)$, $\|y+\sigma A u\|\geqslant \mu$.  Then there exist collections $(x_t)_{t\in R_{\omega^\xi}}\subset B_X$, $(y^*_t)_{t\in S_{\omega^\xi}}\subset B_{Y^*}$, and a number $c\in \rr$ such that \begin{enumerate}[(i)]\item $(x_t)_{t\in R_{\omega^\xi}}$ is normally weakly null, \item $(y^*_t)_{t\in S_{\omega^\xi}}$ is normally $w^*$-closed,  \item for every $\varnothing\preceq s\prec t\in R_{\omega^\xi}$, $|y^*_s(Ax_t)|<\ee$,   \item for every $\varnothing\prec s\preceq t$, $\text{\emph{Re\ }}y^*_t(y+\sigma Ax_s)\geqslant \mu$,  \item for any $\varnothing\preceq s\preceq t\in S_{\omega^\xi}$, $|(y^*_t-y^*_s)(y)|<\ee$, \item for every $t\in R_{\omega^\xi}$, $c\leqslant \text{\emph{Re}\ }y^*_t(Ax_t)\leqslant c+\ee$.   \end{enumerate}

\label{big corollary}

\end{corollary}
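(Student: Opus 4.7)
The plan is to first apply Corollary \ref{thiscorollary} with the sequence $\ee_n := \ee/2^n$ to produce collections $(x_t)_{t \in R_{\omega^\xi}} \subset B_X$ and $(y^*_t)_{t \in S_{\omega^\xi}} \subset B_{Y^*}$ satisfying (i), (ii), (iv), together with the consecutive-differences estimates from the moreover-clause. Conclusion (iii) is then immediate: for $\varnothing \preceq s \prec t$ in $R_{\omega^\xi}$,
\[ |y^*_s(Ax_t)| = |A^*y^*_s(x_t)| < \ee_{|t|} < \ee. \]
Conclusion (v) follows by telescoping: for $\varnothing \preceq s \preceq t$,
\[ |(y^*_t - y^*_s)(y)| \leqslant \sum_{s \prec r \preceq t} |(y^*_r - y^*_{r^-})(y)| < \sum_{n=1}^\infty \ee/2^n = \ee. \]

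The remaining task is to produce (vi) via Lemma \ref{stabilization}. The key algebraic fact is that $\omega^\xi$ is indecomposable under the Hessenberg sum: comparing Cantor normal forms, $\xi_0 \oplus \xi_1 = \omega^\xi$ forces $\{\xi_0, \xi_1\} = \{0, \omega^\xi\}$. Consequently, for any function $f : R_{\omega^\xi} \to \{0,1\}$ and any normally $w^*$-closed collection indexed by $S_{\omega^\xi}$, Lemma \ref{stabilization} produces some $i \in \{0,1\}$ and a pruning $\theta : S_{\omega^\xi} \to S_{\omega^\xi}$ on which $f \circ \theta \equiv i$, and such that $(y^*_{\theta(t)})_{t \in S_{\omega^\xi}}$ remains normally $w^*$-closed.

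Now set $R := \|A\|$ and partition $[-R, R]$ into finitely many half-open intervals $I_1, \ldots, I_N$ of length less than $\ee$. Define $f_j : R_{\omega^\xi} \to \{0,1\}$ by $f_j(t) = 1$ if and only if $\text{Re\ } y^*_t(Ax_t) \in I_j$. Apply Lemma \ref{stabilization} to $f_1$. If the returned value is $1$, take $c$ to be the left endpoint of $I_1$ and relabel via the pruning. Otherwise $f_1 \circ \theta \equiv 0$, so on the pulled-back tree the diagonal values avoid $I_1$; replace the collections by their pull-backs and apply Lemma \ref{stabilization} to $f_2$, and so on. Since the diagonal values always lie in $[-R,R] = \bigcup_j I_j$, this process must succeed within $N$ steps, yielding the required $c$.

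The main obstacle is verifying that properties (i)--(v) survive each successive pull-back, but this is automatic: all these inequalities are indexed by comparable pairs in the tree, and a pruning preserves initial-segment order, so $s \preceq t$ implies $\theta(s) \preceq \theta(t)$ and every inequality transfers verbatim. Normal weak nullity of $(x_{\theta(t)})$ is immediate from $\theta$ being a pruning, while normal $w^*$-closedness of $(y^*_{\theta(t)})$ is built into the conclusion of Lemma \ref{stabilization} (which is essential since the prunings it produces need not be length-preserving). Composing the successive prunings and the one from the successful stabilization, and relabeling, produces the final collections satisfying all of (i)--(vi).
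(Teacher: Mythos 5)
Your proof is correct and follows essentially the same route as the paper: Corollary \ref{thiscorollary} supplies (i)--(v) (the paper secures (v) by shrinking the $w^*$-neighborhoods through an auxiliary pruning rather than by telescoping the moreover-clause, but both work), and (vi) comes from Lemma \ref{stabilization} together with the fact that $\xi_0\oplus\xi_1=\omega^{\xi}$ forces $\{\xi_0,\xi_1\}=\{0,\omega^{\xi}\}$. The only cosmetic difference is that you stabilize the diagonal values $\mathrm{Re}\,y^*_t(Ax_t)$ into one of finitely many intervals of length $<\ee$ in at most $N$ rounds, whereas the paper iteratively halves the diameter of this set; both rely on the same preservation-under-pruning observations.
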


\begin{proof}  Fix $(\ee_n)\subset (0,1)$ such that $\sum \ee_n<\ee$.   By Corollary \ref{thiscorollary}, after relabeling, we may assume that $(u_t)_{t\in R_{\omega^\xi}}$ is normally weakly null, and that we have a collection $(z^*_t)_{t\in S_{\omega^\xi}}\subset B_{Y^*}$ which is normally $w^*$-closed such that $\text{Re\ }z^*_t(y+\sigma Au_s)\geqslant \mu$ for all $\varnothing\prec s\preceq t\in S_{\omega^\xi}$, if $s\prec t\in S_{\omega^\xi}$, then $|A^*z^*_s(u_t)|<\ee_{|t|}$, and if $\varnothing\prec s\preceq t\in S_{\omega^\xi}$, $|(z^*_t-z^*_{t^-})(Au_s)|<\ee_{|t|}$.  We define $\varphi:S_{\omega^\xi}\to S_{\omega^\xi}$ by $\varphi(\varnothing)=\varnothing$ and $$\varphi(s\cat (\eta, U,V))= \varphi(s)\cat (\eta, U, V\cap \{y^*\in Y^*: |y^*(y)|< \ee_{|s|+1} \}).$$ By replacing $(u_t)_{t\in R_{\omega^\xi}}$ and $(z^*_t)_{t\in S_{\omega^\xi}}$  with $(u_{\varphi(t)})_{t\in R_{\omega^\xi}}$ and $(z^*_{\varphi(t)})_{t\in R_{\omega^\xi}}$, respectively, we may assume that $(u_t)_{t\in R_{\omega^\xi}}$ and $(z^*_t)_{t\in S_{\omega^\xi}}$ satisfy properties $(i)$-$(v)$.

 If $D=\text{diam}\{\text{Re\ }z^*_t(Au_t):t\in R_{\omega^\xi}\}$, we claim that there exists a pruning $\theta:S_{\omega^\xi}\to S_{\omega^\xi}$ such that $\theta(\varnothing)=\varnothing$, $\text{diam}\{\text{Re\ }z^*_{\theta(t)}(Au_{\theta(t)})\}\leqslant D/2$, $(u_{\theta(t)})_{t\in R_{\omega^\xi}}$ is normally weakly null, and $(z^*_{\theta(t)})_{t\in S_{\omega^\xi}}$ is normally $w^*$-closed.   Iterating this claim and composing the resulting prunings will yield a pruning $\phi:S_{\omega^\xi}\to S_{\omega^\xi}$ such that $\phi(\varnothing)=\varnothing$, $(x_t)_{t\in R_{\omega^\xi}}:=(u_{\phi(t)})_{t\in R_{\omega^\xi}}$ is normally weakly null, and $(y^*_t)_{t\in S_{\omega^\xi}}:=(z^*_{\phi(t)})_{t\in S_{\omega^\xi}}$ is normally $w^*$-closed, and $\text{diam}\{y^*_t(Ax_t): t\in R_{\omega^\xi}\}<\ee$, and these collections retain properties $(iii)$-$(v)$.  Then if $c$ is the infimum of this set, the collections $(x_t)_{t\in R_{\omega^\xi}}$ and $(y^*_t)_{t\in S_{\omega^\xi}}$ satisfy the conclusions.  We prove this claim.  Let $I=[a,b]$, where $a=\inf\{\text{Re\ }y^*_t(Ax_t): t\in R_{\omega^\xi}\}$, $a'= \sup \{\text{Re\ }y^*_t(Ax_t):t\in R_{\omega^\xi}\}$, and $2b=a+a'$.   Let $f:R_{\omega^\xi}\to 2$ be given by $f(t)= 1_I(\text{Re\ }y^*_t(Ax_t))$.  Then by Lemma \ref{stabilization}, since the only pairs $(\xi_0, \xi_1)$ with $\xi_0\oplus\xi_1=\omega^\xi$ are $(0, \omega^\xi)$ and $(\omega^\xi, 0)$, there exists $\ee\in 2$ and a monotone map $\theta:S_{\omega^\xi}\to S_{\omega^\xi}$ such that $f\circ \theta|_{R_{\omega^\xi}}\equiv \ee$, $(x_{\theta(t)})_{t\in R_{\omega^\xi}}$ is normally weakly null, and $(y^*_{\theta(t)})_{t\in S_{\omega^\xi}}$ is normally $w^*$-closed.  Moreover, the set $\{\text{Re\ }y^*_{\theta(t)}(Ax_{\theta(t)}): t\in R_{\omega^\xi}\}$ is contained in $I$ if $\ee=1$ and in $[a,a']\setminus I$ if $\ee=0$. Since each of these sets has diameter not exceeding half of the diameter of $[a,a']$, this gives the claim.

\end{proof}

\begin{proposition} If  $\sigma, \tau>0$, $\xi\in \textbf{\emph{Ord}}$ are such that $\delta_\xi^{w^*}(\tau;A^*)\geqslant  \sigma\tau$, if $T$ is a tree with $o(T)=\omega^\xi+1$, and if $(y^*_t)_{t\in T}\subset B_{Y^*}$ is $w^*$-closed and $(A^*, c)$-separated for some $c\geqslant \tau$, then either $y^*_\varnothing=0$ or $\|y^*_\varnothing\|\leqslant 1-\sigma c$.    

\label{finish prop}
\end{proposition}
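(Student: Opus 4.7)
The plan is to view $T$ with its root removed as a $B$-tree of order exactly $\omega^{\xi}$, recast the $w^*$-closed, $(A^{*},c)$-separated family on $T$ as a $w^{*}$-null, $(A^{*},1)$-large family on this $B$-tree via telescoping differences normalised by $c$, and then feed the hypothesis $\delta_{\xi}^{w^{*}}(\tau;A^{*})\geq \sigma\tau$ against the unit vector $x=y^{*}_{\varnothing}/\|y^{*}_{\varnothing}\|$.

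Concretely, set $B=T\setminus\{\varnothing\}$, so $B$ is a $B$-tree with $B\cup\{\varnothing\}=T$. Because $o(T)=\omega^{\xi}+1$ is a successor and $T=B\cup\{\varnothing\}$, one checks by transfinite induction that $T^{\zeta}=\{\varnothing\}\cup B^{\zeta}$ whenever $B^{\zeta}\neq\varnothing$, so $o(B)=\omega^{\xi}$. Define $z^{*}_{t}=(y^{*}_{t}-y^{*}_{t^{-}})/c$ for $t\in B$. The $(A^{*},c)$-separation of $(y^{*}_{t})_{t\in T}$ gives $\|A^{*}z^{*}_{t}\|\geq 1$, so $(z^{*}_{t})_{t\in B}$ is $(A^{*},1)$-large. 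For $w^{*}$-nullity, fix $\zeta$ and $t\in(B\cup\{\varnothing\})^{\zeta+1}=T^{\zeta+1}$; since $\{s\in T^{\zeta}:s^{-}=t\}=\{s\in B^{\zeta}:s^{-}=t\}$, the condition $0\in\overline{\{z^{*}_{s}:s\in B^{\zeta},\,s^{-}=t\}}^{w^{*}}$ is just the translate of the $w^{*}$-closedness of $(y^{*}_{t})_{t\in T}$ at $t$.

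Now assume $y^{*}_{\varnothing}\neq 0$; write $a=\|y^{*}_{\varnothing}\|\in(0,1]$ and set $x=y^{*}_{\varnothing}/a$, so $\|x\|=1$. The hypothesis $\delta_{\xi}^{w^{*}}(\tau;A^{*})\geq \sigma\tau$ applied to this $x$ and the collection $(z^{*}_{t})_{t\in B}$ yields
$$\sup_{t\in B}\Bigl\|x+\tau\sum_{\varnothing\prec s\preceq t}z^{*}_{s}\Bigr\|\geq 1+\sigma\tau.$$
The sum telescopes: $\sum_{\varnothing\prec s\preceq t}z^{*}_{s}=(y^{*}_{t}-y^{*}_{\varnothing})/c$, hence
$$x+\tau\sum_{\varnothing\prec s\preceq t}z^{*}_{s}=\Bigl(\tfrac{1}{a}-\tfrac{\tau}{c}\Bigr)y^{*}_{\varnothing}+\tfrac{\tau}{c}\,y^{*}_{t}.$$
Since $c\geq\tau$ and $a\leq 1$, we have $\tau/c\leq 1\leq 1/a$, so the coefficient of $y^{*}_{\varnothing}$ is non-negative; using $\|y^{*}_{\varnothing}\|=a$ and $\|y^{*}_{t}\|\leq 1$,
$$\Bigl\|x+\tau\sum_{\varnothing\prec s\preceq t}z^{*}_{s}\Bigr\|\leq\Bigl(\tfrac{1}{a}-\tfrac{\tau}{c}\Bigr)a+\tfrac{\tau}{c}=1+\tfrac{\tau}{c}(1-a).$$
Taking the supremum over $t$ and combining with the previous inequality gives $\sigma\tau\leq(\tau/c)(1-a)$, i.e.\ $\|y^{*}_{\varnothing}\|=a\leq 1-\sigma c$.

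The main thing to be careful about is the bookkeeping identifying $o(B)=\omega^{\xi}$ and translating the $w^{*}$-closedness of $(y^{*}_{t})_{t\in T}$ into the $w^{*}$-nullity of the differences $(z^{*}_{t})_{t\in B}$ at every level simultaneously; once that is in place, everything else reduces to the one-line telescoping and triangle-inequality estimate above.
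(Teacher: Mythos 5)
Your proposal is correct and follows essentially the same route as the paper: pass to the $B$-tree $B=T\setminus\{\varnothing\}$ of order $\omega^\xi$, form the normalized differences $z^*_t=(y^*_t-y^*_{t^-})/c$ (which are $w^*$-null and $(A^*,1)$-large), apply the definition of $\delta^{w^*}_\xi(\tau;A^*)$ to $y^*_\varnothing/\|y^*_\varnothing\|$, and telescope to write the perturbed vector as a convex-type combination of $y^*_\varnothing$ and $y^*_t$ before estimating by the triangle inequality. The only cosmetic difference is that the paper multiplies the inequality through by $\|y^*_\varnothing\|$ while you keep the normalized vector throughout; the computation and conclusion are identical.
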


\begin{proof} Let $c$ and $(y^*_t)_{t\in T}$ be as in the statement.  Assume $y^*_\varnothing\neq 0$. For $t\in B=T\setminus \{\varnothing\}$, let $z^*_t=\frac{1}{c}(y^*_t-y^*_{t^-})$. Note that $(z^*_t)_{t\in B}$ is $(A^*, 1)$-large and $w^*$-null.  From this, it follows that  $$\sup_{t\in B}\|\frac{y^*_\varnothing}{\|y^*_\varnothing\|} + \tau\sum_{\varnothing \prec s\preceq t} z^*_s\| \geqslant 1+\sigma \tau,$$ and \begin{align*} (1+\sigma \tau)\|y^*_\varnothing\| & \leqslant \sup_{t\in B} \|y^*_\varnothing + \tau\sum_{\varnothing\prec s\preceq t}\|y^*_\varnothing\| z^*_s\| = \sup_{t\in B} \|y^*_\varnothing + \frac{\|y^*_\varnothing\|\tau}{c}(y^*_t-y^*_\varnothing)\| \\ &  = \|\bigl(1-\frac{\tau\|y^*_\varnothing\|}{c}\bigr) y^*_\varnothing + \frac{\tau\|y^*_\varnothing\|}{c} y^*_t\| \leqslant \bigl(1-\frac{\tau\|y^*_\varnothing\|}{c}\bigr) \|y^*_\varnothing\|+ \frac{\tau\|y^*_\varnothing\|}{c}. \end{align*}
 
Rearranging this inequality gives $\|y^*_\varnothing\|\leqslant 1-c\sigma$.

\end{proof}

\begin{proof}[Proof of Proposition \ref{dualprop}]$(i)$ Suppose not.  Then there exist $\mu<\sigma \tau$, $y^*\in Y^*$ with $\|y^*\|\geqslant 1$, and a $w^*$-null, $(A^*,1)$-large collection $(z^*_t)_{t\in R_{\omega^\xi}}$ such that for every $t\in R_{\omega^\xi}$, $$\|y^*+6\tau \sum_{\varnothing \prec s\preceq t} z^*_s \| \leqslant 1+\mu.$$  Since $\frac{1+3\sigma\tau}{1+\mu}>1+\sigma\tau$, we may fix $\ee>0$ such that $\frac{1+3\sigma\tau-20\ee}{1+\mu}>1+\sigma\tau$.  Fix a decreasing sequence $(\ee_n)$ of positive numbers such that $\sum \ee_n<\ee$. Fix $y\in S_Y$ such that $y^*(y)>1-\ee$.  We apply Lemma \ref{lemma2} with $W=\{x\in X: |A^*y^*(x)|<\ee\}$, $W_n=\{z^*\in Y^*: |z^*(y)|<\ee_n\}$ to obtain $(x_t)_{t\in R_{\omega^\xi}}\subset B_X$ and a length-preserving $\theta:R_{\omega^\xi}\to R_{\omega^\xi}$ such that $(z^*_{\theta(t)})_{t\in R_{\omega^\xi}}$ is normally $w^*$-null and for any $\varnothing\prec s\prec t\in R_{\omega^\xi}$, $|A^*z^*_{\theta(t)}(x_s)|, |z^*_{\theta(s)}(Ax_t)|<\ee_{|t|}$, and for any $t\in R_{\omega^\xi}$, $\text{Re\ }z^*_{\theta(t)}(Ax_t)\geqslant 1/2-\ee$. By relabeling, we assume $\theta$ is the identity.   For each $t\in R_{\omega^\xi}$, let $y^*_t=y+ 6\tau\sum_{\varnothing\prec s\preceq t} z_s^*$, noting that $\|y^*_t\|\leqslant 1+\mu$.   Then for any $\varnothing\prec s\preceq t$, \begin{align*} \text{Re\ }y^*_t(y+\sigma Ax_s) & \geqslant y^*(y)+6\sigma\tau\text{Re\ }z^*_s(Ax_s)-|y^*(Ax_s)| \\ & -6\sigma\tau\sum_{s\neq v\preceq t} |z^*_v(Ax_s)| - 6\tau\sum_{v\preceq t} |z^*_v(y)|  \\ & > 1-\ee +6\sigma\tau(1/2-\ee) - \ee-6\ee-6\ee \\ & = 1+3\sigma\tau-20\ee.\end{align*}   Therefore for any $t\in R_{\omega^\xi}$ and any convex combination $x$ of $(x_s: \varnothing\prec s\preceq t)$, $$\|y+\sigma Ax\| \geqslant (1+\mu)^{-1}\text{Re\ }y^*_t(y+\sigma Ax)\geqslant \frac{1+3\sigma\tau-20\ee}{1+\mu}>1+\sigma\tau.$$    But since $\rho^w_\xi(\sigma;A)\leqslant \sigma\tau$, it follows that the infimum of $\|y+\sigma Ax\|$ over all such convex combinations is at most $1+\sigma\tau$, a contradiction.

$(ii)$ Suppose that $\rho^w_\xi(\sigma;A)>\mu-1>\sigma\tau$ and $\delta^{w^*}_\xi(\tau;A^*)\geqslant \sigma\tau$.   Fix $\ee>0$ such that $1+\sigma(\tau+2\ee)+\ee<\mu$.  There exists $y\in B_Y$, a normally weakly null collection $(x_t)_{t\in R_{\omega^\xi}}\subset B_X$ such that for every $t\in B$ and every convex combination $x$ of $(x_s: \varnothing\prec s\preceq t)$, $\|y+\sigma Ax\|\geqslant \mu$.  Then by Corollary \ref{big corollary}, there exist $c\in \rr$, a normally weakly null collection $(x_t)_{t\in R_{\omega^\xi}}\subset B_X$, a normally $w^*$-closed collection $(y^*_t)_{t\in S_{\omega^\xi}}$ such that \begin{enumerate}[(i)]\item for any $\varnothing\preceq s\prec t\in R_{\omega^\xi}$, $|A^*y^*_s(x_t)|<\ee$, \item for every $t\in R_{\omega^\xi}$, $c\leqslant \text{Re\ }y^*_t(Ax_t)\leqslant c+\ee$, \item for every $\varnothing\prec s\preceq t\in R_{\omega^\xi}$, $\text{Re\ }y^*_t(y+\sigma Ax_s)\geqslant \mu$, \item for any $\varnothing\preceq s\preceq t\in S_{\omega^\xi}$, $|(y^*_s-y^*_t)(y)|<\ee$.  \end{enumerate}

Consider two cases. 

Case $1$: $c-\ee\geqslant \tau$.  In this case, $(y^*_t)_{t\in S_{\omega^\xi}}$ is $(A^*, c-\ee)$-separated, since for any $t\in R_{\omega^\xi}$, $$\|A^*y^*_t-A^*y^*_{t^-}\|\geqslant \text{Re\ }(A^*y^*_t-A^*y^*_{t^-})(x_t) \geqslant \text{Re\ }y^*_t(Ax_t)- |y^*_{t^-}(Ax_t)|>c-\ee.$$   It follows from Proposition \ref{finish prop} that either  $\|y^*_\varnothing\|\leqslant 1-\sigma(c-\ee)$ or $y^*_\varnothing=0$.    Then for any $t\in R_{\omega^\xi}$, $$\mu\leqslant \text{Re\ }y^*_t(y+\sigma Ax_t) \leqslant \text{Re\ }y^*_\varnothing(y) +\sigma(c+\ee) +\ee \leqslant 1-\sigma(c-\ee)+\sigma(c+\ee)+\ee= 1+2\sigma\ee+\ee<\mu$$ if $\|y^*\|\leqslant 1-\sigma(c-\ee)$, and $$\mu \leqslant \text{Re\ }y^*_t(y+\sigma Ax_t) \leqslant \sigma(c+\ee)+\ee <\mu$$ if $y^*_\varnothing=0$.  Either of these alternatives yields a contradiction.   

Case $2$: $c-\ee<\tau$.    In this case, for any $t\in R_{\omega^\xi}$, $$\mu \leqslant \text{Re\ }y^*_t(y+\sigma Ax_t) \leqslant 1+\sigma(c+\ee) \leqslant 1+\sigma(\tau+2\ee)<\mu,$$ another contradiction.

\end{proof}

\section{Szlenk index and measures of non-compactness}

In this section, we discuss some remarkable recent results from \cite{LPR}.  We show how to combine their results with the main result from \cite{Causey2} to obtain renorming results.  We work with complex scalars, while \cite{LPR} worked with real scalars, but it is easy to modify either set of proofs to work for the other case.    Our proofs also extend their results to uncountable ordinals and to include renormings involving operators.

For a Banach space $X$, recall that a subset $K$ of $X^*$ is called $w^*$-\emph{fragmentable} provided that for every $\ee>0$ and every non-empty, $w^*$-closed subset $L$ of $K$, there exists a $w^*$-open set $V$ such that $V\cap L\neq \varnothing$ and $\text{diam}(V\cap L)<\ee$.  For a fixed Banach space $X$, we let $\Phi$ denote the $w^*$-compact subsets of $X^*$.  We say an assignment $i:\Phi\times (0, \infty)\to \ord\cup\{\infty\}$ is a \emph{fragmentation index} if \begin{enumerate}[(i)]\item  $\sup_{\ee>0}i(K, \ee)<\infty$ if and only if $K$ is $w^*$-fragmentable, \item for every $K,L\in \Phi$ with $K\subset L$, $i(K,\ee)\leqslant i(L, \ee)$ for every $\ee>0$, \item for every $K\in \Phi$ and $0<\delta\leqslant \ee$, $i(K, \ee)\leqslant i(K, \delta)$. \end{enumerate} 
Recall the convention that $\xi<\infty$ for every ordinal $\xi$.   

We say that two fragmentation indices $i,j$ are \emph{equivalent} if there exists $\rho >0$ such that for every $\ee>0$ and $K\in \Phi$, $$i(K, \rho \ee) \leqslant j(K, \ee)\leqslant i(K, \ee/\rho).$$  Note that equivalence is transitive.

Next, we recall the definition of the Szlenk derivation.  If $X$ is a Banach space and $K\subset X^*$ is $w^*$-compact, for $\ee>0$, we let $s_\ee(K)$ denote those $x^*\in K$ such that for all $w^*$-neighborhoods $V$ of $x^*$, $\text{diam}(V\cap K)>\ee$.    We define the transfinite derivations $s_\ee^0(K)=K$, $s_\ee^{\xi+1}(K)=s_\ee(s^\xi_\ee(K))$, and if $\xi$ is a limit ordinal, $s^\xi_\ee(K)=\cap_{\zeta<\xi}s_\ee^\zeta(K)$.  Note that $s_\ee^\xi(K)$ is $w^*$-compact for every $\xi$ and every $\ee>0$.   For each $\ee>0$, we let $Sz(K, \ee)$ denote the minimum ordinal $\xi$ such that $s_\ee^\xi(K)=\varnothing$ if such an ordinal exists, and we write $Sz(K, \ee)=\infty$ otherwise.  We let $Sz(K)=\sup_{\ee>0} Sz(K, \ee)$. 
If $A:X\to Y$ is an operator, we let $Sz(A)=Sz(A^*B_{Y^*})$.     It follows directly from the definition that $(K, \ee)\mapsto Sz(K, \ee)$ is a fragmentation index.   Moreover, it is well-known and easy to see that $(K, \ee)\mapsto Sz(S_\mathbb{F}K, \ee)$ is also a fragmentation index equivalent to $Sz(\cdot, \cdot)$, where $S_\mathbb{F}K=\{t x^*: t\in S_\mathbb{F}, x^*\in K\}$.      

If $\hhh\subset X^{<\nn}$, we define the \emph{weak derivative} of $\hhh$, denoted $(\hhh)_w'$, to be those sequences $t\in \hhh$ such that for any weak neighborhood $U$ of $0$ in $X$, there exists $x\in U$ such that $t\cat (x)\in \hhh$.  We define the higher weak derivatives $(\hhh)_w^\xi$ as usual, and the weak order of $\hhh$ by $o_w(\hhh)=\min\{\xi: (\hhh)_w^\xi=\varnothing\}$, assuming this class is non-empty, and we write $o_w(\hhh)=\infty$ otherwise.  Given $K\subset X^*$ and $\ee>0$, we let $\hhh^K_\ee=\varnothing $ if $K=\varnothing$, and otherwise we let $\hhh^K_\ee$ consist of $\varnothing$ together with those sequences $(x_i)_{i=1}^n\in B_X^{<\nn}$ such that there exists $x^*\in K$ such that for each $1\leqslant i\leqslant n$, $\text{Re\ }x^*(x_i)\geqslant \ee$.    We may then define the fragmentation index $o(\cdot, \cdot)$ by $(K, \ee)\mapsto o_w(\hhh^K_\ee)$.  Conditions (ii) and (iii) of fragmentation index are easily seen to hold, while the main theorem of \cite{Causey1} yields that $o(\cdot, \cdot)$ is indeed a fragmentation index and is equivalent to the Szlenk index.   

We also recall the definitions of  fragmentation and slicing derivations associated with measures of non-compactness, defined in \cite{LPR}.   A function $\eta:\Phi\to [0, \infty)$ is called a \emph{measure of non-compactness} provided that \begin{enumerate}[(i)]\item $\eta(K)=\varnothing$ whenever $K$ is finite, \item $\eta\bigl(\cup_{i=1}^n K_i) = \max_{1\leqslant i\leqslant n} \eta(K_i)$ for any $K_1, \ldots, K_n\in \Phi$, \item for any $K, L\in \Phi$ with $K\subset L$, $\eta(K)\leqslant \eta(L)$, \item there exists a constant $b\geqslant 0$ such that for any $K\in \Phi$ and any $r>0$, $$\eta(K+r B_{X^*})\leqslant \eta(K)+rb.$$ 
  \end{enumerate}

Additionally, we say the measure of non-compactness $\eta$ is \emph{convexifiable} provided that there exists a constant $\kappa>0$ such that for any $K\in \Phi$, $\eta(\overline{\text{co}}^{w^*}(K))\leqslant \kappa \eta(K)$.    We refer to the smallest such $\kappa$ as the \emph{convexifiability constant} of $\eta$.

Given $\ee>0$ and $K\in \Phi$,  we let $[\eta]_\ee'(K)$ consist of those $x^*\in K$ such that for every $w^*$-neighborhood $V$ of $x^*$, $\eta(\overline{V}^{w^*}\cap K)\geqslant \ee$.  The set $[\eta]_\ee'(K)$ is the \emph{fragmentation derivation} of $K$.     We define the higher order derivations as usual, and let $i_{\eta}(K, \ee)=\min\{\xi: [\eta]_\ee^\xi(K)=\varnothing\}$ if such an ordinal exists, and $i_\eta(K, \ee)=\infty$ otherwise.   Recall that a $w^*$-open slice in $X^*$ is a set of the form $\{x^*\in X^*: \text{Re\ }x^*(x)>a\}$, where $x\in X$ and $a\in \rr$.   We let $\langle \eta\rangle_\ee'(K)$ denote those $x^*\in K$ such that for every $w^*$-open slice $S$ containing $x^*$, $\eta(\overline{S}^{w^*}\cap K)\geqslant \ee$. This is the \emph{slicing derivation} of $K$.   We define the higher order derivations as usual.  We remark that $[\eta]_\ee^\xi(K)$ and $\langle \eta\rangle_\ee^\xi(K)$ are $w^*$-compact for every $\ee>0$ and every ordinal $\xi$.  Moreover, if $K$ is convex, so is $\langle \eta\rangle_\ee^\xi(K)$ for every $\ee>0$ and $\xi\in \ord$.   

Recall the definition of the Kuratowski measure of non-compactness on $X^*$.  We let $\alpha(K)$ consist of the infimum of all positive $\ee$ such that there exists a finite set $F\subset X^*$ such that $K\subset \cup_{x^*\in F} (x^*+\ee B_{X^*})$.   It is easily verified that this is a measure of non-compactness.   Moreover, it is easy to see that for $K\in \Phi$, every $\ee_1>\ee>0$, and every ordinal $\xi$, $$[\alpha]_{\ee_1}^\xi(K)\subset s_\ee^\xi(K) \subset [\alpha]_{\ee/4}^\xi(K),$$ from which it follows that $i_\alpha(\cdot, \cdot)$ is a fragmentation index which is equivalent to $Sz(\cdot,\cdot)$.   Similarly, it is straightforward to verify that if $x^*\in [\alpha]_\ee'(K)$, then for any $\ee_1\in (0, \ee)$, there exists a subset $S$ of $K$ such that $\|y^*-x^*\|>\ee_1$ for every $y^*\in S$ and such that $x^*\in \overline{S}^{w^*}$.   Conversely, if there exists a subset $S$ of $K$ such that $x^*\in \overline{S}^{w^*}$ and $\|x^*-y^*\|\geqslant 2\ee$ for every $y^*\in S$, then $x^*\in [\alpha]_\ee'(K)$.  We will use these facts often in the sequel.

We will define other  measures of non-compactness which will be important for us.    For every ordinal $\xi>0$, let $$\alpha^\xi(K)=\inf\{\ee>0: [\alpha]_\ee^\xi(K)=\varnothing\}.$$  These are defined differently than similar measures of non-compactness from \cite{LPR}, but the proofs of the following facts are similar to the proofs given there. We omit those proofs which follow from inessential modifications from the work done there.   The primary difference is our proof that for every ordinal $\xi$, $\alpha^{\omega^\xi}$ is convexifiable.  This was shown in \cite{LPR} for $\xi$ finite.    

\begin{proposition} Let $\xi>0$ be an ordinal.  \begin{enumerate}[(i)]\item The function $\alpha^\xi$ is a measure of non-compactness. \item For any $\ee_1>\ee>0$ and any $w^*$-compact set $K$, $$[\alpha^\xi]_{\ee_1}'(K)\subset [\alpha]_\ee^\xi(K)\subset [\alpha^\xi]_\ee'(K).$$     \item The measure of non-compactness $\alpha^\xi$ is convexifiable if and only if $\xi=\omega^\zeta$ for some $\zeta$. Moreover, there exists a constant $\kappa$ such that for every ordinal $\zeta$, the convexifiability constant of $\alpha^{\omega^\zeta}$ does not exceed $\kappa$.  \end{enumerate}\label{convexifiable}\end{proposition}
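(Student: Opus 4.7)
The plan is to treat the three clauses in turn, handling (i) and (ii) by direct transfinite-induction arguments analogous to those in \cite{LPR} and concentrating the real work on the ``if'' direction of (iii) with its uniform constant. For (i) I would verify the four defining properties of a measure of non-compactness: the value on finite sets is zero because $[\alpha]_\ee'(K)=\varnothing$ whenever $K$ is finite and $\ee>0$; monotonicity and the union property pass from $[\alpha]_\ee^\xi$ to $\alpha^\xi$, with the union case reducing by induction on $\xi$ to the identity $[\alpha]_\ee^\xi(K_1\cup K_2)=[\alpha]_\ee^\xi(K_1)\cup [\alpha]_\ee^\xi(K_2)$; and the Minkowski-ball axiom follows from the inductive estimate $[\alpha]_{\ee+2r}^\xi(K+rB_{X^*})\subset [\alpha]_\ee^\xi(K)+rB_{X^*}$, giving $b=2$. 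For (ii) both inclusions come out by transfinite induction on $\xi$: the forward inclusion uses regularity of the $w^*$-topology together with $w^*$-compactness of $[\alpha]_\ee^\xi(K)$, since the hypothesis $\alpha^\xi(\overline{V}^{w^*}\cap K)\geqslant\ee_1>\ee$ forces $[\alpha]_\ee^\xi(\overline{V}^{w^*}\cap K)\neq\varnothing$ for every $w^*$-open neighborhood $V$ of $x^*$, and this $w^*$-closed family has $x^*$ in its intersection. The reverse inclusion reduces to the one-step relation $[\alpha]_\ee^\xi(K)\cap V\subset [\alpha]_\ee^\xi(\overline{V}^{w^*}\cap K)$ for every $w^*$-open $V$, which I would verify stepwise.

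For the ``only if'' half of (iii), if $\xi$ is not of the form $\omega^\zeta$, one writes $\xi$ as a nontrivial ordinal sum $\xi_0+\xi_1$ with $0<\xi_0,\xi_1<\xi$ and builds a $w^*$-compact $K$ from suitably scaled tree-functional models of $T_{\xi_0}$ and $T_{\xi_1}$ so that $\alpha^\xi(K)$ is small while the convex combinations in $\overline{\text{co}}^{w^*}(K)$ interlock by Hessenberg-sum arithmetic to raise the derivation order to $\xi$ with unbounded ratio, ruling out any universal $\kappa$.

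The main obstacle is the ``if'' direction together with the uniform constant, which is exactly what is new relative to \cite{LPR}. The plan is to prove by transfinite induction on $\zeta$ the stronger statement that there is an absolute $\kappa$, independent of $\zeta$ and $K$, for which
\begin{equation*}
[\alpha]_{\kappa\ee}^{\omega^\zeta}\bigl(\overline{\text{co}}^{w^*}(K)\bigr)\subset \overline{\text{co}}^{w^*}\bigl([\alpha]_\ee^{\omega^\zeta}(K)\bigr)+\ee B_{X^*}.
\end{equation*}
The base $\zeta=0$ is a one-step fragmentation computation yielding $\kappa$ close to $3$, in the spirit of the classical convex-hull argument. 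At successor stages, the relation $\omega^{\zeta+1}=\sup_n\omega^\zeta\cdot n$ together with indecomposability $\omega^\zeta m+\omega^\zeta n<\omega^{\zeta+1}$ permits a finite iteration of the inductive bound without deterioration of the constant. The delicate step is the limit case, where Proposition \ref{proximity} enters essentially: it supplies a dichotomy forcing any Hessenberg decomposition $\xi_0\oplus\xi_1$ arising from the derivation of the convex hull to have one summand bounded strictly below $\omega^\zeta$ along a cofinal slice of approximants. Feeding the inductive hypothesis into that bounded summand, and the indecomposability of $\omega^\zeta$ into the other, prevents the constant from inflating at the limit, and this finite-combinatorial control is precisely what ensures $\kappa$ does not depend on $\zeta$.
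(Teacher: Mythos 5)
Your treatments of (i) and (ii) are essentially sound; in fact your compactness argument for the inclusion $[\alpha^\xi]_{\ee_1}'(K)\subset [\alpha]^\xi_\ee(K)$ (non-emptiness of $[\alpha]^\xi_\ee(\overline{V}^{w^*}\cap K)$ for every neighborhood $V$ of $x^*$, plus monotonicity, $w^*$-compactness of derived sets, and $\bigcap_V\overline{V}^{w^*}=\{x^*\}$) is a clean alternative to the transfinite induction used in the paper, and the reverse inclusion via the stepwise relation $V\cap[\alpha]^\xi_\ee(K)\subset[\alpha]^\xi_\ee(\overline{V}^{w^*}\cap K)$ is exactly the paper's route. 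The problem is the heart of (iii). The inductive statement you propose, $[\alpha]^{\omega^\zeta}_{\kappa\ee}(\overline{\text{co}}^{w^*}(K))\subset \overline{\text{co}}^{w^*}([\alpha]^{\omega^\zeta}_\ee(K))+\ee B_{X^*}$, is false already at the base case $\zeta=0$. Take $X=c_0$, $X^*=\ell_1$, and $K=\{0\}\cup\{\pm e_n:n\in\nn\}$, which is $w^*$-compact. Each $\pm e_m$ is $w^*$-isolated in $K$, so $[\alpha]'_\ee(K)=\{0\}$ for every $\ee\in(0,1]$, and the right-hand side of your inclusion is just $\ee B_{\ell_1}$. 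On the other hand $\overline{\text{co}}^{w^*}(K)=B_{\ell_1}$, and for any $x^*$ with $\|x^*\|\leqslant 1-\kappa\ee$ every $w^*$-neighborhood of $x^*$ contains the infinite $2\kappa\ee$-separated set $\{x^*+\kappa\ee e_n: n \text{ large}\}\subset B_{\ell_1}$, so $(1-\kappa\ee)B_{\ell_1}\subset[\alpha]'_{\kappa\ee}(B_{\ell_1})$. For small $\ee$ this is nowhere near contained in $\ee B_{\ell_1}$, and no adjustment of absolute constants repairs it: convexification creates large derived sets out of sets whose derived sets are tiny, so no pointwise, level-for-level inclusion of this kind can hold. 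Consequently the successor and limit steps (and the intended use of Proposition \ref{proximity} there) have nothing true to induct on.

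What is true, and what the proof actually needs, is a statement about indices rather than derived sets: for balanced $K$ and $\ee_1>\ee$, if the derivation of $\overline{\text{co}}^{w^*}(K)$ at level $\ee_1$ survives to $\omega^\zeta$ then the derivation of $K$ at level $\ee$ (up to a fixed multiplicative loss) also survives to $\omega^\zeta$. The paper does not reprove this; it imports Corollary $4.3$ of \cite{Causey2} for the tree index $o(\cdot,\cdot)$ and combines it with the mutual equivalence, witnessed by a single constant $\rho$, of the six fragmentation indices $Sz, \overline{Sz}, i_\alpha, \overline{i}_\alpha, o, \overline{o}$, which yields the convexifiability of $\alpha^{\omega^\zeta}$ with constant at most $\rho^4$, uniformly in $\zeta$. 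If you do not want to quote \cite{Causey2}, you must prove a result of that emptiness/index type (as in \cite{LPR}, where one derivation step on $K$ controls finitely many slicing-derivation steps on the convex hull), not a set inclusion. Your sketch of the "only if" direction is also only an outline; the paper's argument is a concrete example with $\omega^\zeta<\xi<\omega^{\zeta+1}$, $X=C([1,\omega^{\omega^\zeta}])$ and $K$ the set of unimodular multiples of Dirac functionals, where $\alpha^\xi(K)=0$ outright while $\alpha^\xi(\overline{\text{co}}^{w^*}(K))>0$ because $Sz(C([1,\omega^{\omega^\zeta}]))=\omega^{\zeta+1}$, so no constant can work; any construction you give must achieve this kind of strict degeneration, and the Hessenberg-sum interlocking you describe is not yet an argument.
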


\begin{proof}$(i)$ Properties (i) and (iii) of measure of non-compactness are easily checked.   For (ii) and (iv), we note that in \cite{LPR} it was shown that for any $\ee>0$, any $\xi\in \ord$, any $K, K_1, \ldots, K_n$, and any $r>0$, $$[\alpha]_\ee^\xi\bigl(\cup_{i=1}^n K_i\bigr)= \cup_{i=1}^n [\alpha]_\ee^\xi(K_i)$$ and $$[\alpha]_{\ee+r}^\xi(K+r B_{X^*})\subset [\alpha]_\ee^\xi(K)+r B_{X^*}.$$  From these facts it follows that $\alpha^\xi(\cup_{i=1}^n K_i)=\max_{1\leqslant i\leqslant n}\alpha^\xi(K_i)$ and $\alpha^\xi(K+r B_{X^*})\leqslant \alpha^\xi(K)+r$ for any $r>0$ and $K, K_1, \ldots, K_n\in \Phi$.

$(ii)$ We first show by induction on $\xi$ that for any $w^*$-open set $V$, any $w^*$-compact $K$, and any $\ee>0$, $V\cap [\alpha]_\ee^\xi(K)\subset [\alpha]_\ee^\xi(\overline{V}^{w^*}\cap K)$.  The base case and limit ordinal cases are trivial.  Suppose $V\cap [\alpha]_\ee^\xi(K)\subset [\alpha]_\ee^\xi(\overline{V}^{w^*}\cap K)$ and $x^*\in V\cap [\alpha]_\ee^{\xi+1}(K)$.   Let $U$ be any $w^*$-neighborhood of $x^*$ and let $W$ be a $w^*$-neighborhood of $x^*$ such that $\overline{W}^{w^*}\subset U\cap V$.   Then $\alpha(\overline{W}^{w^*}\cap [\alpha]_\ee^\xi(K))\geqslant \ee$.  Using the inductive hypothesis, we see that $$\overline{W}^{w^*}\cap [\alpha]_\ee^\xi(K) \subset \overline{U}^{w^*}\cap V\cap [\alpha]^\xi_\ee(K) \subset \overline{U}^{w^*}\cap [\alpha]_\ee^\xi(\overline{V}^{w^*}\cap K).$$   From this it follows that $\alpha(\overline{U}^{w^*}\cap [\alpha]_\ee^\xi(\overline{V}^{w^*}\cap K))\geqslant \ee$.  Since $U$ was arbitrary, $x^*\in [\alpha]_\ee^{\xi+1}(\overline{V}^{w^*}\cap K)$.  This yields the claim.  

Now fix $K$ $w^*$-compact and $\ee_1>\ee>0$.  Suppose that for some ordinal $\xi>0$ and $x^*\in X^*$ that $x^*\notin [\alpha^\xi]_\ee'(K)$.  This means there exists a $w^*$-neighborhood $V$ of $x^*$ such that $\alpha^\xi(\overline{V}^{w^*}\cap K)<\ee$.  By the definition of $\alpha^\xi$, $[\alpha]^\xi_\ee(\overline{V}^{w^*}\cap K)=\varnothing$.  By the previous claim, $V\cap [\alpha]_\ee^\xi(K)=\varnothing$, whence $x^*\notin [\alpha]_\ee^\xi(K)$.   This yields that $[\alpha]_\ee^\xi(K)\subset [\alpha^\xi]_\ee'(K)$.   

We next prove by induction on $\xi$ that $[\alpha^\xi]_{\ee_1}'(K)\subset [\alpha]_\ee^\xi(K)$.  First suppose that $x^*\notin [\alpha]_\ee'(K)$.  This means there exists a $w^*$-neighborhood $V$ of $x^*$ such that $\alpha(\overline{V}^{w^*}\cap K)<\ee$.    This means that $$[\alpha]_{\ee_1}^1(\overline{V}^{w^*}\cap K) \subset [\alpha]_\ee^1 (\overline{V}^{w^*}\cap K)=\varnothing,$$  whence $\alpha^1(\overline{V}^{w^*}\cap K)\leqslant \ee<\ee_1$.   Thus $x^*\notin [\alpha^1]_{\ee_1}'(K)$.    Next suppose that $\xi$ is a limit ordinal and the result holds for every $0<\zeta<\xi$.  Note that for any $0<\zeta<\xi$ and for any $w^*$-compact $K$, $\alpha^\xi(K)\leqslant \alpha^\zeta(K)$, whence $[\alpha^\xi]_{\ee_1}'(K)\subset [\alpha^\zeta]_{\ee_1}'(K)$.  Therefore $$[\alpha^\xi]_{\ee_1}'(K)\subset \underset{0<\zeta<\xi}{\bigcap} [\alpha^\zeta]_{\ee_1}'(K) \subset \underset{0<\zeta<\xi}{\bigcap}[\alpha]_\ee^\zeta(K)=[\alpha]_\ee^\xi(K).$$   Next, suppose that for some ordinal $\xi$, $[\alpha^\xi]_{\ee_1}'(K)\subset [\alpha]^\xi_\ee(K)$.   To reach a contradiction, assume that $x^*\in [\alpha^{\xi+1}]_{\ee_1}'(K)\setminus [\alpha]_\ee^{\xi+1}(K)$.     Since $x^*\notin [\alpha]^{\xi+1}_\ee(K)$, there exists a $w^*$-neighborhood $V$ of $x^*$ such that $\alpha(\overline{V}^{w^*}\cap [\alpha]_\ee^\xi(K))<\ee$.  From this it follows that $V\cap [\alpha]_\ee^\xi(K)=\varnothing$.  Fix a $w^*$-neighborhood $U$ of $x^*$ such that $\overline{U}^{w^*}\subset V$.  Since $x^*\in [\alpha^{\xi+1}]_{\ee_1}'(K)$, we see that $\alpha^{\xi+1}(\overline{U}^{w^*}\cap K)\geqslant \ee_1>\ee$, whence $[\alpha]^{\xi+1}_\ee(\overline{U}^{w^*}\cap K)\neq \varnothing$.    But  $$[\alpha]^{\xi+1}_\ee(\overline{U}^{w^*}\cap K) \subset \overline{U}^{w^*}\cap [\alpha]_\ee^{\xi+1}(K)\subset V\cap [\alpha]_\ee^{\xi+1}(K)=\varnothing,$$ a contradiction.

$(iii)$ First we show that if $\xi=\omega^\zeta$, $\alpha^\xi$ is convexifiable.  For a fragmentation index $i:\Phi\times (0, \infty)\to \ord\cup\{\infty\}$, let $\overline{i}(K, \ee)=i(S_\mathbb{F}K, \ee)$.  As we have already mentioned,  $Sz(\cdot, \cdot)$ is equivalent to $\overline{Sz}(\cdot, \cdot)$, and $Sz(\cdot, \cdot)$, $i_\alpha(\cdot, \cdot)$, and $o(K, \ee):=o_w(\hhh^K_\ee)$ are all equivalent.  By transitivity, each of these fragmentation indices is equivalent to $\overline{i}_\alpha(\cdot, \cdot)$ and $\overline{o}(\cdot, \cdot)$.  Then there exists a constant $\rho>1$ such that if $i, j$ are any two of these six fragmentation indices $Sz, \overline{Sz}, i_\alpha, \overline{i}_\alpha, o, \overline{o}$, for any $K\in \Phi$, and $\ee>0$, $i(K, \ee\rho)\leqslant j(K, \ee)$.   By Corollary $4.3$ of \cite{Causey2} and the remarks following it, if $K$ is balanced and $\ee_1>\ee>0$, if $o(\overline{\text{co}}^{w^*}(K), \ee_1)>\omega^\zeta$, then $o(K, \ee)>\omega^\zeta$.   We claim that $\alpha^{\omega^\zeta}$ is convexifiable with convexifiability constant not exceeding $\rho^4$.  We work by contradiction, so assume this is not true.  Then there exists $K\in \Phi$ such that $\alpha^{\omega^\zeta}(\overline{\text{co}}^{w^*}(K))> \rho^4 \alpha^{\omega^\zeta}(K)$.   Fix $\ee>0$ such that $\alpha^{\omega^\zeta}(\overline{\text{co}}^{w^*}(K))>\rho^4 \ee$ and $\ee>\alpha^{\omega^\zeta}(K)$.  This means that $[\alpha]^{\omega^\zeta}_{\rho^4 \ee}(\overline{\text{co}}^{w^*}(K))\neq \varnothing$, so that $$i_\alpha(\overline{\text{co}}^{w^*}(S_\mathbb{F} K), \rho^4 \ee)\geqslant i_\alpha(\overline{\text{co}}^{w^*}(K), \rho^4 \ee)>\omega^\zeta.$$  From this it follows that  $$o(\overline{\text{co}}^{w^*}(S_\mathbb{F}K), \rho^3\ee) \geqslant i_\alpha(\overline{\text{co}}^{w^*}(S_\mathbb{F} K), \rho^4 \ee)>\omega^\zeta.$$   Then since $S_\mathbb{F}K$ is balanced and $\rho>1$, $$o(S_\mathbb{F}K, \rho^2\ee)>\omega^\zeta.$$   Finally, $$i_\alpha(K,\ee)\geqslant o(K, \rho \ee) \geqslant \overline{o}(K, \rho^2 \ee)= o(S_\mathbb{F} K, \rho^2\ee)>\omega^\zeta,$$ which means $[\alpha]^{\omega^\zeta}_\ee(K)\neq \varnothing$.     But this is a contradiction, since $\ee>\alpha^{\omega^\zeta}(K)$, and this contradiction shows that $\alpha^{\omega^\zeta}$ is convexifiable with convexifiability constant not exceeding $\rho^4 $.   

In order to see that $\alpha^\xi$ is not convexifiable when $\xi>0$ is not of the form $\omega^\zeta$, suppose $\xi$, $\zeta$ are such that $\omega^\zeta<\xi<\omega^{\zeta+1}$.   Let $X=C([1, \omega^{\omega^\zeta}])$.  Let $K=\{\ee \delta_\eta: \eta\in [1, \omega^{\omega^\zeta}], |\ee|=1\}$.   Then $\overline{\text{co}}^{w^*}(K)=B_{C([1, \omega^{\omega^\zeta}])}$.  It is well-known and easy to see that $Sz(K, \ee)=\omega^\zeta+1$ for every $\ee\in (0,2)$, so that $[\alpha]_\ee^\xi(K)=\varnothing$ for every $\ee>0$.  This shows that $\alpha^\xi(K)=0$.  But since $Sz(C([1, \omega^{\omega^\zeta}]))=\omega^{\zeta+1}$ \cite{Brooker1}, $\alpha^\xi(\overline{\text{co}}^{w^*}(K))>0$.  This example shows that $\alpha^\xi$ is not convexifiable.

\end{proof}

\begin{lemma}\begin{enumerate}[(i)]\item Fix an operator $A:X\to Y$, $K\subset Y^*$  $w^*$-compact, $V\subset X^*$  $w^*$-open, $\ee_1>\ee>0$.  If $x^*\in V\cap [\alpha]_{\ee_1}^\xi(A^*K)$, then there exists a tree $T$ with $o(T)=\xi+1$ and a collection $(y^*_t)_{t\in T}\subset K$ such that $(y^*_t)_{t\in T}$ is $w^*$-closed and $(A^*, \ee)$-separated, $(A^* y^*_t)_{t\in T}\subset V$, and $A^*y^*_\varnothing=x^*$.  \item If $T$ is a well-founded, non-empty tree, $(x^*_t)_{t\in T}$ is $w^*$-closed, and if $P$ is a finite cover of $T\setminus T'$, there exists a subtree $S$ of $T$ with $o(S)=o(T)$ and $M\in P$ such that $(x^*_s)_{s\in S}$ is $w^*$-closed and $ S\setminus S'\subset M$.  \item Suppose $S$ is a non-empty, well-founded $B$-tree. Suppose $C$ is a compact set of positive numbers and let $b=\max C$.   Suppose $\ee, \delta, R>0$ are such that $\ee/2b>\delta R$.  Suppose also that  $K\subset X^*$ is $w^*$-compact and $C$ is a compact set of positive numbers such that $C^{-1}=\{c^{-1}:c\in C\}$ has diameter not exceeding $\delta$, and $(x^*_\sigma)_{\sigma\in S}\subset CK\cap R B_{X^*}$ is $w^*$-closed and $\ee$-separated.  Then $x^*_\varnothing\in C[\alpha]_{\ee/2b- \delta R}^\xi(K)$.  Here, $CL=\{ c x^*: c\in C, x^*\in L\}$.    \end{enumerate}

\label{szlenk characterization}

\end{lemma}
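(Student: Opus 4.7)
The plan for (i) is a transfinite induction on $\xi$. The case $\xi=0$ is immediate: $[\alpha]^0_{\ee_1}(A^*K)=A^*K$, so I pick any $y^*_\varnothing\in K$ with $A^*y^*_\varnothing=x^*$ and take $T=\{\varnothing\}$. For the successor case $\xi=\zeta+1$, the characterization of $[\alpha]'_\ee$ recorded in the text gives a subset $\Sigma\subset[\alpha]^\zeta_{\ee_1}(A^*K)$ with $x^*\in\overline{\Sigma}^{w^*}$ and $\|z^*-x^*\|>\ee$ for every $z^*\in\Sigma$; since $x^*\in V$ and $V$ is $w^*$-open, $\Sigma\cap V$ still $w^*$-accumulates at $x^*$. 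I would pick a preimage $y^*_{z^*}\in K$ for each $z^*\in\Sigma\cap V$ and, using $w^*$-compactness of $K$ together with $w^*$-to-$w^*$ continuity of $A^*$, extract a subnet so that $y^*_{z^*}\to y^*_\varnothing\in K$ with $A^*y^*_\varnothing=x^*$. For each remaining $z^*$ the inductive hypothesis applied to $(z^*,V)$ yields a tree $T_{z^*}$ of order $\zeta+1$ rooted at $y^*_{z^*}$; grafting these at $\varnothing$ produces $T$ of order $\xi+1$. Level-one separation is forced by $\|A^*y^*_{z^*}-A^*y^*_\varnothing\|=\|z^*-x^*\|>\ee$, while deeper separation and $w^*$-closedness within branches are inherited from the $T_{z^*}$; closedness at $\varnothing$ is arranged by the subnet choice. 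The limit case is handled by gluing, at the common root, trees of order $\zeta+1$ produced by the inductive hypothesis for each $\zeta<\xi$.

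For (ii), the plan is a transfinite induction on $o(T)$, the base $o(T)=1$ being immediate finite pigeonhole. For $o(T)>1$, to each child $t$ of $\varnothing$ I would associate, by the inductive hypothesis applied to the rooted subtree $T^t=\{s\in T:t\preceq s\}$ (with the finite cover restricted to its maximal elements), a subtree $S^t\subset T^t$ with $o(S^t)=o(T^t)$ and a color $i(t)\in\{1,\ldots,|P|\}$ such that $S^t\setminus(S^t)'\subset M_{i(t)}$. Finite pigeonhole on $i(t)$ selects a color $i$ realized by children whose subtree orders are sufficient to recover $o(T)$: for $o(T)=\xi+1$ with $\xi$ a successor, one child with the maximal subtree order suffices; for $\xi$ a limit, one argues that the orders for some color must be unbounded below $\xi$. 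The desired subtree is $S=\{\varnothing\}\cup\bigcup_{i(t)=i}S^t$, and its $w^*$-closedness is inherited from that of $T$ since each $S^t$ is a subtree of $T^t$.

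For (iii), write $x^*_\sigma=c_\sigma z^*_\sigma$ with $c_\sigma\in C$ and $z^*_\sigma\in K$. The key separation estimate
\[
\|z^*_\sigma-z^*_{\sigma^-}\|\;\geqslant\;\frac{\|x^*_\sigma-x^*_{\sigma^-}\|}{c_\sigma}-\|x^*_{\sigma^-}\|\,|c_\sigma^{-1}-c_{\sigma^-}^{-1}|\;\geqslant\;\frac{\ee}{b}-\delta R
\]
produces an $(\ee/b-\delta R)$-separated family of $z^*_\sigma$'s in $K$. The plan is then to argue by transfinite induction on $\xi=o(S)$ that $z^*_\varnothing\in[\alpha]^\xi_{\ee/2b-\delta R}(K)$, which gives the claim since $x^*_\varnothing=c_\varnothing z^*_\varnothing$. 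The factor of two in the derivation index comes from the characterization recorded before the lemma: a $w^*$-cluster point of a set of points at mutual distance at least $2\eta$ lies in $[\alpha]'_\eta(K)$. Part (ii) enters to drive the induction: at each stage I would cover $C$ by finitely many pieces small enough (using its compactness) that, after pruning by (ii), the prefactors $c_\sigma$ along the remaining subtree are nearly constant, allowing the inductive hypothesis to be applied uniformly to each child subtree. The main obstacle, both in (i) and (iii), is to route every preimage lift and subtree pruning through a $w^*$-cluster-point argument so that the $w^*$-closedness hypothesis is preserved at every root and propagates through the transfinite induction; balancing this with the finite pigeonholes of (ii) is the principal technical burden.
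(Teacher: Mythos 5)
Your sketch of (i) is essentially the paper's argument (neighborhood-indexed choices of far-away points of the lower derived set, lifts into $K$, grafting, and a $w^*$-cluster point of the roots of the grafted trees as the new root); the only slip is that the inductive hypothesis does not let you insist the tree be ``rooted at'' a preimage chosen in advance -- you must take as lifts the roots the induction actually produces and only then pass to a subnet, which is how the paper orders the steps.

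In (ii) there is a genuine gap. Your color is selected purely to recover the order (``one child of maximal subtree order,'' or ``orders unbounded below $\xi$''), and you assert that $w^*$-closedness of $S$ is ``inherited from that of $T$ since each $S^t$ is a subtree of $T^t$.'' Closedness is not inherited by passing to subtrees, and the delicate point is closedness at the root: if the selected color class happened to consist of a single child $t$, closedness of $S$ at $\varnothing$ would force $x^*_\varnothing\in\overline{\{x^*_t\}}^{w^*}=\{x^*_t\}$, which is false in general, and nothing in your selection rules out this or, more generally, a color class whose values fail to $w^*$-accumulate at $x^*_\varnothing$. The paper's pigeonhole is tied to the closure condition, not to the order: in the successor case $o(T)=\zeta+2$ it takes $R$ to be the length-one sequences lying in $T^\zeta$ (so closedness of the original collection gives $x^*_\varnothing\in\overline{\{x^*_s:s\in R\}}^{w^*}$), colors each $s\in R$ by the $M_s$ coming from the inductive hypothesis on $T_s$, and chooses $M$ with $x^*_\varnothing\in\overline{\{x^*_s:s\in R_M\}}^{w^*}$, which is possible because a point in the closure of a finite union lies in the closure of one piece; in the limit case it proves the stronger simultaneous claim that a single $M$ satisfies $x^*_\varnothing\in\overline{\{x^*_s:s\in R_M\cap T^\zeta\}}^{w^*}$ for every $\zeta$ below the limit, and this one claim delivers both $o(S)=o(T)$ and root closedness. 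Without an argument of this kind your $S$ need not carry a $w^*$-closed collection.

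In (iii) the route also breaks down. The rescaled family $z^*_\sigma=c_\sigma^{-1}x^*_\sigma$ need not be $w^*$-closed: $x^*_s\to x^*_\sigma$ along children gives no convergence of $c_s^{-1}x^*_s$ to $c_\sigma^{-1}x^*_\sigma$ unless the prefactors converge, and the proposed repair via (ii) cannot supply this, because (ii) prunes according to a finite coloring of the maximal nodes only and says nothing about prefactors at interior nodes; moreover the statement you aim for, $z^*_\varnothing\in[\alpha]^{\xi}_{\ee/2b-\delta R}(K)$ for a pre-chosen decomposition of $x^*_\varnothing$, is stronger than what is claimed and not obviously true. The paper instead proves, by induction on $\zeta$, exactly the node-wise statement $x^*_\sigma\in C[\alpha]^{\zeta}_{\ee/2b-\delta R}(K)$ for $\sigma\in S^\zeta$, letting the witnessing constant vary from node to node: at a successor step one uses compactness of $C$ to extract a net with $c_s^{-1}x^*_s\to c^{-1}x^*_\sigma$ for some $c\in C$, and the estimate $\|c^{-1}x^*_\sigma-c_s^{-1}x^*_s\|\geqslant b^{-1}\|x^*_\sigma-x^*_s\|-|b^{-1}-c^{-1}|\,\|x^*_\sigma\|-|b^{-1}-c_s^{-1}|\,\|x^*_s\|>\ee/b-2\delta R$ places $c^{-1}x^*_\sigma$ in $[\alpha]^{\zeta+1}_{\ee/2b-\delta R}(K)$; no appeal to (ii) is needed. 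Your separation computation is the right one, but it is applied to a fixed global rescaling rather than to these node-wise subnet limits, and that is where the argument as written fails.
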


We will often apply $(iii)$ when $C=\{1\}$.

\begin{rem}\upshape Note that the proof given below of $(ii)$ also works if we replace a $w^*$-closed tree with a weakly null or $w^*$-null $B$-tree.   

\label{remark}
\end{rem}

\begin{proof}$(i)$ Let $N$ denote the $w^*$-open sets in $X^*$ containing $0$. Note that $N$ ordered by reverse inclusion is a directed set.  We prove by induction on $\xi$ that there  exists a collection $(x^*_t)_{t\in T_\xi N}$ satisfying the requirements.  The $\xi=0$ case is trivial since $T_0  N=\{\varnothing\}$.  We take $y^*_\varnothing\in K$ such that $A^* y^*_\varnothing =x^*$.   Assume the result holds for a given $\xi$ and $x^*\in V\cap [\alpha]^{\xi+1}_{\ee_1}(A^*K)$.   For every $U\in N$, there exists $x^{*,U}\in V\cap (x^*+U)\cap [\alpha]^{\xi}_{\varepsilon_1}(A^*K)$ such that $\|x^{*,U}-x^*\|>\ee$.    By the inductive hypothesis, for each $U\in N$ there exists a $w^*$-closed, $(A^*, \ee)$-separated collection $(y^{*,U}_t)_{t\in T_\xi N}\subset K$ such that $(A^*y^{*,U}_t)_{t\in T_\xi N}\subset V$  and $A^*y^{*,U}_\varnothing=x^{*,U}$.   Let $y^*_\varnothing$ be any $w^*$-limit of a $w^*$-converging subnet of $(y^{*,U}_\varnothing)_{U\in N}$,  and for each $U\in N$,  $y^*_{(\xi+1, U)\cat t}=y^{*,U}_t$.   One easily checks that the requirements are satisfied.   Last, assume the result has been shown for every ordinal $\zeta<\xi$.  Then if $\zeta<\xi$, $x^*\in [\alpha]^\xi_{\ee_1}(A^*K)\subset [\alpha]^{\zeta+1}_{\ee_1}(A^*K)$, whence there exists $(y^{*, \zeta}_t)_{t\in T_{\zeta+1} N}$ satisfying the conslusions.  Let $y^*_\varnothing$ be a $w^*$-limit of a $w^*$-converging subnet $(y^{*,\zeta}_\varnothing)_{\zeta\in D}$ of $(y^{*,\zeta}_\varnothing)_{\zeta<\xi}$.   For $t\in T_\xi N\setminus \{\varnothing\}$, let $y^*_t=y^{*, \zeta}_t$, where $\zeta$ is the unique ordinal less than $\xi$ such that $t\in T_{\zeta+1} N$.

$(ii)$ We prove the result for trees $T$ such that $o(T)=\xi+1$ by induction on $\xi$.     The $\xi=0$ case is trivial, since $T=\{\varnothing\}=T\setminus T'$ in this case.    Suppose $o(T)=\xi+1$ where $\xi=\zeta+1$ and the result holds for $\zeta$.   Let $R$ consist of all length $1$ sequences in $T^\zeta$. Since $o(T)>\zeta+1$, $R$ is non-empty.  Since $T$ is closed, $x^*_\varnothing\in \overline{\{x^*_s: s\in R\}}^{w^*}$.   For every $s\in R$, let $T_s$ consist of all those sequences $u$ such that $s\cat u\in T$.   Note that $o(T_s)=\zeta+1$ and the collection $(x^*_{s\cat u})_{u\in T_s}$ is closed.  There exists a subtree $S_s\subset T_s$ with $o(S_s)=\zeta+1$ such that $(x^*_{s\cat u})_{u\in S_s}$ is closed and $M_s\in P$ such that $S_s\setminus S_s'\subset M_s$.    Then with $R_M=\{s\in R: M_s=M\}$, since $\{x^*_s: s\in R\}=\cup_{M\in P} \{x^*_s:s\in R_M\}$, and since this union is finite, there must exist $M\in P$ such that $x^*_\varnothing\in \overline{\{x^*_s: s\in R_M\}}^{w^*}$.  Let $$S=\{\varnothing\}\cup \bigcup_{s\in R_M} \{s\cat u: u\in S_s\}.$$   One easily checks that the conditions are satisfied in this case.   Last, assume $\xi$ is a limit ordinal and the conclusion holds for every $\zeta<\xi$.  Suppose $o(T)=\xi+1$.    Let $R$ consist of all length $1$ sequences in $T$ and for each $s\in R$, let $T_s$ consist of all sequences such that $s\cat u\in T$.   For each $s$, find $M_s$ and $S_s\subset T_s$ as in the successor case.   For each $M\in P$, let $R_M=\{t\in R: M_s=M\}$.  We claim that there exists $M\in P$ such that for all $\zeta<\xi$, $y^*_\varnothing\in \overline{\{y^*_s: s\in R_M\cap T^\zeta\}}^{w^*}$.  If it were not so, for each $M\in P$ there would exist some $\zeta_M<\xi$ such that $y^*_\varnothing\notin \overline{\{y^*_s: s\in R_M\cap T^{\zeta_M}\}}^{w^*}$.  Since $\xi$ is a limit and $P$ is finite, there exists some $\zeta$ such that for each $M\in P$, $\zeta_M<\zeta<\xi$.  Then $\varnothing\in T^{\zeta+1}$, since $\zeta+1<\xi$, but $$y^*_\varnothing \notin \cup_{M\in P}\overline{\{y^*_s: s\in R_M\cap T^{\zeta_M}\}}^{w^*}\supset \cup_{M\in P}\overline{\{y^*_s: s\in R_M\cap T^\zeta\}}^{w^*}=\overline{\{y^*_s: s\in T^\zeta, s^-=\varnothing\}}^{w^*},$$ a contradiction.  Here we have used that the sets $\overline{\{y^*_s: s\in R_M\cap T^\gamma\}}^{w^*}$ decrease as $\gamma$ increases.    Thus there must be such an $M$.   We then let $$S=\{\varnothing\}\cup \bigcup_{s\in R_M}\{s\cat u: u\in S_s\}.$$   Of the conditions to be satisfied by $S$, the only non-trivial one to check is that $y^*_\varnothing\in \overline{\{y^*_s: s\in S^\zeta, s^-=\varnothing\}}^{w^*}$.   But note that $\{y^*_s: s\in S^\zeta, s^-=\varnothing\}$ is equal to $\{y^*_s: s\in R_M\cap T^\zeta\}$, and so the $w^*$-closure of this set contains $y^*_\varnothing$ by construction.  In order to see the equality of the two sets, we note that the length $1$ sequences in $S$, and therefore in $S^\zeta$, lie both in $T^\zeta$ and $R_M$, yielding one inclusion.  For the reverse inclusion, fix $s\in R_M\cap T^\zeta$.  Since $o(S_s)=o(T_s)$ and $s\in T^\zeta$, it follows that $o(S_s)=o(T_s)>\zeta$.   Using the map $u\mapsto s\cat u$ from $S_s$ into $S$, we see that $s=s\cat\varnothing\in S^\zeta$, since $\varnothing\in S_s^\zeta$.   Since $R_M$ consists of length $1$ sequences, $s^-=\varnothing$, which finishes the reverse inclusion.

$(iii)$ First, note that the closed condition and compactness of $CK$ guarantees that $x^*_\sigma\in CK$ for all $\sigma\in S$.  This statement is the base case of the following claim which we prove by induction: For any ordinal $\zeta$ and $\sigma\in S^\zeta$, $x^*_\sigma\in C[\alpha]^\zeta_{\ee/2b-\delta R}(K)$.   The limit ordinal case is trivial.  Suppose $\sigma\in S^{\zeta+1}\subset S^\zeta$ and the result holds for $\zeta$.  Then $x^*_\sigma\in C[\alpha]_{\ee/2b-\delta R}^\zeta(K)$.  Since the collection is closed, $x^*_\sigma$ lies in the $w^*$-closure of $\{x^*_s: s\in D\}$, where $D=\{s\in S^\zeta: s^-=\sigma\}$.   By the $\ee$-separated condition, $\|x^*_\sigma- x^*_s\|>\ee$ for every $s\in D$.  For every $s\in D$, $x^*_s\in C[\alpha]^\zeta_{\ee/2b- \delta R}(K)$, so there exists $c_s\in C$ such that $c_s^{-1}x^*_s\in [\alpha]^\zeta_{\ee/2b-\delta R}(K)$.   We may fix a net $(z^*_\lambda)$ in $\{c^{-1}_s x^*_s: s\in D\}$ and $c\in C$ such that $z^*_\lambda\underset{w^*}{\to} c^{-1}x^*_\sigma$.   Note that for every $\lambda$, if $z^*_\lambda = c^{-1}_s x^*_s$, $$\|c^{-1}x^*_\sigma - c^{-1}_s x^*_s\| \geqslant b^{-1}\|x^*_\sigma-x^*_s\| -|b^{-1}-c^{-1}|\|x^*_\sigma\| - |b^{-1}- c^{-1}_s| \|x^*_s\|> \ee/b- 2 \delta R.$$ It follows that $c^{-1}x^*_\sigma\in [\alpha]^{\zeta+1}_{\ee/2b-\delta R}(K)$.   This yields the inductive claim.  Since $o(S)=\xi+1$, $\varnothing \in S^\xi$, and $(iii)$ follows.

\end{proof}

The next four results were stated in \cite{LPR}.  Some of these results were stated in the case that the Banach space $X$ is separable.   The general cases require only the substitution of Lemma \ref{szlenk characterization} for their Lemmas $5.5$ and $5.6$.

\begin{lemma}\cite[Proposition $3.2$]{LPR} Suppose that $\eta$ is a measure of non-compactness which is convexifiable with convexifiability constant $\kappa$.  Suppose that $\lambda\in (0,1)$, $\ee>0$, and $L\subset X^*$ are such that $L$ is $w^*$-compact, balanced, and radial, and $[\eta]_\ee'(L)\subset \lambda L$.  Then if $M=\overline{\text{\emph{co}}}^{w^*}(L)$, then for any $\ee_1>\ee$, there exists $n\in \nn$ such that $\langle \eta\rangle_{\kappa\ee_1}^n(M)=\varnothing$.

\label{lpr1}
\end{lemma}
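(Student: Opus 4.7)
The plan is to establish the single-step inclusion $\langle \eta \rangle_{\kappa \ee_1}'(M) \subset \lambda M$ and then iterate until the scaled set $\lambda^n M$ is so small that the slicing derivation must vanish.

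For the key inclusion, fix $x^* \in M \setminus \lambda M$. The set $\lambda M = \overline{\text{co}}^{w^*}(\lambda L)$ is $w^*$-compact, convex and balanced, so the Hahn--Banach separation theorem produces $x \in X$ and $a' \in \rr$ with $\text{Re } x^*(x) > a'$ and $\sup_{y^* \in \lambda M}\text{Re } y^*(x) < a'$. The slice $S := \{z^*\in X^*: \text{Re } z^*(x) > a'\}$ is then a $w^*$-open slice containing $x^*$ with $\overline{S}^{w^*} \cap \lambda M = \varnothing$. Since by hypothesis $[\eta]_\ee'(L) \subset \lambda L \subset \lambda M$, no point of $[\eta]_\ee'(L)$ lies in $\overline{S}^{w^*}$, so every $y^* \in \overline{S}^{w^*} \cap L$ admits a $w^*$-neighborhood $V_{y^*}$ with $\eta(\overline{V_{y^*}}^{w^*} \cap L) < \ee$. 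The $w^*$-compactness of $\overline{S}^{w^*} \cap L$ together with property (ii) of measures of non-compactness then yields $\eta(\overline{S}^{w^*} \cap L) < \ee < \ee_1$.

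The remaining step is to transfer this bound from $L$ to $M$. A Milman-type analysis of the $w^*$-compact convex cap $\overline{S}^{w^*} \cap M$, combined with the balanced/radial hypothesis on $L$, controls this cap by $\overline{\text{co}}^{w^*}(\overline{S}^{w^*} \cap L)$; the convexifiability of $\eta$ with constant $\kappa$ then gives
$$\eta(\overline{S}^{w^*} \cap M) \leq \kappa\, \eta(\overline{S}^{w^*} \cap L) < \kappa \ee_1,$$
so $x^* \notin \langle \eta \rangle_{\kappa \ee_1}'(M)$. Iterating, $\langle \eta \rangle_{\kappa \ee_1}^n(M) \subset \lambda^n M$ for every $n$. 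Since $M$ is bounded, say $M \subset R B_{X^*}$, property (iv) of measures of non-compactness gives $\eta(\lambda^n M) \leq \eta(\{0\}) + b \lambda^n R = b \lambda^n R$, which is less than $\kappa \ee_1$ for $n$ sufficiently large. Because any $w^*$-compact $K$ with $\eta(K) < \kappa \ee_1$ satisfies $\langle \eta \rangle_{\kappa \ee_1}'(K) = \varnothing$ by monotonicity of $\eta$, we conclude $\langle \eta \rangle_{\kappa \ee_1}^{n+1}(M) = \varnothing$, as required.

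The main obstacle is the transfer from $\eta(\overline{S}^{w^*} \cap L) < \ee_1$ to $\eta(\overline{S}^{w^*} \cap M) < \kappa \ee_1$. A direct appeal to convexifiability is delicate because a slice of a $w^*$-closed convex hull is not the $w^*$-closed convex hull of the slice: extreme points of $\overline{S}^{w^*} \cap M$ can arise on the separating hyperplane $\{\text{Re}(\cdot)(x) = a'\}$ rather than in $L$. It is precisely the balanced/radial structure of $L$---ensuring that $M$ is the unit ball of a dual norm and thereby ruling out problematic contributions from the hyperplane face---together with the slack $\ee_1 > \ee$ that accommodates the finite-cover approximation, which makes the argument go through.
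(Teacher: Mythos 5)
The paper does not actually prove this lemma --- it is quoted from [LPR, Proposition 3.2] --- so your argument has to stand on its own, and it does not: the step you yourself flag as ``the main obstacle'' is exactly where the proof is missing. Your separation step and the bound $\eta(\overline{S}^{w^*}\cap L)<\ee$ (via $\overline{S}^{w^*}\cap[\eta]_\ee'(L)=\varnothing$, compactness, and property (ii)) are fine. But the transfer to $\eta(\overline{S}^{w^*}\cap M)<\kappa\ee_1$ rests on the unproved assertion that the cap $\overline{S}^{w^*}\cap M$ is ``controlled by'' $\overline{\text{co}}^{w^*}(\overline{S}^{w^*}\cap L)$, and that assertion fails: a point of $M=\overline{\text{co}}^{w^*}(L)$ lying in $S$ is a $w^*$-limit of combinations $\sum_i t_iy_i^*$ with $y_i^*\in L$, and only a fraction $s$ of the mass is forced onto elements of $L$ in a (slightly widened) slice, while the remaining $(1-s)$ of the mass ranges over all of $M$. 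What one actually gets is an inclusion of the cap of $M$ into $\bigcup_{s\in[s_0,1]}\bigl(s\,\overline{\text{co}}^{w^*}(\overline{S'}^{w^*}\cap L)+(1-s)M\bigr)$, where $s_0$ is proportional to the depth of the slice relative to $\sup_{z^*\in M}\text{Re}\,z^*(x)$; properties (iii) and (iv) then only give $\eta(\overline{S}^{w^*}\cap M)\leqslant \kappa\ee + Cb(1-s_0)$. For $x^*$ just outside $\lambda M$ the separating slice is arbitrarily shallow, $s_0$ is arbitrarily small, and the estimate is vacuous. So the one-step inclusion $\langle\eta\rangle_{\kappa\ee_1}'(M)\subset\lambda M$, which is the engine of your whole proof, is not established; neither the balanced/radial hypothesis nor the slack $\ee_1>\ee$ repairs it by itself. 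Overcoming exactly this difficulty is the substance of the cited result of Lancien--Proch\'{a}zka--Raja, whose argument takes a different route (exploiting the radial structure and the identity of Proposition \ref{lpr2} rather than a single geometric shrinking of the slicing derivation), and it is why their conclusion is only that \emph{some} finite $n$ works rather than an explicit geometric decay $\langle\eta\rangle^n_{\kappa\ee_1}(M)\subset\lambda^nM$.

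There is also a secondary gap in the iteration. Even granting the one-step inclusion for $M$, the inductive step $\langle\eta\rangle^{n+1}_{\kappa\ee_1}(M)\subset\lambda^{n+1}M$ requires applying the same statement to $\lambda^nM=\overline{\text{co}}^{w^*}(\lambda^nL)$, and the needed hypothesis $[\eta]_\ee'(\lambda^nL)\subset\lambda^{n+1}L$ is not available: the axioms for a measure of non-compactness used here do not include positive homogeneity, so the hypothesis $[\eta]_\ee'(L)\subset\lambda L$ cannot simply be rescaled, and monotonicity of the slicing derivation alone does not substitute for it. Your concluding observations --- that $\eta(\lambda^nM)\leqslant b\lambda^nR$ by property (iv), and that $\eta(K)<\kappa\ee_1$ forces $\langle\eta\rangle_{\kappa\ee_1}'(K)=\varnothing$ by monotonicity --- are correct, but they only become relevant once the two gaps above are filled.
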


\begin{proposition}\cite[Lemma $2.7$]{LPR} Suppose that $\eta$ is a measure of non-compactness.  For any $w^*$-compact, convex $K\subset X^*$ and any $\ee>0$, $$\langle \eta\rangle_\ee'(K)=\overline{\text{\emph{co}}}^{w^*}([\eta]_\ee'(K)).$$   
\label{lpr2}
\end{proposition}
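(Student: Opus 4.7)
My plan is to prove the two inclusions separately. For the inclusion $\overline{\text{co}}^{w^*}([\eta]_\ee'(K))\subset \langle \eta\rangle_\ee'(K)$, I note that every $w^*$-open slice is a $w^*$-open neighborhood of each of its points; hence $[\eta]_\ee'(K)\subset \langle \eta\rangle_\ee'(K)$ directly from the definitions, since the defining condition for $\langle\eta\rangle_\ee'(K)$ must be checked on a strictly smaller collection of sets. Since $\langle \eta\rangle_\ee'(K)$ is $w^*$-closed and, by the convexity observation recorded earlier (namely that $\langle\eta\rangle_\ee^\xi(K)$ is convex whenever $K$ is), also convex, it must contain $\overline{\text{co}}^{w^*}([\eta]_\ee'(K))$.

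For the reverse inclusion I argue contrapositively. Suppose $x^*\in K\setminus\overline{\text{co}}^{w^*}([\eta]_\ee'(K))$. In the locally convex space $(X^*,w^*)$, the Hahn--Banach theorem strictly separates the compact singleton $\{x^*\}$ from the disjoint $w^*$-closed convex set $\overline{\text{co}}^{w^*}([\eta]_\ee'(K))$: there exist $x\in X$ and reals $a>b$ such that $\text{Re}\,x^*(x)>a$ while $\text{Re}\,y^*(x)\leqslant b$ for every $y^*\in \overline{\text{co}}^{w^*}([\eta]_\ee'(K))$. The set $S=\{z^*\in X^*:\text{Re}\,z^*(x)>a\}$ is then a $w^*$-open slice containing $x^*$, and its $w^*$-closure $\overline{S}^{w^*}\subset\{z^*:\text{Re}\,z^*(x)\geqslant a\}$ is disjoint from $[\eta]_\ee'(K)$.

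To finish, I would show $\eta(\overline{S}^{w^*}\cap K)<\ee$, which will place $x^*$ outside $\langle \eta\rangle_\ee'(K)$. Set $L=\overline{S}^{w^*}\cap K$; this set is $w^*$-compact, and each $y^*\in L$ fails to lie in $[\eta]_\ee'(K)$, so admits a $w^*$-open neighborhood $V_{y^*}$ with $\eta(\overline{V_{y^*}}^{w^*}\cap K)<\ee$. Choosing a finite subcover $V_1,\ldots,V_n$ of $L$ and writing $L=\bigcup_{i=1}^n(L\cap \overline{V_i}^{w^*})$, the finite-union property and monotonicity of $\eta$ give $\eta(L)\leqslant \max_{1\leqslant i\leqslant n} \eta(K\cap\overline{V_i}^{w^*})<\ee$, as desired. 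The argument is almost entirely routine; the one point to watch is the use of \emph{strict} separation, so that the $w^*$-closure of the slice (and not merely the slice itself) misses $\overline{\text{co}}^{w^*}([\eta]_\ee'(K))$. This is available because $\{x^*\}$ is $w^*$-compact, so the standard separation theorem in locally convex spaces applies.
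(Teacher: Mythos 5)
Your proof is correct. The paper itself gives no proof of this statement---it is imported from \cite{LPR} (Lemma 2.7)---and your argument is essentially the standard one used there: the inclusion $\overline{\text{co}}^{w^*}([\eta]_\ee'(K))\subset \langle\eta\rangle_\ee'(K)$ follows because every $w^*$-open slice through a point is a $w^*$-neighborhood of it, together with the $w^*$-closedness and (for convex $K$) convexity of the slicing derivation, while the reverse inclusion is the Hahn--Banach strict separation in $(X^*,w^*)$ followed by the compactness/finite-cover estimate, which uses exactly properties (ii) and (iii) of a measure of non-compactness (the degenerate case $[\eta]_\ee'(K)=\varnothing$ being trivially absorbed, since the same covering argument then gives $\eta(K)<\ee$ outright).
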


\begin{lemma}\cite[Proposition $5.7$]{LPR} Suppose that $K$ is $w^*$-compact and balanced, and $m\in \nn$, $\ee>0$, $0<\xi\in \emph{\textbf{Ord}}$, $[\alpha]_\ee^{\xi m}(K)=\varnothing$.  Then there exists a balanced, radial, $w^*$-compact set $L$ containing $K$ such that for any $\ee_1>4\ee$, $$[\alpha]_{\ee_1}^\xi(L)\subset \Bigl(1-\frac{1}{32(m+1)}\Bigr)L.$$

\label{lpr3}
\end{lemma}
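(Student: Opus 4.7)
My plan is to define $L$ explicitly as a finite union of geometrically rescaled iterated $[\alpha]_\ee$-derivatives of $K$, together with a small norm ball to guarantee radiality. Because the lemma does not require $L$ to be convex, I can avoid the tree and Hahn--Banach machinery of Section~3 and deduce the desired inclusion from two properties of $[\alpha]$ already recorded in the paper: distributivity over finite unions from \cite{LPR} (quoted in the proof of Proposition~\ref{convexifiable}$(i)$), and the positive-scaling identity $[\alpha]_\ee^\xi(cK)=c\,[\alpha]_{\ee/c}^\xi(K)$ for $c>0$, which follows from the fact that $x^*\mapsto cx^*$ is a $w^*$-homeomorphism and $\alpha$ is positively homogeneous.

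\textbf{Setup.} Set $c:=1/(32(m+1))$ and $\rho:=(1-c)^{-1}$, and define
$$K_k:=[\alpha]_\ee^{\xi k}(K),\qquad 0\leqslant k\leqslant m,$$
so that $K_0=K$, $K_m=\varnothing$ by hypothesis, $K_{k+1}\subset K_k$, and each $K_k$ is a $w^*$-compact balanced subset of $K$. Pick $r$ with $0<r\leqslant 4\ee$ and put
$$L:=\bigcup_{k=0}^{m-1}\rho^k K_k\ \cup\ rB_{X^*}.$$
I would then verify in a short paragraph that $L$ is $w^*$-compact (finite union of $w^*$-compacts, using Alaoglu for the ball), balanced (a finite union of balanced sets is balanced), radial (since $rB_{X^*}\subset L$), and contains $K=\rho^0 K_0$.

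\textbf{Key inclusion.} Fix $\ee_1>4\ee$. Distributivity over finite unions gives
$$[\alpha]_{\ee_1}^\xi(L)=\bigcup_{k=0}^{m-1}[\alpha]_{\ee_1}^\xi(\rho^k K_k)\ \cup\ [\alpha]_{\ee_1}^\xi(rB_{X^*}).$$
The ball term vanishes already at the first derivation level, since $\alpha(rB_{X^*})\leqslant r<\ee_1$ forces $[\alpha]_{\ee_1}(rB_{X^*})=\varnothing$. For each remaining piece the scaling identity gives $[\alpha]_{\ee_1}^\xi(\rho^k K_k)=\rho^k[\alpha]_{\ee_1/\rho^k}^\xi(K_k)$, and the routine estimate $(1-c)^{-(m-1)}\leqslant e^{2c(m-1)}\leqslant e^{1/16}<4$ (using $c\leqslant 1/32$) yields $\ee_1/\rho^k>\ee_1/4>\ee$ for every $0\leqslant k\leqslant m-1$. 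Monotonicity of $[\alpha]$ in its parameter then gives
$$[\alpha]_{\ee_1/\rho^k}^\xi(K_k)\subset[\alpha]_\ee^\xi(K_k)=K_{k+1},$$
so that $[\alpha]_{\ee_1}^\xi(\rho^k K_k)\subset\rho^k K_{k+1}=(1-c)\,\rho^{k+1}K_{k+1}$. For $k\leqslant m-2$ this lies in $(1-c)L$, since $\rho^{k+1}K_{k+1}$ is one of the generating pieces of $L$; for $k=m-1$ it is empty because $K_m=\varnothing$. Collecting these, $[\alpha]_{\ee_1}^\xi(L)\subset(1-c)L$, as required.

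\textbf{Main obstacle.} The only quantitatively delicate point is the numerical bound $(1-c)^{-(m-1)}<4$ with $c=1/(32(m+1))$: the factor $4$ appearing in the hypothesis $\ee_1>4\ee$ is precisely the budget required to absorb the geometric blowup from the $m-1$ successive rescalings, so that $\ee_1/\rho^k$ still exceeds $\ee$ and monotonicity of $[\alpha]$ finishes the argument. Everything else is a direct, termwise computation; no tree, pruning, or convexifiability argument is needed, essentially because the lemma asks only that $L$ be balanced rather than convex.
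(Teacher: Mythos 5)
The quantitative core of your plan is fine: the finite-union distributivity of $[\alpha]^{\xi}_{\varepsilon_1}$ quoted in the proof of Proposition \ref{convexifiable}, the scaling identity $[\alpha]^{\xi}_{\varepsilon_1}(\rho^k K_k)=\rho^k[\alpha]^{\xi}_{\varepsilon_1/\rho^k}(K_k)$, monotonicity in the parameter, the bound $(1-c)^{-(m-1)}<4$, and the vanishing of the ball term all check out, so $[\alpha]^{\xi}_{\varepsilon_1}(L)\subset(1-c)L$ would indeed follow for your $L$. The genuine gap is the assertion that each $K_k=[\alpha]^{\xi k}_{\varepsilon}(K)$ is balanced because $K$ is; this is false in general, and with it the balancedness of $L$ collapses. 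Dilation by a scalar $\lambda$ with $|\lambda|<1$ contracts norm distances, so a point can survive the derivation while its multiples do not; in fact a nonempty derived set need not even contain $0$, which every nonempty balanced set must. For a concrete example, in $\ell_1=c_0^*$ let $K=\{\lambda e_1/2:|\lambda|\leqslant 1\}\cup\bigcup_{n\geqslant 2}\{\lambda(e_1/2+e_n):|\lambda|\leqslant 1\}$. This $K$ is $w^*$-compact and balanced; since $e_1/2+e_n\to e_1/2$ in the $w^*$-topology and these points are $2$-separated, $e_1/2\in[\alpha]_1'(K)$, whereas a $w^*$-neighborhood of $0$ (or of $e_1/20$) pinning down the first coordinate meets $K$ in a set of diameter well below $1$, so neither $0$ nor $e_1/20$ lies in $[\alpha]_1'(K)$. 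Thus $[\alpha]_1'(K)$ is not balanced, and for your $L=\bigcup_{k}\rho^kK_k\cup rB_{X^*}$ there is no piece of the union guaranteed to contain $\lambda\rho^kx^*$ when $x^*\in K_k$ and $\rho^{-k}<|\lambda|\leqslant 1$.

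Nor is the repair routine within your scheme: replacing each $K_k$ by its balanced hull restores balancedness of $L$ but breaks the key inclusion, because the derivation $[\alpha]^{\xi}_{\varepsilon}$ does not commute, even approximately without further work, with taking balanced hulls; controlling derived sets of sets of the form $CK$ for a set of scalars $C$ is exactly the delicate point handled (only for $C$ with $\mathrm{diam}(C^{-1})$ small and bounded away from $0$) by Lemma \ref{szlenk characterization}$(iii)$, and it is where the cited argument of \cite{LPR} has to do real work. So your construction as written proves the derivation estimate for a set $L$ that need not be balanced, and the balancedness required by the statement (and needed downstream in Lemma \ref{lpr1}) remains unproved.
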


\begin{corollary} Let $K\subset X^*$ be a $w^*$-compact set with $Sz(K)\leqslant \omega^{\xi+1}$.  For $\ee>0$, let $K_0=\overline{\text{\emph{co}}}^{w^*}(K)$, and given $K_n$, let $K_{n+1}=\overline{\text{\emph{co}}}^{w^*}(s^{\omega^\xi}_\ee(K_n))$.    Then there exists $n=n(\ee)\in \nn$ such that $K_n=\varnothing$.    

\label{main corollary}

\end{corollary}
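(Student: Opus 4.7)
My plan is to bound $K_n$ above by the $n$-th iterate of the slicing derivation associated with the measure of non-compactness $\alpha^{\omega^\xi}$, and then to appeal to Lemmas \ref{lpr3} and \ref{lpr1} to force that iterate to vanish. Since $Sz(S_\mathbb{F} K)$ and $Sz(K)$ are equivalent fragmentation indices and enlarging $K$ only enlarges each $K_n$, I will first replace $K$ by $S_\mathbb{F} K$ and assume $K$ is balanced.

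Having fixed $\ee > 0$, I will extract an ordinal of the form $\omega^\xi \cdot m$ beyond which the Kuratowski derivation kills $K$. The hypothesis $Sz(K) \leqslant \omega^{\xi+1}$ says that for arbitrarily small $\ee_0 > 0$, $s_{\ee_0}^{\omega^{\xi+1}}(K) = \varnothing$; writing $\omega^{\xi+1} = \sup_{m<\omega} \omega^\xi \cdot m$ and using $w^*$-compactness of the nested sets $s_{\ee_0}^{\omega^\xi \cdot m}(K)$, some such set is empty, and the inclusion $[\alpha]_{\ee_1}^\zeta \subset s_{\ee_0}^\zeta$ (for $\ee_1 > \ee_0$) recalled just before Proposition \ref{convexifiable} converts this into $[\alpha]_{2\ee_0}^{\omega^\xi \cdot m}(K) = \varnothing$. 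Feeding this into Lemma \ref{lpr3} produces a balanced, radial, $w^*$-compact set $L \supset K$ satisfying $[\alpha]_{\ee_1}^{\omega^\xi}(L) \subset (1 - \tfrac{1}{32(m+1)}) L$ for every $\ee_1 > 8\ee_0$, and Proposition \ref{convexifiable}$(ii)$ upgrades this to the same inclusion for $[\alpha^{\omega^\xi}]_{\ee_2}'(L)$ for any $\ee_2 > \ee_1$. I then invoke Lemma \ref{lpr1} with $\eta = \alpha^{\omega^\xi}$, whose convexifiability constant is bounded by a universal $\kappa$ via Proposition \ref{convexifiable}$(iii)$, to obtain some $n = n(\ee)$ with $\langle \alpha^{\omega^\xi} \rangle_{\kappa \ee_3}^n(M) = \varnothing$ for any $\ee_3 > \ee_2$, where $M := \overline{\text{co}}^{w^*}(L)$. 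Choosing $\ee_0$ small enough at the outset that $\kappa \ee_3$ can be taken below $\ee/4$, and using monotonicity of $\langle \eta \rangle_\delta'$ in $\delta$, one concludes $\langle \alpha^{\omega^\xi} \rangle_{\ee/4}^n(M) = \varnothing$.

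The final step is an induction showing $K_n \subset M_n := \langle \alpha^{\omega^\xi} \rangle_{\ee/4}^n(M)$. The base case follows because $K \subset L$ forces $K_0 = \overline{\text{co}}^{w^*}(K) \subset \overline{\text{co}}^{w^*}(L) = M_0$. For the inductive step, since $M_n$ is convex (the slicing derivation preserves convexity), monotonicity of $s_\ee^{\omega^\xi}$ together with the chain
\[
s_\ee^{\omega^\xi}(M_n) \subset [\alpha]_{\ee/4}^{\omega^\xi}(M_n) \subset [\alpha^{\omega^\xi}]_{\ee/4}'(M_n)
\]
(the first inclusion from the pre-Proposition \ref{convexifiable} remarks, the second from Proposition \ref{convexifiable}$(ii)$) yields $s_\ee^{\omega^\xi}(K_n) \subset [\alpha^{\omega^\xi}]_{\ee/4}'(M_n)$; taking $\overline{\text{co}}^{w^*}$ and applying Proposition \ref{lpr2} delivers $K_{n+1} \subset \langle \alpha^{\omega^\xi} \rangle_{\ee/4}'(M_n) = M_{n+1}$. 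Combined with $M_n = \varnothing$ from the previous step, this gives $K_n = \varnothing$.

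The principal obstacle is bookkeeping: one must absorb a factor of $2$ from the Szlenk-to-Kuratowski conversion, a further factor of $4$ from the gap $\ee_1 > 4 \cdot 2\ee_0$ in Lemma \ref{lpr3}, an $\ee_2 > \ee_1$ slack from Proposition \ref{convexifiable}$(ii)$, and the multiplicative factor $\kappa$ in Lemma \ref{lpr1}, all inside a single small $\ee_0$. The only conceptually deep input is the uniform convexifiability of $\alpha^{\omega^\xi}$ provided by Proposition \ref{convexifiable}$(iii)$: without the $\kappa$ being independent of $\xi$, Lemma \ref{lpr1} could not be applied uniformly in $\xi$, and it is precisely the restriction of the exponent to $\omega^\xi$ (rather than an arbitrary ordinal) that makes this possible.
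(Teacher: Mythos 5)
Your argument is correct and assembles exactly the ingredients the paper intends for this corollary (the Szlenk--Kuratowski comparison stated before Proposition \ref{convexifiable}, Proposition \ref{convexifiable}, Lemma \ref{lpr3}, Lemma \ref{lpr1}, and Proposition \ref{lpr2}), i.e.\ the adaptation of the argument from \cite{LPR} that the paper leaves implicit, with the quantifier bookkeeping ($\ee_0$ small relative to $\ee/\kappa$, $m=m(\ee_0)$ allowed to grow) handled properly. The only quibble is your closing remark: since $\xi$ is fixed in the corollary, one needs only that $\alpha^{\omega^\xi}$ is convexifiable with some constant, not that $\kappa$ is uniform in $\xi$.
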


\section{General renorming results}

We will later need the following elementary observation.  

\begin{proposition} Suppose $A:X\to Y$ is weakly compact, $x^{**}\in X^{**}$, and $V$ is a $w^*$-neighborhood of $x^{**}$.  Then for any $\ee>0$, there exists $x\in X\cap V$ with $\|x\|\leqslant \|x^{**}\|$ such that $\|Ax-A^{**}x^{**}\|<\ee$.  
\label{easy obs}
\end{proposition}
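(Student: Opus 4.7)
The plan is to combine Goldstine's theorem with Mazur's theorem, using weak compactness only to ensure $A^{**}x^{**}$ actually lies in $Y$ (so that weak convergence in $Y$ is meaningful) and that the target we are approximating is a norm-limit.

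First I would reduce to the case that $V$ is convex: since the $w^*$-topology on $X^{**}$ is locally convex, replace $V$ by a convex $w^*$-open neighborhood $V_0\subset V$ of $x^{**}$. Set $r=\|x^{**}\|$. By Goldstine's theorem, the image of $rB_X$ in $X^{**}$ is $w^*$-dense in $rB_{X^{**}}$, so $V_0\cap rB_X$ is nonempty, and there is a net $(x_\alpha)\subset V_0\cap rB_X$ with $x_\alpha\xrightarrow{w^*} x^{**}$ in $X^{**}$. In particular, for every $y^*\in Y^*$,
\[
y^*(Ax_\alpha)=(A^*y^*)(x_\alpha)\longrightarrow x^{**}(A^*y^*)=(A^{**}x^{**})(y^*).
\]
So $Ax_\alpha\to A^{**}x^{**}$ in the $w^*$-topology of $Y^{**}$.

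Next I would invoke weak compactness of $A$: by Gantmacher's theorem, $A^{**}(X^{**})\subset Y$, so $A^{**}x^{**}\in Y$, and the convergence $Ax_\alpha\to A^{**}x^{**}$ above is in fact weak convergence inside $Y$. The set
\[
C:=A(V_0\cap rB_X)\subset Y
\]
is convex, being the linear image of the convex set $V_0\cap rB_X$. By Mazur's theorem, the weak closure and the norm closure of $C$ coincide. Since $A^{**}x^{**}$ lies in the weak closure of $C$, it lies in its norm closure, so given $\ee>0$ there exists $x\in V_0\cap rB_X\subset V$ with $\|x\|\leqslant r=\|x^{**}\|$ and $\|Ax-A^{**}x^{**}\|<\ee$, as required.

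There is no serious obstacle here; the only point worth being careful about is to shrink $V$ to a convex neighborhood before applying Mazur, and to invoke Gantmacher so that $A^{**}x^{**}\in Y$ and the weak-topology argument on $Y$ actually applies (rather than merely $w^*$-convergence in $Y^{**}$, for which Mazur would not give norm approximation).
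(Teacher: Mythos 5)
Your proof is correct and follows essentially the same route as the paper: reduce to convex $V$, use Goldstine to get a net in $\|x^{**}\|B_X\cap V$ converging $w^*$ to $x^{**}$, use weak compactness (so $A^{**}x^{**}\in Y$ and the images converge weakly in $Y$), and then pass to a convex combination via Mazur to upgrade weak to norm approximation. The only cosmetic difference is that you phrase the last step as Mazur applied to the convex image set $A(V_0\cap rB_X)$, while the paper takes a convex combination of the net elements directly; these are the same argument.
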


\begin{proof} By homogeneity, we may assume $\|x^{**}\|=1$.   We may also assume $V$ is convex.   By Goldstine's theorem, we may fix a net $(x_\lambda)\subset B_X$ such that $x_\lambda\underset{w^*}{\to}x^{**}$.  We may assume that for all $\lambda$, $x_\lambda\in V$.   By weak compactness of $A$, $Ax_\lambda\underset{w}{\to}A^{**}x^{**}$.  Thus there exists a convex combination $x$ of $(x_\lambda)$ such that $\|Ax-A^{**}x^{**}\|<\ee$, and this $x$ is the one we seek.

\end{proof}

\begin{corollary} If $A:X\to Y$ is a weakly compact operator, then for any ordinal $\xi$, $\rho^w_\xi(\cdot;A:X\to Y)=\rho^{w^*}_\xi(\cdot; A^{**}:X^{**}\to Y)$. \label{obs corollary}  \end{corollary}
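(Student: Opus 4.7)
The proof will split into the two inequalities. For $\rho^w_\xi(\sigma;A) \leq \rho^{w^*}_\xi(\sigma;A^{**})$, the argument is direct: the canonical embedding $X \hookrightarrow X^{**}$ identifies the weak topology on $X$ with the restriction of the $w^*$ topology on $X^{**}$, and $A^{**}|_X = A$. Consequently every weakly null collection $(x_t)\subset B_X$ yields, when viewed inside $B_{X^{**}}$, a $w^*$-null collection with identical values of $\|y+\sigma A^{**}x\|$ for every $y\in B_Y$ and every convex combination $x$. Thus the supremum defining $\rho^w_\xi(\sigma;A)$ is taken over a sub-family of the witnesses considered for $\rho^{w^*}_\xi(\sigma;A^{**})$.

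For the reverse inequality the plan is to replace an arbitrary $w^*$-null witness in $B_{X^{**}}$ with a nearby weakly null collection in $B_X$, apply $\rho^w_\xi(\sigma;A)$ to the latter, and transfer the resulting norm bound back to $X^{**}$. Fix $\epsilon>0$ and take $y\in B_Y$, a $B$-tree $B$ with $o(B)=\omega^\xi$, and a $w^*$-null $(x^{**}_t)_{t\in B}\subset B_{X^{**}}$. First, I will invoke the obvious $X^{**}$-analog of Proposition \ref{normal prop}(ii), in which the role of $w^*$-open neighborhoods of $0$ in $Y^*$ is played by $w^*$-open neighborhoods of $0$ in $X^{**}$. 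This produces a length-preserving monotone map $\theta:\tilde R_\xi\to B$ (where $\tilde R_\xi$ is the corresponding analog of $R_\xi$) such that $(\hat x^{**}_t)_{t\in\tilde R_\xi}:=(x^{**}_{\theta(t)})_{t\in\tilde R_\xi}$ is ``normally $w^*$-null'': $\hat x^{**}_{s\cat(\zeta,U,V)}\in U$ whenever $s\cat(\zeta,U,V)\in\tilde R_\xi$, with $U$ ranging over $w^*$-open neighborhoods of $0$ in $X^{**}$.

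Next, for each such $t$, Proposition \ref{easy obs} together with the weak compactness of $A$ will yield $x_t\in B_X\cap U$ with $\|Ax_t-A^{**}\hat x^{**}_t\|<\delta$ for a pre-chosen $\delta>0$ satisfying $\sigma\delta<\epsilon$. The main technical step will then be to verify that $(x_t)_{t\in\tilde R_\xi}\subset B_X$ is weakly null in the sense used in the definition of $\rho^w_\xi(\sigma;A)$. Given any ordinal $\zeta$, any $t\in(\tilde R_\xi\cup\{\varnothing\})^{\zeta+1}$, and any basic weak neighborhood $W=\{x\in X:|\langle x,x^*_j\rangle|<\eta,\ 1\leq j\leq m\}$ of $0$ in $X$, the set $U_W:=\{x^{**}\in X^{**}:|\langle x^{**},x^*_j\rangle|<\eta,\ 1\leq j\leq m\}$ is a $w^*$-open neighborhood of $0$ in $X^{**}$ with $U_W\cap X=W$; the product structure of $\tilde R_\xi$ then allows me to pick a child $s$ of $t$ in $\tilde R_\xi^\zeta$ whose last coordinate has $U_W$ as its $X^{**}$-component, so that $x_s\in U_W\cap X=W$. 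This will deliver the required weak-closure condition.

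Once weak nullity is in hand, the definition of $\rho^w_\xi(\sigma;A)$ applied to $(y,\tilde R_\xi,(x_t))$ produces some $t\in\tilde R_\xi$ and a convex combination $x=\sum_s\lambda_s x_s$ of the branch to $t$ with $\|y+\sigma Ax\|\leq 1+\rho^w_\xi(\sigma;A)+\epsilon$. The corresponding convex combination $x^{**}:=\sum_s\lambda_s\hat x^{**}_s\in \text{co}(x^{**}_{s'}:\varnothing\prec s'\preceq\theta(t))$ satisfies $\|A^{**}x^{**}-Ax\|<\delta$ by convexity, so $\|y+\sigma A^{**}x^{**}\|\leq 1+\rho^w_\xi(\sigma;A)+\epsilon+\sigma\delta\leq 1+\rho^w_\xi(\sigma;A)+2\epsilon$. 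Taking the supremum over all witnesses and letting $\epsilon\to 0$ will complete the reverse inequality. The chief obstacle is the weak-nullity verification: it depends crucially on the fact that every basic weak neighborhood of $0$ in $X$ extends to a $w^*$-open neighborhood of $0$ in $X^{**}$, combined with the freedom to choose the ``neighborhood'' coordinate in the tree $\tilde R_\xi$ independently of the ordinal coordinate coming from $B_\xi$.
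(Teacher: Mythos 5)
Your proposal is correct and follows essentially the same route as the paper: the trivial inequality by viewing weakly null collections in $B_X$ as $w^*$-null collections in $B_{X^{**}}$, and the reverse inequality by pruning to a normally $w^*$-null tree, approximating each $x^{**}_t$ by some $x_t\in B_X$ lying in the same neighborhood via Proposition \ref{easy obs}, verifying that $(x_t)$ is weakly null, and transferring the norm estimate along convex combinations of branches. The only cosmetic differences are that the paper argues by contradiction with fixed $\mu<\eta$ and uses the specific directed set of neighborhoods $V_F$ determined by finite subsets $F\subset X^*$ (which makes the weak-nullity check automatic), whereas you keep all $w^*$-neighborhoods of $0$ in $X^{**}$ and instead select children whose neighborhood coordinate is the canonical $w^*$-extension $U_W$ of a given weak neighborhood $W$ of $0$ in $X$.
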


\begin{proof} Suppose that there exists a weakly compact operator $A:X\to Y$, an ordinal $\xi$, and a positive number $\sigma$ such that $\rho^w_\xi(\sigma;A:X\to Y)\neq \rho^{w^*}_\xi(\sigma;A^{**}:X^{**}\to Y)$.  It is clear that $\rho^w_\xi(\sigma;A:X\to Y)\leqslant \rho^{w^*}_\xi(\sigma;A^{**}:X^{**}\to Y)$, so our supposition implies that $\rho^w_\xi(\sigma;A:X\to Y)<\rho^{w^*}_\xi(\sigma;A^{**}:X^{**}\to Y)$.  Fix some numbers $\mu, \eta$ such that $$\rho^w_\xi(\sigma;A:X\to Y)+1<\mu<\eta<\rho^{w^*}_\xi(\sigma;A^{**}:X^{**}\to Y)+1.$$      Let $D$ be the set of non-empty, finite subsets of $X^*$ directed by inclusion.  Given $F\in D$, let $$V_F=\{x^{**}\in X^{**}: (\forall x^*\in F)(|x^{**}(x^*)|<|F|^{-1})\}.$$  Arguing as in Proposition $1.1$, there exist $y\in B_Y$ and a collection $(x^{**}_t)_{t\in B_{\omega^\xi} D}\subset B_{X^{**}}$ such that \begin{enumerate}[(i)]\item if $t=t_1\cat (\gamma, F)\in B_{\omega^\xi} D$, $x^{**}_t\in V_F$, \item for every $t\in B_{\omega^\xi} D$ and every $x^{**}\in \text{co}(x^{**}_s: \varnothing\prec s\preceq t)$, $\|y+\sigma A^{**}x^{**}\|\geqslant \eta$.   \end{enumerate}

For each $t\in B_{\omega^\xi} D$, by Proposition \ref{easy obs}, there exists $x_t\in B_X$ such that $x_t\in V_F$ and $\|Ax_t-A^{**}x^{**}_t\|< \eta-\mu$.    Then for any $t\in B_{\omega^\xi} D$ and any convex combination $x=\sum_{\varnothing\prec s\preceq t}a_s x_s$ of $(x_s: \varnothing\prec s \preceq t)$, if $x^{**}=\sum_{\varnothing\prec s\preceq t}a_sx_s^{**}$, $$\|y+\sigma Ax\| \geqslant \|y+\sigma A^{**}x^{**}\|- \sum_{\varnothing \prec s\preceq t} a_s\|Ax_s-Ax_s^{**}\| \geqslant \eta- (\eta-\mu)\sum a_s =\mu>\rho^w_\xi(\sigma;A:X\to Y).$$  Since $o(B_{\omega^\xi})=\omega^\xi$, $(x_t)_{t\in B_{\omega^\xi} D}\subset B_X$ is a weakly null collection, and since we have the lower estimate for every convex combination, we arrive at a contradiction.

\end{proof}

\begin{theorem} For any operator $A:X\to Y$ and any ordinal $\xi$, there exists an equivalent norm $|\cdot|$ on $Y$ such that $A:X\to (Y, |\cdot|)$ is $\xi$-AUS if and only if $Sz(A)\leqslant \omega^{\xi+1}$.    
\label{renorming1}
\end{theorem}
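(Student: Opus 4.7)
Since the Szlenk index is invariant under renorming, I may assume $A$ itself is $\xi$-AUS. By Theorem \ref{duality}, $A^*$ is $w^*$-$\xi$-AUC, so $c(\tau):=\delta^{w^*}_\xi(\tau;A^*)>0$ for every $\tau>0$. Fix $\ee>0$ and set $c=c(\ee/8)$. I claim $s_\ee^{\omega^\xi}(A^*B_{Y^*})\subset (1-c)A^*B_{Y^*}$. Given $x^*$ in this derivative, the inclusion $s_\ee^{\omega^\xi}\subset [\alpha]_{\ee/4}^{\omega^\xi}$ together with Lemma \ref{szlenk characterization}(i) (applied with $V=X^*$) yields a tree $T$ with $o(T)=\omega^\xi+1$ and a $w^*$-closed, $(A^*,\ee/8)$-separated collection $(y^*_t)_{t\in T}\subset B_{Y^*}$ satisfying $A^*y^*_\varnothing=x^*$. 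Proposition \ref{finish prop}, applied with $\tau=\ee/8$ and $\sigma=c/(\ee/8)$, forces $y^*_\varnothing=0$ or $\|y^*_\varnothing\|\leqslant 1-c$, proving the claim. Using the homogeneity $s_\ee^\zeta(\lambda K)\subset \lambda s_\ee^\zeta(K)$ for $\lambda\in(0,1]$ (verified from $s_\ee(\lambda K)=\lambda s_{\ee/\lambda}(K)$ by transfinite induction), iteration gives
$$s_\ee^{\omega^\xi\cdot n}(A^*B_{Y^*})\subset (1-c)^nA^*B_{Y^*},$$
a set of norm diameter at most $2\|A\|(1-c)^n$. For $n$ large enough the diameter drops below $\ee$, so one more Szlenk derivation empties the set. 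Thus $Sz(A^*B_{Y^*},\ee)\leqslant \omega^\xi\cdot n+1\leqslant \omega^{\xi+1}$ for each $\ee>0$, whence $Sz(A)\leqslant \omega^{\xi+1}$.

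\textbf{Sufficiency, strategy.} Now assume $Sz(A)\leqslant \omega^{\xi+1}$. By Theorem \ref{duality} it suffices to renorm $Y$ so that $A^*:(Y,|\cdot|)^*\to X^*$ is $w^*$-$\xi$-AUC. I will build the new dual unit ball in $Y^*$ as a $w^*$-closed convex hull of scaled sets arising from finite iterations of the $\omega^\xi$-Szlenk derivation, closely following the template of \cite{LPR}. Set $\ee_m=2^{-m}$. Corollary \ref{main corollary}, applied to $K=A^*B_{Y^*}$ at scale $\ee_m$, produces an integer $N_m$ such that the $N_m$-th iterate of $K\mapsto \overline{\text{co}}^{w^*}(s_{\ee_m}^{\omega^\xi}(K))$, starting from $\overline{\text{co}}^{w^*}(A^*B_{Y^*})$, is empty. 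Using $[\alpha]_{\ee'}^{\omega^\xi}\subset s_\ee^{\omega^\xi}\subset [\alpha]_{\ee/4}^{\omega^\xi}$ (for $\ee'>\ee$), this translates into $[\alpha]_{\ee'_m}^{\omega^\xi\cdot N_m}(S_\mathbb{F}A^*B_{Y^*})=\varnothing$ for suitable $\ee'_m$.

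\textbf{Sufficiency, construction.} Lemma \ref{lpr3} then yields, for each $m$, a balanced, radial, $w^*$-compact set $L_m\supset S_\mathbb{F}(A^*B_{Y^*})$ with $[\alpha]_{\ee''_m}^{\omega^\xi}(L_m)\subset (1-\lambda_m)L_m$ for some $\lambda_m>0$. Proposition \ref{convexifiable}(iii) shows $\alpha^{\omega^\xi}$ is convexifiable with universal constant $\kappa$ (independent of $\xi$), and Lemma \ref{lpr1} gives an integer $k_m$ with $\langle\alpha^{\omega^\xi}\rangle_{\kappa \ee''_m}^{k_m}(M_m)=\varnothing$, where $M_m=\overline{\text{co}}^{w^*}(L_m)$. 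The new dual unit ball in $Y^*$ is
$$B=\overline{\text{co}}^{w^*}\Bigl(B_{Y^*}\cup\bigcup_{m\in\nn} a_m M_m\Bigr),$$
with weights $a_m>0$ chosen so that $cB_{Y^*}\subset B\subset CB_{Y^*}$ for constants $0<c\leqslant C$, and so that the slicing information on the $M_m$ telescopes onto $B$ with a controlled rate. The equivalent norm $|\cdot|$ on $Y$ is the support functional of $B$.

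\textbf{Main obstacle.} The crux of the argument is verifying that this new norm really forces $A^*$ to be $w^*$-$\xi$-AUC. Given any normally $w^*$-null, $(A^*,1)$-large tree in $Y^*$ of order $\omega^\xi$, one must show that from any root $y^*\in B$ it is impossible to remain in $(1+\sigma c')B$ after adding $\sigma$ times a convex combination along a branch. Proposition \ref{lpr2} identifies the slicing derivation of the convex set $B$ with the $w^*$-closed convex hull of its fragmentation derivation, converting the tree data on branches into a fragmentation statement forbidden by construction of $B$. The delicate bookkeeping — choosing the weights $a_m$ so that $B$ is simultaneously an equivalent dual ball and inherits sufficiently strong slicing decay across all scales, and then converting this decay into a quantitative lower bound on $\delta^{w^*}_\xi(\tau;A^*)$ uniform on compact subsets of $(0,\infty)$ — is the main technical burden. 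It parallels the separable Banach space case in \cite{LPR}, with the essential new input being the \emph{uniform} convexifiability constant for $\alpha^{\omega^\xi}$ across all $\xi$ furnished by Proposition \ref{convexifiable}(iii), which permits the extension to arbitrary ordinals and to operators.
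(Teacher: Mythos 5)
Your necessity argument is essentially the paper's: the paper routes it through the lemma preceding the proof (equivalence of $w^*$-$\xi$-AUC with the smallness of roots of $w^*$-closed, $(A^*,\ee)$-separated trees of order $\omega^\xi+1$), which is itself proved from Proposition \ref{finish prop}, and then uses Lemma \ref{szlenk characterization} and the same homogeneity iteration $s_\ee^{\omega^\xi n}(A^*B_{Y^*})\subset (1-\delta)^n A^*B_{Y^*}$. Your direct application of Proposition \ref{finish prop} with $\sigma\tau=\delta^{w^*}_\xi(\ee/8;A^*)$ is fine, and this half is correct.

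The sufficiency half, however, has a genuine gap, in two respects. First, your construction is not well formed for an operator: the sets produced by Lemma \ref{lpr3} and Lemma \ref{lpr1} are applied to $K=S_\mathbb{F}A^*B_{Y^*}$, so $L_m$ and $M_m=\overline{\text{co}}^{w^*}(L_m)$ live in $X^*$, while the new dual unit ball $B=\overline{\text{co}}^{w^*}\bigl(B_{Y^*}\cup\bigcup_m a_mM_m\bigr)$ must be a subset of $Y^*$; you cannot take this hull unless $X=Y$ and $A^*$ is the identity. Handling exactly this mismatch is the point of the paper's construction: the equivalent dual norm is the Minkowski functional of $C=\{y^*: f(y^*)\leqslant 1\}$ with $f(y^*)=\|y^*\|+\sum_n \frac{\ee_n}{M_n}\sum_{m=1}^{M_n} d(A^*y^*, K_{n,m})$, where $K_{n,m}$ are the iterated sets from Corollary \ref{main corollary}; the composition with $A^*$ inside the distance terms is what pulls the $X^*$-data back to a norm on $Y^*$ (and $w^*$-lower semicontinuity of $f$ is what makes it a dual norm). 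Second, and more importantly, you explicitly defer the heart of the theorem: the verification that the constructed norm makes $A^*$ $w^*$-$\xi$-AUC (equivalently, the quantitative estimate $|y^*_\varnothing|\leqslant 1-\delta$ for every $w^*$-closed, $(A^*,\ee)$-separated tree of order $\omega^\xi+1$ in the new ball, which the characterization lemma then converts into positivity of $\delta^{w^*}_\xi(\cdot;A^*)$, and Theorem \ref{duality} into $\xi$-AUS of $A$). Labelling this "the main technical burden" and leaving the weights $a_m$ and the slicing/fragmentation bookkeeping unspecified means the sufficiency direction is a strategy outline rather than a proof; the paper carries this step out via Lemma \ref{szlenk characterization}, Corollary \ref{main corollary} and the Lipschitz estimate on $f$, following the LPR computation.
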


In order to verify Theorem \ref{renorming1}, we first observe the following.  

\begin{lemma} For any ordinal $\xi$,  the following are equivalent. 
 
\begin{enumerate}[(i)]\item $A^*$ is $w^*$-$\xi$-AUC.  \item For every $\ee>0$, there exists $\delta>0$ such that if $T$ is a tree with order $\omega^\xi+1$ and if $(y^*_t)_{t\in T}\subset B_{Y^*}$ is $w^*$-closed and $(A^*, \ee)$-separated, $\|y^*_\varnothing\|\leqslant 1-\delta$.  \end{enumerate} It follows that if $p,q$ are two equivalent norms on $Y$ such that $A^*:(Y^*,p^*)\to X^*$ is $w^*$-$\xi$-AUC, then $A^*:(Y^*, p^*+q^*)\to X^*$ is $w^*$-$\xi$-AUC.   

\end{lemma}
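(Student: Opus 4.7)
The plan is to establish (i) $\Leftrightarrow$ (ii) directly --- the forward direction from Proposition \ref{finish prop} and the converse from a tree-rescaling argument --- and then deduce the ``it follows'' clause from the equivalence combined with the renorming-invariance of $w^*$-$\xi$-AUC already noted earlier in the paper.

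For (i) $\Rightarrow$ (ii), fix $\ee > 0$. Since $A^*$ is $w^*$-$\xi$-AUC we have $\delta^{w^*}_\xi(\ee;A^*) > 0$, so I set $\sigma = \delta^{w^*}_\xi(\ee;A^*)/\ee$ and apply Proposition \ref{finish prop} with $\tau = c = \ee$: the hypothesis $\delta^{w^*}_\xi(\tau;A^*) = \sigma\tau$ then holds with equality, and any $w^*$-closed, $(A^*,\ee)$-separated tree in $B_{Y^*}$ of order $\omega^\xi+1$ must have $\|y^*_\varnothing\| \leqslant 1 - \sigma\ee = 1 - \delta^{w^*}_\xi(\ee;A^*)$ (with equality only when $y^*_\varnothing = 0$). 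So take $\delta := \min(\delta^{w^*}_\xi(\ee;A^*), 1)$.

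For (ii) $\Rightarrow$ (i), fix $\sigma>0$ and apply (ii) with $\ee = \sigma/2$ to obtain $\delta>0$. Given data $\|y^*\|\geqslant 1$ and a $w^*$-null, $(A^*,1)$-large collection $(y^*_t)_{t\in B}$ with $o(B)=\omega^\xi$, set $M = \sup_{t\in B} \|y^* + \sigma \sum_{\varnothing\prec s\preceq t} y^*_s\|$. If $M\geqslant 2$ the excess over $1$ is already at least $1$; otherwise on the tree $T = B\cup\{\varnothing\}$ of order $\omega^\xi+1$ define $z^*_\varnothing = y^*/M$ and $z^*_t = (y^* + \sigma \sum_{\varnothing\prec s\preceq t} y^*_s)/M$ for $t\in B$. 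Then $(z^*_t) \subset B_{Y^*}$ by the choice of $M$, and since $z^*_s - z^*_{s^-} = (\sigma/M)y^*_s$ for every $\varnothing \neq s\in T$, the $w^*$-nullity of $(y^*_t)$ forces $(z^*_t)$ to be $w^*$-closed on $T$, while the same identity yields $(A^*,\sigma/M)$-separation with $\sigma/M\geqslant \sigma/2=\ee$. Condition (ii) now gives $\|y^*\|/M = \|z^*_\varnothing\| \leqslant 1-\delta$, so $M \geqslant 1/(1-\delta)$, and combining both cases we obtain $\delta^{w^*}_\xi(\sigma;A^*) \geqslant \min(1, \delta/(1-\delta)) > 0$.

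For the concluding implication, observe that $p^*$ and $p^*+q^*$ are equivalent on $Y^*$ (from $p^* \leqslant p^*+q^* \leqslant (1+C)p^*$ where $q^* \leqslant Cp^*$), and $p^*+q^*$ is $w^*$-lower semicontinuous as a sum of two dual norms, hence is itself the dual of some norm on $Y$ equivalent to $p$. The renorming-invariance of the $w^*$-$\xi$-AUC property of $A^*$ under equivalent renorming of $Y$ (observed earlier in the paper) therefore transfers the hypothesis from $p^*$ to $p^*+q^*$. The main obstacle will be the verification in the (ii) $\Rightarrow$ (i) step that the translated and rescaled tree $(z^*_t)$ is genuinely $w^*$-closed on $T$: this requires unpacking the definition of $w^*$-closed at every derivation level $T^{\xi'+1}$ and noting that the increment identity $z^*_s - z^*_{s^-} = (\sigma/M)y^*_s$ converts the $w^*$-null condition on $(y^*_t)_{t\in B}$ (which holds in particular at $\varnothing\in (B\cup\{\varnothing\})^{\xi'+1}$) into the required closure statement for $(z^*_t)_{t\in T}$ at every node, including the added root $\varnothing$.
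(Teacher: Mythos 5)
Your handling of the equivalence (i)$\Leftrightarrow$(ii) is essentially the paper's own argument: (i)$\Rightarrow$(ii) via Proposition \ref{finish prop} with $c=\tau=\ee$, and (ii)$\Rightarrow$(i) by translating and rescaling a $w^*$-null, $(A^*,1)$-large collection $(y^*_t)_{t\in B}$ into the $w^*$-closed, separated tree $z^*_t=(y^*+\sigma\sum_{\varnothing\prec s\preceq t}y^*_s)/M$ on $B\cup\{\varnothing\}$; the increment identity you highlight is exactly how the paper passes between the two notions. Two loose ends are worth noting but are easily repaired: if no $w^*$-null, $(A^*,1)$-large collection exists then $\delta^{w^*}_\xi(\ee;A^*)=\infty$ and your $\sigma$ is undefined (the paper disposes of this case first, showing (ii) is then vacuous); and membership of the root $z^*_\varnothing$ in $B_{Y^*}$ needs $M\geqslant\|y^*\|$, which follows from $w^*$-nullity together with $w^*$-lower semicontinuity of the norm, not merely ``by the choice of $M$.''

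The genuine gap is the final clause. The invariance you invoke does not exist: the paper's observation is that $\xi$-AUC and $w^*$-$\xi$-AUC of an operator are unaffected by renorming its \emph{range} (and AUS-type properties by renorming its \emph{domain}). For $A^*:Y^*\to X^*$ the domain is $Y^*$, so replacing $p^*$ by $p^*+q^*$ is a renorming of the domain, which is precisely the situation in which $w^*$-$\xi$-AUC is \emph{not} invariant. If it were, the last sentence of the lemma would hold for an arbitrary equivalent dual norm in place of $p^*+q^*$, and the subsequent renorming theory (Theorem \ref{renorming1}, the Asplund-averaging results using $r_n^*=p^*+\tfrac1n q^*$) would trivialize; in particular every equivalent norm on a space of Szlenk index at most $\omega^{\xi+1}$ would be $\xi$-AUS, which is false. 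The statement genuinely uses the sum structure, and the paper proves it by verifying condition (ii) for $p^*+q^*$: given a $w^*$-closed, $(A^*,\ee)$-separated tree in the $(p^*+q^*)$-unit ball, one uses Lemma \ref{szlenk characterization}$(ii)$ to pass to a subtree on whose maximal nodes $p^*$ and $q^*$ are nearly constant, say close to $a$ and $b$ with $a+b\leqslant 1+2\mu$ and $a<1$; rescaling by $a^{-1}$ puts the tree in the $p^*$-ball, condition (ii) for $p^*$ gives $p^*(y^*_\varnothing)\leqslant a(1-\delta)$, and hence $(p^*+q^*)(y^*_\varnothing)\leqslant(1+2\mu)\bigl(1-\frac{\delta}{1+C}\bigr)$, with $\mu$ arbitrarily small. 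Some quantitative argument of this kind is needed; the appeal to renorming invariance cannot replace it.
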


\begin{proof} First, let condition $1$ be that $\delta^{w^*}_\xi(\tau;A)<\infty$ for some $\tau>0$ (or, equivalently, for every $\tau>0$), and let condition $2$ be the condition that there exists a tree $T$ with $o(T)=\omega^\xi+1$, $\ee>0$, and a $w^*$-closed, $(A^*, \ee)$-separated collection $(y^*_t)_{t\in T}\subset B_{Y^*}$.  We claim that conditions $1$ and $2$ are equivalent.  By Lemma \ref{szlenk characterization}$(i)$ and $(iii)$, condition $2$ is equivalent to the existence of some $\ee>0$ such that  $Sz_\ee(A^*B_{Y^*})>\omega^\xi$, or equivalently, $Sz(A)>\omega^\xi$.   We prove the equivalence of conditions $1$ and $2$. If a collection $(y^*_t)_{t\in T}$ exists as in condition $2$, we may fix any $y^*$ with $\|y^*\|=1$ and for each $t\in B=T\setminus\{\varnothing\}$ let $z^*_t=\ee^{-1}(y^*_t-y^*_{t^-})$.  Then $(z^*_t)_{t\in B}$ is $w^*$-null and $(A^*, 1)$-large.  Moreover, for any $t\in B$ and $\tau>0$, $$\|y^*+\tau \sum_{\varnothing\prec s\preceq t}y^*_s\|=\|y^*+(\tau/\ee)(y^*_t-y^*_\varnothing)\|\leqslant 1+2\tau/\ee,$$ whence $\delta^{w^*}_\xi(\tau;A^*)<\infty$.   

Conversely, suppose that $\delta^{w^*}_\xi(\tau;A^*)<\infty$.  Fix $y^*\in Y^*$ and a $w^*$-null, $(A^*, 1)$-large, $(z_t)_{t\in B}$, where $B$ is a $B$-tree with $o(B)=\omega^\xi$ such that $$\sup_{t\in B}\|y^*+\tau\sum_{\varnothing\prec s\preceq t} z^*_s\|=C <\infty.$$  Then with $y^*_\varnothing=C^{-1}y^*$ and $y^*_t=C^{-1}(y^*+\tau\sum_{\varnothing\prec s\preceq t}z^*_s)$ for $t\in B$, $(y^*_t)_{t\in B\cup\{\varnothing\}}\subset B_{Y^*}$ is $w^*$-closed and $\tau/C$-separated.   

This shows that $(i)$ and $(ii)$ are equivalent, and in fact both vacuously true, in the case that condition $1$ fails, which is equivalent to the case that $Sz(A)\leqslant \omega^\xi$. Thus it suffices to prove the equivalence of conditions $(i)$ and $(ii)$ under the assumption that condition $2$ holds.

Suppose $A^*$ is $w^*$-$\xi$-AUC.   Suppose $(y^*_t)_{t\in T}\subset B_{Y^*}$ and $\ee>0$ are as in $(ii)$.   Then by Proposition \ref{finish prop} with $c=\tau=\ee$ and $\sigma=\delta^{w^*}_\xi(\tau;A^*)/\tau>0$, we deduce that either $y^*_\varnothing=0$ or $\|y^*_\varnothing\|\leqslant 1-\sigma \tau=1-\delta_\xi^{w^*}(\tau;A^*)$.  Thus $(ii)$ is satisfied with $\delta=\min \{1,\delta^{w^*}_\xi(\tau;A^*)\}$.

Next suppose that $(ii)$ is satisfied.   Fix $\tau>0$ and fix $\delta\in (0,1/2)$ as in the conclusion of $(ii)$ with $\ee=\tau/2$.    We claim that $\delta_\xi^{w^*}(\tau;A^*)\geqslant \delta/(1-\delta)=:D<1$.  If it were not so, there would exist $y^*\in Y^*$ with $\|y^*\|\geqslant 1$, $\mu<1+D$, and a $w^*$-null, $(A^*, 1)$-large collection $(z^*_t)_{t\in B}$ such that $$\sup_{t\in B}\|y^*+\tau\sum_{\varnothing\prec s\preceq t} z^*_s\|\leqslant \mu<1+D<2,$$ where $B$ is a $B$-tree with $o(B)=\omega^\xi$.   Let $y^*_\varnothing=\mu^{-1} y^*$, $y^*_t=\mu^{-1}(y^*+\tau\sum_{\varnothing\prec s\preceq t}z^*_s)\subset B_{Y^*}$ for each $t\in B$.  Then $(y^*_t)_{t\in B\cup\{\varnothing\}}$ is $w^*$-closed and $(A^*, \tau/2)$-separated, since $\mu<2$.    From this it follows that $\|y^*_\varnothing\|\leqslant 1-\delta$, whence $$1\leqslant \|y^*\|=\mu\|y^*_\varnothing\|\leqslant \mu(1-\delta)<(1+D)(1-\delta)=1,$$ a contradiction.

Next, suppose that $p,q$ are two equivalent norms on $Y$ such that $A^*:(Y^*, p^*)\to X^*$ is $w^*$-$\xi$-AUC.  Fix $C>0$ such that $C^{-1}p^*\leqslant q^*\leqslant Cp^*$.  Suppose that for $\ee>0$, $\delta>0$ is as in $(ii)$ with respect to the norm $p^*$.  Suppose that $T$ is a tree with $o(T)=\omega^\xi+1$ and that $(y^*_t)_{t\in T}\subset B_{Y^*}^{p^*+q^*}$ is $w^*$-closed and $\ee$-separated.  Fix $\mu>0$ such that $2\mu<1/C$.  By passing to a subtree and using Lemma \ref{szlenk characterization}, assume that $a,b$ are such that for every $t\in MAX(T)$, $$a-\mu\leqslant p^*(y^*_t)\leqslant a, \hspace{5mm} b-\mu \leqslant q^*(y^*_t)\leqslant b.$$ Note that $a+b\leqslant 1+2\mu$.   Since $$(1+1/C)a \leqslant a+b \leqslant 1+2\mu,$$ it follows that $a<1$.   Then $(a^{-1}y_t^*)_{t\in T}\subset B_{Y^*}^{p^*}$ is $w^*$-closed and $(A^*, \ee)$-separated.   From this it follows that $p^*(a^{-1}y^*_\varnothing)\leqslant 1-\delta$ and $p^*(y^*_\varnothing)\leqslant a(1-\delta)$.   Then \begin{align*} p^*(y^*_\varnothing) + q^*(y^*_\varnothing) & \leqslant a(1-\delta)+b = a+b-a\delta \\ & \leqslant a+b-\delta (a+b)/(1+C) \\ & \leqslant (1+2\mu)\bigl(1-\frac{\delta}{1+C}\bigr).\end{align*} Since $\mu\in (0, 1/2C)$ was arbitrary, we are done.

\end{proof}

\begin{remark} In the sequel, if $p,q$ are two equivalent norms on a Banach space $Y$, we let $\text{eq}(p,q)$ denote the smallest $C>0$ such that for every $y\in Y$, $p(y)\leqslant Cq(y)$ and $q(y)\leqslant Cp(y)$.

Note that if $p,q$ are two norms on $Y$, then $Q:(Y,p)\oplus_\infty (Y,q)\to Y$ given by $Q(y_1,y_2)=y_1+y_2$ is a quotient map which induces the quotient norm $$|y|=\inf\{\max\{p(y_1), q(y_2)\}: y_1+y_2=y\}.$$  Moreover, the dual norm to this quotient norm is $p^*+q^*$, which is the norm induced by the isomorhpic embedding $Q^*:Y^*\to (Y^*, p^*)\oplus_1 (Y^*, q^*)$, $Q^*y=(y,y)$.    Thus $p^*+q^*$ is the dual norm to an equivalent norm on $Y$.  

Similarly, if $(r_n)$ is a sequence of norms on $Y$ such that $\sup_n \text{eq}(\|\cdot\|_Y, r_n)<\infty$, and if $(\ee_n)$ is a summable sequence of positive numbers, then $r^*=\sum \ee_n r_n^*$ is dual to some equivalent norm $r$ on $Y$.  Indeed, if $Q:(\oplus_n (Y, r_n))_{c_0}\to Y$ is the surjection given by $Q((y_n))=\sum \ee_n y_n$, then the quotient norm on $Y$ induced by $Q$ has $r^*$ as its dual.   

Finally, if $p,q$ are equivalent norms on $Y$, then the norm $r^*=((p^*)^2+(q^*)^2)^{1/2}$ is also dual to the equivalent norm $$r=\inf\{(p(y_1)^2+q(y_2)^2)^{1/2}: y_1+y_2=y\}$$ on $Y$.

\end{remark}

The proof of Theorem \ref{renorming1} is a modification of the main theorem of \cite{LPR} to operators on possibly non-separable domains and to uncountable ordinals.  

\begin{proof}[Proof of Theorem \ref{renorming1}] Assume there exists an equivalent norm $|\cdot|$ on $Y$ such that $A:X\to (Y, |\cdot|)$ is $\xi$-AUS.  We may assume the original norm of $Y$ has this property, since this does not change the Szlenk index of $A$.  Then $A^*:Y^*\to X^*$ is $w^*$-$\xi$-AUC.  By the previous lemma, for any $\ee>0$, there exists $\delta=\delta(\ee)\in (0,1)$ such that if $(y^*_t)_{t\in T}\subset B_{Y^*}$ is $w^*$-closed and $(A^*, \ee/2)$-separated, $\|y^*_\varnothing\|\leqslant 1-\delta$.   If $s_\ee^{\omega^\xi}(A^*B_{Y^*})=\varnothing$ for all $\ee>0$, $Sz(A)\leqslant \omega^\xi<\omega^{\xi+1}$.  Otherwise for any $\ee>0$ such that $s_\ee^{\omega^\xi}(A^*B_{Y^*})\neq \varnothing$ and any $x^*\in s_\ee^{\omega^\xi}(A^*B_{Y^*})$, by Lemma \ref{szlenk characterization}, there exists a tree $T$ with $o(T)=\omega^\xi+1$ and a  $w^*$-closed, $(A^*, \ee/2)$-separated collection $(y^*_t)_{t\in T}\subset B_{Y^*}$ such that $A^*y^*_\varnothing=x^*$.   Then $\|y^*_\varnothing\|\leqslant 1-\delta$, so that $x^*\in (1-\delta)A^*B_{Y^*}$.   We have shown that $s_\ee^{\omega^\xi}(A^*B_{Y^*})\subset (1-\delta)A^*B_{Y^*}$.  By a standard homogeneity argument, for any $n\in \nn$, $s_\ee^{\omega^\xi n}(A^*B_{Y^*})\subset (1-\delta)^n A^*B_{Y^*}$.  It easily follows that $Sz_\ee(A^*B_{Y^*})<\omega^{\xi+1}$.  Therefore $Sz(A)\leqslant \omega^{\xi+1}$.  

If $Sz(A)<\omega^{\xi+1}$, then $Sz(A)\leqslant \omega^\xi$.  This means that for any $\ee>0$, $s_\ee^{\omega^\xi}(A^*B_{Y^*})=\varnothing$.    Therefore by our previous lemma, $A^*$ is already $w^*$-$\xi$-AUC, and it suffices to assume $Sz(A)=\omega^{\xi+1}$.   Let $a=\max\{\|A\|, 1\}$ and fix a sequence $(\ee_n)$ of positive numbers decreasing to $0$ such that $a\sum \ee_n\leqslant 1$ and $s_{\ee_1}^{\omega^\xi}(A^*B_{Y^*})\neq \varnothing$.    For each $n\in \nn$, $K_{n,0}=A^*B_{Y^*}$ and $K_{n,m+1}=\overline{\text{co}}^{w^*}(s_{\ee_n}^{\omega^\xi}(A^*B_{Y^*}))$.  By Corollary \ref{main corollary} and our choice of $\ee_1$, there exists $M_n\in \nn$ such that $K_{n,M_n}\neq \varnothing$ and $K_{n, M_n+1}=\varnothing$.    We let $f:Y^*\to \rr$ be given by $$f(y^*)=\|y^*\|+ \sum_{n=1}^\infty \frac{\ee_n}{M_n} \sum_{m=1}^{M_n} d(A^*y^*, K_{n,m}).$$   Since $f$ is $w^*$-lower semi-continuous and $\|y^*\|\leqslant f(y^*)\leqslant 2\|y^*\|$, we deduce that the Minkowski functional $|\cdot|$ of $C=\{y^*\in Y^*: f(y^*)\leqslant 1\}$ is an equivalent norm on $Y^*$ with unit ball $C$ which is dual to some norm on $Y$.  Arguing as in \cite{LPR}, for $\ee>0$, we deduce the existence of some $\delta_0=\delta_0(\ee)$ such that if $(y^*_t)_{t\in T}\subset C$ is $w^*$-closed and $(A^*, \ee)$-separated with $o(T)=\omega^\xi+1$, $f(y^*_\varnothing)\leqslant 1-\delta_0$.  Since $C\subset B_{Y^*}$ and $f$ is $2$-Lipschitz with respect to the original norm on $Y^*$,  $$f((1+\delta_0/2)y^*_\varnothing) \leqslant f(y^*_\varnothing)+ \delta_0\leqslant 1.$$  Thus it follows that $|y^*_\varnothing|\leqslant (1+\delta_0/2)^{-1}=1-\delta$ for some $\delta>0$.    We have shown that $A^*:(Y^*, |\cdot|)\to X^*$ is $w^*$-$\xi$-AUC, from which it follows that $A:X\to (Y, |\cdot|_0)$ is $\xi$-AUS, where $|\cdot|_0$ is the predual norm to $|\cdot|$.

\end{proof}

\begin{corollary} If $A:X\to Y$ is weakly compact and $Sz(A^*)\leqslant \omega^{\xi+1}$, then there exists an equivalent norm $|\cdot|$ on $X$ such that $A:(X, |\cdot|)\to Y$ is $\xi$-AUC, $A^*:Y^*\to (X^*, |\cdot|)$ is $\xi$-AUS, and $A^{**}:(X^{**}, |\cdot|)\to Y^{**}$ is $w^*$-$\xi$-AUC.   
\label{drew corollarymore}
\end{corollary}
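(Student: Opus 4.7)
The plan is to apply Theorem \ref{renorming1} to the operator $A^*:Y^*\to X^*$ and, using the weak compactness of $A$, to show that the resulting norm can be taken to be dual to an equivalent norm on $X$. Since $Sz(A^*)\leqslant \omega^{\xi+1}$, the construction in the proof of Theorem \ref{renorming1} (applied with $A^*$ in the role of $A$) produces a $w^*$-lower semi-continuous function $f:X^{**}\to[0,\infty)$ of the form
\[
f(x^{**})=\|x^{**}\|+\sum_{n=1}^\infty \frac{\ee_n}{M_n}\sum_{m=1}^{M_n}d(A^{**}x^{**},K_{n,m}),
\]
where $K_{n,m}\subset Y^{**}$ are obtained from $A^{**}B_{X^{**}}$ by iterated $\omega^\xi$-Szlenk derivations and $w^*$-closed convex hulls. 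The Minkowski functional $|\cdot|_{**}$ of $C:=\{f\leqslant 1\}$ is $w^*$-lower semi-continuous and is therefore the bidual of an equivalent norm $|\cdot|_*$ on $X^*$ for which $A^*:Y^*\to(X^*,|\cdot|_*)$ is $\xi$-AUS.

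Because $A$ is weakly compact, $A^{**}(X^{**})\subset Y$ and $A^{**}:(X^{**},w^*)\to(Y,w)$ is continuous. A short induction using Krein's theorem then shows that every $K_{n,m}$ is a weakly compact convex subset of $Y$, so the formula above restricts to a norm-continuous, convex, balanced function on $X$, namely $f(x)=\|x\|+\sum_n\frac{\ee_n}{M_n}\sum_m d(Ax,K_{n,m})$. Let $|\cdot|$ denote the Minkowski functional of $C_0:=C\cap X=\{x\in X:f(x)\leqslant 1\}$, an equivalent norm on $X$.

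The heart of the argument is to identify $|\cdot|^*$ with $|\cdot|_*$, equivalently to show $C=\overline{C_0}^{w^*}$ in $X^{**}$. Since $\{f<1\}$ is norm-dense in $C$ (scaling in a balanced convex radial set), it suffices to approximate any $x^{**}$ with $f(x^{**})<1$ in the $w^*$-topology by elements of $C_0$. This is exactly the role of Proposition \ref{easy obs}: given a $w^*$-neighborhood $V$ of $x^{**}$ and $\eta>0$, choose $x\in X\cap V$ with $\|x\|\leqslant \|x^{**}\|$ and $\|Ax-A^{**}x^{**}\|<\eta$; using that each $d(\cdot,K_{n,m})$ is $1$-Lipschitz and $\sum\ee_n\leqslant 1$, we obtain $f(x)\leqslant f(x^{**})+\eta$, so $x\in C_0\cap V$ for $\eta$ small. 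This step is the main obstacle, as simultaneous control of $\|x\|$ and $\|Ax-A^{**}x^{**}\|$ in a prescribed $w^*$-neighborhood is precisely where weak compactness of $A$ enters in an essential way.

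Once $|\cdot|^*=|\cdot|_*$ is established, we immediately have that $A^*:Y^*\to(X^*,|\cdot|^*)$ is $\xi$-AUS (the second conclusion), and Theorem \ref{duality} applied to $A^*$ turns this into the third conclusion, that $A^{**}:(X^{**},|\cdot|^{**})\to Y^{**}$ is $w^*$-$\xi$-AUC. For the first conclusion: any weakly null, $(A,1)$-large tree in $(B_X,|\cdot|)$ is also a $w^*$-null, $(A^{**},1)$-large tree in $(B_{X^{**}},|\cdot|^{**})$ with the same $|\cdot|$-norms of convex combinations of branches; hence $\delta_\xi^w(\sigma;A)\geqslant \delta_\xi^{w^*}(\sigma;A^{**})>0$, so $A:(X,|\cdot|)\to Y$ is $\xi$-AUC.
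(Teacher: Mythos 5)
Your proposal is correct and follows essentially the same route as the paper: apply the construction of Theorem \ref{renorming1} to $A^*:Y^*\to X^*$ to get the function $f$ on $X^{**}$ and the norm $|\cdot|$ with $A^{**}:(X^{**},|\cdot|)\to Y^{**}$ $w^*$-$\xi$-AUC, restrict to $X$, and use Goldstine together with Proposition \ref{easy obs} (weak compactness giving simultaneous control of $\|x\|$, the $w^*$-neighborhood, and $\|Ax-A^{**}x^{**}\|$) to show the restricted norm's bidual is $|\cdot|$, after which the $\xi$-AUC and $\xi$-AUS conclusions follow exactly as you say. One wording quibble: the $w^*$-lower semicontinuous Minkowski functional on $X^{**}$ is the \emph{dual} of an equivalent norm on $X^*$, not its bidual.
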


\begin{proof} For $(\ee_n$) chosen as in the proof of Theorem \ref{renorming1}, we let $K_{n,0}=A^{**}B_{X^{**}}$ and $K_{n, m+1}=\overline{\text{co}}^{w^*}(K_{n,m})$.  We define the function $f:X^{**}\to \rr$ by $$f(x^{**})=\|x^{**}\|+\sum_{n=1}^\infty \frac{\ee_n}{M_n}\sum_{m=1}^{M_n} d(A^{**}x^{**}, K_{n,m})$$ and the new norm $|\cdot|$ on $X^{**}$ to be the Minkowski functional of $C=\{x: f(x)\leqslant 1\}$.   We deduce as in the previous theorem that this is an equivalent norm on $X^{**}$ making $A^{**}:(X^{**}, |\cdot|)\to Y^{**}$ $w^*$-$\xi$-AUC.  We define the norm $|\cdot|_0$ on $X$ by $|x|_0=|\Phi x|$, where $\Phi:X\to X^{**}$ is the canonical embedding.   We must show that $|\cdot|_0^{**}=|\cdot|$.  If $C_0$ denotes the unit ball of $|\cdot|_0$ in $X$, by Goldstine's theorem, we only need to show that $\overline{\Phi C_0}^{w^*}=C$.  Of course, $\Phi C_0\subset C$, and since $C$ is $w^*$-compact, $\overline{\Phi C_0}^{w^*}\subset C$.  Since $\{x^{**}\in X^{**}: f(x^{**})<1\}$ is dense in $C$, we need to show that $\Phi C_0$ is $w^*$-dense in $\{x^{**}\in X^{**}: f(x^{**})<1\}$.  To that end, fix $x^{**}\in X^{**}$ with $f(x^{**})=1-\delta<1$.  Fix a $w^*$-neighborhood $V$ of $x^{**}$ and $x\in X$ with $\|x\|\leqslant \|x^{**}\|$ such that $\Phi x\in V$ and $\|A^{**}x^{**}-A x\|<\delta$, which we may do by Proposition \ref{easy obs}.   Then $$f(\Phi x)\leqslant f(x^{**})+\sum_{n=1}^\infty \frac{\ee_n}{M_n}\sum_{m=1}^{M_n}\delta <1-\delta+\delta=1.$$  Thus $x\in C_0$ and $\Phi x\in V$, yielding the $w^*$-density of $\Phi C_0$.

\end{proof}

We have yet another modification, which we will need later.  Given an ordinal $\xi$, a constant $\ee>0$, and a topology $\tau$ on $X$, and an operator $A:X\to Y$, let $d^\tau_\xi(\ee;A)$ be the supremum of all $\delta\geqslant 0$ such that for every tree $T$ with $o(T)=\omega^\xi+1$, every $\tau$-closed, $(A, \ee)$-separated $(x_t)_{t\in T}\subset B_X$, and every $\mu>0$, there exists a subtree $S$ of $T$ with $o(S)=o(T)$ such that $(x_t)_{t\in S}$ is $\tau$-closed and $$\|Ax_\varnothing\|\leqslant (1-\delta+\mu)\underset{t\in MAX(S)}\inf \|Ax_t\|.$$ We observe that by Lemma \ref{szlenk characterization}, the set of all such $\delta$ always includes $0$.  If $A$ is an adjoint and $\tau=w^*$ is the $w^*$-topology on $X$ coming from an understood predual, we say $A:X\to Y$ is \emph{co}-$w^*$-$\xi$-\emph{AUC} provided that $d^{w^*}_\xi(\ee;A)>0$.  We remark here that if $A:X\to Y$ is weakly compact, then whether the operator $A^{**}:X^{**}\to Y$ is co-$w^*$-$\xi$-AUC depends on the norm of $Y$ and is invariant under renorming $X$. 

\begin{theorem}\begin{enumerate}[(i)]\item Suppose $A:X\to Y$ is a weakly compact operator such that $Sz(A^*)\leqslant \omega^{\xi+1}$.  Then there exists an equivalent norm $|\cdot|$ on $Y$ such that $A^{**}:X^{**}\to (Y, |\cdot|)$ is co-$w^*$-$\xi$-AUC.  \item If $A:X\to Y$ $Sz(A)\leqslant \omega^{\xi+1}$, then there exists an equivalent norm $|\cdot|$ on $X$ such that $A^*:Y^*\to X^*$ is co-$w^*$-$\xi$-AUC.

\end{enumerate}

\label{coAUC}
\end{theorem}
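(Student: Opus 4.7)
The plan is to mimic the LPR-style renorming construction used in Theorem \ref{renorming1} and Corollary \ref{drew corollarymore}, but with a stabilization step that extracts the scale-invariant comparison (root versus leaves) demanded by the co-AUC property instead of the absolute bound $|\cdot|\leqslant 1-\delta$ produced there. For part (ii), I would fix a summable sequence $(\ee_n)\subset(0,1)$ with $\ee_n\downarrow 0$ and $\sum\ee_n\leqslant 1$, set $K_{n,0}=A^*B_{Y^*}\subset X^*$, and inductively define $K_{n,m+1}=\overline{\text{co}}^{w^*}(s^{\omega^\xi}_{\ee_n}(K_{n,m}))$. Corollary \ref{main corollary} applied to the hypothesis $Sz(A)\leqslant\omega^{\xi+1}$ produces $M_n\in\nn$ with $K_{n,M_n+1}=\varnothing$. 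The function
\[
f(x^*)=\|x^*\|+\sum_{n=1}^\infty\frac{\ee_n}{M_n}\sum_{m=1}^{M_n}d(x^*,K_{n,m})
\]
is $w^*$-lower semicontinuous, convex, and even; since each $K_{n,m}$ is balanced and contains $0$, one has $\|\cdot\|\leqslant f\leqslant 2\|\cdot\|$. The Minkowski functional $|\cdot|^*$ of $C=\{f\leqslant 1\}$ is then an equivalent dual norm on $X^*$ corresponding to an equivalent norm $|\cdot|$ on $X$.

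To verify that $A^*:Y^*\to(X^*,|\cdot|^*)$ is co-$w^*$-$\xi$-AUC, I fix $\ee>0$, $\mu>0$, and a $w^*$-closed, $(A^*,\ee)$-separated tree $(y^*_t)_{t\in T}\subset B_{Y^*}$ of order $\omega^\xi+1$, with separation measured in $|\cdot|^*$. Using Lemma \ref{szlenk characterization}(ii) applied to a finite cover of the leaves by intervals recording the values $|A^*y^*_t|^*$, I pass to a subtree $S$ of the same order on which those values lie in a single interval $[\lambda,\lambda(1+\mu/2)]$; choosing the partition finely enough (relative to $\ee$) forces $\lambda>0$. Because $|\cdot|^*\leqslant 2\|\cdot\|$, the tree is also $(A^*,\ee/2)$-separated in the original norm, so Lemma \ref{szlenk characterization}(iii) (with $C=\{1\}$, $K=A^*B_{Y^*}$) places $A^*y^*_\varnothing$ in $[\alpha]^{\omega^\xi}_{\ee/4}(A^*B_{Y^*})$; by the equivalence of the $[\alpha]^\xi$ and Szlenk-derivative fragmentation indices, one obtains $A^*y^*_\varnothing\in s^{\omega^\xi}_{\ee_n}(A^*B_{Y^*})\subset K_{n,1}$ for any $n=n(\ee)$ with $\ee_n$ sufficiently small. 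Setting $\lambda'=\lambda(1+\mu/2)$, the rescaled tree $(A^*y^*_t/\lambda')$ lies in $C$ and is $(A^*,\ee/\lambda')$-separated with $\ee/\lambda'\geqslant\ee/(2\|A\|)$, uniformly in the original tree. An LPR-style estimate, keyed on the root lying one level deeper than the leaves in the $K_{n,m}$-hierarchy, then yields $f(A^*y^*_\varnothing/\lambda')\leqslant 1-\delta_0$ for some $\delta_0=\delta_0(\ee,\|A\|)>0$; invoking the 2-Lipschitz property of $f$ with respect to $\|\cdot\|$ exactly as in the proof of Theorem \ref{renorming1} gives $|A^*y^*_\varnothing/\lambda'|^*\leqslant 1-\delta$, and unscaling yields $|A^*y^*_\varnothing|^*\leqslant\lambda(1-\delta+\mu)$, as required.

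Part (i) follows the bidual version of the same recipe, paralleling Corollary \ref{drew corollarymore}. Weak compactness of $A$ places $A^{**}B_{X^{**}}$ inside $Y$ with Szlenk index at most $\omega^{\xi+1}$; I build $K_{n,m}\subset Y^{**}$ starting from $K_{n,0}=A^{**}B_{X^{**}}$ by iterating $w^*$-Szlenk derivatives and $w^*$-closed convex hulls, form the analogous $f$ on $Y^{**}$, and verify via the Goldstine-density argument of Corollary \ref{drew corollarymore} (using Proposition \ref{easy obs}) that the resulting Minkowski functional is the bidual of an equivalent norm on $Y$. The stabilization-rescaling verification of co-$w^*$-$\xi$-AUC for $A^{**}:X^{**}\to(Y,|\cdot|)$ is then identical. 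The hard part throughout is the conversion from the absolute bound available for unit-ball trees to the homogeneous inequality $|A^*y^*_\varnothing|^*\leqslant(1-\delta+\mu)\inf_{t\in MAX(S)}|A^*y^*_t|^*$ required by the co-AUC property; the stabilization step (Lemma \ref{szlenk characterization}(ii)) reduces to the unit-ball case at the cost of the error $\mu$, and the $\|A\|$-dependent lower bound on the rescaled separation keeps the resulting $\delta$ uniform in the tree.
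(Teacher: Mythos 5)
Your overall strategy is the same as the paper's (build an LPR-style function $f$ from a hierarchy $K_{n,m+1}=\overline{\text{co}}^{w^*}(s^{\omega^\xi}_{\ee_n}(K_{n,m}))$, stabilize the leaf values with Lemma \ref{szlenk characterization}(ii), rescale by the leaf level, and finish with the $2$-Lipschitz perturbation of $f$), but there is a genuine gap at the crucial step, namely the choice $K_{n,0}=A^*B_{Y^*}$ (and $K_{n,0}=A^{**}B_{X^{**}}$ in part (i)) \emph{without dilation}. The co-AUC inequality is homogeneous, and the essential new case is when the leaf level $\lambda'$ is small: the separation forces only $\lambda'\geqslant \ee/2$, so the rescaled functionals $y^*_t/\lambda'$ have norm up to $2/\ee>1$ and the rescaled tree images $A^*y^*_t/\lambda'$ lie in $(2/\ee)A^*B_{Y^*}$ (intersected with $B_{X^*}$), not in $A^*B_{Y^*}=K_{n,0}$. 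Consequently the LPR mechanism cannot be applied to the rescaled tree: Lemma \ref{szlenk characterization}(iii) and the ``root one level deeper'' argument require the tree to sit inside the set whose derivations form the hierarchy. What you actually prove --- that the \emph{unrescaled} root satisfies $A^*y^*_\varnothing\in K_{n,1}$ --- does not transfer to the rescaled root, because neither $f$ nor the sets $K_{n,m}$ are positively homogeneous; and membership of the unrescaled root in $K_{n,1}$ only yields an absolute gain in $f$, which can be negligible compared to the leaf scale $\lambda$ (e.g.\ when all leaves are close to $0\in K_{n,1}$), so the claimed bound $f(A^*y^*_\varnothing/\lambda')\leqslant 1-\delta_0$ is unsupported. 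Using $B_{X^*}$ instead is not an option either, since only $A^*B_{Y^*}$, not $B_{X^*}$, has controlled Szlenk index.

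The paper's proof fixes exactly this point by taking the dilated base sets $K_{n,0}=nA^{**}B_{X^{**}}$ (resp.\ $nA^*B_{Y^*}$ in part (ii)); dilation does not change the Szlenk index, so Corollary \ref{main corollary} still gives finite $M_n$, and the rescaled tree, which lives in $kB_{X^{**}}$ for $k$ of order $2/\ee$, is then contained in $K_{k,0}$, so the LPR argument applies directly to it and produces $f(\mu^{-1}A^{**}x^{**}_\varnothing)\leqslant 1-\delta(\ee)$ with $\delta$ depending only on $\ee$ (through which $k$, hence the weight $\ee_k$, is used). Your remaining steps (the lower bound $\lambda'\geqslant\ee/2$ from separation plus $w^*$-closedness, the Lipschitz correction, the unscaling, and the Goldstine/Proposition \ref{easy obs} argument identifying the new norm in part (i)) match the paper and are fine once the hierarchy is built from the dilates; as written, though, the verification of co-$w^*$-$\xi$-AUC does not go through.
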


\begin{proof}[Sketch]$(i)$ As before, we may assume that $Sz(A^*)=\omega^{\xi+1}$.   Fix a sequence of positive numbers $(\ee_n)$ decreasing to zero such that $a\sum \ee_n<1$ and $s_{\ee_1}^{\omega^\xi}(A^{**}B_{X^{**}})\neq \varnothing$. Let $\Phi:Y\to Y^{**}$ denote the canonical embedding and note that since $A$ is weakly compact, $A^{**}X^{**}\subset Y$.    Let $K_{n, 0}= n A^{**}B_{X^{**}}$ and let $K_{n, m+1}= \overline{\text{co}}^{w^*}s_{\ee_n}^{\omega^\xi}(K_{n,m})$.  By Corollary \ref{main corollary}, for each $n\in \nn$, there exists $M_n\in \nn$ such that $K_{n, M_n}\neq \varnothing$ and $K_{n, M_n+1}=\varnothing$.   We let $f:Y\to \rr$ be given by $$f(y)=\|y\|+\sum_{n=1}^\infty \frac{\ee_n}{M_n}\sum_{m=1}^{M_n} d(y, \Phi^{-1}K_{n,m}).$$   Let $C=\{y\in Y: f(y)\leqslant 1\}$.  Then $C$ is the unit ball of an equivalent norm on $Y$ such that $C\subset B_Y \subset 2C$.   Let $|\cdot|$ denote the Minkowski functional of $C$.   

Fix $\ee>0$, a tree $T$ with $o(T)=\omega^\xi+1$, and a collection $(x_t^{**})_{t\in T}\subset B_{X^{**}}$ as in the definition of $d_\xi^{w^*}(\ee;A^{**}:X^{**}\to (Y, |\cdot|))$.   Fix $\eta\in (0, \ee/2)$.     By passing to a subtree, using Lemma \ref{szlenk characterization}, we may assume that there exists $\mu\geqslant 0$ such that for every $t\in MAX(T)$, $$\mu-\eta \leqslant |Ax_t|\leqslant \mu.$$   For any $t\in MAX(T)$, if $s=t^-$, $$A^{**}x_s^{**}\in \overline{\{Ax_v: v^-=s\}}^w\subset \mu C.$$   Then $$\ee\leqslant |A^{**}x^{**}_s-A^{**}x^{**}_t|\leqslant 2\mu,$$ whence $\mu\geqslant \ee/2$.  Note also that $\mu\leqslant \eta+\inf_{t\in MAX(T)}|A^{**}x_t^{**}|\leqslant  \eta+\sup_{t\in T}|A^{**}x^{**}_t|\leqslant \eta+2\|A\|$.    We then argue as in \cite{LPR} with the tree $(\mu^{-1} x_t^{**})_{t\in T}\subset k B_{X^{**}}$ for a sufficiently large $k\in \nn$ (depending on $\ee$) to deduce that $f(\mu^{-1}A^{**}x^{**}_\varnothing)\leqslant 1-\delta$ for some $\delta=\delta(\ee)>0$.    From this it follows that $$f((1-\delta\ee/2\|A\|)A^{**}x^{**}_\varnothing) \leqslant f(\mu^{-1}A^{**}x^{**}_\varnothing)+\frac{\delta\ee}{2\mu \|A\|}\|A^{**}x^{**}_\varnothing\|\leqslant 1-\delta + \frac{\delta\ee}{2\|A\|}\cdot \frac{2}{\ee}\|A\| \leqslant 1,$$   whence $|A^{**}x^{**}_\varnothing|\leqslant \mu(1-\delta\ee/2\|A\|)^{-1}=\mu(1-\underline{\delta})$ for some constant $\underline{\delta}=\underline{\delta}(\ee)>0$.  From this it follows that $$|A^{**}x^{**}_\varnothing|\leqslant \mu(1-\underline{\delta}) \leqslant \frac{\mu}{\mu-\eta} (1-\underline{\delta})\inf_{t\in MAX(T)}|A^{**}x^{**}_t|.$$ Since the numbers $\delta$ and $\underline{\delta}$ did not depend on $\mu$ or $\eta$, since $\mu\geqslant \ee/2$, and since this process can be completed for arbitrarily small $\eta$, we deduce that $d^{w^*}_\xi(\ee;A^{**}:X^{**}\to Y)\geqslant \underline{\delta}$.    

$(ii)$ The proof is the same as $(i)$, with the $n A^{**}B_{X^{**}}$ replaced by $nA^*B_{Y^*}$.  One needs only note that the resulting set $C$ is $w^*$-compact, so that the norm $|\cdot|$ is the dual norm to a norm on $X$.

\end{proof}

Next, we turn to the use of the Asplund averaging method to show, among other things, that for certain operators from a Banach space into itself, which admits AUC and AUS norms, it admits a norm which is simultaneously AUC and AUS.  We first require a technical piece.

\begin{lemma} Suppose $A:X\to Y$ is an operator and $r_1, \ldots, r_k$ are equivalent norms on $Y$ such that $\text{\emph{eq}}(\|\cdot\|_Y, r_n)\leqslant C$ for each $1\leqslant n\leqslant k$.  Suppose also that $\sigma_0, \tau>0$ are such that for any $0<\sigma\leqslant \sigma_0$ and $1\leqslant n\leqslant k$, $\rho^w_\xi(\sigma C^2;A:X\to (Y, r_n)) \leqslant \sigma\tau$.    Then for any $B$-tree $B$ with $o(B)=\omega^\xi$, any $y\in Y$ with $\|y\|\geqslant 1/C$, any $\ee>0$, and any weakly null collection $(x_t)_{t\in B}\subset B_X$, there exists $t\in B$ and a convex combination $x$ of $\text{\emph{co}}(x_s: \varnothing\prec s\preceq t)$ such that for each $1\leqslant n\leqslant k$, $r_n(y+\sigma Ax) < r_n(y)(1+\sigma\tau+\ee)$.     

\label{crazy lemma}

\end{lemma}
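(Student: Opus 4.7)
The plan is to argue by contradiction, closely paralleling the pruning strategy already used in the previous section. Suppose the conclusion fails: there exist $y \in Y$ with $\|y\| \geqslant 1/C$, a $B$-tree $B$ with $o(B) = \omega^\xi$, a weakly null $(x_t)_{t\in B} \subset B_X$, and $\ee > 0$ such that for every $t \in B$ and every convex combination $x$ of $(x_s : \varnothing \prec s \preceq t)$, there exists some $n(t,x) \in \{1, \ldots, k\}$ with $r_{n(t,x)}(y + \sigma Ax) \geqslant r_{n(t,x)}(y)(1+\sigma\tau+\ee)$. Using the $\Gamma_\xi$-version of Proposition \ref{normal prop} (as applied in the convexity proposition of Section 2), I replace $B$ with $\Omega_\xi = \Gamma_\xi D$ and assume $(x_t)_{t\in\Omega_\xi}$ is normally weakly null. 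For each $t \in MAX(\Omega_\xi)$ the $\mathbb{P}_\xi$-average $z_t := \sum_{\varnothing \prec s \preceq t}\mathbb{P}_\xi(s) x_s$ must violate the conclusion, so I may pick $n(t) \in \{1,\ldots,k\}$ and a Hahn--Banach functional $y^*_t$ in the unit ball of $r^*_{n(t)}$ satisfying $\text{Re\ }y^*_t(y+\sigma A z_t) = r_{n(t)}(y+\sigma A z_t) \geqslant r_{n(t)}(y)(1+\sigma\tau+\ee)$.

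The next step is to stabilize both the index $n(t)$ and the functional-value tuples across the tree. Define $h : \Pi\Omega_\xi \to \rr$ by $h(s,t) = \text{Re\ }y^*_t(y + \sigma A x_s)/r_{n(t)}(y)$; then $\sum_{\varnothing \prec s \preceq t}\mathbb{P}_\xi(s)h(s,t) \geqslant 1+\sigma\tau+\ee$, so $h$ is $(1+\sigma\tau+\ee)$-large in the sense of \cite{Causey2}. Theorem $1.5$ of \cite{Causey2} produces an extended pruning $(\theta, e) : \Omega_\xi \to \Omega_\xi$ with $h(\theta(s), e(t)) > 1+\sigma\tau + 3\ee/4$ on $\Pi\Omega_\xi$. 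Since $r^*_{n(t)}(y^*_t) \leqslant 1$ forces $\|y^*_t\| \leqslant C$, all values $\text{Re\ }y^*_t(y)$ and $\text{Re\ }y^*_t(Ax_s)$ lie in $[-R, R]$ for $R := C\max(\|y\|, \|A\|)$. Fix a small $\delta' > 0$, partition $[-R, R]^2$ into finitely many $\ell_\infty$-pieces of diameter less than $\delta'$, and fold the finite coloring $n(e(t))$ into this partition to form a single finite coloring $\chi$ on $\Pi\Omega_\xi$. Proposition $4.6(iii)$ of \cite{Causey1} then yields a further extended pruning $(\theta', e')$ and fixed data $(n, \eta, \mu)$ such that, on the resulting tree, $n(e \circ e'(t)) \equiv n$ and $(\text{Re\ }y^*_{e\circ e'(t)}(y), \text{Re\ }y^*_{e\circ e'(t)}(Ax_{\theta\circ\theta'(s)}))$ lies within $\delta'$ of $(\eta, \mu)$.

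Set $u_s := x_{\theta \circ \theta'(s)}$; the collection $(u_s)_{s \in \Omega_\xi} \subset B_X$ remains normally weakly null since prunings respect the directed order. Combining the large-function estimate with the near-constancy of the functionals and choosing $\delta'$ small enough that $\eta + \sigma\mu > r_n(y)(1+\sigma\tau+\ee/2)$, I obtain, for every maximal $t$ and every convex combination $u$ of $(u_s : \varnothing \prec s \preceq t)$,
$$r_n(y+\sigma Au) \geqslant \text{Re\ }y^*_{e\circ e'(t)}(y+\sigma Au) \geqslant \eta+\sigma\mu-(1+\sigma)\delta' > r_n(y)(1+\sigma\tau+\ee/2)-(1+\sigma)\delta'.$$
On the other hand, let $\tilde y := y/r_n(y)$ and $\sigma' := \sigma/r_n(y)$. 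Since $r_n(y) \geqslant \|y\|/C \geqslant 1/C^2$, $\sigma' \leqslant \sigma C^2 \leqslant \sigma_0 C^2$. Convexity of $\rho^w_\xi(\cdot; A:X \to (Y, r_n))$ together with $\rho^w_\xi(0; A:X \to (Y, r_n)) = 0$ gives $\rho^w_\xi(\lambda s; \cdot) \leqslant \lambda \rho^w_\xi(s; \cdot)$ for $\lambda \in [0,1]$; taking $s = \sigma C^2$ and $\lambda = 1/(C^2 r_n(y))$ yields $\rho^w_\xi(\sigma'; A:X \to (Y, r_n)) \leqslant \sigma\tau/(C^2 r_n(y))$. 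Applying the modulus definition to $\tilde y \in B_Y^{r_n}$ and the weakly null $(u_s)$ produces $t$ and a convex combination $u$ with
$$r_n(y+\sigma Au) = r_n(y)\, r_n(\tilde y + \sigma' Au) \leqslant r_n(y) + \sigma\tau/C^2 + r_n(y)\delta''$$
for arbitrarily small $\delta'' > 0$. Evaluating both estimates at this common $(t, u)$ and using $r_n(y) \geqslant 1/C^2$ simplifies to $r_n(y)\ee/2 < r_n(y)\delta'' + (1+\sigma)\delta'$, which fails for sufficiently small $\delta', \delta''$. This is the desired contradiction.

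The main technical obstacle is the two-stage pruning while carrying the norm index $n(t)$ through both steps, which I handle by folding $n(t)$ into the finite coloring fed to Proposition $4.6(iii)$ of \cite{Causey1}. The passage from the hypothesis $\rho^w_\xi(\sigma C^2; A) \leqslant \sigma\tau$ to a usable estimate at $\sigma' = \sigma/r_n(y)$ is a short convexity argument leveraging $r_n(y) \geqslant 1/C^2$, so that the constant $C^2$ in the hypothesis precisely compensates the worst-case scaling.
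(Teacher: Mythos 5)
Your proof is correct and follows essentially the same route as the paper's: argue by contradiction, pass to a normally weakly null collection indexed by $\Gamma_\xi D$, take the $\mathbb{P}_\xi$-averages with Hahn--Banach functionals, apply the largeness pruning from \cite{Causey2}, stabilize the norm index by \cite[Proposition $4.6(iii)$]{Causey1}, and rescale by $r_n(y)\geqslant 1/C^2$ against the hypothesis on $\rho^w_\xi(\sigma C^2;A:X\to (Y,r_n))$. The only deviations are harmless: the $(\eta,\mu)$-value stabilization is redundant, since once the index $n$ is constant, linearity of the single functional $y^*_{e\circ e'(t)}$ already gives the lower bound over convex combinations, and your explicit convexity/scaling step simply makes precise the monotonicity in $\sigma$ that the paper uses implicitly.
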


For the proof, we recall some definitions. Given a $B$-tree $T$, we let $\Pi T=\{(s,t)\in T\times T: s\preceq t\}$.     Given $B$-trees $S,T$ and a directed set $D$, we say a pair $(\theta, e)$ of maps with $\theta:S D\to T D$ and $e:MAX(S D)\to MAX(T D)$ is an \emph{extended pruning} provided that $\theta:S D\to T D$ is a pruning and for every $t\in MAX(S D)$, $\theta(t)\preceq e(t)$.  By an abuse of notation, we write $(\theta, e):S D\to T D$.

\begin{proof} Suppose that the conclusion fails, and let $0<\sigma\leqslant \sigma_0$, $B$, $y\in Y$, $(x_t)_{t\in B}$ witness the failure of the claim.    Let $D$ be the set of all weakly open sets in $X$ that contain $0$, ordered by reverse inclusion. By Proposition \ref{normal prop}, we may assume that $B=B_{\omega^\xi} D$. Let $\Gamma_\xi$ be the $B$-tree with $o(\Gamma_\xi)=\omega^\xi$ defined in \cite{Causey2}. It was shown there that $o(\Gamma_\xi)=\omega^\xi$, so that there exists a pruning $\theta:\Gamma_\xi D\to B_{\omega^\xi}D$.  By replacing $x_t$ by $x_{\theta(t)}$ and relabeling, we obtain a collection $(x_t)_{t\in\Gamma_\xi D}$ witnessing the failure of the claim.     

In \cite{Causey2}, a function $\mathbb{P}_\xi:\Gamma_\xi D\to [0,1]$ was defined such that for every $t\in MAX(\Gamma_\xi D)$, $\sum_{\varnothing\prec s\preceq t}\mathbb{P}_\xi(s)=1$.   For each $t\in MAX(\Gamma_\xi D)$, let $$y_t=\sum_{\varnothing\prec s\preceq t}\mathbb{P}_\xi(s)x_s\in \text{co}(x_s: \varnothing\prec s\preceq t).$$   For each $1\leqslant n\leqslant k$, fix $y^*_{n,t}\in B_{Y^*}^{r_n}$ such that $$\text{Re\ }y^*_{n,t}(y+\sigma Ay_t)= r_n(y+\sigma Ay_t).$$   For each $1\leqslant n\leqslant k$ and each $(s,t)\in \Pi(\Gamma_\xi D)$, let $$g_n(s,t)= \frac{\text{Re\ }y^*_{n,t}(y+\sigma Ax_s)}{r_n(y)}.$$  For each $(s,t)\in \Pi (\Gamma_\xi D)$, let $g(s,t)=\max_{1\leqslant n\leqslant k}g_n(s,t)$.   Our contradiction assumption yields that for any $t\in MAX(\Gamma_\xi D)$, there exists some $1\leqslant n\leqslant k$ such that \begin{align*} 1+\sigma\tau+\ee & \leqslant \frac{r_n(y+\sigma Ay_t)}{r_n(y)}   = r_n(y)^{-1} \text{Re\ }y^*_{n,t}(y+\sigma Ay_t) \\ & = r_n^{-1}(y)\text{Re\ }y^*_{n,t}\sum_{\varnothing\prec s\preceq t}\mathbb{P}_\xi(s)(y+\sigma Ax_s)\\ & = \sum_{\varnothing\prec s\preceq t}\mathbb{P}_\xi(s)g_n(s,t) \leqslant \sum_{\varnothing\prec s\preceq t}\mathbb{P}_\xi(s)g(s,t). 
\end{align*}

By \cite{Causey2}, there exists an extended pruning $(\theta, e):\Gamma_\xi D\to \Gamma_\xi D$ such that for every $(s,t)\in \Pi(\Gamma_\xi D)$, $g(\theta(s), e(t))\geqslant 1+\sigma\tau+\ee/2$.     We next define a function $N:\Pi(\Gamma_\xi D)\to \{1, \ldots, k\}$  by letting $N(s,t)$ be the minimum $ n\in \{1, \ldots, k\}$ such that $g(\theta(s), e(t))=g_n(\theta(s), e(t))$.    By \cite[Proposition $4.6(iii)$]{Causey1}, there exists an extended pruning $(\theta', e'):\Gamma_\xi D\to \Gamma_\xi D$ and $n\in \{1, \ldots, k\}$ such that for every $(s,t)\in \Pi(\Gamma_\xi D)$, $N(\theta'(s), e'(t))=n$.    Let $\phi=\theta\circ \theta'$ and $f=e\circ e'$.  Then $(\phi, f):\Gamma_\xi D\to \Gamma_\xi D$ is an extended pruning.  Since $\phi$ is a pruning, it follows that $(x_{\phi(s)})_{s\in \Gamma_\xi D}\subset B_X$ is a weakly null collection.  Fix $(s,t)\in \Pi(\Gamma_\xi D)$ and note that $$1+\sigma\tau+\ee/2\leqslant g(\phi(s), f(t))=g_n(\phi(s), f(t))= \frac{\text{Re\ }y^*_{n,f(t)}(y+\sigma Ax_{\phi(s)})}{r_n(y)}.$$   For this $t\in MAX(\Gamma_\xi D)$, it follows that for every convex combination $x=\sum_{\varnothing \prec s\preceq t}a_sx_{\phi(s)}$ of $\text{co}(x_{\phi(s)}: \varnothing\prec s \preceq t)$, \begin{align*} 1+\sigma\tau+\ee/2 & \leqslant \sum_{\varnothing \prec s\preceq t}a_s \text{Re\ } y^*_{n,f(t)}\Bigl(\frac{y}{r_n(y)}+\frac{\sigma}{r_n(y)} Ax_{\phi(s)}\Bigr) \\ & =\text{Re\ }y^*_{n, f(t)}\Bigl(\frac{y}{r_n(y)}+ \frac{\sigma}{r_n(y)} Ax\Bigr) \leqslant r_n\Bigl(\frac{y}{r_n(y)}+\frac{\sigma}{r_n(y)} Ax\Bigr),\end{align*}   since $y^*_{n,t}\in B_{Y^*}^{r_n}$.   

However, since $\|y\|\geqslant 1/C$, $r_n(y)\geqslant 1/C^2$, so that $\sigma/r_n(y)\leqslant \sigma C^2$.   Then the vector $y/r_n(y)$ and the collection $(x_{\phi(s)})_{s\in \Gamma_\xi D}\subset B_X$ contradict the assumption that $\rho^w_\xi(\sigma C^2; A:X\to (Y, r_n))\leqslant 1+\sigma \tau$.    

\end{proof}

\begin{proposition} Suppose $A:X\to Y$ is an operator.  Suppose also that $r_n$ is a sequence of norms on $Y$ such that $\sup_n \text{\emph{eq}}(\|\cdot\|_Y, r_n)<\infty$.  Fix a sequence $(\ee_n)$ of positive numbers with $\sum \ee_n\leqslant 1$ and let $r=\sum \ee_n r_n$.  \begin{enumerate}[(i)]\item If for some ordinal $\xi$ and every $n\in \nn$, $A:(X, \|\cdot\|_X)\to (Y, r_n)$ is $\xi$-AUS, then $A:(X, \|\cdot\|_X)\to (Y, r)$ is $\xi$-AUS.  \item If $A$ is weakly compact, $\xi$ is an ordinal, and $p$ is an equivalent norm on $Y$ such that $\text{\emph{eq}}(p, r_n)\underset{n\to \infty}{\to} 1$ and $A^{**}:(X^{**}, \|\cdot\|_X)\to (Y, p)$ is co-$w^*$-$\xi$-AUC, then $A^{**}:(X^{**}, \|\cdot\|_X)\to (Y, r)$ is co-$w^*$-$\xi$-AUC.  \item If $X=Y$, $A$ is weakly compact, $p$ is a norm on $X$ such that $A^{**}:(X^{**},p)\to (X, \|\cdot\|)$ is $w^*$-$\xi$-AUC, and $\text{\emph{eq}}(p, r_n)\underset{n\to \infty}{\to} 1$, then $A^{**}:(X^{**}, r)\to (X, \|\cdot\|_X)$ is $w^*$-$\xi$-AUC. \end{enumerate}
\label{Asplund}
\end{proposition}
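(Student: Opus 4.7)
The plan is to handle (i) by a direct application of Lemma~\ref{crazy lemma} to finitely many of the $r_n$, and to reduce (ii) and (iii) to the $p$-hypothesis using the convergence $\mathrm{eq}(p,r_n)\to 1$ to isolate a tail of $r$ that behaves essentially like a scalar multiple of $p$.

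For (i), fix $\tau>0$, set $C=\sup_n\mathrm{eq}(\|\cdot\|_Y,r_n)$, $M=\|A\|$, and choose $N$ so large that $CM\sum_{n\geqslant N}\ee_n<\tau/2$. The $\xi$-AUS hypothesis for each $r_n$ will yield $\sigma_0>0$ with $\rho^w_\xi(\sigma C^2;A:X\to(Y,r_n))\leqslant\sigma\tau/4$ for all $0<\sigma\leqslant\sigma_0$ and $n<N$. For any $y$ with $r(y)=1$ (hence $\|y\|\geqslant 1/C$) and any weakly null $(x_t)_{t\in B}\subset B_X$ on a $B$-tree of order $\omega^\xi$, Lemma~\ref{crazy lemma} will produce $t\in B$ and a convex combination $x$ of $(x_s:\varnothing\prec s\preceq t)$ with $r_n(y+\sigma Ax)\leqslant r_n(y)(1+\sigma\tau/4+\ee)$ for every $n<N$. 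Splitting $r(y+\sigma Ax)=\sum_{n<N}+\sum_{n\geqslant N}$, bounding the first block by this simultaneous estimate and the tail by the triangle inequality together with $r_n(Ax)\leqslant CM$, yields $r(y+\sigma Ax)\leqslant 1+\sigma\tau$ once $\ee\leqslant\sigma\tau/4$, and hence $\rho^w_\xi(\sigma;A:X\to(Y,r))\leqslant\sigma\tau$.

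For (ii), write $r=r_N^{\mathrm{fin}}+r_N^{\mathrm{tail}}$ with $r_N^{\mathrm{tail}}=\sum_{n\geqslant N}\ee_n r_n$. For $N$ large, the hypothesis $\mathrm{eq}(p,r_n)\to 1$ makes $r_N^{\mathrm{tail}}$ $(1+\eta_N)$-equivalent to $S_Np$ (with $S_N=\sum_{n\geqslant N}\ee_n$, $\eta_N\to 0$), so $r_N^{\mathrm{tail}}$ inherits the co-$w^*$-$\xi$-AUC of $p$ by near-isometric perturbation. Given $\ee>0$ and a $w^*$-closed, $r$-$(A^{**},\ee)$-separated collection on a tree of order $\omega^\xi+1$, the bound $r_N^{\mathrm{tail}}\geqslant(S_N(1-\eta_N)/C)r$ (with $C=\mathrm{eq}(r,p)$) makes the collection $r_N^{\mathrm{tail}}$-separated, so the $r_N^{\mathrm{tail}}$-modulus will produce a subtree $S$ with a multiplicative $r_N^{\mathrm{tail}}$-bound on $A^{**}x_\varnothing^{**}$ relative to the leaves. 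I then iterate subtree passage via Lemma~\ref{szlenk characterization}(ii) at successive levels of $S$ to stabilize each $r_n$ with $n<N$, and combine this with the weak lower semicontinuity of each $r_n$ together with the weak compactness of $A$ (so $A^{**}x_\varnothing^{**}$ is a weak limit of $A^{**}x_s^{**}$ for $s$ at level one of $S^\zeta$) to bound $r_N^{\mathrm{fin}}(A^{**}x_\varnothing^{**})$ from above by its infimum over the leaves. Summing the tail and finite-part estimates gives $d^{w^*}_\xi(\ee;A^{**}:(X^{**},\|\cdot\|_X)\to(Y,r))>0$.

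For (iii), the argument parallels (ii) with domain and range swapped. Given $\sigma>0$, $x^{**}\in X^{**}$ with $r(x^{**})\geqslant 1$ (so $p(x^{**})\geqslant 1/C$), and a $w^*$-null $(A^{**},1)$-large tree, I apply the $p$-$w^*$-$\xi$-AUC of $A^{**}:(X^{**},p)\to(X,\|\cdot\|)$ to the normalization $x^{**}/p(x^{**})$ (with a suitably scaled $\sigma$) to obtain $\sup_t p(x^{**}+\sigma'\sum_{s\preceq t}x_s^{**})\geqslant p(x^{**})(1+\delta_p)$. The bidual extension of $r$ to $X^{**}$ has its tail again close to a scalar multiple of $p$, so this $p$-gain transports to an $r$-gain, and the finite-part contribution is controlled by weak-star lower semicontinuity together with subtree fragmentation exactly as in (ii).

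The hard part, shared between (ii) and (iii), will be the upper bound for the finite piece $r_N^{\mathrm{fin}}$ at the root $\varnothing$. Since the hypothesis controls only $p$, and each $r_n$ with $n<N$ may have a poor equivalence constant with $p$, the naive bound $r_N^{\mathrm{fin}}(A^{**}x_\varnothing^{**})\leqslant CR_Np(A^{**}x_\varnothing^{**})$ is too lossy to preserve the tail's positive modulus. The resolution requires iterating subtree passage at multiple levels of $S$ (roughly one iteration per norm $r_n$ with $n<N$) so that each $r_n$ stabilizes far enough down the tree for weak-star lower semicontinuity to close the gap between root and leaves; arranging these compositions so the error constants still leave a strictly positive net modulus is the technical crux.
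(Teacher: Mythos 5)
Your part (i) is essentially the paper's proof: Lemma~\ref{crazy lemma} applied simultaneously to $r_1,\dots,r_{N}$, with the tail of the series killed by the triangle inequality. Your part (ii) is also sound, and it is a mild variant of the paper's argument: you take the quantitative gain from the whole block $r_N^{\mathrm{tail}}=\sum_{n\geqslant N}\ee_n r_n$, which by $\mathrm{eq}(p,r_n)\to 1$ is nearly a multiple of $p$, whereas the paper takes it from a \emph{single} summand $\ee_k r_k$ with $\mathrm{eq}(p,r_k)^2(1-\delta)<1-\delta/2$ and handles every other summand (including the far tail) by leaf-stabilization via Lemma~\ref{szlenk characterization}$(ii)$ together with the root-versus-leaves bound coming from $w^*$-closedness, weak compactness of $A$ and weak lower semicontinuity of the $r_n$. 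Both work; your version even avoids a separate estimate for the far tail. One caution on quantifiers: measure the separation of the tree in $p$ (this is independent of $N$), extract $\delta=d^{w^*}_\xi(\cdot\,;A^{**}:X^{**}\to(Y,p))$ first, and only then choose $N$ with $\eta_N$ small relative to $\delta$; as written, the modulus of $r_N^{\mathrm{tail}}$ and the choice of $N$ chase each other.

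Part (iii) has a genuine gap: the finite block is \emph{not} controlled ``exactly as in (ii).'' In (ii) the collection is $w^*$-closed and you need an \emph{upper} bound for the finite block at the root in terms of the leaves, which is exactly what lower semicontinuity plus leaf-stabilization provides. In (iii) the collection is $w^*$-null, the relevant vectors are the partial sums $z_t=x^{**}+\sigma\sum_{\varnothing\prec s\preceq t}x^{**}_s$, and the danger is that at the node $t$ witnessing the $p$-gain for the tail block, the finite block $\sum_{n<N}\ee_n r_n(z_t)$ has dropped below its value at $x^{**}$, wiping out that gain. This is a \emph{lower} bound along the chosen branch, and neither leaf-stabilization (which only makes leaf values close to one another) nor lower semicontinuity by itself (which says the value at a node is at most the $\liminf$ over its successors, i.e.\ guarantees \emph{some} good successors, not all of them) delivers it; your closing paragraph, which identifies the shared crux as ``an upper bound for $r_N^{\mathrm{fin}}$ at the root,'' fits (ii) but misdescribes (iii). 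What is actually needed is a pruning: since at each node the successors' partial sums $w^*$-accumulate at that node's partial sum, and the bidual extensions of $r_1,\dots,r_{N-1}$ are $w^*$-lower semicontinuous, one can select successors along which these finitely many norms decrease by at most $\ee_{|t|}$ with $\sum_j\ee_j$ small; the pruned tree retains order $\omega^\xi$, $w^*$-nullness and $(A^{**},1)$-largeness, and applying the $p$-modulus inside it produces a node where the tail gain and the near-monotonicity of the finite block coexist. Note that the paper's own proof of (iii) uses a lighter bookkeeping (it compares $\sup_t r(z_t)-r(x^{**})$ with $\ee_k\bigl[\sup_t r_k(z_t)-r_k(x^{**})\bigr]$ for one well-chosen $k$), but the point where the remaining summands must not decrease at the witnessing node is exactly where this selection argument belongs, so your write-up needs it spelled out whichever decomposition you use.
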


\begin{remark} Recall that the the properties $\xi$-AUS and co-$w^*$-$\xi$-AUC of the operator $A^{**}$ is invariant under renorming the domain, and the property $w^*$-$\xi$-AUC of $A^{**}$ is invariant under renorming the range. Therefore in Proposition \ref{Asplund}, each statement remains valid if we replace either of the original norms $\|\cdot\|_X, \|\cdot\|_Y$ with any equivalent norms.

\end{remark}

\begin{proof}$(i)$ It is sufficient to check that for any $\tau>0$, there exists $\sigma>0$ such that for any $y\in S_Y^r$, any $B$-tree $B$ with $o(B)=\omega^\xi$, any $\ee>0$, and weakly null collection $(x_t)_{t\in B}\subset B_X$, there exists some $t\in B$ and some convex combination $x\in \text{co}(x_s: \varnothing\prec s\preceq t)$ such that $r(y+\sigma Ax)\leqslant 1+\sigma\tau+\ee$. To that end, suppose $\tau, \ee>0$, $y\in S^r_Y$, $B$, and $(x_t)_{t\in B}$ are fixed as above.    Fix $k\in \nn$ such that $b\sum_{n=k+1}^\infty \ee_n<\tau/2$, where $b>\sup_n \|A:X\to (Y, r_n)\|$.  Note that there exists a constant $C$ such that for any $n\in \nn$, $\text{eq}(\|\cdot\|, r), \text{eq}(\|\cdot\|, r_n)\leqslant C$, so that $\|y\|\geqslant 1/C$.   There exists some number $\sigma_0$ such that for each $1\leqslant n\leqslant k$ and each $0<\sigma\leqslant \sigma_0$, $\rho^w_\xi(\sigma/C^2;A:X\to (Y, r_n))\leqslant \sigma\tau/2$.   By Lemma \ref{crazy lemma}, for any $0<\sigma\leqslant \sigma_0$, there exists $t\in B$ and $x\in \text{co}(x_s: \varnothing \prec s\preceq t)$ such that for each $1\leqslant n\leqslant k$, $r_n(y+\sigma Ax)\leqslant r_n(y)(1+\sigma\tau/2+\ee)$.   Then \begin{align*} r(y+\sigma Ax) & = \sum_{n=1}^k \ee_n r_n(y+\sigma Ax) + \sum_{k=n+1}^\infty \ee_nr_n(y+\sigma Ax) \\ & \leqslant \sum_{n=1}^k \ee_n[r_n(y)(1+\sigma\tau/2+\ee)] +\sum_{n=k+1}^\infty \ee_n(r_n(y)+\sigma b) \\ & \leqslant \sum_{n=1}^\infty \ee_n r_n(y) + \sum_{n=1}^k \ee_n(\sigma\tau/2+\ee)+\sigma b\sum_{n=k+1}^\infty \ee_n \\ & \leqslant 1+\sigma\tau/2+\ee+\sigma\tau/2=1+\sigma\tau+\ee .\end{align*}

$(ii)$ Fix $\ee>0$.    Fix a tree $T$ with $o(T)=\omega^\xi+1$ and a $w^*$-closed $(x^{**}_t)_{t\in T}\subset B_{X^{**}}$ such that $(x^{**}_t)_{t\in T}$ is $(A^{**}, \ee)$-separated (where the norm on $Y$ is $r$).    Fix $C$ such that for all $n\in \nn$, $\text{eq}(r,p), \text{eq}(p, r_n)\leqslant C$.  Let $\delta=d^{w^*}_\xi(\ee/C;A^{**}:X^{**}\to (Y,p))>0$ and $k\in \nn$ such that $\text{eq}(p, r_k)^2 (1-\delta)<1-\delta/2$.  Fix $\mu>0$ such that $\text{eq}(p, r_k)^2(1-\delta+\mu)<1-\delta/2$. Our choice of $C$ yields that  $(x^{**}_t)_{t\in T}$ is $(A^{**}, \ee/C)$-separated in $(Y, p)$, and by the definition of $d_\xi^{w^*}(\ee/C;A^{**}:X^{**}\to (Y,p))$, there exists a subtree $S$ of $T$ such that $o(S)=o(T)$, $(x^{**}_t)_{t\in T}$ is still $w^*$-closed, and \begin{align*} r_k(A^{**}x^{**}_\varnothing) & \leqslant \text{eq}(p, r_k)p(A^{**}x^{**}_\varnothing)\leqslant \text{eq}(p, r_k)(1-\delta+\mu)\underset{t\in MAX(S)}{\inf} p(A^{**}x^{**}_t) \\ & \leqslant (1-\delta/2)\underset{t\in MAX(S)}{\inf}r_k(A^{**}x^{**}_t).\end{align*} Fix a number $b$ which exceeds $\sup_n\|A:X\to (Y,r_n)\|$ and note that $b>\|A:X\to (Y,r)\|$. By passing to a full subtree, we may assume that $0< \underset{t\in MAX(S)}{\inf} r(A^{**}x^{**}_t)$.  Indeed, by Lemma \ref{szlenk characterization}$(ii)$, we may assume that for every $t\in MAX(S)$, for some $a_0\in \rr$, $$a_0\leqslant r(A^{**}x^{**}_t) \leqslant a_0+\ee/4.$$  From this it follows that for any $t\in MAX(S)$, $$\ee\leqslant r(A^{**}x^{**}_t-A^{**}x^{**}_{t^-}) \leqslant 2(a_0+\ee/4)$$ and $a_0>0$.

Next, let $\mu=\delta\ee_k/12C^2$ and fix $m\in \nn$ with $m>k$ such that $Cb\sum_{n=m+1}^\infty \ee_n < a_0\mu$.  By passing to a subtree once more, we may assume that we have $a, a_1, \ldots, a_m\in \rr$ such that for every $1\leqslant n\leqslant m$, $$a_n= \underset{t\in MAX(S)}{\inf} r_n(A^{**}x^{**}_t), \underset{t\in MAX(S)}{\sup}r_n(A^{**}x^{**}_t)\leqslant a_n+\mu a_0,$$ $$ a=\underset{t\in MAX(S)}{\inf} r(A^{**}x^{**}_t), \underset{t\in MAX(S)}{\sup} r(A^{**}x^{**}_t)\leqslant a+\mu a_0.$$   Noting that $a_k\geqslant a/C^2$, we see that \begin{align*} r(A^{**}x^{**}_\varnothing) & = \sum_{n=1}^\infty \ee_n r_n(A^{**}x^{**}_\varnothing) \leqslant \sum_{n=1}^m \ee_n r_n(A^{**}x^{**}_\varnothing)+\mu a_0 \\ & \leqslant (1-\delta/2)\ee_k  a_k+\sum_{k\neq n=1}^m \ee_n (a_n+\mu a_0) + \mu a_0 \\ & \leqslant \sum_{n=1}^m \ee_n a_n + 2\mu a_0 -\delta\ee_ka_k/2 \\ & \leqslant \underset{t\in MAX(S)}{\sup} r(A^{**}x^{**}_t) + 2\mu a_0 - \delta\ee_k a_k/2 \\ & \leqslant \underset{t\in MAX(S)}{\inf} r(A^{**}x^{**}_t) + 3\mu a_0 - \delta\ee_k a_k/2 \\ & \leqslant a -\Bigl(\frac{\delta\ee_k}{2}\cdot a_k - \frac{\delta \ee_k}{4C^2}\cdot a\Bigr) \leqslant a\Bigl(1-\frac{\delta\ee_k}{4C^2}\Bigr).  \end{align*} From this it follows that $d^{w^*}_\xi(\ee;A^{**}:X^{**}\to (Y,r))\geqslant \ee_k \delta/4C^2>0$.

$(iii)$ Fix a $B$-tree $B$, a vector $x^{**}\in X^{**}$ with $r(x^{**})\geqslant 1$, and a $w^*$-null, $(A^{**}, 1)$-large collection $(x_t^{**})_{t\in B}\subset X^{**}$. Again, let $C$ be such that $\text{eq}(r,p), \text{eq}(p, r_n)\leqslant C$ for all $n\in \nn$.  For $\tau>0$, there exists $\delta=\delta_\xi^{w^*}(\tau/C;A^{**}:(X^{**},p)\to X)>0$ such that $$\sup_{t\in B}p\Bigl(\frac{x^{**}}{p(x^{**})}+\frac{\tau}{p(x^{**})}\sum_{\varnothing\prec s\preceq t} x_s^{**}\Bigr) \geqslant 1+\delta,$$ whence for any $k\in \nn$ such that $\text{eq}(r_k, p)^{-2}(1+\delta)>1+\delta/2$, \begin{align*}\sup_{t\in B} r_k\bigl(x^{**}+\tau \sum_{\varnothing\prec s\preceq t} x_s^{**}\bigr) & \geqslant \text{eq}(r_k, p)^{-1} \sup_{t\in B}p\bigl(x^{**}+\tau \sum_{\varnothing\prec s\preceq t} x^{**}_s\bigr) \\ & \geqslant \text{eq}(r_k, p)^{-1}p(x^{**})(1+\delta) \geqslant \text{eq}(r_k, p)^{-2}r_k(x^{**})(1+\delta) \\ & \geqslant r_k(x^{**})(1+\delta/2) \geqslant r_k(x^{**})+\delta/2C. \end{align*} 

Fix some $k$ such that $\text{eq}(r_k, p)^{-2}(1+\delta)>1+\delta/2$.  Then \begin{align*} \sup_{t\in B} r\bigl(x^{**}+\tau\sum_{\varnothing\prec s\preceq t} x_s^{**}\bigr)   -1 & \geqslant \sup_{t\in B} r\bigl(x^{**}+\tau\sum_{\varnothing\prec s\preceq t}x_s^{**}\bigr)-r(x^{**}) \\ & \geqslant \ee_k \Bigl[\sup_{t\in B} r_k\bigl(x^{**}+\tau\sum_{\varnothing\prec s\preceq t}x_s^{**}\bigr) - r_k(x^{**})\Bigr] \geqslant \ee_k\delta/2C.\end{align*} 

\end{proof}

\begin{remark}Items $(ii)$ and $(iii)$ of Proposition \ref{Asplund} remain true if we only consider $A:X\to (Y, p)$ in $(ii)$ or $A:(X,p)\to X$ in $(iii)$ and replace co-$w^*$-$\xi$-AUC or $w^*$-$\xi$-AUC with co-$\xi$-AUC or $\xi$-AUC.  

\end{remark}

\begin{corollary} Let $A:X\to Y$ be a weakly compact operator and suppose $Sz(A)\leqslant \omega^{\xi+1}$ and $Sz(A^*)\leqslant \omega^{\zeta+1}$.    There exists an equivalent norm $r$ on $Y$ such that $A^{**}:X^{**}\to (Y, r)$ is $w^*$-$\xi$-AUS and co-$w^*$-$\zeta$-AUC.   

\label{super troopers}
 
\end{corollary}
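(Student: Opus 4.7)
The plan is to combine the two separate renorming results via the Asplund averaging technique of Proposition~\ref{Asplund}. First I would apply Theorem~\ref{coAUC}(i) to produce an equivalent norm $p$ on $Y$ under which $A^{**}:X^{**}\to (Y,p)$ is co-$w^*$-$\zeta$-AUC, and Theorem~\ref{renorming1} to produce an equivalent norm $q$ on $Y$ under which $A:X\to (Y,q)$ is $\xi$-AUS; by Theorem~\ref{duality} the latter is equivalent to $A^*:(Y^*,q^*)\to X^*$ being $w^*$-$\xi$-AUC.

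The heart of the argument is a family of interpolating norms $(r_n)$ on $Y$ chosen so that each $A:X\to (Y,r_n)$ is $\xi$-AUS while $\text{eq}(p,r_n)\to 1$. For each $n\in\nn$, I would take $r_n$ to be the norm on $Y$ whose dual is $r_n^*=(nq)^*+p^*=q^*/n+p^*$; by the remark following Theorem~\ref{renorming1}, $r_n$ is the equivalent quotient norm
\[
r_n(y)=\inf\{\max\{nq(y_1),p(y_2)\}:y_1+y_2=y\}.
\]
From the sandwich $p^*\leqslant r_n^*\leqslant (1+\text{eq}(p,q)/n)p^*$ one reads off $p/(1+\text{eq}(p,q)/n)\leqslant r_n\leqslant p$, so that $\text{eq}(p,r_n)\to 1$ and $\sup_n\text{eq}(\|\cdot\|_Y,r_n)<\infty$. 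On the other hand, scaling preserves $w^*$-$\xi$-AUC, so $A^*:(Y^*,(nq)^*)\to X^*$ is $w^*$-$\xi$-AUC; the lemma immediately following Theorem~\ref{renorming1} then upgrades this to $A^*:(Y^*,r_n^*)\to X^*$ being $w^*$-$\xi$-AUC, and Theorem~\ref{duality} gives that $A:X\to (Y,r_n)$ is $\xi$-AUS for every $n$.

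With the $r_n$ in hand, I would fix a summable positive sequence $(\ee_n)$ with $\sum\ee_n=1$ and set $r=\sum\ee_n r_n$, which is an equivalent norm on $Y$. Proposition~\ref{Asplund}(i) then yields that $A:X\to (Y,r)$ is $\xi$-AUS, and since $A$ is weakly compact, Corollary~\ref{obs corollary} lifts this to $A^{**}:X^{**}\to (Y,r)$ being $w^*$-$\xi$-AUS. Simultaneously, since $\text{eq}(p,r_n)\to 1$, Proposition~\ref{Asplund}(ii) yields that $A^{**}:X^{**}\to (Y,r)$ is co-$w^*$-$\zeta$-AUC, so $r$ is the norm sought.

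The main subtlety will be the construction of $r_n$: each must individually be $\xi$-AUS for $A$ (which precludes simply perturbing $p$), yet the family must approach $p$ in eq (which precludes simply perturbing $q$). The lemma on preservation of $w^*$-$\xi$-AUC under additive perturbations of dual norms is the key device: it allows one to build $r_n^*$ by adding to $p^*$ an $n$-shrunk copy of the AUC-generating $q^*$, so that the $p^*$ summand dominates and forces $r_n$ close to $p$ while the AUC property is inherited from the shrunk $q^*$ summand.
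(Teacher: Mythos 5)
Your proposal is correct and follows essentially the same route as the paper: fix $p$ from Theorem \ref{coAUC} and $q$ from Theorem \ref{renorming1}, form the dual norms $r_n^*=p^*+\frac{1}{n}q^*$ (which are $w^*$-$\xi$-AUC for $A^*$ by the lemma after Theorem \ref{renorming1}, hence $\xi$-AUS for $A$ by duality), note $\mathrm{eq}(p,r_n)\to 1$, and conclude via Proposition \ref{Asplund} together with Corollary \ref{obs corollary}. The only differences are cosmetic (explicit sandwich estimate and order of applying Corollary \ref{obs corollary}).
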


\begin{proof}We may fix norms $p, q$ on $Y$ such that $A:X\to (Y, q)$ is $\xi$-AUS and $A^{**}:X^{**}\to (Y, p)$ is co-$w^*$-$\zeta$-AUC. For this we are using  Theorems \ref{renorming1} and \ref{coAUC}.  By our remark above, for every $n\in \nn$, $r^*_n=p^*+\frac{1}{n}q^*$ is a norm on $Y^*$ which is the dual norm of some equivalent norm $r_n$ on $Y$ and such that $A^*:(Y^*, r_n^*)\to X^*$ is $w^*$-$\xi$-AUC. Of course, $\lim_n \text{eq}(p, r_n)=1$. It follows from Theorem \ref{duality} that $A:X\to (Y, r_n)$ is $\xi$-AUS and, by Corollary \ref{obs corollary}, $A^{**}:X^{**}\to (Y, r_n)$ is $w^*$-$\xi$-AUS.   Then with $r=\sum \ee_n r_n$ for some sequence $(\ee_n)$ of positive numbers with $\sum \ee_n\leqslant 1$, we deduce by Proposition \ref{Asplund} that $A^{**}:X^{**}\to (Y, r)$ is co-$w^*$-$\zeta$-AUC and $w^*$-$\xi$-AUS.  

\end{proof}

In the same way we can prove the following corollary, which implies Theorem \ref{main2}$(ii)$.  

\begin{corollary} Let $A:X\to X$ be weakly compact and let $\xi, \zeta$ be ordinals such that $Sz(A)\leqslant \omega^{\xi+1}$ and $Sz(A^*)\leqslant \omega^{\zeta+1}$.  Then there exists an equivalent norm $r$ on $X$ such that $A^{**}:(X^{**}, r)\to (X, r)$ is $w^*$-$\xi$-AUS and $w^*$-$\zeta$-AUC.

\end{corollary}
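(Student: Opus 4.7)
The plan is to imitate the proof of Corollary \ref{super troopers} above, producing two norms $q$ and $p$ on $X$ which witness the two properties separately and then combining them by an Asplund averaging. First I would apply Theorem \ref{renorming1} to $A:X\to X$ (using $Sz(A)\leqslant \omega^{\xi+1}$) to obtain a norm $q$ on $X$ such that $A:(X,q)\to(X,q)$ is $\xi$-AUS; by Theorem \ref{duality} this is equivalent to $A^*:(X^*,q^*)\to X^*$ being $w^*$-$\xi$-AUC.

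The main obstacle is producing a norm $p$ on $X$ such that $A^{**}:(X^{**},p^{**})\to(X,p)$ is $w^*$-$\zeta$-AUC. By Theorem \ref{duality} applied to $A^*$ together with invariance of $w^*$-$\zeta$-AUC under renorming the range, this is equivalent to requiring $A^*:(X^*,p^*)\to(X^*,p^*)$ to be $\zeta$-AUS, which in turn (by invariance of $\zeta$-AUS under renorming the domain) only requires $A^*:X^*\to(X^*,p^*)$ to be $\zeta$-AUS. Applying Theorem \ref{renorming1} to $A^*:X^*\to X^*$ (using $Sz(A^*)\leqslant \omega^{\zeta+1}$) produces some equivalent norm $\tilde p$ on $X^*$ with $A^*:X^*\to(X^*,\tilde p)$ $\zeta$-AUS; the delicate point is that $\tilde p$ must be a dual norm, i.e., $\tilde p=p^*$ for some $p$ on $X$. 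I would verify this by exploiting weak compactness of $A$: the sets $K_{n,m}$ appearing in the construction of $\tilde p$ start from $A^{**}B_{X^{**}}\subset X$, and by Krein's theorem the $w^*$-closed convex hulls stay inside $X$, so every $K_{n,m}\subset X$. Using Proposition \ref{easy obs} to approximate any $y^{**}\in X^{**}$ by some $x\in X$ simultaneously in the $w^*$-topology and so that $Ax$ is norm-close to $A^{**}y^{**}$, one shows that the unit ball $C\subset X^{**}$ defining the Minkowski functional satisfies $C=\overline{C\cap X}^{w^*}$ (after a harmless small scaling). Goldstine's theorem then gives that this functional on $X^{**}$ is the bidual of its restriction to $X$, so $\tilde p=p^*$ for $p$ the Minkowski functional of $C\cap X$. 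This is the step I expect to be the main obstacle.

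With $p,q$ on $X$ in hand, set $r_n^*=p^*+\tfrac{1}{n}q^*$, which by the remark before Corollary \ref{super troopers} is the dual norm of some equivalent $r_n$ on $X$ satisfying $\mathrm{eq}(p,r_n)\to 1$. By the summing lemma used in the proof of Corollary \ref{super troopers}, since $q^*$ makes $A^*$ be $w^*$-$\xi$-AUC, so does $r_n^*$, hence $A:X\to(X,r_n)$ is $\xi$-AUS for every $n$ by Theorem \ref{duality}. Choosing a summable positive sequence $(\ee_n)$ and setting $r=\sum\ee_n r_n$, Proposition \ref{Asplund}(i) gives that $A:(X,r)\to(X,r)$ is $\xi$-AUS, whence by Corollary \ref{obs corollary} the operator $A^{**}:(X^{**},r^{**})\to(X,r)$ is $w^*$-$\xi$-AUS. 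Finally Proposition \ref{Asplund}(iii), applied with the reference norm $p$ and the sequence $(r_n)$ (using $\mathrm{eq}(p,r_n)\to 1$), yields that $A^{**}:(X^{**},r^{**})\to(X,r)$ is $w^*$-$\zeta$-AUC, completing the proof. Once the dual norm construction in the second paragraph is secured, the rest is routine modulo the argument of Corollary \ref{super troopers}.
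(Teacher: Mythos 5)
Your proposal is correct and follows essentially the same route as the paper's proof: produce two witnessing norms $p,q$ on $X$, form $r_n^*=p^*+\frac{1}{n}q^*$ and $r=\sum \ee_n r_n$, and combine the two properties via Proposition \ref{Asplund}(i) and (iii), Corollary \ref{obs corollary}, Theorem \ref{duality}, and the renorming invariances. The only deviation is that the ``delicate'' step you re-derive by hand (applying Theorem \ref{renorming1} to $A^*$ and checking, using weak compactness of $A$, Proposition \ref{easy obs}, and Goldstine's theorem, that the resulting norm is a dual norm, i.e.\ a bidual norm on $X^{**}$) is precisely the content and proof of the paper's Corollary \ref{drew corollarymore}, which the paper's argument simply cites.
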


\begin{proof} We fix norms $p,q$ on $X$ such that $A^{**}:(X^{**}, p)\to Y$ is $w^*$-$\zeta$-AUC and $A:X\to (X,q)$ is $\xi$-AUS.   For this we are using Theorem \ref{renorming1} and Corollary \ref{drew corollarymore}.  Then with $r_n^*=p^*+\frac{1}{n}q^*$ and $r=\sum \ee_nr_n$, we again use Proposition \ref{Asplund} to deduce that $A^{**}:(X^{**}, r)\to (X,r)$ is $w^*$-$\xi$-AUS and $w^*$-$\zeta$-AUC.  Indeed, each $r_n^*$ is dual to some norm $r_n$ on $X$ and $\text{eq}(r_n,p)\to 1$. As in the previous corollary, we deduce that for each $n\in \nn$,  $A^*:(X^*, r_n^*)\to X^*$ is $w^*$-$\xi$-AUC and $A:X\to (X,r_n)$ is $\xi$-AUS.   By Proposition \ref{Asplund}$(i)$, $A:X\to (X,r)$ is $\xi$-AUS.  By Corollary \ref{obs corollary}, $A^{**}:X^{**}\to (X,r)$ is $w^*$-$\xi$-AUS.  Since $w^*$-$\zeta$-AUC is not lost by renorming the range space, $A^{**}:(X^{**}, p)\to (X,r)$ is $w^*$-$\zeta$-AUC.  By Proposition \ref{Asplund}$(iii)$, $A^{**}:(X^{**}, r)\to (X,r)$ is $w^*$-$\zeta$-AUC.  Since $w^*$-$\xi$-AUS is not lost by renorming the domain, $A^{**}:(X^{**}, r)\to (X,r)$ is also $w^*$-$\xi$-AUS.

\end{proof}

\section{Property $(\beta)$ and non-linear characterizations}

Given a sequence $(y_n)$ in a Banach space $Y$, we write $\sep(y_n)$ to denote $\inf_{m\neq n}\|y_m-y_n\|$.  In the case that multiple norms on $Y$ have been specified, we will write $\sep_{\|\cdot\|}(y_n)$ to denote the separation with respect to the particular norm $\|\cdot\|$.  

Let us say that an operator $A:X\to Y$ has property $(\beta)$ provided that for any $\ee>0$, there exists $\beta=\beta(\ee)>0$ such that for any $x\in B_X$ and any $(x_n)\subset B_X$ with $\sep(Ax_n)\geqslant \ee$, there exists $n\in \nn$ such that $\|x+x_n\|\leqslant 2(1-\beta)$.  Note that this property is invariant under renorming $Y$, but not under renorming $X$. Moreover, this property is a direct generalization of property $(\beta)$ of Rolewicz when $A$ is an identity operator.  We say an operator $A:X\to Y$ is $(\beta)$-\emph{able} if there exists an equivalent norm $|\cdot|$ on $X$ such that $A:(X, |\cdot|)\to Y$ has property $(\beta)$.  In complete analogy to the case of Banach spaces, we have the following.

\begin{theorem} An operator $A:X\to Y$ is $(\beta)$-able if and only if $A$ is weakly compact, $Sz(A)\leqslant \omega$, and $Sz(A^*)\leqslant \omega$.   
\label{beta}
\end{theorem}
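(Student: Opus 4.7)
My plan for the forward direction is to show $(\beta)$ implies each of the three conditions separately. For weak compactness, I would use Rosenthal's $\ell_1$-theorem: if $AB_X$ were not weakly relatively compact, there would exist $(x_n)\subset B_X$ with $(Ax_n)$ equivalent to the unit vector basis of $\ell_1$, so $\sep(Ax_n)\geqslant\ee$. Iteratively applying $(\beta)$ with center $y_0=0$ and $y_k=(y_{k-1}+x_{n_k})/(2(1-\beta))\in B_X$ (selecting $n_k$ via $(\beta)$) produces a weighted sum of $(x_{n_j})$ whose $A$-image must satisfy an $\ell_1$-lower-bound inconsistent with $\|y_k\|\leqslant 1$ after enough iterations, contradicting the uniformity of $\beta(\ee)$. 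For $Sz(A)\leqslant\omega$, I would derive that $A$ is $0$-AUS by rescaling: given weakly null $(x_n)\subset B_X$ of separation $\sigma\sep(Ax_n)$ after scaling by $\sigma$, $(\beta)$ yields the modular inequality $\rho^w_0(\sigma;A)/\sigma\to 0$, and Theorem \ref{renorming1} gives the Szlenk bound. For $Sz(A^*)\leqslant\omega$, I would lift $(\beta)$ to a $w^*$-version for $A^{**}:X^{**}\to Y$ by Goldstine-approximation via Proposition \ref{easy obs} (justified by the weak compactness just established), derive a co-$w^*$-AUC property for $A^{**}$, and apply Theorem \ref{coAUC}.

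For the reverse direction, I would construct the $(\beta)$-norm directly, in the spirit of the DKLR and Kalton--Lancien constructions adapted to operators. Given $A$ weakly compact with $Sz(A), Sz(A^*)\leqslant\omega$, Corollary \ref{main corollary} applied to $A^*B_{Y^*}\subset X^*$ and to $A^{**}B_{X^{**}}\subset Y$ (well-defined since $A$ is weakly compact) yields decreasing chains of $w^*$-compact convex sets $(K_{n,m})\subset X^*$ and $(L_{n,m})\subset Y$ that terminate after finitely many iterates for each $\ee_n\downarrow 0$. I would define
$$f(x)=\|x\|+\sum_n \frac{\ee_n}{M_n}\sum_{m=1}^{M_n} d(Ax,L_{n,m})+\sum_n \frac{\delta_n}{N_n}\sum_{m=1}^{N_n} d(x,\widetilde K_{n,m})$$
on $X$, where $\widetilde K_{n,m}$ encodes the $K_{n,m}$ via the canonical embedding into $X^{**}$. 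The new norm on $X$ is the Minkowski functional of $\{f\leqslant 1\}$; the first sum encodes AUS-type control of $A$ (compare Corollary \ref{drew corollarymore}) while the second encodes UKK-type control of $X$ dual to AUS of $A^*$ (compare the proof of Theorem \ref{renorming1}).

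To verify $(\beta)$ for this norm: given $\ee>0$, $x\in B_X$, and $(x_n)\subset B_X$ with $\sep(Ax_n)\geqslant\ee$, Lemma \ref{szlenk characterization}(iii) supplies the link between $(A,\ee)$-separated sequences and points of the successive Szlenk derivations; by passing to a subsequence and using the shrinking chains $(L_{n,m})$, one shows that $Ax$ and $Ax_n$ cannot simultaneously lie close to every $L_{n,m}$, so that $f((x+x_n)/2)$ drops below $1$ by a quantity depending only on $\ee$, yielding $\|(x+x_n)/2\|\leqslant 1-\beta(\ee)$. The $(K_{n,m})$-sum handles the UKK side symmetrically. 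The main obstacle is the calibration of the weights $\ee_n,\delta_n,M_n,N_n$ so that this drop is uniform in $x$ and $(x_n)$, in direct parallel with the Kalton--Lancien construction for Banach spaces; the additional operator-theoretic difficulty is ensuring coherence between the range-side $(L_{n,m}\subset Y)$ and domain-side $(K_{n,m}\subset X^*)$ renormings inside a single norm on $X$, for which the interplay of Theorem \ref{duality} and Proposition \ref{easy obs} is essential.
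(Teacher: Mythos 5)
There are genuine gaps in both directions. On necessity: (a) your weak-compactness step is unsound — failure of weak compactness of $A$ does not produce $(x_n)\subset B_X$ with $(Ax_n)$ equivalent to the $\ell_1$-basis (Rosenthal only gives the dichotomy weakly Cauchy/$\ell_1$, and a weakly Cauchy non-convergent image is exactly the case you cannot exclude); moreover your iteration $y_k=(y_{k-1}+x_{n_k})/(2(1-\beta))$ assigns geometrically decaying weights whose sum is bounded by $1/(1-2\beta)$, so the $\ell_1$-lower bound never exceeds $\|A\|$ and no contradiction results. The paper instead runs James's construction, which yields $\sep(Ax_n)\geqslant \ee$ together with $\|x_m+x_{m+n}\|\geqslant 2-2/m$, contradicting $(\beta)$ directly. (b) The claim that $(\beta)$ yields $\rho^w_0(\sigma;A)/\sigma\to 0$ ``by rescaling'' is a non sequitur: applying $(\beta)$ to $(\sigma x_n)$ only gives some $\|x+\sigma x_n\|\leqslant 2(1-\beta(\sigma\ee))$, which is useless for a modulus of order $1+o(\sigma)$; and the implication is false for a fixed norm — every uniformly convex norm has $(\beta)$, yet e.g. $|x|=\|x\|_{\ell_2}+\sup_{n\geqslant 2}|x_n|$ is uniformly convex but satisfies $\rho^w_0(\sigma)\geqslant \sigma$, so you cannot feed ``$A$ is $0$-AUS'' into Theorem \ref{renorming1}. (c) For $Sz(A^*)\leqslant\omega$ you invoke Theorem \ref{coAUC} backwards: that theorem produces a co-$w^*$-AUC norm \emph{from} the Szlenk bound, and no converse is stated or proved; nor is it indicated how $(\beta)$ of $A$ (a statement about norms in $X$) would give co-$w^*$-AUC of $A^{**}$ (a statement about $\|A^{**}x^{**}_t\|$ along $w^*$-closed trees). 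This is precisely the hard implication, and the paper obtains both Szlenk bounds by a different, nonlinear route: property $(\beta)$ forbids factoring the countably branching trees $T_n$ through $A$ (a Kloeckner/Baudier--Zheng self-improvement argument), while $Sz(A)>\omega$ or $Sz(A^*)>\omega$ forces such a factorization (Lemma \ref{hardlemma}, Proposition \ref{prop1}, Corollary \ref{maincorollary}).

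On sufficiency, your single formula has a type error — $d(x,\widetilde K_{n,m})$ with $K_{n,m}\subset X^*$ is not meaningful, and the canonical embedding of $X$ into $X^{**}$ does not bring $x$ and subsets of $X^*$ into a common space — and the decisive verification (that the drop in $f$ is uniform and produces $\|x+x_n\|\leqslant 2(1-\beta)$ in the new $X$-norm) is exactly the content you defer to ``calibration of weights.'' The paper avoids a one-shot construction: it first uses Theorem \ref{renorming1}, Theorem \ref{coAUC} and the Asplund-averaging Proposition \ref{Asplund} (via Corollary \ref{super troopers} with $\xi=\zeta=0$) to produce a single norm $r$ on $Y$ for which $A^{**}:X^{**}\to (Y,r)$ is simultaneously $w^*$-AUS and co-$w^*$-AUC, and then defines $p(x)^2=\|x\|^2+r(Ax)^2$ on $X$; property $(\beta)$ of $A:(X,p)\to Y$ is then checked by a two-case midpoint computation in $\ell_2^2$ (the case $r(Ax)<\ee/4$ handled by separation of the pairs $(\|x_n\|,r(Ax_n))$, the case $r(Ax)\geqslant\ee/4$ by combining the co-AUC drop of the weak limit with the $w^*$-AUS modulus). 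If you want to pursue your direct construction, you would still need to supply the analogue of this verification, which is where all the difficulty lies.
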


We begin with the positive $(\beta)$-renorming results, for which all of the preparatory work is done.  We remark that $Sz(A^{**})=Sz(A)$ when $A$ is weakly compact.  This is because if $\Phi:Y^*\to Y^{***}$ is the canonical embedding,  $\Phi(A^*B_{Y^*})=A^{***}B_{Y^{***}}$ and the $w^*$-topologies from $Y^*$ and $Y^{***}$ coincide on this set. Therefore the function $\Phi$ commutes with the Szlenk derivation.   In particular, if $A$ is weakly compact, the Szlenk index of $A$ is equal to the Szlenk index of every even adjoint of $A$, and the Szlenk index of $A^*$ is equal to the Szlenk index of every odd adjoint of $A$.   Therefore being $(\beta)$-able is a self-dual property.  We obtain the following.

\begin{proposition} If $A:X\to Y$ is weakly compact, $Sz(A)\leqslant \omega$, and $Sz(A^*)\leqslant \omega$, then $A$ is $(\beta)$-able.

\end{proposition}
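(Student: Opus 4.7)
The plan is to produce a single equivalent norm $|\cdot|$ on $X$ (together with a convenient renorming $r$ of $Y$, which is harmless since property $(\beta)$ is invariant under renorming the range) that simultaneously witnesses a strong form of AUC for $A$ and a strong form of $w^*$-AUS for $A^{**}$, and then to verify property $(\beta)$ directly by passing to a $w^*$-limit in $X^{**}$.

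First, using $Sz(A^*)\leqslant \omega=\omega^{0+1}$ together with the weak compactness of $A$, I would invoke Corollary \ref{drew corollarymore} (with $\xi=0$) to produce an equivalent norm $|\cdot|$ on $X$ such that $A:(X,|\cdot|)\to Y$ is AUC and $A^{**}:(X^{**},|\cdot|)\to Y^{**}$ is $w^*$-AUC. Keeping this norm on $X$, I would then apply Corollary \ref{super troopers} (now using also $Sz(A)\leqslant\omega$) to obtain an equivalent norm $r$ on $Y$ for which $A^{**}:(X^{**},|\cdot|)\to (Y,r)$ is in addition $w^*$-AUS and co-$w^*$-AUC. Inspection of the proofs of these corollaries confirms that the $X$-norm may be held fixed throughout the construction of $r$.

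To verify $(\beta)$, I would fix $\ee>0$ and pick $x\in B_X$ together with $(x_n)\subset B_X$ such that $\inf_{m\neq n}\|Ax_m-Ax_n\|_r\geqslant \ee$. By $w^*$-compactness of $B_{X^{**}}$ and the weak compactness of $A$ (so that $A^{**}X^{**}\subseteq Y$), pass to a subsequence so that $x_n\to x^{**}\in B_{X^{**}}$ in the $w^*$-topology and $Ax_n\to y:=A^{**}x^{**}$ weakly in $Y$. A triangle inequality applied to the separation hypothesis shows $\|Ax_n-y\|_r\geqslant\ee/2$ for all but at most one index $n$; after discarding that index, $u_n:=x_n-x^{**}$ is $w^*$-null in $X^{**}$ with $\|A^{**}u_n\|_r\geqslant\ee/2$. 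Applying the $w^*$-AUC modulus $\delta^{w^*}_0(\cdot;A^{**})$ to $(u_n)$ and $x^{**}$ (handling the case $|x^{**}|\leqslant \ee/4$ trivially) then yields $|x^{**}|\leqslant 1-\delta_0$ for some $\delta_0=\delta_0(\ee)>0$, and therefore $|x+x^{**}|\leqslant 2-\delta_0$.

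The hard part, and the main technical obstacle of the proof, is to promote this upper bound on $|x+x^{**}|$ to an upper bound on $|x+x_n|$ for some $n$. The operator moduli furnished by the renorming control range-side quantities such as $\|A^{**}\cdot\|_r$, whereas $(\beta)$ requires control of the domain-side norm $|x+x_n|_{X^{**}}$, and $w^*$-lower semi-continuity only gives $|x+x^{**}|\leqslant \liminf |x+x_n|$ (the wrong direction). To close this gap I plan to build a $w^*$-closed tree of order $\omega+1$ in $B_{X^{**}}$ out of a subsequence of $(x_n)$ via Proposition \ref{normal prop} and Lemma \ref{szlenk characterization}, apply co-$w^*$-AUC of $A^{**}$ to extract a subtree along which $\|A^{**}(x+x_n)\|_r$ is uniformly comparable to $\|A^{**}(x+x^{**})\|_r$, and then combine this with $w^*$-AUS of $A^{**}$ and a Mazur-style convex-block argument (made effective by the inclusion $A^{**}X^{**}\subseteq Y$) to conclude that along this subtree, $|x+x_n|\leqslant 2(1-\beta)$ for some $\beta=\beta(\ee)>0$ depending only on $\delta_0$ and the AUS/AUC moduli of $A^{**}$.
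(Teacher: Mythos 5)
Your reductions up to the bound $|x+x^{**}|\leqslant 2-\delta_0$ are fine, but the step you yourself flag as ``the hard part'' is a genuine gap, and the repair you sketch cannot close it. Every property you have placed at your disposal --- AUC of $A$ and $w^*$-AUC of $A^{**}$ from Corollary \ref{drew corollarymore}, and $w^*$-AUS together with co-$w^*$-AUC of $A^{**}:X^{**}\to (Y,r)$ from Corollary \ref{super troopers} --- either controls range-side quantities of the form $r(A^{**}\cdot)$ or gives \emph{lower} bounds of the form $\liminf |x^{**}+\sigma u_n|\geqslant |x^{**}|+\delta$; none of them produces the upper bound $|x+x_n|\leqslant 2(1-\beta)$ on the domain norm that property $(\beta)$ demands. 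That no such bound can follow from these properties alone is shown by $A:\ell_2\oplus_1\ell_2\to \ell_2$, $A(u,v)=u$: this operator is weakly compact with $Sz(A)=Sz(A^*)=\omega$, it is AUC and $w^*$-AUC for the $\ell_1$-sum norm, and its range-side moduli are Hilbertian (so $w^*$-AUS and co-$w^*$-AUC of $A^{**}$ hold with the best possible behavior); yet with $x=(0,w)$ and $x_n=((\ee/2)e_n,(1-\ee/2)w)$ one has $\sep(Ax_n)\geqslant \ee/\sqrt{2}$ while $\|x+x_n\|=2$ for every $n$, even though $\|x+x^{**}\|=2-\ee/2$. So comparability of $\|A^{**}(x+x_n)\|_r$ with $\|A^{**}(x+x^{**})\|_r$, combined with $w^*$-AUS of $A^{**}$, simply does not reach the conclusion; moreover, a Mazur-type convex-block argument produces convex combinations of the $x_n$, whereas $(\beta)$ requires the estimate for a single $x_n$. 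Nothing in your use of Corollary \ref{drew corollarymore} goes beyond the abstract properties it asserts, so your argument cannot be completed as written, and note that $X$ itself may admit no asymptotically smooth norm at all (e.g.\ $X$ could contain $\ell_1$), so no domain-side smoothness is available to substitute.

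The missing idea, which is how the paper proceeds, is to \emph{couple the domain norm to the operator}: the paper uses only Corollary \ref{super troopers} with $\xi=\zeta=0$ to get $r$ on $Y$ making $A^{**}:X^{**}\to(Y,r)$ $w^*$-AUS and co-$w^*$-AUC, and then defines $p(x)^2=\|x\|^2+r(Ax)^2$ and verifies $(\beta)$ for $A:(X,p)\to Y$ directly; no renorming of $X$ of the type in Corollary \ref{drew corollarymore} is used. The $\ell_2^2$-coupling is exactly the transfer mechanism you are missing: if $r(Ax)<\ee/4$ while $\lim_n r(Ax_n)\geqslant \ee/2$, the vectors $(\|x\|,r(Ax))$ and $\lim_n(\|x_n\|,r(Ax_n))$ are $\ee/4$-separated in $\ell_2^2$ and uniform convexity of $\ell_2^2$ forces $p\bigl(\frac{x+x_n}{2}\bigr)\leqslant 1-\ee^2/128$; if $r(Ax)\geqslant \ee/4$, the co-$w^*$-AUC estimate $r(y)\leqslant(1-\delta)\lim_n r(Ax_n)$ for the weak limit $y=A^{**}x^{**}$, combined with the $w^*$-AUS modulus of $A^{**}$, yields $\limsup_n r\bigl(\frac{Ax+Ax_n}{2}\bigr)\leqslant \frac{r(Ax)+\lim_n r(Ax_n)}{2}-\eta$, and once again the $\ell_2^2$-structure of $p$ converts this range-side drop into $p\bigl(\frac{x+x_n}{2}\bigr)\leqslant 1-\eta^2/2$. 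Without some such explicit coupling of the domain norm to $r(A\cdot)$, the passage from range-side gains to the required drop of $|x+x_n|$ is not merely unproved in your proposal --- it is false in general, as the example above shows.
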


\begin{proof}   By Corollary \ref{super troopers} with $0=\xi=\zeta$, there exists an equivalent norm $r$ on $Y$ such that $A^{**}:X^{**}\to (Y,r)$ is $w^*$AUS and co-$w^*$-AUC.    Let $p(x)^2=\|x\|^2+r(Ax)^2$. Fix $\ee>0$.  Since $A^{**}:X^{**}\to (Y,r)$ is co-AUC, there exists $\delta>0$ such that if $(x_n^{**})\subset B_{X^{**}}$, $\sep_r(A^{**}x_n^{**})\geqslant \ee$, and $A^{**}x^{**}_n\underset{w}{\to}y$, $r(y)\leqslant \underset{n}{\lim\inf\ } (1-\delta)r(A^{**}x^{**}_n)$.   Since $A^{**}:X^{**}\to (Y,r)$ is $w^*$-AUS, there exists $\sigma_0\in (0,1/2)$ such that for any $0<\sigma\leqslant \sigma_0$, for any $w^*$-null net $(x_\lambda^{**})\subset B_X$ and any $y\in B_Y^r$, $$\underset{\lambda}{\lim\sup\ }r(y+\sigma x_\lambda^{**}) \leqslant 1+  \sigma\tau,$$   where $\tau=\ee\delta/8$.

Fix $x\in B_X^p$ and a sequence $(x_n)\subset B_X^p\subset B_X$ with $\sep_r(Ax_n)\geqslant \ee$.   Without loss of generality, we may assume $\lim_n \|x_n\|$ exists, $\lim_n r(Ax_n)\to t\geqslant \ee/2$, and $Ax_n\underset{w}{\to}y$.  First suppose that $r(Ax)<\ee/4$.  Let $u=(\|x\|, r(Ax))\in B_{\ell_2^2}$ and assume by passing to a further subsequence that $v= \lim_n (\|x_n\|, r(Ax_n))\in B_{\ell_2^2}$.  Note that the second coordinate of $v$ is $t\geqslant \ee/2$, so that $\|u-v\|\geqslant \ee/4$.    Therefore \begin{align*} \underset{n}{\lim\sup\ } p\bigl(\frac{x+x_n}{2}\bigr) & = \underset{n}{\lim\sup}\Bigl[\bigl\|\frac{x+x_n}{2}\bigr\|^2+ r\bigl(\frac{Ax+Ax_n}{2}\bigr)^2\Bigr]^{1/2}   \\ & \leqslant \underset{n}{\lim\sup} \Bigl[\Bigl(\frac{\|x\|+\|x_n\|}{2}\Bigr)^2+ \Bigl(\frac{r(Ax)+r(Ax_n)}{2}\Bigr)^2\Bigr]^{1/2} = \bigl\|\frac{u+v}{2}\bigr\|_{\ell_2^2} \\ & = \Bigl[\frac{\|u\|^2+\|v\|^2}{2} - \|\frac{u-v}{2}\|^2\Bigr]^{1/2} \\ & \leqslant \bigl[1 - (\ee/8)^2\bigr]^{1/2}\leqslant 1-\ee^2/128. \end{align*}

Now consider the case that $r(Ax)\geqslant \ee/4$.        We may pass to a subnet $(x_\lambda)$ of $(x_n)$ which converges $w^*$ to some $x^{**}\in B_{X^{**}}^p$ and note that $A^{**}x^{**}=y$.   Then $Ax_\lambda \underset{w}{\to}y$ and $r(Ax_\lambda)\to t$. Note that $r(y)\leqslant (1-\delta)t$.   Let $\lambda=\ee\sigma_0/16$ and let $$\mu= (1-\lambda)r(Ax)+\lambda (1-\delta)t\geqslant \ee/8.$$  Note that $r((1-\lambda)Ax+\lambda y)\leqslant \mu$, $2\lambda/\mu\leqslant \sigma_0$, and  $(x_\lambda-x^{**})$ is a $w^*$-null net, so \begin{align*} \underset{\alpha}{\lim\sup\ } r((1-\lambda)Ax +\lambda Ax_\alpha) & = \mu \underset{\alpha}{\ \lim\sup\ } r\Bigl(\frac{(1-\lambda)Ax+\lambda y}{\mu} + \frac{2\lambda}{\mu} A^{**} \bigl(\frac{x_\alpha-x^{**}}{2}\bigr)\Bigr) \\ & \leqslant \mu\Bigl(1+ \frac{2\lambda\tau}{\mu}\Bigr) = (1-\lambda)r(Ax) + (1-\delta)\lambda t + 2\lambda \tau \\ & = (1-\lambda)r(Ax) + \lambda t + (2\lambda \tau-\delta \lambda t) \\ & \leqslant (1-\lambda)r(Ax)+\lambda t -\ee\delta\lambda/4 .  \end{align*} From this it follows that with $\eta=\ee\delta\lambda/8$, $$\underset{\alpha}{\lim\sup\ }r\Bigl(\frac{Ax+Ax_\alpha}{2}\Bigr) \leqslant r(Ax)/2+t/2-\eta.$$

From this, it follows that \begin{align*} \underset{\alpha}{\lim\sup} p\bigl(\frac{x+x_\alpha}{2}\bigr) & = \underset{\alpha}{\lim\sup} \Bigl[\bigl\|\frac{x+x_\alpha}{2}\bigr\|^2 + r\bigl(\frac{Ax+Ax_\alpha}{2}\bigr)^2\Bigr]^{1/2} \\ & \leqslant \lim_\alpha \Bigl[\Bigl(\frac{\|x\|+\|x_\alpha\|}{2}\Bigr)^2+ \Bigl(\frac{r(Ax)+r(Ax_\alpha)}{2} -\eta\Bigr)^2\Bigr]^{1/2} \\ & \leqslant (1-\eta^2)^{1/2} \leqslant 1-\eta^2/2.  \end{align*} Here we have used the fact that for any vector $h=(h_1, h_2)\in \ell_2^2$ and any $\theta\geqslant 0$ such that  $0\leqslant \theta\leqslant h_2$, $$\|h-(0, \theta)\|_{\ell_2^2}\leqslant (\|h\|^2_{\ell_2^2}-\theta^2)^{1/2}.$$  

Since $\eta^2/2\leqslant \ee^2/128$, we see that $A:(X,p)\to (Y,r)$ has property $(\beta)$ with $\beta(\ee)\geqslant \eta^2/2$.  Note that since property $(\beta)$ is invariant under renorming $Y$, $A:(X,p)\to Y$ has property $(\beta)$.

\end{proof}

 Note that we may repeat this construction to obtain a dual norm $q^*$ on $Y^*$ such that $A^*:(Y^*, q^*)\to X$ has property $(\beta)$.   From this it follows that we may renorm both $X$ and $Y$ so that $A:X\to Y$ and $A^*:Y^*\to X^*$ simultaneously have property $(\beta)$.

The remainder of this section is devoted to the other direction of Theorem \ref{beta}.     The easiest piece of this direction is due to classical characterizations of weak compactness due to James \cite{James}.  

\begin{proposition} If $A$ is $(\beta)$-able, then $A$ is weakly compact.   

\end{proposition}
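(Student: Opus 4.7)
Since being $(\beta)$-able only requires an equivalent renorming of the domain, and weak compactness of $A$ is preserved under such renormings, I would first assume that $A$ itself has property $(\beta)$ in the given norms. Now suppose for contradiction that $A$ is not weakly compact. Then $\overline{AB_X}\subset Y$ is a bounded, closed, convex set that fails to be weakly compact, so James' classical theorem produces $y^*\in Y^*$ with $\|y^*\|=1$ whose supremum $M:=\sup_{x\in B_X}y^*(Ax)$ is not attained on $\overline{AB_X}$. Note that $M>0$, since otherwise $0\in \overline{AB_X}$ would trivially attain the supremum.

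Next I would pick a sequence $(x_n)\subset B_X$ with $y^*(Ax_n)>M-\ee_n$ for a sequence $\ee_n\searrow 0$ whose decay rate will be fixed in the last step. Because $y^*$ fails to attain its supremum on $\overline{AB_X}$, the sequence $(Ax_n)$ can have no norm-convergent subsequence, since any such norm limit would lie in $\overline{AB_X}$ and realize the value $M$. A routine total boundedness argument then permits passing to a further subsequence such that $\sep(Ax_n)\geqslant \eta$ for some $\eta>0$.

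To close the argument, let $\beta=\beta(\eta)>0$ be the modulus supplied by property $(\beta)$ applied with $\ee=\eta$, and now specify $(\ee_n)$ and an integer $N\in\nn$ so that $\ee_n<M\beta$ for every $n\geqslant N$. Applying property $(\beta)$ to the vector $x_N\in B_X$ and to the tail $(x_k)_{k>N}\subset B_X$, which is $\eta$-separated under $A$, yields some $n>N$ with $\|x_N+x_n\|\leqslant 2(1-\beta)$. Setting $v=(x_N+x_n)/2$, we have $v\in (1-\beta)B_X$, so by the very definition of $M$, $y^*(Av)\leqslant (1-\beta)M$. On the other hand,
$$y^*(Av)=\tfrac{1}{2}\bigl(y^*(Ax_N)+y^*(Ax_n)\bigr) > M-\tfrac{1}{2}(\ee_N+\ee_n) > M-M\beta=(1-\beta)M,$$
which is the sought contradiction. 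The only real subtlety is selecting the correct form of James' theorem (applied to the weakly closed convex set $\overline{AB_X}\subset Y$); once this is in place, the extraction of a separated subsequence and the final averaging estimate are essentially mechanical.
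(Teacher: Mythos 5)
Your proof is correct, but it takes a genuinely different route from the paper's. The paper does not quote the sup-attainment form of James' theorem; instead, starting from $x^{**}\in S_{X^{**}}$ with $A^{**}x^{**}\in Y^{**}\setminus Y$, it runs James' recursive construction to produce sequences $(x_n)\subset B_X$ and $(x_m^*)\subset B_{X^*}$ with $\mathrm{Re}\, x_m^*(x_n)\geqslant 1-1/m$ for $n\geqslant m$ and with $(Ax_n)$ seminormalized and basic, hence separated; the functionals $x_m^*$ then give $\bigl\|\tfrac{x_m+x_{m+n}}{2}\bigr\|\geqslant 1-1/m$ for all $n$, so no modulus $\beta(\varepsilon)$ can exist. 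You instead apply the (hard direction of the) attainment theorem to the bounded, norm-closed, convex --- hence weakly closed --- set $\overline{AB_X}$, take a maximizing sequence for a non-attaining functional $y^*$, extract an $\eta$-separated subsequence via the Cauchy-versus-separated dichotomy (legitimate, since a norm-convergent subsequence would attain the supremum), and contradict property $(\beta)$ through the homogeneity bound $\mathrm{Re}\, y^*(Av)\leqslant (1-\beta)M$ for $v\in (1-\beta)B_X$. Your route buys a clean quantitative finish using James' theorem only as a black box, at the cost of the extra subsequence extraction; the paper's route exhibits the violating configuration (the point $x_m$ and the separated sequence, with the witnessing functionals living on $X$ rather than $Y$) directly from the construction. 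Two cosmetic remarks: the paper allows complex scalars, so work with $\mathrm{Re}\, y^*$ throughout; and there is no need to leave the decay of $(\varepsilon_n)$ unspecified --- any $\varepsilon_n\downarrow 0$ works, since one simply chooses $N$ at the end with $\varepsilon_N<M\beta$ (and one may assume $\beta\leqslant 1$, so that $v\in(1-\beta)B_X$ is meaningful).
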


\begin{proof} Suppose that $A:X\to Y$ has property $(\beta)$ and $A$ is not weakly compact.  Then there exists $x^{**}\in S_{X^{**}}$ such that $A^{**}x^{**}\in Y^{**}\setminus Y$.   Then following James's recursive construction, we obtain sequences $(x_n)\subset B_X$ and $(x_n^*)\subset B_{X^*}$ such that for each $n\geqslant m$, $\text{Re\ }x^*_m(x_n)\geqslant 1-1/m$ and such that $(Ax_n)$ is seminormalized and basic.  Then $\sep(Ax_n)\geqslant \ee>0$ for some $\ee>0$ (which can be estimated as a function of $\|A^{**}y^{**}\|_{Y^{**}/Y}$).  However, there exists no $\beta=\beta(\ee)$ as in the definition of property $(\beta)$.  Indeed, if $\beta>1/m$, the member $x_m\in B_X$ and the sequence $(x_{m+n})_{n\in \nn}$ satisfy $\|\frac{x_m+x_{m+n}}{2}\|\geqslant 1-1/m>1-\beta$ for all $n\in \nn$, while $\sep(Ax_{n+m})\geqslant \ee$.

\end{proof}

The remainder of proving this direction will rely on non-linear results.   Given a collection $\mathcal{M}$ of metric spaces and an operator $A:X\to Y$, we say $\mathcal{M}$ \emph{factors through} $A$ provided that there exists $D>0$ such that for every $(G,d)\in \mathcal{M}$, there exists $f:G\to X$ such that for every $s,t\in G$, $$\frac{d(s,t)}{D}\leqslant \|Af(s)-Af(t)\|, \|f(s)-f(t)\|\leqslant d(s,t).$$  If $A$ is the identity on $X$, this is the same as the members of $\mathcal{M}$ admitting bi-Lipschitz embeddings into $X$ with uniform distortions.   If $\mathcal{M}=\{(G,d)\}$, we say $G$ factors through $A$ rather than $\mathcal{M}$ factors through $A$.   Of course, whether or not $\mathcal{M}$ factors through $A$ is invariant under renormings of $X$ and $Y$.  

  For each $n\in \nn$, let $T_n$ consist of all sequences of natural numbers having length not exceeding $n$, including the empty sequence $\varnothing$.   We let $T$ denote all finite sequences of natural numbers and treat each $T_n$ as a subset of $T$.  We endow $T$ with the distance $d(s,t)=|s|+|t|-2|u|$, where $u$ is the largest common initial segment of $s$ and $t$.  This is the graph distance on the graph which has vertex set $T$ and where the sequence $t$ is adjacent to $t\cat n$ for every $n\in \nn$.   We let $\mathcal{T}=\{(T_n, d):n\in \nn\}$. Of course, since each $T_n$ is a subset of $T$, if $T$ factors through $A$, then $\mathcal{T}$ factors through $A$.

     The following argument is a modification of the elegant argument of Baudier and Zheng \cite{BZ}, which is itself a modification of Kloeckner's argument \cite{Kl} for the non-embeddability of binary trees into uniformly convex spaces.

\begin{proposition} If $A:X\to Y$ is $(\beta)$-able, then $\mathcal{T}$ does not factor through $A$. 

\end{proposition}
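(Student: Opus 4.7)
We argue by contradiction. Assume $A:X\to Y$ is $(\beta)$-able and that $\mathcal{T}$ factors through $A$ with some constant $D$. Since $(\beta)$-ability is preserved under renorming $X$, and the factorization hypothesis is invariant under renormings of both spaces, we may assume $A:X\to Y$ itself has property $(\beta)$ with modulus $\beta(\cdot)$. For every $n\in\nn$ fix $f_n:T_n\to X$ realizing the factorization, so that $d(s,t)/D\le \|Af_n(s)-Af_n(t)\|$ and $\|f_n(s)-f_n(t)\|\le d(s,t)$ for all $s,t\in T_n$. Set $\e=2/D$ and $\beta=\beta(\e)>0$; the goal is to show $f_n$ cannot exist for $n$ sufficiently large.

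The starting observation is a one-step midpoint contraction at every internal vertex. For $t\in T_n$ of depth less than $n$, the translates $z_j:=f_n(t\cat j)-f_n(t)$ lie in $B_X$ and satisfy $\|Az_j-Az_l\|\ge 2/D=\e$ whenever $j\ne l$, since $d(t\cat j,t\cat l)=2$. Applying property $(\beta)$ with $x=z_1$ yields some $j\ne 1$ with
\[
\bigl\|\tfrac{1}{2}\bigl(f_n(t\cat 1)+f_n(t\cat j)\bigr)-f_n(t)\bigr\|\le 1-\beta,
\]
so at every internal vertex two children can be chosen whose midpoint lies strictly inside the ball of radius $1-\beta$ about $f_n(t)$.

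The plan is then to iterate this contraction along two parallel descending sibling paths, in the spirit of Kloeckner's argument for binary trees in uniformly convex spaces and its extension by Baudier and Zheng to property $(\beta)$. At each level one applies the midpoint contraction to the active pair of cousins; after rescaling each sub-problem by the remaining depth so that the translates lie in the unit ball while the $A$-separation $\e$ is preserved (which uses precisely the lower bound $d/D$ within the relevant subtree), the successive contractions compound multiplicatively. Iterating for $n$ levels produces a pair of leaves $s,t\in T_n$ whose images satisfy $\|f_n(s)-f_n(t)\|$ much smaller than the trivial bound $d(s,t)=2n$. Since $\|A\|\cdot\|f_n(s)-f_n(t)\|\ge\|Af_n(s)-Af_n(t)\|\ge 2n/D$, the multiplicative contraction eventually contradicts this linear lower bound once $n$ is large relative to $\beta$, $D$, and $\|A\|$.

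The main obstacle will be the combinatorial bookkeeping of the iteration: property $(\beta)$ selects only one good child index at a time, so the pair of descending sibling paths must be chosen coherently, and at each level one must re-verify that the rescaled translates still satisfy both the unit-ball condition and the $\e$-separation hypothesis under $A$. Once this scheme is organized, comparing the accumulated geometric contraction with the linearly growing lower bound $2n/D$ will deliver the desired contradiction for all sufficiently large $n$.
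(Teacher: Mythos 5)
You have the right ingredients (renorming invariance, $\varepsilon=2/D$, applying property $(\beta)$ to increments of the tree map, and comparing with the lower bound $d(s,t)/D$), but the heart of the proof — the iteration — is only sketched, and the scheme you sketch has a genuine gap. Your one-step observation applies $(\beta)$ to sibling increments $z_j=f_n(t\cat j)-f_n(t)$, which are at tree-distance $2$, and you then propose to compound this level by level "after rescaling each sub-problem by the remaining depth so that the translates lie in the unit ball while the $A$-separation $\varepsilon$ is preserved." That rescaling does \emph{not} preserve the separation: if you shrink the subtree map by a factor $k$ (the remaining depth, or any accumulated contraction factor larger than $1$), the $A$-separation of sibling increments drops to $2/(Dk)<\varepsilon$, so property $(\beta)$ with the fixed modulus $\beta(2/D)$ can no longer be invoked. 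Moreover, after one midpoint step the object you control is a midpoint of two images, not the image of a vertex, so it is not clear what plays the role of $x\in B_X$ and of the $A$-separated sequence at the next level; averaging increments coming from two different "cousin" branches destroys the lower bound on $\|Ax_m-Ax_{m'}\|$. So the claimed per-level multiplicative compounding is not established, and as set up it fails.

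The paper's proof resolves exactly this difficulty by a doubling induction on trees of depth $2^n$ rather than a level-by-level one: it shows that for $f:T_{2^n}\to X$ satisfying the factorization inequalities, from the root one can reach a vertex $2^n$ levels down (with prescribed first coordinate) within norm $2^n(1-\beta)^n$, where $\beta=\beta(2/D)$. In the inductive step one applies the inductive hypothesis twice to produce blocks $t_p$ and $s_{p,q}$ of length $2^n$, and then applies property $(\beta)$ to the vectors $f(t_p)/2^n(1-\beta)^n$ and $(f(t_p\cat s_{p,q})-f(t_p))/2^n(1-\beta)^n$: here the points being separated are descendants at tree-distance $2\cdot 2^n$, so the separation after rescaling is at least $2\cdot 2^n/(D\,2^n(1-\beta)^n)\geqslant 2/D$, i.e.\ the tree-distance grows in step with the rescaling factor, which is precisely what your per-level scheme lacks. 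The conclusion $\|f(t)-f(\varnothing)\|\leqslant 2^n(1-\beta)^n$ against $\|f(t)-f(\varnothing)\|\geqslant 2^n/D$ forces $D\geqslant(1-\beta)^{-n}$ for every $n$, the desired contradiction (note you do not even need $\|A\|$ here, since the factorization bounds $\|f(s)-f(t)\|$ from below directly). To repair your write-up, replace the sibling-midpoint contraction and its per-level iteration by this block/doubling induction, or otherwise supply a genuinely different mechanism that keeps the $A$-separation at level $\varepsilon=2/D$ after each rescaling.
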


\begin{proof} Suppose that $A:X\to Y$ has property $(\beta)$ and $\|A\|\leqslant 1$.  Fix $D\geqslant 1$.   We prove by induction on $n$ that if $f:T_{2^n}\to X$ is such that for every $s,t\in T_{2^n}$, $$d(s,t)/D\leqslant \|Af(s)-Af(t)\|\leqslant \|f(s)-f(t)\|\leqslant d(s,t),$$ then for any $m\in \nn$,  there exists $t=(m_i)_{i=1}^{2^n}\in T_{2^n}$ such that $m_1=m$ and $\|f(t)-f(\varnothing)\|\leqslant 2^n(1-\beta)^n$, where $\beta=\beta(2/D)$.   This will imply that $D\geqslant (1-\beta)^{-n}$ and yield the result.    

First, the base case.   Suppose $f:T_2\to X$ is as described in the previous paragraph.  Assume without loss of generality that $f(\varnothing)=0$.    Let $x=f((m))$ and $x_n=f((m,n))-x$. Then $x, x_n\in B_X$ and $$\sep(Ax_p)=\inf_{q\neq p} \|Ax_q-Ax_p\|= \inf_{q\neq p}\|Af((m,q))-Af((m,p))\| \geqslant 2/D.$$   By the definition of property $(\beta)$, there exists $p\in \nn$ such that $$\|f((m,p))-f(\varnothing)\|= \|x+x_p\|\leqslant 2(1-\beta),$$ finishing the base case.

Next, suppose that $f:T_{2^{n+1}}\to X$ satisfies the hypotheses and assume the result for $n$.  Assume also that $f(\varnothing)=0$ and fix $m\in \nn$.     Consider $f|_{T_{2^n}}\to X$ and apply the inductive hypothesis for each $p\in \nn$ to obtain a sequence $t_p$ with $|t_p|=2^n$ such that the first member of $t_p$ is $p$ and such that $\|f(t_p)\|\leqslant 2^n(1-\beta)^n$ for each $p\in \nn$.    Next, for each $p\in \nn$, let $f_p:T_{2^n}\to X$ be given by $f_p(t)=f(t_p\cat t)$.   For each $q\in \nn$, we may fix $s_{p,q}\in T_{2^n}$ with first member $q$ and such that $|s_{p,q}|=2^n$ and $\|f(t_p\cat s_{p,q})-f(t_p)\| \leqslant 2^n(1-\beta)^n$.  Let $t_{p,q}=t_p\cat s_{p,q}$.  Then one easily checks that $g:T_2\to X$ given by $g(\varnothing)=f(\varnothing)$, $g((p))=f(t_p)/2^n(1-\beta)^n$, and $g((p,q))=f(t_{p,q})/2^n(1-\beta)^n$ satisfies the hypotheses of the base case.   By the base case, there exists $(m,q)\in T_2$ such that $\|g((m,q))\|\leqslant 2(1-\beta)$, from which it follows that $\|f(t_{m,q})\|\leqslant 2^{n+1}(1-\beta)^{n+1}$.    

\end{proof}

We next aim to show that if $Sz(A^*)>\omega$, then $T$ factors through $A$, $A^*$, and $A^{**}$.  The portion of the proof which shows that $T$ factors through $A^*$ and $A^{**}$ will not depend in any way on the fact that $A^*$ is an adjoint, so the same proof can be used to show that if $Sz(A)>\omega$, then $T$ factors through $A$ and $A^*$.  To avoid repeating lengthy computations, we omit a direct proof of this implication and leave it to the reader to modify the following results.  

Let $N:T\to \nn_0$ be a bijection such that if $s\prec t$, $N(s)<N(t)$.   Let us define the linear order $<$ on $T$ by letting $s<t$ if and only if $N(s)<N(t)$.    Let $r_n=2^n-1$ for $n\in \nn_0$, $\ell(t)=\max\{n\in \nn_0: r_n\leqslant |t|\}$, $L(t)=\max\{\ell(s): \varnothing\leqslant s\leqslant t\}$, and $L_n=\{t\in T: \ell(t)=n\}$.   

\begin{lemma} Suppose that $\ee>0$ is such that $0\in s_{4\ee}^n(A^{**}B_{Y^{**}})$ for every $n\in \nn$.    Then for any $(\ee_n)\subset (0,1)$ and any $\delta>0$, there exist collections $(x_t)_{t\in T}\subset B_X$, $(y^*_t)_{t\in T}\subset B_{Y^*}$, $(x^{**}_t)_{t\in T}\subset B_{X^{**}}$ such that for every $t\in T$, \begin{enumerate}[(i)]\item $\text{\emph{Re}\ }A^{**}x^{**}_t(y^*_t)>\ee-\delta$,  \item for each $\varnothing<s<t$, $|A^{**}x^{**}_s(y^*_t)|, |y^*_t(Ax_s)|< \delta/ 2^{3 L(t)+2},$ \item if $|t|=r_n$, then for each $\varnothing<s<t$, $|A^{**}x^{**}_t(y^*_s)|<  \delta/ 2^{3L(t)+2},$ \item if $r_n<|t|<r_{n+1}$, for each $\varnothing<s<t$, $|(A^{**}x^{**}_t- A^{**}x^{**}_{t^-})(y^*_s)|< \delta/ 2^{3L(t)+2},$ \item for each $\varnothing<s\leqslant t$, $|A^{**}x^{**}_t(y^*_s)- y^*_s(Ax_t)|< \delta/ 2^{3L(t)+2}.$ \end{enumerate}

\label{hardlemma}

\end{lemma}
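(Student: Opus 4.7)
My plan is induction on $N(t)$, which enumerates $T$ linearly via $<$. Set $\delta_t := \delta/2^{3L(t)+2}$. At the root, conditions (ii)--(v) are vacuous (each requires $s>\varnothing$), and (i) is secured by using $0\in s_{4\ee}(A^{**}B_{X^{**}})$ to select $x^{**}_\varnothing\in B_{X^{**}}$ with $\|A^{**}x^{**}_\varnothing\|>2\ee-\delta/2$, a Hahn--Banach dual $y^*_\varnothing\in B_{Y^*}$, and $x_\varnothing:=0$.

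At the inductive step for $t\neq\varnothing$, set $F_t:=\{A^{**}x^{**}_s,\, Ax_s:\varnothing<s<t\}\subset Y^{**}$, $G_t:=\{y^*_s:\varnothing<s<t\}\subset Y^*$, and $v_0:=0$ if $|t|=r_n$, else $v_0:=A^{**}x^{**}_{t^-}$. I choose $x^{**}_t$, then $y^*_t$, then $x_t$, so that $A^{**}x^{**}_t\in v_0+W_t$ with $W_t:=\{z\in Y^{**}:|z(y^*)|<\delta_t/2\text{ for all }y^*\in G_t\}$ (yielding the $s<t$ cases of (iii)/(iv)); that $y^*_t(A^{**}x^{**}_t)>\ee-\delta$ and $|z(y^*_t)|<\delta_t$ for $z\in F_t$ (yielding (i) and the $y^*_t$-half of (ii)); and that $x_t\in B_X$ satisfies $|y^*_s(Ax_t)-A^{**}x^{**}_t(y^*_s)|<\delta_t$ for $\varnothing<s\leqslant t$ (yielding (v) and the $Ax_t$-half of (ii)). The last clause follows from Goldstine's theorem in the $w^*$-neighborhood of $x^{**}_t$ cut out by the finite family $\{A^*y^*_s\}$, so the substantive task is producing $(x^{**}_t,y^*_t)$.

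To produce this pair, apply the dualized form of Lemma \ref{szlenk characterization}(i) with operator $A^*:Y^*\to X^*$ (adjoint $A^{**}$), $w^*$-open set $v_0+W_t\subset Y^{**}$, and starting point $v_0\in s_{4\ee}^n(A^{**}B_{X^{**}})$ (true for $v_0=0$ by hypothesis; maintained as an extra induction invariant for $v_0=A^{**}x^{**}_{t^-}$ by selecting the parent inside a Szlenk tree of height exceeding any future demand). For $n$ sufficiently large compared to $|F_t|$ this extracts a $w^*$-closed, $(A^{**},2\ee-\delta/8)$-separated family $(u^{**}_\sigma)_{\sigma\in T^{(n)}}\subset B_{X^{**}}$ indexed by a tree of order $n+1$, with $A^{**}u^{**}_\varnothing=v_0$ and $A^{**}u^{**}_\sigma\in v_0+W_t$ throughout. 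I then stabilize along a branch, via iterated Bolzano--Weierstrass, the evaluation of $A^{**}u^{**}_\sigma$ on a finite net of $Y^*$-functionals that approximate the pairing between $Y^{**}$ and $\sspan(F_t)$ within tolerance $\delta_t/4$. Choosing $x^{**}_t:=u^{**}_{\sigma'}$ for a node $\sigma'\succ\varnothing$ on the stabilized branch, the difference $A^{**}x^{**}_t-v_0$ has norm exceeding $2\ee-\delta/8$ and, by stabilization, is at Hahn--Banach distance at least $\ee-\delta/2$ from $\sspan(F_t)$ in the $w^*$-duality sense against $B_{Y^*}$; Hahn--Banach then supplies the required $y^*_t\in B_{Y^*}$.

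The main obstacle is this pigeonhole/stabilization step: one needs enough depth in the Szlenk tree that, after discarding the bad coordinates relative to $F_t$, a remaining branch node still differs from $v_0$ in norm while being nearly $w^*$-orthogonal to $F_t$. This rests essentially on the hypothesis $0\in s_{4\ee}^n(A^{**}B_{X^{**}})$ for every $n$, coupled with the bookkeeping that each previously chosen $A^{**}x^{**}_s$ itself lies at unbounded Szlenk depth — workable since $T$ is countable, so only finitely many fresh depth demands accrue before each new step.
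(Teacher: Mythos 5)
Your skeleton (induction along the enumeration $N$, the restart $v_0=0$ at $|t|=r_n$, a weak/$w^*$-neighborhood at each step to secure (ii)--(iv), Goldstine for (v)) is the same as the paper's, but the two substantive steps have genuine gaps. First, the Szlenk-depth bookkeeping. The hypothesis only places $0$ in every $s^m_{4\ee}(A^{**}B_{X^{**}})$; a point chosen at stage $t$ can be guaranteed to lie only in $s^m_{4\ee}$ for whatever \emph{finite} $m$ you reserve at the moment of choice, and your selection --- a node of the separated tree extracted via a dualized Lemma \ref{szlenk characterization}(i) --- carries no derived-set membership at all, since that lemma's conclusion records only $w^*$-closedness and separation. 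Your claim that ``each previously chosen $A^{**}x^{**}_s$ lies at unbounded Szlenk depth,'' justified by countability of $T$, is unobtainable, and countability is beside the point: what closes the induction is the exact finite budget created by the restarts, namely the invariant $A^{**}x^{**}_t\in s^{2^n-l}_{4\ee}(A^{**}B_{X^{**}})$ when $|t|=r_n+l$, which drops by one along a block of length $2^n$ and is replenished at $|t|=r_{n+1}$ because one restarts from $0$; this is the whole reason conditions (iii) and (iv) distinguish $|t|=r_n$ from $r_n<|t|<r_{n+1}$. Without maintaining this invariant (i.e.\ choosing $A^{**}x^{**}_t$ directly inside the next derived set near $v_0$, as the paper does), the step for the children $t\cat k$ with $r_n<|t\cat k|<r_{n+1}$ cannot be performed, since it needs $v_0=A^{**}x^{**}_t$ to be a $w^*$-condensation point of $2\ee$-separated points of a derived set.

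Second, the production of $y^*_t\in B_{Y^*}$. You need one functional in $B_{Y^*}$ that is $\delta_t$-small on the finite set $F_t\subset Y^{**}$ and has real part $>\ee-\delta$ on the new point; your ``stabilize along a branch, then Hahn--Banach'' step does not deliver this as written. Stabilizing the evaluations of branch nodes against finitely many functionals controls differences between two branch nodes, not the difference $A^{**}u^{**}_{\sigma'}-v_0$ with the root, which is the vector you then use (and for $\sigma'$ deeper than a child of the root even the norm lower bound on that difference fails, since separation is only parent--child); and the assertion that this difference lies at distance $\geqslant \ee-\delta/2$ from $\mathrm{span}(F_t)$ ``in the $w^*$-duality sense against $B_{Y^*}$'' is precisely the statement needing proof: Hahn--Banach by itself produces a functional in $Y^{***}$, and replacing it by one in $B_{Y^*}$ requires an argument (for instance, that $\mathrm{span}(F_t)$ is finite-dimensional, hence $\sigma(Y^{**},Y^*)$-closed, so $B_{Y^*}\cap{}^{\perp}\mathrm{span}(F_t)$ norms the quotient $Y^{**}/\mathrm{span}(F_t)$) together with an actual lower bound on that distance, which your stabilization does not supply. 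The paper's Lemma \ref{lemma1} mechanism achieves both requirements at once: take a net $(y^{**}_\lambda)$ in the derived set $w^*$-converging to $v_0$ with $\|y^{**}_\lambda-v_0\|>2\ee$, choose norming functionals in $B_{Y^*}$ for the differences, pass to a subnet along which their evaluations on $F_t$ converge, and output half the difference of two of them; the action on the fixed earlier functional is killed by $w^*$-nullity of $y^{**}_\lambda-v_0$, with no distance estimate needed. Your sketch needs either this trick or a corrected substitute (e.g.\ choose, inside the next derived set, a point whose evaluations against a finite norming net for $\mathrm{span}(F_t)$ agree with those of $v_0$ up to $\delta_t/2$, then run the quotient-duality argument above); as it stands the step is a non sequitur.
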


\begin{proof} We define $x_t$, $y_t^*$, and $x^{**}_t$ by induction on $N(t)$.  We let $x_\varnothing=y^*_\varnothing=x^{**}_\varnothing=0$.

Assume that for some $t\neq \varnothing$, $x_s$, $y_s^*$, and $x^{**}_s$ have been defined for each $s<t$.   Moreover, assume that for each $s<t$, if $|s|=r_n+l$ for some $0\leqslant l<2^n$, $A^{**}x^{**}_x\in s_{4\ee}^{2^n-l}(A^{**}B_{X^{**}})$.    Fix integers $n,l$ such that $0\leqslant l<2^n$ and $|t|=r_n+l$. If $l=0$, let $y^{**}=0\in s_{4\ee}^{2^n+1}(A^{**}B_{X^{**}})=s_{4\ee}(s_{4\ee}^{2^n}(A^{**}B_{X^{**}}))$.  If $l>0$, let $y^{**}=A^{**}x^{**}_{t^-}$.  In the second case, our assumption guarantees that $y^{**}\in s_{4\ee}^{2^n-l+1}(A^{**}B_{X^{**}})=s_{4\ee}(s_{4\ee}^{2^n-l}(A^{**}B_{X^{**}}))$.  Let $$U=\Bigl\{y^*\in Y^*: (\forall \varnothing<s<t)(|A^{**}x^{**}_s(y^*)|,|y^*(Ax_s)|< \delta/2^{3L(t)+2} )\Bigr\}.$$  Let $$V=\Bigl\{u^{**}\in Y^{**}: (\forall \varnothing<s<t)(|y^{**}(y^*_s)-u^{**}(y^*_s)|<\delta/2^{3L(t)+2})\Bigr\}.$$   By Lemma \ref{lemma1} and the choice of $y^{**}$, there exists $y^{**}_0\in s_{4\ee}^{2^n-l}(A^{**}B_{X^{**}})\cap V$ and $y^*\in B_{Y^*}\cap U$ such that $\text{Re\ }y^{**}_0(y^*)>\ee-\delta$.   Let $y_t^*=y^*$.   Fix $x_t^{**}\in B_{X^{**}}$ such that $A^{**}x^{**}_t=y^{**}_0$. We see that with these choices, $(i)$ is satisfied.   Our choice of $U$ guarantee that $(ii)$ is satisfied.   Our choice of $V$ and $y^{**}$ guarantee that the $(iii)$ is satisfied if $l=0$ and $(iv)$ is satisfied if $l>0$.   We choose by Goldstine's theorem some $x_t\in B_X$ to satisfy $(v)$.

\end{proof}

In order to make the remainder of the work more readable, we introduce some terminology and notation.  A \emph{segment} $\mathfrak{s}$ will be a subset of $T$ of the form $\mathfrak{s}=\{w: u\preceq w\preceq t\}$ for some $u\preceq t\in T$. The notations $[u,s]$ and $(u,s]$ will denote the obvious segments.       For a segment $\mathfrak{s}$ and $v\in T$, we will write $\mathfrak{s}\perp v$ if no member of $\mathfrak{s}$ is comparable to $v$.    We will write $\mathfrak{s}\sqsubset \mathfrak{t}$ if for every $s\in \mathfrak{s}$ and $t\in \mathfrak{t}$, $s\prec t$ and $\ell(s)<\ell(t)$.   We will write $\mathfrak{s}\sqsubset v$ to denote $\mathfrak{s}\sqsubset \{v\}$ and $v\sqsubset \mathfrak{s}$ to denote $\{v\}\sqsubset \mathfrak{s}$.

\begin{proposition} Let $(x_t)_{t\in T}$, $(y^*_t)_{t\in T}$, and $(x^{**}_t)_{t\in T}$ be as in Lemma \ref{hardlemma}.   \begin{enumerate}[(i)]\item For any $i\in \nn$, if $\varnothing\neq \mathfrak{s}\subset L_i$, if $v$ is the $\prec$-minimal member of $\mathfrak{s}$, then $$\text{\emph{Re}}\sum_{w\in \mathfrak{s}} A^{**}x_w^{**}(y^*_v),\text{\emph{Re}} \sum_{w\in \mathfrak{s}}y^*_v(Ax_w) \geqslant (\ee-3\delta)|\mathfrak{s}|.$$  \item For any $i\in \nn$, for any $\varnothing\neq \mathfrak{s}\subset L_i$, if $v$ is the $\prec$-maximal member of $\mathfrak{s}$,  $$ \text{\emph{Re}}\sum_{w\in \mathfrak{s}}A^{**}x^{**}_v(y^*_w)\geqslant (\ee-3\delta)|\mathfrak{s}|.$$ \item If $v,w$ are incomparable or $\ell(w)\neq \ell(v)$, $$|A^{**}x^{**}_w(y^*_v)|, |y^*_v(Ax_w)|<\delta/2^{\max\{\ell(v), \ell(w)\}}.$$    \item For any segment $\mathfrak{s}$ and $v\in T$ such that either $\mathfrak{s}<v$, $v<\mathfrak{s}$, or $\mathfrak{s}\perp v$,   $$\sum_{w\in \mathfrak{s}}|A^{**}x_w^{**}(y^*_v)|, \sum_{w\in \mathfrak{s}}|y^*_v(Ax_w)|, \sum_{w\in \mathfrak{s}}|A^{**}x^{**}_v(y^*_w)|< \delta.$$   \end{enumerate}
\label{prop1}
\end{proposition}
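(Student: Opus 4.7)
All four parts follow from combining the five estimates of Lemma~\ref{hardlemma} with telescoping along initial-segment chains; the symbols $<$ appearing in~(iv) must be read as the strict ``layered'' variant $\sqsubset$ introduced just before the proposition (which carries the strict level inequality), since otherwise the claim fails: for instance, if $v\prec u$ with $\ell(v)=\ell(u)$ and $\mathfrak{s}=[u,t]$ is contained in the same level, then $A^{**}x^{**}_w(y^*_v)\approx A^{**}x^{**}_v(y^*_v)>\ee-\delta$ for each $w\in\mathfrak{s}$. The key bookkeeping fact is that $L(t)\geq\ell(t)$ always and $L(t)\geq\ell(s)$ whenever $s\leq t$ in $<$, so that $\delta/2^{3L(t)+2}$ dominates $\delta/2^{3\max\{\ell(v),\ell(t)\}+2}$ for every pair in play; together with the observation that any $\prec$-chain inside $L_i$ has cardinality at most $2^i$, all the geometric sums we encounter stay comfortably below $\delta$.

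I prove~(iii) first. Fix $v\neq w$ satisfying the hypothesis. If $w<v$ in $<$, (ii) and (v) of Lemma~\ref{hardlemma} give $|A^{**}x^{**}_w(y^*_v)|,|y^*_v(Ax_w)|<2\delta/2^{3L(v)+2}\leq\delta/2^{\max\{\ell(v),\ell(w)\}}$ at once, using $L(v)\geq\max\{\ell(v),\ell(w)\}$. For $v<w$, set $n=\ell(w)$ and $w_0=w|_{r_n}$, and telescope
\[
A^{**}x^{**}_w(y^*_v)=A^{**}x^{**}_{w_0}(y^*_v)+\sum_{k=r_n+1}^{|w|}\bigl(A^{**}x^{**}_{w|_k}-A^{**}x^{**}_{w|_{k-1}}\bigr)(y^*_v).
\]
In the subcase $v<w_0$, I bound the first term by~(iii) of Lemma~\ref{hardlemma} and the $|w|-r_n<2^n$ differences by~(iv); each summand is at most $\delta/2^{3\max\{\ell(v),n\}+2}$, so the total is at most $\delta/2^{2\max\{\ell(v),\ell(w)\}+2}<\delta/2^{\max\{\ell(v),\ell(w)\}}$. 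In the complementary subcase $v\geq w_0$, I let $j^{*}=\max\{j\in[r_n,|w|]:w|_j\leq v\}$, split the telescope at $j^{*}$, and bound $|A^{**}x^{**}_{w|_{j^*}}(y^*_v)|$ via~(ii) of Lemma~\ref{hardlemma} (which applies since $w|_{j^*}<v$); the same final bound emerges. The estimate on $|y^*_v(Ax_w)|$ is then obtained by combining the estimate on $|A^{**}x^{**}_w(y^*_v)|$ with~(v) of Lemma~\ref{hardlemma}.

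Parts~(i) and~(ii) are telescoping arguments anchored by property~(i) of Lemma~\ref{hardlemma}. Since $\mathfrak{s}\subset L_i$ admits a $\prec$-extremum, it must be a chain, so $\mathfrak{s}=\{w|_j:j\in J\}$ for some fixed backbone $w$ and $J\subseteq[r_i,r_{i+1})$. For~(i), $v=w|_{\min J}$, and for each $j\in J$ I telescope $A^{**}x^{**}_{w|_j}(y^*_v)$ from $v$ up to $w|_j$ using only~(iv) of Lemma~\ref{hardlemma} (every intermediate length lies strictly between $r_i$ and $r_{i+1}$, and $v\prec w|_k$ gives $v<w|_k$). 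This yields $|A^{**}x^{**}_{w|_j}(y^*_v)-A^{**}x^{**}_v(y^*_v)|\leq 2^i\cdot\delta/2^{3i+2}=\delta/2^{2i+2}$, hence $\mathrm{Re}\,A^{**}x^{**}_{w|_j}(y^*_v)>\ee-2\delta$ by~(i) of Lemma~\ref{hardlemma}. Summing over $|J|\leq 2^i$ terms gives the first inequality of~(i); swapping to $y^*_v(Ax_{w|_j})$ via~(v) of Lemma~\ref{hardlemma}, at total cost $\leq|J|\delta/2^{3i+2}\leq\delta$, gives the second. Part~(ii) is dual: telescope $A^{**}x^{**}_v(y^*_{w|_j})$ downward from $v=w|_{\max J}$ to $w|_j$ using~(iv).

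For~(iv), both non-perpendicular cases exploit the forced level boundary. When $v\sqsubset\mathfrak{s}$, every $w\in\mathfrak{s}$ has $\ell(w)>\ell(v)$, so the base $w|_{r_{\ell(w)}}$ of the (iii)-telescope satisfies $v\prec w|_{r_{\ell(w)}}$ by length comparison, and the computation from~(iii) gives $|A^{**}x^{**}_w(y^*_v)|\leq\delta/2^{2\ell(w)+2}$; summing over $w\in\mathfrak{s}$ grouped by level yields $\sum_{n'=\ell(v)+1}^{\ell(t)}2^{n'}\cdot\delta/2^{2n'+2}\leq\delta/2^{\ell(v)+1}<\delta$. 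The bound on $\sum|y^*_v(Ax_w)|$ follows from~(v) of Lemma~\ref{hardlemma}, and $\sum|A^{**}x^{**}_v(y^*_w)|$ is bounded directly by~(ii) of Lemma~\ref{hardlemma} since $v<w$. The case $\mathfrak{s}\sqsubset v$ is dual, using the telescoping argument of~(iii) applied to $A^{**}x^{**}_v(y^*_w)$. The incomparable case $\mathfrak{s}\perp v$ reduces to summing the pointwise estimate of~(iii) already established, grouped by level. The main bookkeeping obstacle throughout is tracking the interplay between $\prec$ and $<$; this is precisely what makes the ``level boundary'' structure of~(iii) of Lemma~\ref{hardlemma} so useful, as it provides the near-zero anchors at exactly the right positions for every telescope to close up.
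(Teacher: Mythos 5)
Your proof is correct and follows essentially the same route as the paper's: telescoping along initial segments anchored by the level-boundary estimates of Lemma \ref{hardlemma}, with the strengthened internal bound of order $\delta/2^{2\max\{\ell(v),\ell(w)\}}$ from part (iii) feeding the level-by-level geometric summation in (iv), and your reading of $<$ in (iv) as $\sqsubset$ agrees with how the paper's own proof (and its use in Corollary \ref{maincorollary}) interprets that hypothesis. The only cosmetic differences are that you compare $v$ directly with $w|_{r_{\ell(w)}}$ and with the last initial segment of $w$ lying below $v$, where the paper introduces the auxiliary nodes $g_1,g_2$, and that you sometimes invoke Lemma \ref{hardlemma}(ii) directly where the paper reapplies part (iii) with the roles of $v$ and $w$ reversed.
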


\begin{proof}$(i)$ Let $\mathfrak{s}=[v,s]$.   For $v\preceq u\preceq s$, using properties $(i)$ and $(iv)$ from Lemma \ref{hardlemma}, $$\text{Re\ }A^{**}x^{**}_u(y^*_v) \geqslant \text{Re\ }A^{**}x^{**}_v(y^*_v) -\sum_{v\prec w\preceq u}|(A^{**}x^{**}_w-A^{**}w^{**}_{w^-})(y^*_v)| \geqslant \ee-\delta - 2^n\delta/2^{3n+2} >\ee-2\delta,$$ where $n=\ell(v)$.  Using property $(v)$, $\text{Re\ }Ax_u(y^*_v)> \text{Re\ }A^{**}x^{**}_u(y^*_v)- \delta > \ee-3\delta$.  Summing over the appropriate $u$ gives $(i)$.  

$(ii)$ This is similar to $(i)$, summing over the appropriate range.   

$(iii)$ Let $m=\max\{\ell(v), \ell(w)\}$. Assume that either $v,w$ are incomparable or $\ell(w)\neq \ell(v)$.     If $w<v$, $L(v)\geqslant m$ and $$|A^{**}x^{**}_w(y_v^*)|, |y^*_v(Ax_w)|<\delta/2^{3L(v)+2}<\delta/2^{2m}$$ by Lemma \ref{hardlemma}$(iii)$.  For the remainder of $(iii)$, we assume $v<w$. Let $n=\ell(w)$ and let $g_1$ be the initial segment of $w$ with $|g_1|=r_n$. Note that $\ell(g_1)=\ell(w)$.  Let $g_2$ be the minimal initial segment of $w$ such that $g_2^-< v$.  Let $g$ be the larger of the two initial segments $g_1, g_2$.  Note that if $g=g_2$, then $g_2^-<v<g_2$ and $\ell(g_2)=\ell(w)$.  That $\ell(g_2)=\ell(w)$ in the case that $g=g_2$ follows from the fact that in this case, $\ell(w)=\ell(g_1)\leqslant \ell(g_2)\leqslant \ell(w)$.  If $v$ is not an initial segment of $w$, $g_2^-<v<g_2$. If $v$ is an initial segment of $w$, then we must be in the case that $\ell(w)\neq \ell(v)$ and $v<w$.  This means that $\ell(v)<\ell(w)$. Since $\ell(g_2)=\ell(w)$, $\ell(v)<\ell(g_2)$. In this case, since both $v$ and $g_2$ are initial segments of $w$ and $\ell(v)<\ell(g_2)$, $v\prec g_2$  and $v<g_2$.

Suppose that $g_1\prec g=g_2$, so that $g_2^-<v< g_2$ and $\ell(w)=\ell(g_2)$. In this case, since $g_1\prec g_2$, $g_1\preceq g_2^-<v$.  From this it follows that $\ell(w)=\ell(g_1) \leqslant L(v)$.  Since $\ell(v)\leqslant L(v)$, $m\leqslant L(v)$.  Therefore $|A^{**}x^{**}_{g_2^-}(y^*_v)|<\delta/2^{3L(v)+2}\leqslant \delta/2^{3m+2}$ by Lemma \ref{hardlemma}$(ii)$ applied with $s=g_2^-$ and $t=v$.    Then by Lemma \ref{hardlemma}$(iv)$, \begin{align*} |A^{**}x^{**}_w(y^*_v)| & \leqslant |A^{**}x^{**}_{g_2^-}(y^*_v)| + \sum_{g_2\preceq h\preceq w}|A^{**}x^{**}_h(y^*_v)-A^{**}x^{**}_{h^-}(y^*_v)| \\ & < \frac{\delta}{2^{3m+2}} +\frac{\delta 2^n}{2^{3m+2}} < \delta/2^{2m+1}. \end{align*} Here we have used the fact that $\ell(w)=n\leqslant m$ and $L(h)\geqslant m$ for each $g_2\preceq h \preceq w$.  

Now assume that $g=g_1$. In this case, $v<g_1$.  Indeed, $v=g_1$ is impossible since either $v$ and $w$ are either incomparable or $\ell(v)\neq \ell(w)$, and $g_1<v$ is impossible, since then $g_1\prec g_2$. Since $v<g_1$, $L(g_1) \geqslant \ell(v)$ and $L(g_1)\geqslant \ell(g_1)=\ell(w)$.  Therefore $L(g_1)\geqslant m$. By Lemma \ref{hardlemma}$(iii)$, $|A^{**}x^{**}_{g_1}(y^*_v)|<\delta/2^{3m+2}$.  Since $L(h)\geqslant m$ for each $g_1\preceq h\preceq w$, we deduce by Lemma \ref{hardlemma}$(iv)$ that \begin{align*} |A^{**}x^{**}_w(y^*_v)| & \leqslant |A^{**}x^{**}_{g_1}(y^*_v)| + \sum_{g_1\prec h\preceq w}|A^{**}x^{**}_h(y^*_v)-A^{**}x^{**}_{h^-}(y^*_v)| \\ & < \frac{\delta}{2^{3m+2}} +\frac{\delta 2^n}{2^{3m+2}} < \delta/2^{2m+1}. \end{align*}  

Recalling that in either of the cases $g_1\prec g$ and $g=g_1$ above, $L(w)\geqslant m$, we use Lemma \ref{hardlemma}$(v)$ to deduce that $$|y^*_v(Ax_w)|<|A^{**}x^{**}_w(y^*_v)|+\delta/2^{3L(w)+2} <\delta/2^{2m+1}+\delta/2^{3m+2}<\delta/2^{2m}.$$

$(iv)$ Since $x_\varnothing=y^*_\varnothing=x^{**}_\varnothing=0$, we may assume $0\notin \mathfrak{s}$ without loss of generality. If $v, \mathfrak{s}$ are such that either $v\sqsubset\mathfrak{s}$, $\mathfrak{s}\sqsubset v$, or $v\perp \mathfrak{s}$, we may write $\mathfrak{s}$ as the union $\mathfrak{s}=\cup_{i=1}^\infty \mathfrak{s}_i$ where $\mathfrak{s}_i=\mathfrak{s}\cap L_i$.  Then $|\mathfrak{s}_i|\leqslant 2^i$, and $v, \mathfrak{s}_i$ satisfy the same relationship as $v, \mathfrak{s}$. That is, if $v\sqsubset \mathfrak{s}$, then $v\sqsubset \mathfrak{s}_i$ for each $i$, etc.  By $(iii)$, for each $i\in \nn$ and $w\in \mathfrak{s}_i$, $|A^{**}x^{**}_w(y^*_v)|, |y^*_v(Ax_w)|<\delta/2^{2i}$. This follows from the fact that in each case, either $w$ and $v$ are incomparable (if $v\perp \mathfrak{s}$) or $\ell(w)\neq \ell(v)$ (if either $v\sqsubset \mathfrak{s}$ or $\mathfrak{s}\sqsubset v$) and $\max\{\ell(w), \ell(v)\}\geqslant \ell(w)\geqslant i$ when $w\in \mathfrak{s}_i$.  Similarly we deduce by $(iii)$, by reversing the roles of $v$ and $w$, that $|A^{**}x^{**}_v(y^*_w)|<\delta/2^{2i}$ for $w\in \mathfrak{s}_i$.         Summing as $w$ ranges over $\mathfrak{s}=\cup_{i=1}^\infty \mathfrak{s}_i$ yields  $$\sum_{i=1}^\infty\sum_{w\in \mathfrak{s}_i}|A^{**}x^{**}_w(y^*_v)|, \sum_{i=1}^\infty \sum_{w\in \mathfrak{s}_i}|y^*_v(Ax_w)|, \sum_{i=1}^\infty \sum_{w\in \mathfrak{s}_i}|A^{**}x^{**}_v(y^*_w)|\leqslant \sum_{i=1}^\infty \frac{\delta |\mathfrak{s}_i|}{2^{2i}} \leqslant \sum_{i=1}^\infty \frac{2^i \delta}{2^{2i}}=\delta.$$   

\end{proof}

\begin{corollary} Let $A:X\to Y$ be an operator.  \begin{enumerate}[(i)]\item If  $Sz(A^*)>\omega$, then $T$ factors through $A$, $A^*$, and $A^{**}$.  \item If $Sz(A)>\omega$, then $T$ factors through $A$ and $A^*$.  \end{enumerate}   
\label{maincorollary}
\end{corollary}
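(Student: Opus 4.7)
To prove part (i), the plan is to use Lemma \ref{hardlemma} to build three explicit bi-Lipschitz factorizations of $T$ simultaneously. I would begin by verifying the hypothesis of that lemma under the assumption $Sz(A^*)>\omega$. Since $Sz(A^*)=Sz(A^{**}B_{X^{**}})$, there exists $\ee>0$ with $s_\ee^n(A^{**}B_{X^{**}})\neq\varnothing$ for every $n\in\nn$, and using the absolute convexity and $w^*$-compactness of $A^{**}B_{X^{**}}$ one concludes (after possibly shrinking $\ee$) that $0\in s_{4\ee}^n(A^{**}B_{X^{**}})$ for all $n$. Choosing $\delta\in(0,\ee/12)$ small and applying Lemma \ref{hardlemma} produces collections $(x_t)\subset B_X$, $(y^*_t)\subset B_{Y^*}$, $(x^{**}_t)\subset B_{X^{**}}$ indexed by $T$ with all the listed orthogonality properties. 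I then define
\[
\varphi(t)=\sum_{\varnothing\prec s\preceq t}x_s,\qquad \psi(t)=\sum_{\varnothing\prec s\preceq t}y^*_s,\qquad \chi(t)=\sum_{\varnothing\prec s\preceq t}x^{**}_s,
\]
valued in $X$, $Y^*$, $X^{**}$ respectively. Since each summand has norm at most one and the symmetric difference of the paths from $\varnothing$ to $s$ and from $\varnothing$ to $t$ contains exactly $d(s,t)$ vertices, the triangle inequality immediately gives $\|\varphi(s)-\varphi(t)\|\leqslant d(s,t)$ and analogous upper bounds for $\psi$ and $\chi$.

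For the lower bounds, fix $s,t\in T$ with longest common initial segment $u$, and after swapping if necessary assume $|s|-|u|\geqslant d(s,t)/2$. The key combinatorial step is to choose a sub-segment $\mathfrak{s}$ of $(u,s]$ lying entirely within a single level $L_i$ and with $|\mathfrak{s}|\geqslant (|s|-|u|)/4\geqslant d(s,t)/8$. This is possible because the $L_n$ are consecutive blocks of $|t|$-values of size $2^n$, so a quick case analysis (path inside one level; crossing exactly two consecutive levels; crossing three or more, in which case some intermediate $L_{N-1}$ is entirely swept out and has size at least one quarter of the path) shows that one level always captures at least a quarter of $(u,s]$.

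With this $\mathfrak{s}$ in hand, the three lower bounds all follow from a common template: the principal mass is extracted from $\mathfrak{s}$ via clause (i) or (ii) of Proposition \ref{prop1}, and the remaining cross terms are absorbed by clause (iv). For $\varphi$, I would write $A\varphi(s)-A\varphi(t)=\sum_{r\in(u,s]}Ax_r-\sum_{r\in(u,t]}Ax_r$ and evaluate at $y^*_v$ with $v=\min_\prec\mathfrak{s}$: Proposition \ref{prop1}(i) supplies $\mathrm{Re}\sum_{w\in\mathfrak{s}}y^*_v(Ax_w)\geqslant(\ee-3\delta)|\mathfrak{s}|$, while the leftover vertices of $(u,s]\cup(u,t]$ form three segments $(u,v^-]<v$, $v<(\max_\prec\mathfrak{s},s]$, and $(u,t]\perp v$, each of which contributes less than $\delta$ by Proposition \ref{prop1}(iv). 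This yields
\[
\|A\varphi(s)-A\varphi(t)\|\geqslant (\ee-3\delta)|\mathfrak{s}|-3\delta\geqslant \tfrac{\ee}{16}\,d(s,t).
\]
For $\psi$, I test against $x^{**}_v$ with $v=\max_\prec\mathfrak{s}$, using Proposition \ref{prop1}(ii) for the main term; for $\chi$, I test against $y^*_v$ with $v=\min_\prec\mathfrak{s}$, using the $\sum A^{**}x^{**}_w(y^*_v)$ bound in Proposition \ref{prop1}(i). In each case the error analysis is identical and produces a uniform factorization constant $D=O(1/\ee)$.

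Part (ii) follows from the mirrored construction indicated in the paper: when $Sz(A)>\omega$, the analogue of Lemma \ref{hardlemma} built from the set $A^*B_{Y^*}\subset X^*$ supplies collections $(y^*_t)\subset B_{Y^*}$ and $(x_t)\subset B_X$ satisfying the same structural properties with $A^{**}$ everywhere replaced by $A^*$, and the same summation construction then yields bi-Lipschitz embeddings of $T$ into $X$ and into $Y^*$ witnessing factorizations through $A$ and $A^*$. The main obstacle is the bookkeeping: at each step, $v$ must be chosen as the $\prec$-minimum or $\prec$-maximum of $\mathfrak{s}$ precisely so that each of the three leftover segments satisfies one of the $<$, $>$, or $\perp$ relations required by Proposition \ref{prop1}(iv), and one has to verify that the combinatorial level estimate on $\mathfrak{s}$ really is robust across all corner cases (small $d(s,t)$, the case where $u\in\{s,t\}$, and so on).
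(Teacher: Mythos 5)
Your proposal is correct and follows essentially the same route as the paper: the hypothesis $0\in s_{4\ee}^n(A^{**}B_{X^{**}})$ (the fact the paper cites to Lancien), Lemma \ref{hardlemma}, the branch partial-sum maps into $X$, $Y^*$, $X^{**}$, the decomposition $(u,s]=\mathfrak{s}_1\cup\mathfrak{s}\cup\mathfrak{s}_2$ with $\mathfrak{s}$ a full level-intersection of size at least $d(s,t)/8$, the choice of $\prec$-minimal or $\prec$-maximal test vector in $\mathfrak{s}$ together with Proposition \ref{prop1}(i), (ii), (iv), and the shift-by-one-dual reduction for part (ii). The only difference is cosmetic: you select the side with the longer path and extract a quarter-sized level block, whereas the paper's case analysis is organized by comparing $\ell(u)$ with $\max\{\ell(s),\ell(t)\}$; both yield the same estimate.
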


\begin{proof}

$(i)$ We first note that if $Sz(A^*)>\omega$, then there exists $\ee>0$ such that $0\in s_{4\ee}^n(A^{**}B_{X^{**}})$ for all $n\in \nn$, which was shown in \cite{Lancien2}.   Fix any $\ee_0\in (0, \ee)$ and $\delta>0$ such that $\ee-6\delta>\ee_0$.   Let $(x_t)_{t\in T}$, $(y^*_t)_{t\in T}$, $(x^{**}_t)_{t\in T}$ be as in Lemma \ref{hardlemma}.    Define $\Phi:T\to X$ by $\Phi(t)=\sum_{\varnothing\prec s\preceq t} x_s$, and let $\Phi^*:T\to Y^*$ and $\Phi^{**}:T\to X^{**}$ be defined similarly. The triangle inequality easily yields that $\Phi$, $\Phi^*$, and $\Phi^{**}$ are $1$-Lipschitz.   Fix distinct $s,t\in T$ and let $u$ be the largest common initial segment of $s$ and $t$.    We claim that, by switching $s$ and $t$ if necessary, there exist segments $\mathfrak{s}_1$, $\mathfrak{s}$, $\mathfrak{s}_2$ such that $|\mathfrak{s}|\geqslant d(s,t)/8$, $\mathfrak{s}_1\cup \mathfrak{s}\cup \mathfrak{s}_2=(u,s]$, $\mathfrak{s}\subset L_i$ for some $i\in \nn$, and for any $v\in \mathfrak{s}$, $\mathfrak{s}_1\sqsubset v\sqsubset \mathfrak{s}_2$.    We first assume the claim and establish the desired lower estimates.  If $v$ is the $\prec$-minimal member of $\mathfrak{s}$, then by Proposition \ref{prop1}, \begin{align*} \|A\Phi(s)-A\Phi(t)\| & \geqslant \text{Re\ }y^*_v\Bigl(\sum_{w\in \mathfrak{s}} Ax_w\Bigr) - \sum_{w\in \mathfrak{s}_1}|y^*_v(Ax_w)| \\ & - \sum_{w\in \mathfrak{s}_2}|y^*_v(Ax_w)| - \sum_{w\in (u,t]} |y^*_v(Ax_w)| \\ & \geqslant (\ee-3\delta)|\mathfrak{s}| - 3\delta \geqslant (\ee-6\delta)|\mathfrak{s}| \\ & \geqslant d(s,t)\ee_0/8. \end{align*} The lower estimate on $\|A^{**}\Phi^{**}(s)-A^{**}\Phi^{**}(t)\|$ is established similarly.  To establish the lower estimate on $\|A^*\Phi^*(s)-A^*\Phi^*(t)\|$, we consider the action of the functional $x^{**}_v$, where $v$ is the $\prec$-maximal member of $\mathfrak{s}$.

We return to the claim.  Let  $\max\{\ell(s), \ell(t)\}=n+1$.  

Case $1$: $\ell(u)<n$. By switching $s$ and $t$, we may assume $n+1=\ell(s)$.  Then $$d(s,t)\leqslant |s|+|t|\leqslant 2r_{n+2}<2^{n+3}.$$  Let $\mathfrak{s}=\{w\in (u,s]: \ell(w)=n\}$, so that $|\mathfrak{s}|=2^n\geqslant d(s,t)/8$.  We let $\mathfrak{s}_1=\{w\in (u,s]: \ell(w)<n\}$ and $\mathfrak{s}_2=\{w\in (u,s]: \ell(w)>n\}$.    

Case $2$:  $\ell(u)=n+1$.  Then $(u, s], (u,t]\subset L_{n+1}$ and $d(s,t)=|(u, s]|+|(u,t]|$.  By switching $s$ and $t$, we may assume $|(u,s]|\geqslant d(s,t)/2$.  We let $\mathfrak{s}=(u,s]$, $\mathfrak{s}_1=\mathfrak{s}_2=\varnothing$.   

Case $3$: $\ell(u)=n$. Then we may write $(u,s]=(u,s']\cup(s',s]$ and $(u,t]=(u,t']\cup (t', t]$, where $(u, s'], (u,t']\subset L_n$, $(s', s], (t', t]\subset L_{n+1}$.  Note that some of these segments may be empty, and $d(s,t)$ is the sum of the cardinalities of these four segments.  By switching $s$ and $t$, we may assume that one of the two segments $(u, s']$, $(s',s]$ has cardinality at least $d(s,t)/4$.    If $|(u, s']|\geqslant d(s,t)/4$, we let $\mathfrak{s}_1=\varnothing$, $\mathfrak{s}=(u, s']$, and $\mathfrak{s}_2=(s',s]$.   Otherwise we let $\mathfrak{s}_1=(u, s']$, $\mathfrak{s}=(s', s]$, and $\mathfrak{s}_2=\varnothing$.

$(ii)$ As mentioned above, this follows from repeating the computations above with $A^*:Y^*\to X^*$ replaced by $A:X\to Y$, $A^{**}:X^{**}\to Y^{**}$ replaced by $A^*:Y^*\to X^*$, and omitting all reference to $A,X$, and $Y$.

\end{proof}

\section{Remarks}

In our definition of $\xi$-AUS, we considered $B$-trees $B$ with $o(B)=\omega^\xi$.  One may ask whether more precise results are possible if we consider the modulus $$r_\xi^w(\sigma;A)=\sup\Bigl\{\inf\{\|y+\sigma Ax\|: t\in B, x\in \text{co}(x_s:\varnothing\prec s\preceq t)\}\Bigr\},$$ where the supremum is taken over all $y\in B_Y$, all $B$-trees $B$ with $o(B)=\xi$, and weakly null collections $(x_t)_{t\in B}\subset B_X$. With these definitions, $r_{\omega^\xi}^w$ coincides with $\rho^w_\xi$. Similarly, we may define $p_\xi^{w^*}(\sigma;A^*)$ in a way such that $p_{\omega^\xi}^{w^*}(\sigma;A^*)$ coincides with $\delta_\xi^{w^*}(\sigma;A^*)$.        However, there are no better renorming results to be obtained by considering these finer gradations.  Indeed, it follows from the description of the Szlenk index of an operator which was given in \cite{Causey1} that if $\xi\geqslant Sz(A)$, then for any $B$-tree $B$ with $o(B)=\xi$ and any weakly null collection $(x_t)_{t\in B}\subset B_X$, $$\inf\{\|Ax\|: t\in B, x\in \text{co}(x_s: \varnothing\prec s\preceq t)\}=0,$$   whence $$\inf\{\|y+\sigma Ax\|-1: t\in B, x\in \text{co}(x_s: \varnothing\prec s\preceq t)\}\leqslant \|y\|-1\leqslant 0.$$    Thus for a given operator $A$, it is trivial to consider $r^w_\xi$ for $\xi\geqslant Sz(A)$.  However, if $\xi<\omega^\gamma<Sz(A)$, we claim that it is also trivial to consider $r^w_\xi$. Indeed, one may prove that if $r^w_\xi(\sigma;A)\leqslant \sigma\tau$ for some $0<\sigma, \tau\leqslant 1$, then $p^{w^*}_\xi(6\tau;A^*)\geqslant \sigma\tau$ as in Proposition \ref{dualprop}$(i)$.  This is because the only time special properties of the ordinals of the form $\omega^\gamma$ were used in our proofs were to obtain Proposition \ref{dualprop}$(ii)$.  One can then deduce that if $r^w_\xi(\sigma;A)/\sigma\to 0$ as $\sigma\to 0$, then $p^{w^*}_\xi(\tau;A^*)>0$ for every $\tau>0$.  Then one argues as in Proposition \ref{finish prop} to deduce that for every $\ee>0$, there exists $\delta\in (0,1)$ such that $s_\ee^\xi(A^*B_{Y^*})\subset (1-\delta)A^*B_{Y^*}$.  A standard homogeneity argument yields that $Sz(A)\leqslant \xi\omega$. Then if $\xi<\omega^\gamma$, $\xi\omega\leqslant \omega^\gamma$ and $Sz(A)\leqslant \omega^\gamma$.   Thus if $\xi<\omega^\gamma<Sz(A)$, it is trivial to consider $r^w_\xi$.    If $Sz(A)=\omega^\zeta$ for a limit ordinal $\zeta$, then for any $\xi<Sz(A)$, there exists an ordinal $\gamma$ such that $\xi<\omega^\gamma<Sz(A)$, which means that there are no $\xi$ for which one can deduce any positive renorming results regarding $r^w_\xi$.   Since any Asplund operator $A$ has $Sz(A)=\omega^\zeta$ for some ordinal $\zeta$, it follows that there can only exist positive, non-trivial renorming results if $\zeta$ is a successor, say $\zeta=\gamma+1$.   Then our discussion here implies that the only $\xi$ for which we could obtain positive results would be $\xi\in [\omega^\gamma, \omega^{\gamma+1})$.  As we already mentioned, however, the strongest possible result in this case would be $\xi=\omega^\gamma$, which was covered by our main renorming theorem.  Thus we see that our positive renorming results are a complete solution to the question of Asplund renormings.   

It follows from the work of Brooker \cite{BrookerAsplund} that for every ordinal $\xi$, there is a Banach space $X_\xi$ with $Sz(X_\xi)=\omega^{\xi+1}$. Thus for every ordinal $\xi$, there do exist Banach spaces to which our main renorming theorem applies.    Moreover, these spaces can be taken to be reflexive, and to satisfy the property that $Sz(X_\xi^*)=\omega$ for every ordinal $\xi$ (we will prove this later).    Since $Sz(X\oplus Y)=\max\{Sz(X), Sz(Y)\}$ for every pair of Banach spaces $X,Y$, it follows that $W_{\xi, \zeta}=X_\xi\oplus X_\zeta^*$ is reflexive, $Sz(W_{\xi, \zeta})=\omega^{\xi+1}$, and $Sz(W_{\xi, \zeta}^*)=\omega^{\zeta+1}$.   Thus for every $\xi, \zeta$, the class of spaces to which Corollary \ref{super troopers} applies is non-empty.

For $1<p<\infty$, we say that an operator $A:X\to Y$ is $\xi$-AUS \emph{with power type} $p$ if there exists a constant $C$ such that for every $\sigma\geqslant 0$, $\rho^w_\xi(\sigma;A)\leqslant C\sigma^p$.   It is known that if $X$ is a Banach space with $Sz(X)\leqslant \omega$, then $X$ admits an equivalent AUS norm with power type $p$ for some $p>1$.  Moreover, the best possible $p$ is known in terms of the behavior of the indices $Sz(B_{X^*}, \ee)$.   The positive results regarding AUS norms has to do with the submultiplicative nature of the Szlenk index of a Banach space.  That is, for any $\delta, \ee>0$, $Sz(B_{X^*}, \delta\ee)\leqslant Sz(B_{X^*}, \delta)Sz(B_{X^*}, \ee)$.  However, this inequality fails if we replace $B_{X^*}$ with other $w^*$-compact sets.  Moreover, one can construct examples of operators with Szlenk index $\omega$ which admit no power type renorming.  One may fix a sequence $p_n\downarrow 1$ and a corresponding sequence of positive numbers $\theta_n$ tending very slowly to $0$ (depending on $p_n$) and let $A:\bigl(\oplus_n \ell_{p_n}\bigr)_{\ell_2}\to \bigl(\oplus_n \ell_{p_n} \bigr)_{\ell_2}$ be given by $A|_{\ell_{p_n}}=\theta_n I_{\ell_{p_n}}$.    It is a consequence of the work of Brooker \cite{BrookerAsplund} that this operator has Szlenk index $\omega$, while one easily checks that for a sufficiently slowly vanishing choice of $(\theta_n)$, there can be no power type renorming.  

Let us say for a constant $C\geqslant 0$, some $p>1$, and an ordinal $\xi$, that a Banach space $X$ has \emph{property} $(\xi,p,C)$ provided that for any constant $r>0$, any $y\in X$, any $B$-tree $B$ with $o(B)=\omega^\xi$, and any weakly null collection $(x_t)_{t\in B}\subset rB_X$, $$\inf\{\|y+x\|^p: t\in B, x\in \text{co}(x_s:\varnothing \prec s\preceq t)\} \leqslant \|y\|^p+C r^p.$$  An elementary computation shows that $X$ has property $(\xi, p,C)$ for some $C$ if and only if $X$ is $\xi$-AUS with power type $p$, which in turn implies that $Sz(X)\leqslant \omega^{\xi+1}$.   If $\xi=0$, of course it is sufficient to check that for any $y\in X$, any $r>0$, and any net $(x_\lambda)\subset rB_X$, $$\underset{\lambda}{\lim\inf} \|y+x_\lambda\|^p \leqslant \|y\|^p+Cr^p. $$ We say that $X$ has \emph{property} $(\xi,p)$ if it has property $(\xi, p)$ for some $C\geqslant 0$.    Also, given an Asplund Banach space $X$ and $\ee>0$, let $o(X, \ee)$ denote the supremum over all ordinals $\xi$ such that there exists a $B$-tree $B$ and a weakly null collection $(x_t)_{t\in B}\subset B_X$ such that for every $t\in B$ and every $x\in \text{co}(x_s:\varnothing\prec s\preceq t)$, $\|x\|\geqslant \ee$.  It follows from \cite{Causey1} that $o(X, \ee)<Sz(X)$.           

We next observe that for any $1<p<\infty$ and any ordinal $\xi$, there exists a reflexive Banach space $S_{\xi,p}$ such that \begin{enumerate}[(i)]\item $S_{\xi,p}$ has property $(\xi, p,1)$, \item $S_{\xi,p}$ cannot be renormed to have property $(\xi,r)$ for any $p<r<\infty$, \item $o(S_{\xi,p},1)\geqslant \omega^\xi$, \item $S_{\xi,p}^*$ has property $(0,q,1)$, where $1/p+1/q=1$.     \end{enumerate} Item $(i)$ implies that $Sz(S_{\xi, p})\leqslant \omega^{\xi+1}$, while item $(iii)$ implies that $Sz(X)>\omega^\xi$, so that $Sz(S_{\xi, p})=\omega^{\xi+1}$.   Item $(iv)$ implies that $Sz(S^*_{\xi, p})\leqslant \omega$, and since $S_{\xi, p}^*$ will be infinite dimensional, $Sz(S^*_{\xi, p})=\omega$ for all $\xi$.   Thus $S_{\xi, 2}\oplus S_{\zeta,2}^*$ furnishes an example of a reflexive Banach space with Szlenk index $\omega^{\xi+1}$ and the Szlenk index of the dual of which is $\omega^{\zeta+1}$.

We define the spaces.  We begin with $S_{0,p}=\ell_p$.   Assuming $S_{\xi, p}$ has been defined, we let $$S_{\xi+1,p}=\bigl(\oplus_n \ell_1^n(S_{\xi,p})\bigr)_{\ell_p}.$$   Last, if $\xi$ is a limit ordinal and if $S_{\zeta,p}$ has been defined for every $\zeta<\xi$, we let $$S_{\xi,p}=\bigl(\oplus_{\zeta<\xi} S_{\zeta, p}\bigr)_{\ell_p([0, \xi))}.$$   It follows from the work of Brooker \cite{BrookerDirect} that $Sz(S_{\xi, p})\leqslant \omega^{\xi+1}$ for every $\xi$.  It is easy to see that for any Banach spaces $X,Y$, $o(X\oplus_1 Y,1)\geqslant o(X,1)+o(Y,1)$, from which it follows that $o(\ell_1^n(S_{\xi,p}),1)\geqslant o(S_{\xi,p},1)n$ and $o(S_{\xi+1,p},1)\geqslant \sup_n o(\ell_1^n(S_{\xi,p}),1)\geqslant \omega^{\xi+1}$.  Furthermore, when $\xi$ is a limit ordinal, $o(S_{\xi,p},1)\geqslant \sup_{\zeta<\xi} o(S_{\zeta+1,p},1)\geqslant \omega^\xi$.   From this it follows that $o(S_{\xi,p},1)>\omega^\xi$ for every ordinal, whence $Sz(S_{\xi,p},1)=\omega^{\xi+1}$ for every $\xi$. 

Furthermore, it is easy to see that $\ell_q$ has property $(0,q,1)$, if $X,Y$ have property $(0,q,1)$, so does $X\oplus_\infty Y$, and if $(X_i)_{i\in I}$ have property $(0,q,1)$, so does $(\oplus_{i\in I} X_i)_{\ell_q(I)}$.  From this we deduce $(iv)$ by induction.    

Next, we want to show that $S_{\xi,p}$ has $(\xi, p,1)$.  If $\xi=0$, this is clear.  Recall that $S_{\xi+1, p}=\bigl(\oplus_n \ell_1^n(S_{\xi,p})\bigr)_{\ell_p}$.  Fix $N\in \nn$ and let $X_N:=\bigl(\oplus_{n=1}^N \ell_1^n(S_{\xi,p})\bigr)_{\ell_p^m}$.    Let $P:S_{\xi+1,p}\to X_N$ denote the projection onto $X_N$.   By the work of H\'{a}jek and Lancien \cite{HL}, it follows that $Sz(X_N)=Sz(S_{\xi, p})=\omega^{\xi+1}$.  By the characterization of Szlenk index given in \cite{Causey1}, for any $B$-tree $B$ with $o(B)=\omega^{\xi+1}$, any $r>0$, any weakly null collection $(x_t)_{t\in B}\subset r B_{S_{\xi+1, p}}$, and any $\ee>0$, there exists $t\in B$ and $x\in \text{co}(x_s:\varnothing\prec s\preceq t)$ such that $\|P_Nx\|<\ee$.   Now fix $r>0$, a $B$-tree $B$ with $o(B)=\omega^{\xi+1}$, $y\in S_{\xi+1, p}$, and a weakly null collection $(x_t)_{t\in B}\subset bB_{S_{\xi+1, p}}$.   Fix $N\in \nn$ such that $\|y-P_Ny\|<\ee$. Fix $t\in B$ and $x\in \text{co}(x_s:\varnothing\prec s\preceq t)$ such that $\|P_Nx\|<\ee$.  Then $$\|y+x\|\leqslant 2\ee+ \|P_N y+ (I-P_N)x\| \leqslant =2\ee +(\|P_Ny\|^p+\|(I-P_N)y\|^p)^{1/p} \leqslant 2\ee+(\|y\|^p+r^p)^{1/p}.$$   Since $\ee>0$ was arbitrary, we deduce that $$\inf \{\|y+z\|^p: t_0\in B, z\in \text{co}(x_s:\varnothing\prec s\preceq t)\} \leqslant \|y\|^p+r^p,$$ and $S_{\xi+1, p}$ has property $(\xi+1, p,1)$.    Proving that $S_{\xi,p}$ has property $(\xi, p,1)$ when $\xi$ is a limit ordinal is similar.   We now let $P_\gamma:S_{\xi,p}\to X_\gamma=\bigl(\oplus_{\zeta<\gamma} S_{\zeta, p}\bigr)$.  It follows now from \cite{BrookerAsplund} that $$Sz(X_\gamma) \leqslant \bigl(\sup_{\zeta<\gamma} Sz(S_{\zeta, p})\bigr)\omega \leqslant \omega^\gamma\omega=\omega^{\gamma+1}  <\omega^\xi$$   Again using \cite{Causey1}, for any $B$-tree $B$ with $o(B)=\omega^\xi$, any $r>0$, any weakly null collection $(x_t)_{t\in B}\subset r B_{S_{\xi, p}}$, and any $\ee>0$, there exists $t\in B$ and $x\in \text{co}(x_s: \varnothing\prec s\preceq t)$ such that $\|P_\gamma x\|<\ee$.  We then argue that $S_{\xi, p}$ has property $(\xi, p, 1)$ as in the successor case.

Finally, we argue that since $o(S_{\xi, p}, 1)\geqslant \omega^\xi$, $o(S_{\xi, p}, 1/n^{1/q}) \geqslant \omega^\xi n$.    First we define ``addition'' of well-founded $B$-trees.  If $C_1$, $C_2, \ldots, C_n$ are $B$-trees, we let $C_1+C_2+\ldots +C_n$ denote the $B$-tree consisting of all sequences $t_1\cat \ldots \cat t_m$ such that $m\leqslant n$, $t_i\in C_i$ for each $1\leqslant i\leqslant m$, and $t_i\in MAX(C_i)$ for each $1\leqslant i<m$. It is easy to see that the representation of a sequence in $C_1+\ldots +C_n$ as a concatenation $t_1\cat\ldots \cat t_m$ satisfying these conditions must be unique.  It is easy to see (see, for example, \cite{Causey}) that if $o(C_i)=\omega^\xi$, then $o(C_1+\ldots +C_n)=\omega^\xi n$.   Next, suppose $X_1, \ldots, X_n$ are Banach spaces such that $o(X_i, 1)\geqslant \omega^\xi$ for each $1\leqslant i\leqslant n$.  Then $o\bigl((\oplus_{i=1}^n X_i)_{\ell_p^n}, 1/n^{1/q}\bigr)\geqslant \omega^\xi n$.  Indeed, for each $i$, there exists a $B$-tree $C_i$ with $o(C_i)=\omega^\xi$ and a weakly null collection $(x^i_t)_{t\in C_i}$ such that for every $t\in C_i$ and every $x\in \text{co}(x^i_s:\varnothing \prec s\preceq t)$, $\|x\|=1$.   For $t\in C_1+\ldots +C_n$, let $t=t_1\cat \ldots \cat t_i$ be a representation of the form given above and let $x_t= x^i_{t_i}\in (\oplus_{i=1}^n X_i)_{\ell_p^n}$, where $X_i$ is identified with the obvious subspace of $(\oplus_{i=1}^n X_i)_{\ell_p^n}$.     Then for every $t\in C_1+\ldots +C_n$, say $t=t_1\cat \ldots \cat t_m$, and any $x\in \text{co}(x_s: \varnothing \prec s\preceq t)$, $x=\sum_{i=1}^m a_ix^i$, where $x_i\in \text{co}(x^i_s:\varnothing \prec s\preceq t_i)$, $a_i\geqslant 0$, and $\sum_{i=1}^m a_i=1$.   Therefore $$\|x\|\geqslant \bigl(\sum_{i=1}^m a_i^p\bigr)^{1/p}\geqslant 1/n^{1/q}.$$     Therefore in order to see that $o(S_{\xi, p}, 1/n^{1/q}) \geqslant \omega^\xi n$, we need only to see that we may write $S_{\xi, p}=(\oplus_{i=1}^n X_i)_{\ell_p^n}$ for some $X_1, \ldots, X_n$ with $o(X_i)\geqslant \omega^\xi$.  If $\xi=0$, we let $M_1, \ldots, M_n$ be disjoint, infinite subsets of $\nn$ and let $X_i=[e_j: j\in M_i]$, where $(e_i)$ is the $\ell_p$ basis.    If $\xi$ is a successor, say $\xi=\gamma+1$, we let $$X_i=\bigl(\oplus_{n\in M_i} \ell_1^n(S_{\gamma, p})\bigr)_{\ell_p}.$$   If $\xi$ is a limit, we let $A_1, \ldots, A_n$ be disjoint subsets of $[0, \xi)$ such that $\sup A_i=\xi$ for each $i$.    Then we let $$X_i=\bigl(\oplus_{\zeta\in A_i} S_{\zeta, p}\bigr)_{\ell_p(A_i)}.$$  

Note that the inequality $o(S_{\xi, p},1/n^{1/q})\geqslant \omega^\xi n$ yields that for any equivalent norm $|\cdot|$ on $S_{\xi, p}$, there exists a constant $C>0$ such that  $o((S_{\xi, p}, |\cdot|),1/Cn^{1/q}) \geqslant \omega^\xi n$.    We last show that if $X$ is any Banach space having property $(\xi, r)$, there exists a constant $D$ such that for any $n\in \nn$, $o(X, D/n^{1/s})\leqslant  \omega^\xi n$, where $1/r+1/s=1$.   Then if $r>p$, since $o(X, 1/Cn^{1/q})\geqslant \omega^\xi n$ and $o(X, D/n^{1/s})\leqslant \omega^\xi n$ cannot both hold for all $n\in \nn$,     we deduce that $S_{\xi, p}$ cannot be renormed to have property $(\xi, r)$ for any $r>p$.     Suppose that $X$ has property $(\xi, r, c)$ for some $c\geqslant 1$.   Let $B$ be a $B$-tree with $o(B)=\omega^\xi n$ and suppose $(x_t)_{t\in B}\subset B_X$ is a weakly null collection. Fix $\ee\in (0,c)$.   Fix any $t_1$ maximal  in $B^{\omega^\xi(n-1)}$ and let $y_1=x_{t_1}$.    Next, assume that for $1\leqslant m<n$,  $t_1, \ldots, t_m$, $y_1, \ldots, y_m$ have been defined such that $t_1\cat \ldots \cat t_{m-1}$ is a maximal member of $B^{\omega^\xi (n-m)}$, $y_i\in \text{co}(x_s: t_1\cat \ldots \cat t_{i-1}\prec s\preceq t_1\cat \ldots \cat t_i)$ for each $1\leqslant i\leqslant m$, and $$\|\sum_{j=1}^i y_j\|^r \leqslant \ee + \|\sum_{j=1}^{i-1} y_j\|^r + c$$ for each $1<i\leqslant m$.    Then let $C$ denote those non-empty sequences $t$ such that $t_1\cat \ldots \cat t_m\cat t\in B^{\omega^\xi(n-m-1)}$.   Since $t_1\cat \ldots \cat t_m$ is a maximal member of $B^{\omega^\xi(n-m)}$, $o(C)=\omega^\xi$.  Applying the definition of property $(\xi, r, c)$ to the vector $y=\sum_{i=1}^m y_i$ and the weakly null collection $(x_{t_1\cat \ldots \cat t_m\cat t})_{t\in C}$, we deduce the existence of $t_{m+1}\in C$ (which we may assume is maximal by replacing the sequence with a proper extension if necessary) such that $t_1\cat \ldots \cat t_{m+1}$ is a maximal member of $B^{\omega^\xi(n-m-1)}$ and a vector $y_{m+1}\in \text{co}(x_s: t_1\cat \ldots \cat t_m\prec s\preceq t_1\cat \ldots \cat t_{m+1})$ such that $$\|\sum_{i=1}^{m+1} y_i\|^r \leqslant \ee + \|\sum_{i=1}^m y_i\|^r + c.$$  This recursively defines $t_1, \ldots, t_n$ and $y_1, \ldots, y_n$ in such a way that $y=\frac{1}{n}\sum_{i=1}^n y_i\in \text{co}(x_s: \varnothing\prec s\preceq t_1\cat \ldots \cat t_n)$ and, by construction, \begin{align*} \|y\|^r & = n^{-r}\|\sum_{i=1}^{n-1} y_i\|^r \\ & < n^{-r}\Bigl[\ee+\|\sum_{i=1}^n y_i\|^r + c\Bigr] <\ldots \\ & < n^{-r}(n\ee + cn) < \frac{2cn}{n^r}.\end{align*}    Taking $r^{th}$ roots yields that for any constant $D>(2c)^{1/r}$, $o(X, C/n^{1/s}) \leqslant \omega^\xi n$ for every $n\in \nn$.

Thus we have shown by the previous examples that for every ordinal $\xi$ and every $1<p<\infty$, there exists a Banach space $S_{\xi, p}$ which has a $\xi$-AUS norm with power type $p$, and cannot be renormed to have power type better than $p$.  Moreover, it is easy to see using Proposition \ref{dualprop} that for an operator $A:X\to Y$, there exists a constant $C$ such that for every $\sigma>0$, $\rho^w_\xi(\sigma;A)\leqslant C\sigma^p$ if and only if there exists a constant $C'$ such that for every $\tau>0$,  $\delta^{w^*}_\xi(\tau;A^*)\geqslant C'\tau^{1/q}$, where again $1/p+1/q=1$.   Since the $S_{\xi, p}$ spaces are reflexive, we have produced among the spaces $S_{\xi, p}^*$ examples of spaces which are $\xi$-AUC with power type $q$ but which cannot be renormed to have power type better than $q$.  

Note that, arguing as above, the collection of spaces $S_{\xi, \infty}$ given by $S_{\xi, 0}=c_0$, $S_{\xi+1, \infty}=\bigl(\oplus_n \ell_1^n(S_{\xi, \infty})\bigr)_{c_0}$, and $S_{\xi, \infty}=\bigl(\oplus_{\zeta<\xi} S_{\zeta, \infty}\bigr)_{c_0([0, \xi))}$, we obtain examples of Banach spaces with $Sz(S_{\xi, \infty})=\omega^{\xi+1}$ and with $\rho_\xi(\sigma;S_{\xi, \infty})=0$ for $0<\sigma<1$.

In \cite{Causey2}, a characterization was given of which ordinals occur as the Szlenk index of a Banach space.  Let $\alpha>0$ be any ordinal, $\beta=1$, and $\beta_n=n$ for every $n\in \nn$.  For $n\in \nn$, let $\theta_n^{-1}=\log_2(n+1)$.   In \cite{Causey2}, a Banach space $\mathfrak{G}$ was constructed such that $Sz(\mathfrak{G})=\omega^{\alpha+1}$ and for every $n\in \nn$ and $\ee\in (0, \theta_n)$, $Sz(B_{\mathfrak{G}^*}, \ee)>\omega^\alpha n$.   It follows by the relationship between $o(\mathfrak{G}, \cdot)$ and $Sz(B_{\mathfrak{G}^*}, \cdot)$ given in \cite{Causey1} that there exists a constant $c>0$ such that for any $\ee>0$ and $n\in \nn$, if $Sz(B_{\mathfrak{G}^*}, c\ee)>\omega^\alpha n$, $o(\mathfrak{G}, \ee)\geqslant \omega^\alpha n$.  This means that for any $n\in \nn$ and $\ee\in (0, \theta_n/c)$, $o(\mathfrak{G}, \ee)\geqslant \omega^\alpha n$.  By replacing $c$ with any strictly larger number, we may assume that for any $\ee\in (0, \theta_n/c]$, $o(\mathfrak{G}, \ee)\geqslant \omega^\alpha n$.    Then for any equivalent norm $|\cdot|$ on $\mathfrak{
S}$, there exist constants $C,D$ such that for any natural number $n\in \nn$, $o((\mathfrak{G}, |\cdot|), \theta_n/cC)\geqslant \omega^\alpha n$, while, arguing as above, $o((\mathfrak{G}, |\cdot|), D/n^{1/q}) <\omega^\alpha n$, where $q$ is conjugate to $p$.  However, since $n^{1/q}\theta_n>cCD$ for sufficiently large $n$, we obtain a contradiction.  Thus we deduce that for every ordinal $\alpha>0$, there exists a Banach space $\mathfrak{G}$ such that $Sz(\mathfrak{G})=\omega^{\alpha+1}$ which cannot be renormed  to have $\rho^w_\alpha$-modulus of non-trivial power type.   We summarize this discussion, which yields a complete picture, by the following.   In what follows, for convenience, we say $A:X\to Y$ is $\xi$-AUS with power type $\infty$ if there exists a constant $C\geqslant 0$ such that for any $y\in Y$, any $b>0$, any $B$-tree $B$ with $o(B)=\omega^\xi$, and any weakly null collection $(x_t)_{t\in B}\subset b B_X$, $$\inf \{\|y+\sigma Ax\|: t\in B, x\in \text{co}(x_s:s\preceq t)\} \leqslant \max\{\|y\|, Cb \}.$$   

\begin{theorem} Fix an ordinal $\xi$. \begin{enumerate}[(i)]\item For any $1<p\leqslant \infty$, there exists a Banach space which is $\xi$-AUS of power type $p$ and which cannot be renormed to be $\xi$-AUS of power type better than $p$. This Banach space necessarily has Szlenk index $\omega^{\xi+1}$. \item There exists a Banach space with Szlenk index $\omega^{\xi+1}$ which cannot be renormed to be $\xi$-AUS with any power type if and only if $\xi>0$. \item There exists an operator with Szlenk index $\omega^{\xi+1}$ which cannot be renormed to be $\xi$-AUS with any power type. \item All of the appropriate dual statements for $w^*$-$\xi$-AUC Banach spaces and operators also hold. \end{enumerate}

\end{theorem}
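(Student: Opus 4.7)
The plan is to assemble the theorem from the concrete examples already constructed in the Remarks section, invoking the duality of Theorem \ref{duality} for the dual statements, and supplying only the pieces not written out explicitly above.

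For (i), when $1<p<\infty$ I would use the reflexive space $S_{\xi,p}$ built above. The Remarks verify all the hard estimates: $S_{\xi,p}$ has property $(\xi,p,1)$ (so is $\xi$-AUS of power type $p$), and $o(S_{\xi,p},1/n^{1/q})\geqslant \omega^\xi n$ for every $n$ while any space with property $(\xi,r,c)$ must satisfy $o(X,D/n^{1/s})\leqslant \omega^\xi n$. Since $1/s<1/q$ whenever $r>p$, these two asymptotics are incompatible under any equivalent renorming. For $p=\infty$ I would use $S_{\xi,\infty}$; the analogous estimates show it is $\xi$-AUS with power type $\infty$, and since it has Szlenk index $\omega^{\xi+1}$ and is not compact, no genuine power type $p<\infty$ is possible (any power-type modulus with $p<\infty$ would force, via Theorem \ref{renorming1} and the argument above, a polynomial lower bound on $o$ incompatible with the fact that $\rho^w_\xi\equiv 0$ on $(0,1)$ for this space). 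The assertion that $Sz$ must be $\omega^{\xi+1}$ for any such example is just Theorem \ref{renorming1}.

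For (ii), the ``if'' direction is exactly the space $\mathfrak{G}$ from \cite{Causey2} constructed in the Remarks: the incompatibility $n^{1/q}\theta_n>cCD$ for large $n$ forbids any power-type renorming regardless of $p$. The ``only if'' direction (that $\xi=0$ always admits a power type) is the theorem of Knaust-Odell-Schlumprecht \cite{KOS} combined with Raja \cite{Raja} for the nonseparable case, which gives an AUS renorming with power type whenever $Sz(X)\leqslant \omega$.

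For (iii), the required operator is the one flagged in the Remarks: choose $p_n\downarrow 1$ and scalars $\theta_n\downarrow 0$ tending slowly to zero, and set $A:(\oplus_n \ell_{p_n})_{\ell_2}\to (\oplus_n \ell_{p_n})_{\ell_2}$ by $A|_{\ell_{p_n}}=\theta_n I_{\ell_{p_n}}$. That $Sz(A)=\omega$ follows from Brooker's Szlenk-index computation for direct sums of operators \cite{BrookerAsplund}. To rule out any power-type renorming, I would argue as for $\mathfrak{G}$: if $A$ were $\xi$-AUS of power type $r$ under some norm, then plugging in normalized blocks from $\ell_{p_n}$ for large $n$ (where $p_n$ is close to $1$) into the modulus inequality would force $\theta_n$ to satisfy a lower bound $\theta_n\gtrsim n^{-1/s}$ for $s$ conjugate to $r$, and a sufficiently slow choice of $\theta_n$ at the outset contradicts this. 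The main technical obstacle here is controlling the submultiplicativity failure for operator Szlenk indices: one must verify that the blocking argument, applied to the operator rather than to a single $\ell_{p_n}$ factor, indeed yields the claimed lower bound on $\theta_n$; this is where the failure of submultiplicativity of $Sz(A,\cdot)$ (as opposed to $Sz(B_{X^*},\cdot)$) is exploited, and it is the content we cannot borrow verbatim from the Banach space case.

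For (iv), apply Theorem \ref{duality}: the dual of each witness from (i)--(iii) is $w^*$-$\xi$-AUC, the power-type correspondence $p\leftrightarrow q$ with $1/p+1/q=1$ is the quantitative part of the same theorem, and in (i) and (iii) reflexivity of the constructed examples (or weak compactness of the operator) ensures that the predual renorming question and the dual renorming question are equivalent, so the obstructions transfer. For (iii) one applies duality to $A^*$, using that $Sz(A^*)$ can be computed on the same blocks; for the Banach-space statement (ii) in the dual setting, take the dual of $\mathfrak{G}$ (using $Sz(\mathfrak{G}^*)$ and the analogous modulus estimate via duality).
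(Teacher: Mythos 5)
Your route is the paper's: $S_{\xi,p}$ and $S_{\xi,\infty}$ for (i), $\mathfrak{G}$ together with \cite{KOS}, \cite{Raja} and Theorem \ref{renorming1} for (ii), the diagonal operator on $\bigl(\oplus_n\ell_{p_n}\bigr)_{\ell_2}$ for (iii), and Theorem \ref{duality}/Proposition \ref{dualprop} plus reflexivity for (iv). Two assembly points need repair. First, in (i) with $p=\infty$ your added claim that ``no genuine power type $p<\infty$ is possible'' for $S_{\xi,\infty}$ is both unnecessary and false: since $\rho^w_\xi(\sigma;S_{\xi,\infty})=0$ for $0<\sigma<1$, this space trivially has every finite power type as well; the point is simply that ``better than $\infty$'' is vacuous, so the $p=\infty$ case needs only the construction of $S_{\xi,\infty}$ and the computation of its Szlenk index. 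Second, your witness for (iii) has Szlenk index $\omega=\omega^{0+1}$, so as written it settles only $\xi=0$; for $\xi>0$ you should invoke the identity operator on $\mathfrak{G}$, noting that the obstruction there is phrased through $o(\cdot,\ee)$, which changes only by a constant rescaling of $\ee$ under equivalent norms on domain and range, so it excludes operator renormings as well.

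The more substantive issue is your verification for the diagonal operator. ``Plugging normalized blocks into the modulus inequality'' to extract a bound of the form $\theta_n\geqslant cn^{-1/s}$ does not work: a blockwise lower estimate for $\|y+\sigma Ax_k\|$ computed in the original norm does not transfer to an arbitrary equivalent norm (after renorming, the normalized vector $y$ may have small original norm, which swamps the gain coming from the block), and the block index $n$ has no intrinsic relation to the power type. The renorming-invariant argument, parallel to the $\mathfrak{G}$ case you appeal to, runs through $\ee$-indices: if some renorming made $A$ AUS of power type $r$, then Proposition \ref{dualprop} and the homogeneity argument of Proposition \ref{finish prop} give $Sz(A,\ee)\leqslant C\ee^{-s}$ for small $\ee$ (with $1/r+1/s=1$), while the $n$-th block gives $Sz(A,\ee)\geqslant Sz(B_{\ell_{q_n}},\ee/\theta_n)$, which at $\ee$ comparable to $\theta_n$ is exponentially large in the conjugate exponent $q_n$ of $p_n$. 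The resulting constraint couples $\theta_n$ to $p_n$ (roughly $\theta_n\leqslant C'2^{-q_n/s}$), and choosing $\theta_n\to 0$ more slowly than any such sequence rules out every power type; this is exactly the paper's ``sufficiently slowly vanishing choice of $(\theta_n)$ depending on $p_n$.'' Relatedly, the failure of submultiplicativity of $Sz(A,\cdot)$ is the reason no positive GKL-type theorem is available for operators, not an ingredient that the lower bound ``exploits.''
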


We discuss the reason for the difference between the $\xi=0$ and $\xi>0$ cases in $(ii)$ of the preceding theorem.  Lancien \cite{Lancien1} showed that for any $\ee, \delta>0$ and any Banach space $X$, $Sz_{\ee\delta}(B_{X^*})\leqslant Sz_\ee (B_{X^*})Sz_\delta(B_{X^*})$ (which may fail if $B_{X^*}$ is replaced by another $w^*$-compact set).   This yields subgeometric growth of $Sz_{2^{-n}}(B_{X^*})$, which yields non-trivial results when $Sz(X)=\omega^{\omega^\xi}$ for some $\xi$.  This is because the ordinals $\omega^{\omega^\xi}$ are precisely the infinite ordinals $\delta$ which have the property that for any $\alpha, \beta<\delta$, $\alpha \beta<\delta$.  However, our familiar homogeneity argument yields that a non-trivial, $\xi$-asymptotically smooth norm on a Banach space $X$ implies that $Sz(X)=\omega^{\xi+1}$.  But the only ordinal which is of the form $\omega^{\omega^\xi}$ and of the form $\omega^{\zeta+1}$ is $\omega$.


\begin{thebibliography}{HD}

\normalsize
\baselineskip=17pt



\bibitem{BZ} F. Baudier and S. Zhang, \emph{$(\beta)$-distortion of some infinite graphs}, J. Lond. Math. Soc., to appear, DOI arXiv: 1504.04250. 

\bibitem{BKL} F. Baudier, N. Kalton, G. Lancien, \emph{A new metric invariant for Banach spaces}, Studia Math. 199 (2010) no. 1, 73-94.

\bibitem{BrookerAsplund} P.A.H. Brooker, \emph{Asplund operators and the Szlenk index}, Operator Theory 68 (2012) 405-442.

\bibitem{Brooker1} P.A.H. Brooker, \emph{Szlenk and $w^*$-dentability indices of the Banach spaces $C([0, \alpha])$}, Journal of Mathematical Analysis and Applications 399 (2013), 559-564.

\bibitem{BrookerDirect} P.A.H. Brooker, \emph{Direct sums and the Szlenk index},. J. Funct. Anal. 260 (2011) 2222-2246. 


\bibitem{Causey} R.M. Causey, \emph{Proximity to $\ell_p$ and $c_0$ in Banach spaces}, J. Funct. Anal., (2015), 10.1016/j.jfa.2015.10.001. 

\bibitem{Causey1} R.M. Causey, \emph{An alternate description of the Szlenk index with applications}, to appear in Illinois J. Math.  

\bibitem{Causey2} R.M. Causey, \emph{The Szlenk index of $C(K,X)$}, submitted.   



\bibitem{DKLR} S.J. Dilworth, D. Kutzarova, G. Lancien, L.  Randrianarivony, \emph{Equivalent norms with property $(\beta)$ of Rolewicz}, submitted.  

\bibitem{GKL} G. Godefroy, N.J. Kalton, G. Lancien, \emph{Szlenk indices and uniform homeomorphisms}, Trans. Amer. Math. Soc. 353 (2001) 3895-3918.

\bibitem{James} R.C. James, \emph{Weak compactness and reflexivity}, Isr. J. Math. 2 (1964) 101-119. 

\bibitem{HL}  P. H\'{a}jek and G. Lancien, \emph{Various slicing indices on Banach spaces}, Mediterr. J. Math. 4 (2007) 179-190.

\bibitem{Kl} B. Kloeckner, \emph{Yet another short proof of the Bourgain's distortion estimate for embedding of trees into uniformly convex Banach spaces}, Israel J. Math., 200 (2014), no. 1, 419-422. 

\bibitem{KOS} H. Knaust, E. Odell, Th. Schlumprecht. \emph{On asymptotic structure, the Szlenk index and UKK properties in Banach spaces}, Positivity, 3 (1999), 173-199.

\bibitem{Lancien} G. Lancien, \emph{On the Szlenk index and the weak$^*$-dentability index}, Quarterly J. Math. Oxford, 47, 1996, 59-71.

\bibitem{Lancien1}  G. Lancien, \emph{On uniformly convex and uniformly Kadec-Klee renormings}, Serdica Math. J. 21 (1995) 1-18.


\bibitem{Lancien2} G Lancien, \emph{A survey on the Szlenk index and some of its applications}, RACSAM Rev. R. Acad. Cienc. Exactas F'is. Nat. Ser. A Mat. 100 (2006), no. 1-2, 209-235.

\bibitem{LPR} G. Lancien, A. Proch\'{a}zka, M. Raja, \emph{Szlenk indices of convex hulls}, arXiv:1504.06997. 

\bibitem{Milman} V. D. Milman, \emph{Geometric theory of Banach spaces. II. Geometry of the unit ball}, Uspekhi Mat. Nauk 26 (1971), no. 6(162), 73-149 (Russian), Russian Math. Surveys 26 (1971), no. 6, 79-163 (English). 

\bibitem{Monk} J.D. Monk. \emph{Introduction to set theory}, McGraw-Hill, (1969). 

\bibitem{Raja} M. Raja, \emph{On weak* uniformly Kadec-Klee renormings}, Bull. London Math. Soc. 42 (2010), 221-228. 

\bibitem{Szlenk} W. Szlenk, \emph{The non existence of a separable reflexive Banach space universal for all separable reflexive Banach spaces}, Studia Math. 30 (1968), 53-61. 


\end{thebibliography}
\end{document}